\numberwithin{equation}{section}
\theoremstyle{plain}
\newtheorem{theorem}{Theorem}[section]
\newtheorem{proposition}[theorem]{Proposition}
\newtheorem{corollary}[theorem]{Corollary}
\newtheorem{lemma}[theorem]{Lemma}
\theoremstyle{definition}
\newtheorem{definition}[theorem]{Definition}
\theoremstyle{remark}
\newtheorem{remark}[theorem]{Remark}
\newcommand{\N}{\mathbb{N}}
\newcommand{\Z}{\mathbb{Z}}
\newcommand{\R}{\mathbb{R}}
\newcommand{\C}{\mathbb{C}}
\DeclareMathOperator{\e}{e}
\DeclareMathOperator{\Op}{Op}
\DeclareMathOperator{\Pc}{P_c}
\DeclareMathOperator{\Pp}{P_p}
\DeclareMathOperator{\Id}{I}
\DeclareMathOperator{\W}{W_\pm}
\DeclareMathOperator{\Dz}{D}
\renewcommand{\Re}{\operatorname{Re}}
\renewcommand{\Im}{\operatorname{Im}}
\newcommand{\prodscal}[2]{(#1\mid#2)}
\newcommand{\prodscalr}[2]{\left\langle#1,#2\right\rangle}
\newcommand{\abs}[1]{\left\vert #1 \right\vert}
\newcommand{\norm}[1]{\Vert #1 \Vert}
\newcommand{\interval}[4]{\mathopen{#1}#2\mathclose{}\mathpunct{},#3\mathclose{#4}}
\newcommand{\intervaloo}[2]{\interval{(}{#1}{#2}{)}}
\newcommand{\intervaloc}[2]{\interval{(}{#1}{#2}{]}}
\newcommand{\intervalco}[2]{\interval{[}{#1}{#2}{)}}
\newcommand{\intervalcc}[2]{\interval{[}{#1}{#2}{]}}
\newcommand{\set}[1]{\mathopen{}\big\{#1\mathclose{}\big\}}
\newcommand{\parent}[1]{\bigl(#1\bigr)}
\newcommand{\bigo}[1]{O\mathopen{}\left(#1\right)}
\newcommand{\bigO}[1]{\ensuremath{\mathop{}\mathopen{}O\mathopen{}\left(#1\right)}}
\newcommand{\smallO}[1]{\ensuremath{\mathop{}\mathopen{}o\mathopen{}\left(#1\right)}}
\newcommand{\1}{\mathbf{1}}
\newcommand{\function}[5]{\begin{array}{ccccc}
#1 & \colon & #2 & \to & #3 \\
 & & #4 & \mapsto & #5 \\
\end{array}}
\newcommand{\disp}{\displaystyle}
\DeclareMathOperator{\ran}{Ran}
\newcommand{\japbrak}[1]{\langle#1\rangle}
\DeclareMathOperator{\supp}{supp}
\begin{document}


\title[Asymptotic stability under random perturbations]{Asymptotic stability of small ground states for NLS under random perturbations}

\author{Nicolas Camps}
\address{Universit\'e Paris-Saclay, Laboratoire de mathématiques d'Orsay, UMR 8628 du CNRS, B\^atiment 307, 91405 Orsay Cedex,}
\email{nicolas.camps@universite-paris-saclay.fr}

\subjclass[2020]{35B40 primary, 42B10, 42B37, 35A01, 35B35, 35P30 35Q51, 35Q55, 35R60 secondary}

\keywords{NLS, probabilistic Cauchy theory, distorted Fourier transform, asymptotic stability}
\date{December, 2019}
\vspace{-8pt}
\begin{abstract}
We consider the cubic Schrödinger equation on the Euclidean space perturbed by a short-range potential $V$. The presence of a negative simple eigenvalue for $-\Delta+V$ gives rise to a curve of small and localized nonlinear ground states that yield some time-periodic solutions known to be asymptotically stable in the energy space. We study the persistence of these coherent states under rough perturbations. We shall construct a large measure set of small scaling-supercritical solutions below the energy space that display some asymptotic stability behavior. The main difficulty is the need to handle the interactions of localized and dispersive terms in the modulation equations. To do so, we use a \textit{critical-weighted strategy} to combine probabilistic nonlinear estimates in critical spaces based on $U^p, V^q$ (controlling higher order terms) with some local energy decay estimates (controlling lower order terms). We also revisit in the perturbed setting the analysis of~\cite{benyi2015} on the probabilistic global well-posedness and scattering for small supercritical initial data. We use a distorted Fourier transform and semiclassical functional calculus to generalize probabilistic and bilinear Strichartz estimates. 
\end{abstract}

\ \vskip -1cm \hrule \vskip 1cm \vspace{-8pt}
 \maketitle
{ \textwidth=4cm \hrule}
\vspace{-10pt}
\tableofcontents


\section{Introduction}
In this paper we consider the cubic Schrödinger equation perturbed by a short-range potential in the Euclidean space of dimension $d\geq3$ :
\begin{equation}
\label{eq-nls}\tag{NLS}
\begin{cases}
i\partial_t \psi + \Delta \psi = \mu \abs{\psi}^2\psi+V \psi\,,\\
\psi\big |_{t=0} = \psi_0\,.
\end{cases}
\end{equation}
The constant $\mu$, whose sign dictates whether the nonlinearity is focusing or defocusing, will play no role since we look at small initial data. Hence, we shall fix $\mu=1$. The short-range potential $V$, whose properties are detailed below, is assumed to be in the Schwartz class $\mathcal{S}(\R^d)$. Our main result concerns the case when $d=3$ and $\sigma(H)=\set{e_0}\cup\sigma_c(H)$ with no resonance at zero, and where $e_0<0$ is a simple negative eigenvalue with positive and normalized eigenfunction $\phi_0$. Then, the 1-dimensional eigenspace spanned by $\phi_0$ bifurcates around zero to a family of small and localized nonlinear ground states. These ground states satisfy the elliptic equation
\begin{equation}
\label{eq-GS}
\parent{\Delta-V+\abs{Q}^2}Q=EQ\,,
\end{equation}
and are written under the form
\begin{equation}
Q(z)=z\phi_0+q(z),\quad E(z)=e_0+e(z)\,,
\end{equation}
where $z$ is the complex modulation parameter. They give rise to periodic solutions to~\eqref{eq-nls} of the form $u(t,x)=\e^{-itE}Q(x)$.  Soffer and Weistein~\cite{soffer1990}, followed by Pillet and Wayne~\cite{pillet1997} and Gustafson \& al.~\cite{gustafson2004} addressed the problem of asymptotic stability of these small ground states in the energy space. More precisely, any small local solution $\psi$ to~\eqref{eq-nls} in $\mathcal{H}^1(\R^3)$ is global and can be decomposed into 
\begin{equation}
\label{ansatz0}
\psi(t) = Q(z(t))+\eta(t,z(t)),\\
\end{equation}
where the radiation term $\eta$ satisfies a time dependent orthogonality condition (see~\eqref{orth}). In large time, asymptotic stability holds in the sense that $z(t)$ has a limit as $t$ goes to infinity, and $\eta(t)$ scatters in $\mathcal{H}^1(\R^3)$. This result is a particular instance of the so-called \textit{soliton resolution conjecture} (see~\cite{tao2009} for a general presentation). Our aim is to prove that the asymptotic stability property of these coherent states still holds at very low regularity, in a supercritical regime where we have a local probabilistic flow.

Before we present our main result, we recall that the critical exponent $s_c$ for the homogeneous~\eqref{eq-nls}  equation (with V=0)  is the regularity exponent for which the homogeneous Sobolev norm is invariant under the scaling
\[u_\lambda(t,x)=\lambda^{-1}u(\lambda^{-2}t,\lambda^{-1}x)\,,\quad u_{0,\lambda}(x)=\lambda^{-1}u_0(\lambda^{-1}x)\,.\]
For the cubic Schrödinger equation in $\R^d$, we have $\displaystyle s_c = \frac{d-2}{2}$. In light of the conservation laws of mass and energy
\begin{equation*}
\mathcal{M}(\psi)(t)=\int_{\R^d}\abs{\psi(t,x)}^2dx,\quad \mathcal{E}(\psi)(t)=\int_{\R^d}\frac{1}{2}\abs{\nabla\psi(t,x)}^2+\frac{\mu}{4}\abs{\psi(t,x)}^4+V\abs{\psi(t,x)}^2dx\,,
\end{equation*}
we say that the problem is mass-critical when $s_c=0$ and energy-critical when $s_c=1$. There exists a vast literature on the Cauchy theory for~\eqref{eq-nls} and we refer to the books~\cite{bourgain2007,cazenave2003,dodson2019,tao2006} or to the survey~\cite{ginibre1979-I,ginibre1979-II} and the references therein for a general presentation. Note that the equation is energy-subcritical for $d=3$, and we have a local flow for $s\in\intervalcc{1/2}{1}$, where $s_c=1/2$. On the other hand, in the scaling-\textit{supercritical} regime where $s<s_c$, the local Cauchy problem for~\eqref{eq-nls} is known to be ill-posed (see ~\cite{carles2007,ckstt2003}). Nevertheless, the \textit{probabilistic Cauchy theory} initiated by Bourgain~\cite{bourgain1996} and developed by many authors since then provides some large measure sets made of scaling-supercritical initial data $u_0^\omega$ in $\mathcal{H}^s(\R^d)$ for $s<s_c$ that give rise a local solutions to~\eqref{eq-nls} of the form $\psi = u^\omega+v$. Here, $u^\omega=\e^{-itH}u_0^\omega$ is the propagation of $u_0^\omega$ under the linear flow and $v$ is solution to the cubic Schrödinger equation with a random forcing term, but with $v(0)=0$.
\subsection{Main result}
We consider~\eqref{eq-nls} in dimension $d=3$ with small randomized initial data that lie in $\mathcal{H}^s(\R^3)\cap\ran\Pc(H)$ for some $s\in\intervaloc{1/4}{1/2}$, where $\Pc(H)$ is the projection onto the continuous spectral subspace for $H$, and we assume that $\sigma(H)=\set{e_0}\cup\sigma_c(H)$ with no resonance at zero, where $e_0<0$ is a simple negative eigenvalue. We generalize the result of~\cite{benyi2015} in the presence of such of potential by first constructing a local probabilistic flow, and extending to a global flow. Then, we prove that at each time of the evolution, the solution decouples into a sum of a ground state and a radiation term that scatters at infinity. 

More precisely, we fix $s\in\intervaloc{1/4}{1/2}$, a function $u_0$ in $\mathcal{H}^s$ and a small parameter $\epsilon$. Then, we perform the \textit{Wiener randomization procedure} on $u_0$ as detailed in section~\ref{sec-rand}, and we get a large measure set $\widetilde\Omega_\epsilon$ made of  rough and small initial data $u_0^\omega\in\mathcal{H}^s(\R^3)$ with improved Strichartz estimates. These initial data give rise to global solutions to~\eqref{eq-nls} under the form~\eqref{ansatz0} that display an asymptotic stability dynamic.
\begin{theorem}[Probabilistic asymptotic stability of small ground states]
\label{theorem-soliton}
Assume that $H$ has no resonance at zero and that $\sigma(H)=\set{e_0}\cup\sigma_c(H)$ with $e_0<0$ a simple eigenvalue. There exists a set $\widetilde\Omega_\epsilon$ and $\delta_0$ such that for all $\psi_0$ with $\displaystyle\norm{\psi_0}_{\mathcal{H}^{1/2}}<\delta_0$ and all $\omega\in\widetilde\Omega_\epsilon$,  the initial value problem
\begin{equation}
\label{cauchy-soliton}
\begin{cases}
i\partial_t\psi+\Delta\psi=\abs{\psi}^2\psi+V\psi\,,\quad (t,x)\in\R\times\R^3\,,\\
\psi_{\lvert{t=0}}=\epsilon u_0^\omega+\psi_0\,,
\end{cases}
\end{equation}
admits a unique global-in-time solution $\psi$ of the form
\[
\psi(t) = \epsilon \e^{it(\Delta-V)}u_0^\omega+v(t),\ \text{where}\ v\in C(\R,\mathcal{H}^{1/2}(\R^3))\,.
\]
Moreover, the solution uniquely resolves into
$
\psi(t) = Q(z(t))+\eta(t)$, and there exist $z_+\in\C$ and a final state $\eta_+\in \mathcal{H}^{1/2}(\R^3)\cap\ran(\Pc)$ such that
\begin{equation}
\label{eq-conv-z}
\underset{t\to+\infty}{\lim}z(t)\exp\parent{i\int_0^tE(z)d\tau} = z_+,\quad \underset{t\to+\infty}{\lim}\norm{\eta-\e^{-itH}\parent{\epsilon u_0^\omega+\eta_+}}_{\mathcal{H}^{1/2}(\R^3)} = 0\,.
\end{equation}
\end{theorem}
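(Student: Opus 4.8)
The plan is to combine three ingredients: (i) a probabilistic local theory for the forced equation, giving the decomposition $\psi=\epsilon e^{it(\Delta-V)}u_0^\omega+v$ with $v(0)=0$ and $v$ in the energy-critical space $\mathcal{H}^{1/2}$; (ii) the modulation/spectral decomposition of Gustafson--Tsai--Sigal--Sasaki adapted to low regularity, writing $\psi=Q(z(t))+\eta(t)$ with $\eta$ satisfying the time-dependent orthogonality condition \eqref{orth}; and (iii) a bootstrap on a carefully chosen norm that simultaneously controls the dispersive high-regularity part of $\eta$ and the localized low-regularity part through local energy decay. First I would set up the Duhamel formula for $v$ around the distorted linear flow $e^{it(\Delta-V)}$, inserting the ansatz $\psi=\epsilon e^{it(\Delta-V)}u_0^\omega+v$ into \eqref{eq-nls}; the cubic nonlinearity expands into terms that are purely random (estimated by the improved Strichartz bounds on $\widetilde\Omega_\epsilon$), purely deterministic in $v$ (estimated in the critical $U^p/V^q$ spaces, exactly as in \cite{benyi2015} but with the distorted Fourier transform and semiclassical functional calculus replacing the flat ones), and mixed. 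A contraction in the critical space yields local existence of $v$; the smallness of $\|\psi_0\|_{\mathcal{H}^{1/2}}$ and of $\epsilon$ makes the nonlinear terms subordinate.

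Next I would globalize. Rather than a conservation law (unavailable below the energy space), the global bound comes from the modulation analysis: substituting $\psi=Q(z)+\eta$ and projecting with $\Pc(H)$ and onto the generalized kernel produces the modulation ODE for $z$ together with a Schrödinger equation for $\eta$ with source terms that are at least quadratic in $(z,\eta)$ and localized (they carry factors of $Q$, hence Schwartz weights). Here is where the \emph{critical-weighted strategy} enters: I would run a bootstrap on a combined quantity of the schematic form
\[
\mathcal{N}(T)=\|v\|_{X^{1/2}([0,T])}+\sup_{0\le t\le T}\|\eta(t)\|_{\mathcal{H}^{1/2}}+\|\langle x\rangle^{-\sigma}\eta\|_{L^2_tL^2_x([0,T])}+\|z\|_{L^\infty([0,T])},
\]
where $X^{1/2}$ is the $U^p$-based critical space and the weighted $L^2_tL^2_x$ norm captures local energy decay for $e^{-itH}$ restricted to $\ran\Pc(H)$ (valid since there is no zero resonance and no eigenvalue other than $e_0$). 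The higher-order nonlinear interactions are absorbed by the critical norm via the probabilistic bilinear/nonlinear estimates; the lower-order localized interactions in the modulation equations are absorbed by the local-decay norm after pairing the Schwartz weight of $Q$ against $\langle x\rangle^{-\sigma}$. Closing $\mathcal{N}(T)\lesssim \epsilon+\delta_0+\mathcal{N}(T)^2$ for all $T$ gives the a priori bound, hence global existence and uniqueness.

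Finally, asymptotic stability: the uniform-in-time bound on $\mathcal{N}$ forces the modulation parameter to be Cauchy. Indeed $\dot z = -iE(z)z + (\text{quadratic, localized})$, and integrating the gauge-corrected quantity $z(t)\exp(i\int_0^t E(z)\,d\tau)$ the remaining terms are time-integrable by the local-decay bound on $\eta$ together with the $\mathcal{H}^{1/2}$ bound; this yields $z_+$. For the radiation, one writes $\eta(t)=e^{-itH}(\epsilon u_0^\omega) + e^{-itH}\eta_+(t)$ where $\eta_+(t)=\epsilon u_0^\omega$-corrected profile obtained by removing the free evolution from the Duhamel expression of $\eta$; the Duhamel tail $\int_t^\infty e^{i(t-\tau)H}F(\tau)\,d\tau$ converges in $\mathcal{H}^{1/2}$ as $t\to\infty$ because $F$ splits into a critically-small dispersive piece (handled by the $U^p/V^q$ scattering theory, as in \cite{benyi2015}) and a localized piece that is $L^1_t$ in $\mathcal{H}^{1/2}$ by local energy decay. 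Taking the limit defines $\eta_+$ and gives the second convergence in \eqref{eq-conv-z}, with $\eta_+\in\mathcal{H}^{1/2}\cap\ran\Pc$ since $\Pc$ commutes with $e^{-itH}$. The main obstacle I expect is step (ii)--(iii): reconciling the critical $U^p/V^q$ framework (which is natural for the scaling-supercritical dispersive part but blind to spatial localization) with the weighted local-decay estimates needed for the modulation equations, in particular checking that the bilinear interactions between a localized factor and a merely-$\mathcal{H}^{1/2}$ dispersive factor can be placed in the weighted space without losing the smallness — this is precisely where the distorted Fourier transform and semiclassical functional calculus must be used to prove the generalized bilinear Strichartz estimates.
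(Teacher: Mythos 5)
Your outline follows the paper's strategy at a high level (probabilistic local theory, modulation decomposition $\psi=Q(z)+\eta$, a critical-weighted bootstrap, and convergence of the modulation parameter). However, there is a genuine gap in the scattering step and the central lemma is named but not supplied.

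The scattering gap: the orthogonality condition~\eqref{orth} places $\eta(t)\in\mathcal{H}_c(z(t))$, \emph{not} in $\ran\Pc=\mathcal{H}_c(0)$, so $\eta$ has a nonzero discrete component $\Pp(\eta)$ along $\phi_0$ for all $t$. Your argument that ``$\eta_+\in\ran\Pc$ since $\Pc$ commutes with $\e^{-itH}$'' does not address why $\Pp(\eta)\to0$. The paper handles this by the reduction $\eta=\operatorname{R}(z)(\epsilon u^\omega+\nu)$, $\Pp(\eta)=(\operatorname{R}(z)-\Id)(\epsilon u^\omega+\nu)$ (Lemma~\ref{lemma-proj}): one proves the Cauchy criterion for $\nu=\Pc\eta-\epsilon u^\omega$ in $\mathcal{H}^{1/2}$ (this is the part amenable to $U^p/V^q$ methods), and then kills the discrete part by combining the weak convergence $\e^{-itH}(\epsilon u_0^\omega+\nu)\rightharpoonup 0$ with the \emph{compactness} of $\operatorname{R}(z)-\Id$. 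Without this compactness step the claim $\Pp(\eta)\to0$ is unproved, and without it the stated convergence $\eta-\e^{-itH}(\epsilon u_0^\omega+\eta_+)\to0$ with $\eta_+\in\ran\Pc$ does not follow. Related to this, in the local theory the discrete component of $v$ cannot be closed with Strichartz/$U^p$ estimates at all, since $\e^{-itH}\phi_0=\e^{-ite_0t}\phi_0$ has no dispersion; the paper decomposes $v=\nu+\alpha\phi_0$ and couples a contraction on $\nu$ in $X^{1/2}$ with an ODE for $\alpha$, which your sketch glosses over.

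On the critical-weighted strategy: you correctly identify this as the crux, but the proposal stops at stating the combined bootstrap norm. The missing content is the device that makes the two scales compatible, namely the dual local-smoothing embedding $L_t^2L_x^{2,1/2+}\hookrightarrow DV_H^2(\mathcal{H}^{1/2})$ together with the splitting of the forcing into $f=\Pc\mathcal{N}(\nu+\epsilon u^\omega)-i\Dz_zQ\,\dot m(z)$ (measured in $DU_H^2$ via duality against $V^2$) and $g=\Pc(Q^2\eta+Q\eta^2+\abs{\Pp\eta}^2\Pp\eta)$ (measured in $L_t^2L_x^{2,1/2+}$ by exploiting the Schwartz weights on $Q$). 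Proposition~\ref{proposition-trick} then bounds $\nu$ simultaneously in $V_H^2(\mathcal{H}^{1/2})$ and $L_t^2\mathcal{H}^{1,-1/2-}$ in terms of these two pieces; without this lemma the bootstrap cannot even be set up. Finally, your $\mathcal N(T)$ tracks $\norm{z}_{L^\infty}$ rather than $\norm{\dot m(z)}_{L^1}$; the latter is what both controls the source term $\Dz_zQ\,\dot m$ in the $\nu$-equation and yields the existence of $z_+$.
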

\begin{remark}
The measure of the set $\widetilde{\Omega}_\epsilon$ of initial with improved Strichartz estimates is all the more large since $\epsilon\norm{u_0}_{\mathcal{H}^s}$ is small. There exist $C,c>0$ such that for all $u_0$ and $\epsilon>0$,
\[
\operatorname{\mathbf{P}}\parent{\Omega\setminus\widetilde\Omega_\epsilon}\leq C\exp\parent{-c\epsilon^{-2}\norm{u_0}_{\mathcal{H}^s(\R^3)}^{-2}}\,.
\]
\end{remark}
\begin{remark}
We have conditional uniqueness for $v$ in the critical space defined in~\eqref{eq-Xcr}, and embedded into $L^\infty(\R,\mathcal{H}^{1/2})$. Hence, $v$ gains some regularity and lies in a space where there exists a deterministic Cauchy theory.
\end{remark}
By taking $u_0=0$ in the statement of Theorem~\ref{theorem-soliton} and using persistence of regularity, we may extend the deterministic result of~\cite{gustafson2004} to the intercritical regime, where $1/2\leq s\leq 1$.
\begin{corollary}[Deterministic asymptotic stability]
For $1/2\leq s\leq1$, there exists $\delta_0>0$ such that for all $\norm{\psi_0}_{\mathcal{H}^s}\leq\delta_0$, the Cauchy problem~\eqref{eq-nls} with initial data $\psi_0$ has a unique global solution $\psi\in C(\R;\mathcal{H}^s)$ that resolves into $\psi(t)=Q(z(t))+\eta(t)$, and asymptotic stability~\eqref{eq-conv-z} holds in $\mathcal{H}^{1/2}$.
\end{corollary}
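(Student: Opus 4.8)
The plan is to deduce the corollary from Theorem~\ref{theorem-soliton} by specializing the random datum and then upgrading the regularity of the resulting solution. Fix $s\in\intervalcc{1/2}{1}$ and apply Theorem~\ref{theorem-soliton} with $u_0=0$: then $u_0^\omega=0$ identically, the exceptional set $\widetilde\Omega_\epsilon$ has full measure, and the conclusion holds for \emph{every} $\psi_0$ with $\norm{\psi_0}_{\mathcal{H}^{1/2}}<\delta_0$. Since $\mathcal{H}^s\hookrightarrow\mathcal{H}^{1/2}$ for $s\geq1/2$, any $\psi_0$ with $\norm{\psi_0}_{\mathcal{H}^s}\leq\delta_0$ is admissible (shrinking $\delta_0$ slightly to absorb the strict inequality), and Theorem~\ref{theorem-soliton} produces a unique global solution $\psi=v\in C(\R,\mathcal{H}^{1/2})$ lying in the critical space of~\eqref{eq-Xcr}, together with the modulation decomposition $\psi(t)=Q(z(t))+\eta(t)$ and the convergence~\eqref{eq-conv-z} in $\mathcal{H}^{1/2}$. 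In particular the $Q+\eta$ splitting and the asymptotic stability~\eqref{eq-conv-z} required by the corollary are already in hand; the modulation parameter $z(t)$ is the same object, since it is fixed by the orthogonality condition~\eqref{orth} applied to $\psi(t)\in\mathcal{H}^{1/2}$, independently of any extra regularity. It remains only to show $\psi\in C(\R,\mathcal{H}^s)$.

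For this persistence-of-regularity step I would run a standard continuity argument, exploiting that $s\geq s_c=1/2$ and that the nonlinearity $\abs{\psi}^2\psi+V\psi$ is smooth ($V\in\mathcal{S}(\R^3)$). The deterministic local theory for~\eqref{eq-nls} is well posed in $\mathcal{H}^\sigma$ for $\sigma\in\intervalcc{1/2}{1}$, with a lifespan controlled only by a subcritical Strichartz-type norm at the level $\mathcal{H}^{1/2}$. Since $\psi$ belongs to the critical space of~\eqref{eq-Xcr}, this norm is \emph{globally finite}, so any interval $\intervalcc{0}{T}$ (and likewise $\intervalcc{-T}{0}$) splits into finitely many subintervals $I_j$ on which it is below the smallness threshold of the local theory. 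On each $I_j$, the fractional Leibniz (Kato--Ponce) inequality lets $\langle\nabla\rangle^s$ fall on a single cubic factor, and the contraction estimate gives $\sup_{I_j}\norm{\psi}_{\mathcal{H}^s}\lesssim\norm{\psi(t_j)}_{\mathcal{H}^s}$ with a constant uniform in $j$. Chaining the finitely many subintervals bounds $\norm{\psi}_{\mathcal{H}^s}$ on $\intervalcc{0}{T}$; letting $T\to\infty$ gives $\psi\in L^\infty_{\mathrm{loc}}(\R,\mathcal{H}^s)$, and time continuity follows from the Duhamel formula together with continuity of $\e^{it(\Delta-V)}$ on $\mathcal{H}^s$. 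Uniqueness in $C(\R,\mathcal{H}^s)$ reduces to uniqueness in $C(\R,\mathcal{H}^{1/2})$ inside the critical space of~\eqref{eq-Xcr}, which is the conditional uniqueness of Theorem~\ref{theorem-soliton}, since on each small subinterval an $\mathcal{H}^s$ solution is in particular such an $\mathcal{H}^{1/2}$ solution. Finally $Q(z)=z\phi_0+q(z)$ is smooth and exponentially localized, hence $Q(z(t))\in\mathcal{H}^s$ uniformly in $t$ and $\eta=\psi-Q\in C(\R,\mathcal{H}^s)$ as well.

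The only genuinely substantial point is the persistence step, and it is classical: its content is precisely that $1/2$ is the scaling-critical exponent, so no derivative loss occurs when transporting extra smoothness along a solution whose critical norm is already globally controlled. Everything else — the embedding $\mathcal{H}^s\hookrightarrow\mathcal{H}^{1/2}$, the algebraic structure of the nonlinearity, and the Schwartz-type decay of $\phi_0$ and $q(z)$ — is immediate, and the structural conclusions (the decomposition and~\eqref{eq-conv-z} in $\mathcal{H}^{1/2}$) are inherited verbatim from Theorem~\ref{theorem-soliton}.
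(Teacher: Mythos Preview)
Your proof is correct and follows exactly the approach indicated in the paper, which derives the corollary in one line: ``By taking $u_0=0$ in the statement of Theorem~\ref{theorem-soliton} and using persistence of regularity.'' You have simply fleshed out both steps---the specialization $u_0=0$ (so that $u_0^\omega\equiv0$ and the conclusion becomes deterministic) and the standard persistence-of-regularity argument via fractional Leibniz and subdivision into subintervals on which the critical norm is small---in more detail than the paper provides.
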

\vspace{8pt}
\subsection{Background}
\subsubsection{Asymptotic stability for small solitons in the energy space}
Previous results on the asymptotic stability of small ground states hold for initial data $\psi_0$ in the energy space $\mathcal{H}^1(\R^3)$, where~\eqref{eq-nls} is known to be well-posed. Soffer and Weinstein proved in~\cite{soffer1990} that the equation displays some multichannel scattering for small and localized data in the energy space. More precisely, any initial data small in $\mathcal{H}^1(\R^3)\cap L_x^1(\R^3)$ gives rise to a global solution that resolves into a fixed ground state and a radiation term whose $L_x^6(\R^3)$-norm decays to zero, as well as some of its $L_x^2(\R^3)$-weighted norm (see Theorem 4.1 in~\cite{soffer1990}). By the use of the \textit{center stable manifold method}, Pillet and Wayne then extended this result in~\cite{pillet1997} to the case where the initial data are localized in some $L_x^2(\R^3)$-weighted spaces rather than in $L_x^1(\R^3)$. While these works impose some fixed orthogonality conditions to the modulation parameters, Gustafson, Nakanishi and Tsai introduced a time-dependent orthogonality condition that leads to an asymptotic stability result in $\mathcal{H}^1(\R^3)$ (Theorem 1.7 in \cite{gustafson2004}) without any assumption on the decay of the initial data, and the radiation term scatters in $\mathcal{H}^1(\R^3)$. A natural question that arises as  was to whether the asymptotic stability holds true below the energy space. Indeed, we still have a local flow for $1/2\leq s\leq1$, and a local probabilistic flow for $1/4<s<1/2$. We note that our proof does not use any decay assumption on the initial data. Besides, the randomized initial are not likely to decay for a general $u_0$ (see the discussion in paragraph~\ref{section-wiener-decomposition}). 
\subsubsection{Probabilistic well-posedness theory}
\label{sub-intro-proba}
 Even though the Cauchy problem~\eqref{eq-nls} is in general ill-posed for scaling-supercritical initial data below $\mathcal{H}^{1/2}(\R^3)$, the probabilistic method can provide some large measure sets of initial data that give rise to global-in-time solutions. Many works were done in this direction for different dispersive PDE's after the pioneer work of Jean Bourgain~\cite{bourgain1996} for the NLS equation on $\mathbb{T}^2$, followed by the work of Burq and Tzvetkov on the nonlinear wave equation (NLW) on a 3D compact Riemannian manifold without a boundary~\cite{burq2008,burq2008II}. The idea is to find a randomization procedure which improves some Strichartz estimates on the free evolution of the randomized data, but does not improve their regularity. 

To extend local well-posedness below the scaling-critical Sobolev space, the strategy consists in decomposing the solution into $\psi = u^\omega+v$ where $u^\omega\coloneqq \e^{-itH}u_0^\omega$ is the linear evolution of the randomized data. The remaining term $v$ is smoother and satisfies~\eqref{eq-nls} with a stochastic forcing term :
\begin{equation}
\label{eq-ivp0}
\begin{cases}
i\partial_t\psi-H\psi = \mathcal{N}(\psi),\\
\psi\vert_{t=0}=\psi_0+u_0^\omega.
\end{cases}
\iff
\begin{cases}
i\partial_tv-Hv = \mathcal{N}(v+u^\omega),\\
v\vert_{t=0}=\psi_0.
\end{cases}
\end{equation}
Besides, we need to solve equation~\eqref{eq-ivp0} for $v$ in a well-chosen subspace of $C(I,\mathcal{H}^{s_c})$ by a contraction mapping argument. Thanks to its random structure, the linear evolution of the randomized data $u^\omega$ displays some enhanced integrability properties that make it possible to gain regularity on the stochastic Duhamel term. After that, we globalize the local solutions by using the~\textit{Bourgain invariant measure argument} in the case of $\mathbb{T}^d$, or by using a priori estimates on some critical norm of $v$.

The general randomization procedure consists in finding a well-chosen decomposition of the initial data and in decoupling the terms of this decomposition by multiplying each of them by some independent random variables~$\set{g_n(\omega)}_n$ centered around zero. Then, taking averages cancels interference and improves therefore the integrability of the data. There exists many versions of the randomization procedure, and we refer the interested reader to the survey~\cite{benyi2019} and the references therein for a detailed presentation. One has to distinguish between two cases. In the confining case where the physical space is a compact manifold, or when the equation contains a confining potential, we consider the decomposition $u_0^\omega=\sum_n g_n(\omega)u_n\e_n(x)$ where $\set{e_n}$ is an orthonormal basis made of eigenfunctions of the Schrödinger operator. In the Euclidean case, the Schrödinger operator does not provide such a natural decomposition, and we use instead unit scale frequency decomposition on Wiener cubes (see~\cite{benyi2015,benyi2015-local,luhrmann2020,zhang2012}), or microlocal decomposition (see~\cite{bringmann2020,burq-krieger2019,murphy2017}). In some specific cases, we can also use a compactifying transformation such as the Lens transform for NLS (see~\cite{burq-thomann2020,btt2013}) or the Penrose transform for NLW (see~\cite{asds2013}) to apply in the Euclidean setting variants of the Bourgain invariant measure argument. Nevertheless, it must be emphasized that the probabilistic method used in the aforementioned works concerns perturbations of the zero solution and ends up with asymptotic results like global well-posedness and scattering for small data or in the defocusing case. 

Still, Kenig and Mendelson recently addressed in~\cite{kenig2019} the problem of asymptotic stability of large solitons for the quintic focusing wave equation with randomized radial initial data. They introduced a randomization procedure based on the distorted Fourier transform adapted to the linearized operator around a soliton, and proved some intricate kernel estimates due to the presence of a resonance at zero for this operator. Then, Bringmann showed the stability of the ODE blow-up for the radial energy critical NLW in 4D under random perturbations below the energy space~\cite{bringmann2020-2}. However, for nonlinear focusing Schrödinger equations the asymptotic stability around solitons is more subtle. Indeed, the solitons for~\eqref{eq-nls} with $V=0$ are stable when the nonlinearity is small and the critical exponent is negative. Hence, we cannot run the probabilistic strategy since the smoother term $v$, which is expected to lie in the critical space $C(\R;\mathcal{H}^{s_c}(\R^d))$, cannot serve as a substitute for the conservation laws. In addition, the linearized operator around the soliton is not self-adjoint. We refer to~\cite{cuccagna2020survey,tao2009} and the references therein for a general insight on the results and techniques about stability of solitons for NLS. 
\subsubsection{Schrödinger equation with a short-range potential}
The study of dispersive PDE's with a potential is a very general problem. For instance, a potential can arise in the modulation equations obtained to address stability problems around soliton solutions by the analytic method (see~\cite{tao2009}). There exists a vast literature about Schrödinger equations perturbed by a localized potential (see for instance~\cite{walsh2013,rousset2018,pusateri2020}). These works rely on the use of a \textit{distorted Fourier transform} $\mathcal{F}_V$, which is the analogue adapted to $H$ of the Fourier transform. When it exists, the distorted Fourier transform defines a partial isometric map onto $L_{ac}^2(\R^d)$ that conjugates $H$ with the operator of multiplication by $\abs{\xi}^2$. Additionally, the distorted Fourier transform is related with the wave operators $\W$. We refer the interested reader to the seminal work of Agmon~\cite{agmon1975}, grounded on the previous works~\cite{alsholm1971,ikebe1960}. In the probabilistic context, we shall follow the strategy used by Kenig and Mendelson for the quintic focusing NLW in~\cite{kenig2019} to provide a randomization procedure based on a \textit{distorted frequency} decomposition. This randomization procedure commutes with the flow $\e^{-itH}$ and is therefore suited to the underlying linear dynamic of~\eqref{eq-nls}. In the present work we consider the Schrödinger equation perturbed by a potential in order to generate a curve of small nonlinear ground states, and our analysis is easier than in~\cite{kenig2019} since we can assume no resonance at zero. Let us now precise the assumptions made on $V$.
\vspace{8pt}
\paragraph{\textbf{Assumptions on the potential} :} $V$ is a real-valued potential in the Schwartz class $\mathcal{S}(\R^d;\R)$. We write
\begin{equation*}
\label{as}
H\coloneqq-\Delta + V,\quad H_0 = -\Delta.
\end{equation*}
Note that $V$ is \textit{short-range} in the sense of Agmon : there exists $\epsilon>0$ such that the operator
\begin{equation*}
\label{eq-sr}
u\in\mathcal{H}^2(\R^d)\mapsto (1+\abs{x})^{1+\epsilon}Vu\in L^2(\R^d)
\end{equation*}
is a compact operator. Namely, $H$ is $H_0$-compact. Hence, $H$ admits a unique self-adjoint realization on $L^2(\R^d)$ with domain $\mathcal{D}(H)=\mathcal{H}^2(\R^d)$ and has the same essential spectrum as $H_0$, that is the half line $\intervalco{0}{+\infty}$. In addition, Agmon proved in~\cite{agmon1975} that if $V$ is short-range then the spectrum of $H$ is given by
\begin{equation*}
\sigma(H)=\sigma_p(H)\cup\sigma_{ess}(H)=\sigma_p(H)\cup\intervalco{0}{+\infty},
\end{equation*}
where $\sigma_p(H)$ (the discrete spectrum of $H$) is  a countable set made of eigenvalues with finite multiplicity. We have the spectral decomposition :
\begin{equation*}
\quad L^2(\R^d)=L_{ac}^2(\R^d)\oplus L_p^2(\R^d),
\end{equation*}
where $L^2_p(\R^d)$ is the space spanned by the associated eigenvectors and $L^2_{ac}(\R^d)$ is the absolute continuous spectral subspace for $H$. In what follows, we assume that $V$ is \textit{generic}, in the sense that
\begin{enumerate}[(i)]
\item$\sigma_p(H)\subset \intervaloo{-\infty}{0},$
\item0 is not a resonance for $H$.
\end{enumerate}
We refer to~\cite{walsh2013} and the references therein for precise discussions about optimal assumptions on $V$. In the present work, we assume extra regularity on $V$ to get the global-in-time local smoothing estimates of Proposition 1.33 in~\cite{rodnianski2015}. We also chose to take $V$ smooth in order to derive a bilinear estimate for perturbed linear Schrödinger evolutions based on \textit{semiclassical functional calculus}, and that does not rely on an explicit structure theory for wave operators (see~\cite{beceanu2014,hong2013}). The assumption that $V$ is in $\mathcal{S}(\R^d)$ is far from being optimal and is made for simplicity.
\subsection{Organization of the paper}
\subsubsection{Spectral theory}
We present in section~\ref{section-fourier} the basic properties of the \textit{distorted Fourier transform}, and it's connections with wave operators and functional calculus. Under our assumptions on $-\Delta+V$, the distorted Fourier transform is a unitary operator onto the continuous spectral subspace $L_{c}^2(\R^d)$ that provides very useful frequency decompositions adapted to the perturbed framework. Indeed, the distorted Fourier multipliers involved in these decompositions commute with the perturbed linear flow $\e^{-itH}$. In particular, they preserve the continuous spectral subspace $\ran(\Pc)$. We state an analogue of the Fourier multiplier theorem for this transformation that follows from the $L^p$-boundedness of wave operators, and we also stress out that distorted Fourier multipliers by radial functions coincide with spectral multipliers defined by the functional calculus on self-adjoint operators. Then, we shall briefly revisit some nonlinear tools such as the Littlewood-Paley inequality in order to perform standard harmonic analysis techniques for nonlinear dispersive PDE's in the perturbed setting.  
\subsubsection{Probabilistic and Bilinear Strichartz estimates for the inhomogeneous Schrödinger evolution}
First, we generalize the refined global-in-time Strichartz estimates for randomized data from~\cite{benyi2015}, Lemma 2.3. To do so, we introduce a \textit{distorted Wiener decomposition} by using distorted Fourier multipliers localized on unit cubes (whereas standard Fourier multipliers are used in the flat case). As mentioned above, the reason for this is to come with a randomization that commutes with $\e^{-itH}$ and that preserves $\ran \Pc(H)$. Proposition~\eqref{prop-rand-str} yields a large measure set $\widetilde\Omega$ of initial data in $\ran \Pc(H)$ that display refined Strichartz estimates. In order to revisit the standard proof written in~\cite{benyi2015}, we use the variant of a Bernstein's estimate for distorted Fourier multipliers~\eqref{eq-distorted-multiplier} presented in section~\ref{section-fourier}. Furthermore, we stress out that since $u_0^\omega$ is in $\ran(\Pc)$ we can apply the local smoothing property~\eqref{eq-local-smoothing} for $\e^{-itH}$. For all these reasons, randomization adapted with a distorted frequency decomposition turns out to be the best suited to the dynamic of~\eqref{eq-nls}. Nevertheless, the distorted Fourier transform does not change a product into a product of convolution. Therefore, the nonlinear analysis is more intricate in this setting, and we cannot generalize bilinear estimates so easily. Still, we prove an extended version of the bilinear Strichartz estimate from J. Bourgain to the inhomogeneous case with $V$ in the Schwartz class. Our proof does not rely on a structure formula for the wave operator, and we use instead semiclassical functional analysis to intertwine Fourier and distorted Fourier multipliers. More precisely, we quantify the interactions between functions of the form $\varphi(N^{-1}H)$ and $\varphi(M^{-1}H_0)$ in order to intertwine localization with respect to $H$ and $H_0$. Then, we decompose the perturbed evolution $\e^{-itH}$ into a superposition of free evolution $\e^{it\Delta}$ of flat-Fourier localized data, and we apply the original Bourgain's estimate for the free evolution $\e^{it\Delta}$, as well as local energy decay. 
\subsubsection{Probabilistic global existence and scattering on the continuous spectral subspace} 
The purpose of section~\ref{section-continuous} is to lay the foundations for the analysis conducted in section~\ref{section-soliton} about the stability of small nonlinear ground states under rough and randomized perturbations. First, we recall the definitions and key properties of critical spaces of functions from an interval $I$ to $\mathcal{H}^\frac{d-2}{2}(\R^d)$, written $X^\frac{d-2}{2}(I)$. This space is built upon the space of functions of finite $q$-variation $V^q$ and its predual, the atomic space $U^p$. Next, we generalize Theorem 1.2 in~\cite{benyi2015} to the perturbed framework in order to understand the dynamic for solutions to~\eqref{eq-nls} projected on the continuous spectral subspace for $H$. To do so, we shall specify and develop a bit on the random nonlinear estimates derived in~\cite{benyi2015}. We obtain the following result.
\begin{theorem}
\label{theorem-continuous}
Assume that $H$ has no resonance nor eigenvalue at zero. Let $s_d = \frac{d-1}{d+1}\cdot s_c$ and $s\in\intervaloc{s_d}{s_c}$. For all $\omega\in\widetilde\Omega_\epsilon$ and $\psi_0\in\mathcal{H}^\frac{d-2}{2}(\R^d)\cap \ran(\Pc)$ small enough, the Cauchy problem
\begin{equation}
\label{eq-nls-continuous}
\tag{$\text{NLS}_c$}
\begin{cases}
i\partial_t\psi +(\Delta-V)\psi=\Pc\parent{\abs{\psi}^2\psi}\,,\\
\psi_{t=0}=\epsilon u_0^\omega+\psi_0\,,
\end{cases}
\end{equation}
admits a unique global-in-time solution $\psi$ in the class
\begin{equation*}
\epsilon\e^{-itH}u_0^\omega + X^\frac{d-2}{2}\intervalco{0}{\infty}\subseteq C\parent{\intervalco{0}{\infty},\mathcal{H}^{s}(\R^d)\cap\ran(\Pc)}\,.
\end{equation*}
In addition, there exists $v_+\in \mathcal{H}^\frac{d-2}{2}(\R^d)\cap\ran(\Pc)$ such that
\begin{equation*}
\underset{t\to+\infty}{\lim}\norm{\psi-\e^{-itH}\epsilon u_0^\omega+v_+}_{\mathcal{H}^\frac{d-2}{2}(\R^d)} = 0\,.
\end{equation*}
\end{theorem}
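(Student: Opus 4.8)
The plan is to run the Bényi--Oh--Pocovnicu scheme~\cite{benyi2015} in the perturbed setting. Writing $u^\omega\coloneqq\e^{-itH}u_0^\omega$ for the distorted linear evolution of the randomized data and $\psi=\epsilon u^\omega+v$, the unknown $v$ solves
\[
i\partial_t v - Hv = \Pc\,\mathcal{N}(v+\epsilon u^\omega)\,,\qquad v(0)=\psi_0\,,\qquad \mathcal{N}(\psi)=\abs{\psi}^2\psi\,,
\]
so the natural map to contract is
\[
\Phi(v)(t)=\e^{-itH}\psi_0 - i\int_0^t\e^{-i(t-\tau)H}\,\Pc\,\mathcal{N}(v+\epsilon u^\omega)(\tau)\,d\tau
\]
on a small ball of the critical space $X^{s_c}\intervalco{0}{\infty}$, $s_c=\tfrac{d-2}{2}$. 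First I would transfer to the perturbed framework the structural facts about the $U^p,V^q$-spaces: the embedding $X^{s_c}\hookrightarrow C(\mathcal{H}^{s_c})$, the homogeneous bound $\norm{\e^{-itH}\psi_0}_{X^{s_c}}\lesssim\norm{\psi_0}_{\mathcal{H}^{s_c}}$, and the inhomogeneous estimate $\norm{\int_0^t\e^{-i(t-\tau)H}F(\tau)\,d\tau}_{X^{s_c}}\lesssim\norm{F}_{N^{s_c}}$ for the dual nonlinearity space $N^{s_c}$. These depend only on $\e^{-itH}$ being a unitary group on $\mathcal{H}^{s_c}$ and on the global-in-time Strichartz estimates for $\e^{-itH}$, which follow from those for $\e^{it\Delta}$ together with the $L^p$-boundedness of wave operators, also used to dispose of $\Pc$ on the nonlinearity. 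Since $u_0^\omega\in\ran\Pc$ and $\Pc$ commutes with $\e^{-itH}$, the whole evolution stays in $\ran\Pc$ and the eigenvalue plays no role at this stage.

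The core is the multilinear estimate controlling $\norm{\Pc\mathcal{N}(v+\epsilon u^\omega)}_{N^{s_c}}$. Expanding $\mathcal{N}(v+\epsilon u^\omega)$ produces the purely deterministic term $\abs{v}^2v$, handled by the deterministic trilinear estimate in $X^{s_c}$; the purely stochastic forcing $\epsilon^3\abs{u^\omega}^2u^\omega$, handled using only the improved Strichartz norms of $u^\omega$; and the mixed terms, trilinear with one or two factors of $\epsilon u^\omega$. For each term I would perform a Littlewood--Paley decomposition and a duality pairing, placing the $v$-factors in $X^{s_c}$ (hence in admissible Strichartz norms) and the $u^\omega$-factors in the enhanced Strichartz spaces furnished on $\widetilde\Omega_\epsilon$ by Proposition~\ref{prop-rand-str}. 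The two new ingredients relative to~\cite{benyi2015} are the distorted-frequency Bernstein inequality and the generalized bilinear Strichartz estimate for $\e^{-itH}$, the latter needed to gain in the high$\times$high$\to$low interactions; in the perturbed setting its proof rests on the semiclassical intertwining of $\varphi(N^{-1}H)$ with $\varphi(M^{-1}H_0)$ and on local smoothing. The restriction $s>s_d=\tfrac{d-1}{d+1}s_c$ is precisely what renders the dyadic sums in the mixed terms convergent.

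Combining these estimates gives $\norm{\Phi(v)}_{X^{s_c}}\lesssim\norm{\psi_0}_{\mathcal{H}^{s_c}}+\norm{v}_{X^{s_c}}^3+R_\omega\norm{v}_{X^{s_c}}^{2}+R_\omega^2\norm{v}_{X^{s_c}}+R_\omega^3$, where $R_\omega$ bundles the improved Strichartz norms of $\epsilon u_0^\omega$, which are small for $\omega\in\widetilde\Omega_\epsilon$, together with the analogous Lipschitz estimate for $\Phi(v_1)-\Phi(v_2)$. For $\norm{\psi_0}_{\mathcal{H}^{s_c}}$ and $\epsilon$ small, $\Phi$ is a contraction on a small ball of $X^{s_c}\intervalco{0}{\infty}$, producing a unique global $v$ with finite critical norm; conditional uniqueness in the class $\epsilon u^\omega+X^{s_c}$ follows from the same estimates on short intervals together with continuation. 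Since $v\in X^{s_c}\intervalco{0}{\infty}$, the Duhamel integral converges in $\mathcal{H}^{s_c}$ as $t\to+\infty$, so $v_+\coloneqq\psi_0-i\int_0^{\infty}\e^{i\tau H}\Pc\mathcal{N}(v+\epsilon u^\omega)\,d\tau\in\mathcal{H}^{s_c}\cap\ran\Pc$ and $\norm{\e^{itH}v(t)-v_+}_{\mathcal{H}^{s_c}}\to0$, which is the asserted scattering of the radiation part $\psi-\epsilon\e^{-itH}u_0^\omega$. Finally $\psi\in C(\intervalco{0}{\infty},\mathcal{H}^s)$ since $\epsilon u^\omega\in C(\mathcal{H}^s)$ by unitarity of $\e^{-itH}$ on $\mathcal{H}^s$ and $v\in C(\mathcal{H}^{s_c})\hookrightarrow C(\mathcal{H}^s)$ because $s\leq s_c$. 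I expect the main obstacle to be the bilinear step: the distorted Fourier transform does not turn products into convolutions, so the high$\times$high interactions cannot be treated as in the flat case, and one must track the error terms coming from the $\varphi(N^{-1}H)$/$\varphi(M^{-1}H_0)$ intertwining and absorb them through local smoothing — which is exactly what forces the smoothness assumption on $V$.
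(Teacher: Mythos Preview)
Your proposal is correct and follows essentially the same route as the paper: a contraction for the Duhamel map on a small ball of $X^{s_c}\intervalco{0}{\infty}$, with the multilinear estimate (Proposition~\ref{prop-a-priori}) assembled from the perturbed Strichartz, probabilistic Strichartz, and bilinear estimates, then scattering via the Cauchy criterion on the Duhamel integral and uniqueness by a connectedness/continuation argument. The only cosmetic differences are notational (the paper writes $DX^{\frac{d-2}{2}}$ rather than $N^{s_c}$, and defines $v_+$ without the $\psi_0$ summand), and the paper makes explicit, via Remark~\ref{remark-t}, that the vanishing of the Duhamel contribution as $\abs{I}\to0$ or $I\to\infty$ is read off from the presence of at least one $L_{t,x}^p$ factor of $u^\omega$ or $v$ in every case of the trilinear analysis.
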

\begin{remark}
If the pure point spectrum of $-\Delta+V$ is empty then $\Pc = \operatorname{Id}$ and~\eqref{eq-nls-continuous} is~\eqref{eq-nls}. By time reversibility of~\eqref{eq-nls} the same results hold true for negative times, and conditional uniqueness holds in the following sense : let $v_1,v_2$ be two solutions in $X^\frac{d-2}{2}(0,\infty)$. If there exists $t\in\intervalco{0}{\infty}$ such that $v_1(t)=v_2(t)$ then $v_1\equiv v_2$.
\end{remark}
In section~\ref{section-soliton}, we consider perturbed solutions of~\eqref{eq-nls} around a ground state under the form $Q+\eta$. In particular, the radiation term $\eta$ satisfies a nonlinear Schrödinger equation with a stochastic forcing term that contains localized and linear terms. The main difficulty is that the higher order terms are handled in critical spaces, while the localized lower order ones can only be controlled in weighted Sobolev spaces by the use of some \textit{local smoothing estimates}.
Therefore, we shall present in Proposition~\ref{proposition-trick} a \textit{critical-weighted strategy} that gives a way to perform nonlinear analysis in critical spaces and in weighted spaces simultaneously. In dimension $d=3$, our analysis shows that this technique also gives an alternative version of Theorem~\ref{theorem-continuous} where global existence and uniqueness for the nonlinear part of the solution hold in $V^2$ intersected with the weighted Sobolev space $L_t^2(\R;\mathcal{H}^{1,-1/2-}(\R^3))$. A similar approach for Korteweg-de Vries equation can be found in~\cite{koch2012}.
\subsubsection{Outline of the proof of Theorem~\ref{theorem-soliton}} 
Let us now present the framework and some notations used in~\cite{gustafson2004}. In what follows, we identify $\C\simeq\R^2$, with $z=z_1+iz_2=(z_1,z_2)$. For convenience, we denote the operator $\Dz_z f(z)\coloneqq (\partial_{z_1}f(z),\partial_{z_2}f(z))$ acting on $\C$ seen as a real vector space of dimension 2 endowed with the real scalar product $\japbrak{(z_1,z_2),(z_1',z_2')}=z_1z_1'+z_2z_2'$. Then, we construct a local probabilistic flow for~\eqref{eq-nls}, and we decompose the solution at each time  under the form~\eqref{ansatz0} with the following time-dependent orthogonality condition imposed for the radiation term $\eta(z)$ :
\begin{equation}
\label{orth}
\eta(z)\in\mathcal{H}_c(z)\coloneqq\set{\eta\in L^2(\R^3)\mid\japbrak{i\eta,\partial_{z_1} Q(z)}= \japbrak{i\eta,\partial_{z_2} Q(z)} = 0}\,.
\end{equation}
In particular, observe that $\mathcal{H}_c(0)=\ran(\Pc)=L_c^2(\R^3)$. In broad outline, we ask for $\eta$ to be orthogonal to the center manifold of ground states at each time, and this has for effect to cancel some linear terms in the modulation equations~\eqref{eq-modulation}. Note that these orthogonality conditions are all the more natural since any small function $\phi$ in $L^2(\R^3)$ can actually be decomposed into  
\begin{equation}
\label{eq-decomposition}
\phi=Q(z)+\eta\,,
\end{equation}  
where $z\in\C$ and $\eta\in\mathcal{H}_c(z)$ satisfies ~\eqref{orth} (see Lemma 2.3 in \cite{gustafson2004}). Furthermore, the decomposition~\eqref{eq-decomposition} is explicit and $\Pp(\eta)$, the discrete part of $\eta$, can be expressed as a function of $\Pc(\eta)$ and $z$.  Indeed, for each $z$ small enough there exists a bijective operator $\operatorname{R}(z):\mathcal{H}_c(0)\to\mathcal{H}_c(z)$ written in Lemma~\ref{lemma-proj} such that
\begin{equation*}
\eta=\operatorname{R}(z)\Pc(\eta)\,.
\end{equation*}
We introduce $\nu= \Pc(\eta)-\epsilon u^\omega$. Since $u^\omega$ is in $\ran(\Pc)$, we have that $\nu\in\ran \Pc$, and
\begin{equation*}
\Pp(\eta)=(\operatorname{R}(z)-I)\parent{\epsilon u^\omega+\nu}\,.
\end{equation*}
Therefore, the evolution reduces to a system with two degrees of freedom $z$ and $\nu$ 
\begin{equation}
\label{ansatz}
\psi = Q(z)+\eta(z)\,,\quad
\eta(z)=\operatorname{R}(z)\parent{\epsilon u^\omega+\nu(z)}\,.
\end{equation}
The aim is to obtain some global a prioi estimates for $(\nu,m(z))$, solution to the coupled system of modulation equations~\eqref{eq-modulation},~\eqref{eq-nu}, where $m(z)$ denotes bijective gauge defined in~\eqref{eq-gauge-transformation}. By injecting the ansatz~\eqref{ansatz} into the equation~\eqref{eq-nls}, we see that $\nu$ satisfies the perturbed Schrödinger equation with a stochastic forcing term~\eqref{eq-nu}. It has the form
\begin{equation*}
i\partial_t\nu+(\Delta-V)\nu = F(\nu,Q(z),z,\epsilon u^\omega)\,,
\end{equation*}
and stress out that the forcing term contains some linear terms with respect to $\eta$ and its complex conjugate $\overline{\eta}$. The main difficulty is to prove scattering with these linear terms. Indeed, they are time dependent, and they are not self-adjoint perturbations of $H$. Hence, we can not absorb them in the left-hand side in order to get a perturbed operator as studied in our framework. We rather need to consider linear terms as source terms. Fortunately, these terms come with powers of $Q$ or $\phi_0$ which are small in $\mathcal{H}^2$ and localized. Hence, we shall use local energy decay to control them. On the other hand, we need the critical spaces based on $U^2$ and $V^2$ to control the term $\mathcal{N}(u^\omega+\nu)$. Consequently, we shall exploit the aforementioned \textit{critical-weighted strategy} (see Proposition~\ref{proposition-trick}) to simultaneously cope with the localized and critical terms and to get the desired a priori estimate on $\nu$.

The dynamic of $m(z)$ is governed by an ODE that arises as when differentiating the orthogonality conditions~\eqref{orth} with respect to time. As explained above, orthogonality conditions~\eqref{orth} cancel terms which are linear with respect to $\eta$. Hence, the obtained ODE contains terms which are at least quadratic in $\eta$. Since the local smoothing estimate controls some global-in-time and weighted quadratic norms of $\eta$, a global-in-time control of $\dot m(z)$ in $L_t^1(\R)$ follows from the analysis of the ODE and yields the convergence result written in~\eqref{eq-conv-z}.

Eventually, the analysis leads to the global a priori estimates  on $\nu$ and $m(z)$ stated in Proposition~\ref{prop-a-priori-eta}, from which we deduce global existence by the use of a continuity argument. The desired asymptotic dynamic also follows from these a priori estimates.
\vspace{8pt}
\paragraph{\textbf{Notations}}
Let $s,\sigma\in\R$. We denote the Sobolev space by $\mathcal{H}^s(\R^d)$, and we define the weighted Sobolev space by
\[L^{2,\sigma}(\R^d)=\set{u\mid \japbrak{x}^\sigma u\in L^2(\R^d)}\,,\quad \mathcal H^{1,\sigma}(\R^d)= \set{u\mid \japbrak{x}^\sigma u\,,\japbrak{x}^\sigma\nabla u\in L^2(\R^d)}\,.\]
The projection onto the continuous spectral subspace $L^2_c(\R^d)$ for $H$ is written $\Pc$, and $\Pp=\Id-\Pc=\prodscal{\cdot}{\phi_0}\phi_0$ is the projection onto the pure point subspace for $H$. Note that 
\[\ran(\Pc)=L_c^2(\R)=\mathcal{H}_c(0)\,,\]
where spaces $\mathcal{H}_c(z)$ encode the orthogonality conditions~\eqref{orth} to the ground state manifold. We write $\e^{-itH}=\e^{-itH}\Pc$ the perturbed Schrödinger evolution projected on the continuous spectral subspace, whereas  by $\e^{it\Delta}=\e^{-itH_0}$ is the free Schrödinger evolution. For $I\subseteq\R$, the space-time Lebesgue space $L_t^q(I;L_x^p(\R^d))$ is written $L_t^qL_x^r(I)$, or $L_t^qL_x^r$ when the dependence on $I$ is clear. Given a Borel function $f:\R_+\to\C$, we denote the operators defined by the usual functional calculus by $f(H_0)$ and $f(H)=f(H)\operatorname{P}_c(H)$. In particular, we define the Littlewood-Paley multipliers (see~\eqref{eq-littlewood-paley}) around the dyadic frequency $N\in2^\N$ by 
\[\Delta_Nu=\varphi(N^{-1}H_0)\,,\ \Pi_N=\sum_{K\leq N}\Delta_Ku\,,\quad \widetilde\Delta_N=\varphi(N^{-1}H)\,,\ \widetilde\Pi_N=\sum_{K\leq N}\widetilde\Delta_Ku\,.\] 
Given a function $m:\R^d\to\C$ we denote the distorted Fourier multiplier $\mathcal{F}_V^*\Pc(H) m\mathcal{F}_V$ by $\operatorname{M}_m(H)$ (see Definition~\ref{def-distorted-multiplier}) where $\mathcal{F}_V$ is the distorted Fourier transform (see Proposition~\ref{theo-fv}). 
Given a function $u_0$ in $L_{c}^2(\R^d)$, we define its Wiener randomization by $u_0^\omega$ in Definition~\ref{def-rand}, and $u^\omega = \e^{-itH}u_0^\omega$ is the linear propagation of the randomized data by the under perturbed flow.
\section{Spectral theory for generic short-range Schrödinger operators}
\label{section-fourier}
In this section, we present well known facts about functional calculus on perturbed Schrödinger operators that gives rise to a natural framework to work with $-\Delta+V$ and $-\Delta$ all together. First, we collect some definitions and basic properties of \textit{wave operators}, such as the intertwining property as well as their $L^p$-boundedness. As a consequence, we will see how dispersive estimates known for free Schrödinger evolution carry over to the perturbed setting. Additionally, we recall the construction of a \textit{distorted Fourier transform} based on the \textit{limiting absorption principle}. We shall see how the wave operators connect this transformation to the usual - or flat - Fourier transform, and lead to a useful generalization of the Fourier multiplier theorem. Some of the results we state here may not hold true in low dimensions $d<3$.
\subsection{Wave operators and the intertwining property}
The wave operators are defined by the strong limit 
\begin{equation*}
\W = \underset{t\to\pm\infty}{s-\lim}\e^{itH}\e^{-itH_0}\quad \text{in}\ \mathcal{B}(L^2(\R^d))\,.
\end{equation*}
They aim at understanding scattering of Schrödinger operators $-\Delta+V$ by comparing the asymptotic behavior of $\e^{-itH}$ with the asymptotic behavior of $\e^{-itH_0}$. We say that the wave operators are asymptotically complete if 
\begin{enumerate}[(i)]
\item $\W$ is bounded and surjective from $L^2(\R^d)$ onto $L_{ac}^2(\R^d)$
\item $\sigma_{sc}(H)=\emptyset$, and hence $L_{ac}^2(\R^d)=L_c^2(\R^d)$.
\end{enumerate}
Agmon proved in~\cite{agmon1975} that the wave operators $\W$ exist and are asymptotically complete under the decaying assumption $\japbrak{x}^{1+\epsilon}V\in L^\infty(\R^d)$. Consequently, wave operators are partial isometries onto $L_{c}^2$ in this case, that is
\begin{equation*}
\W^*\W = \Id,\quad \W \W^* = \Pc\,.
\end{equation*}
It follows from the definition that
\begin{equation*}
\e^{-itH}\W=\W\e^{-itH_0}\,.
\end{equation*}
This leads to the so-called intertwining property : for any Borel function $f:\R\to\C$
\begin{equation}
\label{eq-intertwining}
H\Pc = \W H_0\W^*,\quad f(H)\Pc = \W f(H_0)\W^*\,,
\end{equation}
where $f(H)$ and $f(H_0)$ are defined by the functional calculus on self-adjoint operators. In a seminal paper~\cite{yajima1995}, Yajima proved the $W^{k,p}(\R^d)$ boundedness for wave operators when some extra regularity and decay on $V$ and $\mathcal{F}(V)$ are assumed. We state the result in $L^p(\R^d)$, since we do not need more in our study.
\begin{proposition}[$L^p$-bound of wave operators, Theorem 1.1 in~\cite{yajima1995}]
\label{prop-Lp-bound}
Let $V$ be a generic potential with some regularity and decaying assumptions detailed in~\cite{yajima1995}, and $1\leq p\leq +\infty$. The wave operators $\W$ can be extended to bounded operators in $L^p(\R^d)$: there exists $C_p$ such that for all $f$ in $L^2(\R^d)\cap L^p(\R^d)$,
\begin{equation}
\label{eq-Lp-bound}
\norm{\W f}_{L^p(\R^d)}\leq C_p\norm{f}_{L^p(\R^d)}\,.
\end{equation}
\end{proposition}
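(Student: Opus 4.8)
The cleanest route is to invoke Yajima's theorem verbatim, since $\W$ is already a partial isometry on $L^2$ and the only content is the bound for $p\neq 2$; for completeness the plan is to recall the architecture of the proof. The starting point is the stationary representation of the wave operators. Writing $R_0(\zeta)=(H_0-\zeta)^{-1}$, $R(\zeta)=(H-\zeta)^{-1}$ and using the limiting absorption principle to define the boundary values $R_0(\lambda\pm i0)$, $R(\lambda\pm i0)$ for $\lambda>0$, one has on a suitable dense domain, schematically,
\[
\W = \Id - \frac{1}{2\pi i}\int_0^\infty R(\lambda\mp i0)\, V\, \bigl(R_0(\lambda+i0)-R_0(\lambda-i0)\bigr)\, d\lambda ,
\]
the difference of free resolvents being $2\pi i$ times the spectral density of $H_0$. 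Iterating the second resolvent identity $R=R_0-R_0VR$ expands the right-hand side into a Born series $\W=\sum_{n\geq 0}\W^{(n)}$ whose $n$-th term is an $n$-fold iterated integral of free resolvent boundary values sandwiching $n$ copies of $V$. The zeroth term is the identity, bounded on every $L^p(\R^d)$, so the task reduces to bounding the kernel of the remainder, which I would do by splitting the $\lambda$-integral into a high-energy and a low-energy piece.

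For the high-energy piece $\lambda\gtrsim\lambda_0$ with $\lambda_0$ large, each Born term is an oscillatory integral: the free resolvent kernels carry phases $\e^{\pm i\sqrt\lambda\abs{x-y}}$, and repeated integration by parts in $\lambda$, exploiting the smoothness and rapid decay of $V$ and of $\fou(V)$, produces a kernel $K^{(n)}(x,y)$ with enough decay in $\abs{x-y}$ to pass Schur's test, with norms on $L^1(\R^d)$ and on $L^\infty(\R^d)$ that are summable in $n$ because the effective coupling is small at high energy. Hence the high-energy part of $\W$ is bounded simultaneously on $L^1$ and $L^\infty$, and Riesz--Thorin interpolation with the trivial $L^2$ bound covers $1\leq p\leq\infty$ for this contribution.

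The low-energy piece $\lambda\lesssim\lambda_0$ is where the genericity hypotheses enter decisively. Absence of a zero eigenvalue and of a zero resonance guarantees that $\Id+R_0(\lambda\pm i0)V$ is boundedly invertible, with a controlled asymptotic expansion, uniformly as $\lambda\to 0^+$; hence $R(\lambda\pm i0)=\bigl(\Id+R_0(\lambda\pm i0)V\bigr)^{-1}R_0(\lambda\pm i0)$ has no threshold singularity, the series can be resummed into a closed expression, and its kernel is estimated by separating a near-diagonal Calder\'on--Zygmund part from a regular tail. Here $d\geq 3$ is used, since the free resolvent kernel behaves like $\abs{x-y}^{2-d}$ near the diagonal and is locally integrable precisely in that range. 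Finally, $\W^{*}=\W^{-1}$ restricted to $L^2_c(\R^d)$ has the same structure, so a duality argument upgrades the bounds for $p\in[1,2]$ to $p\in[2,\infty]$, giving the full range. The main obstacle is the low-energy analysis: extracting a kernel estimate from the inverse $(\Id+R_0V)^{-1}$ uniformly down to the threshold is the delicate part of the argument, and it is exactly where the \emph{generic potential} assumption is indispensable.
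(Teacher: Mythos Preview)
The paper does not prove this proposition at all: it is simply quoted from Yajima~\cite{yajima1995} and used as a black box. Your opening sentence---invoke Yajima's theorem verbatim---is therefore exactly what the paper does, and nothing more is required. The architectural sketch you add (stationary representation, Born series, high/low energy splitting, genericity at threshold, duality for $p\geq 2$) is a reasonable summary of Yajima's argument and goes well beyond what the paper supplies, but it is not part of the paper's own proof since there isn't one.
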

 Given $1\leq p,q\leq+\infty$, it follows from the intertwining property~\eqref{eq-intertwining} and from the $L^p$ bound~\eqref{eq-Lp-bound} that $f(H)$ and $f(H_0)$ have equivalent norms on $\mathcal{B}(L^p(\R^d),L^q(\R^d))$. There exists $C_{p,q}$ such that for any Borel function $f: \R\to\C$,
\begin{equation*}
C_{p,q}^{-1}\norm{f(H)}_{\mathcal{B}(L^p,L^q)}\leq\norm{f(H_0)}_{\mathcal{B}(L^p, L^q)}\leq C_{p,q}\norm{f(H)}_{\mathcal{B}(L^p, L^q)}.
\end{equation*}
One striking but straightforward consequence of Proposition~\ref{prop-Lp-bound} is the generalization of the dispersive estimate to perturbed linear Schrödinger evolutions.
\begin{proposition}[Dispersive estimate for perturbed Schrödinger evolution]
\label{prop-pertrubed-str}
Let $V$ be a potential as in Proposition~\ref{prop-Lp-bound}, $2\leq p\leq +\infty$ and $q$ such that $1/p+1/q=1$. There exists a constant $C_p$ such that for all $f\in L^2(\R^d)\cap L^q(\R^d)$ and $t\in\R\setminus\{0\}$
\begin{equation}
\label{eq-pertrubed-str}
\norm{\e^{-itH}\Pc f}_{L^p(\R^d)}\leq C^p\abs{t}^{-d(1/2-1/p)}\norm{f}_{L^q(\R^d)}.
\end{equation}
By the $TT^*$ argument, we deduce from the above dispersive estimate the global-in-time Strichartz estimate for the perturbed linear evolution : for all Schrödinger admissible pairs $2\leq q,r\leq\infty$, with $3\leq d$ and $\displaystyle\frac{2}{q}+\frac{d}{r}=\frac{d}{2}$
\begin{equation}
\label{eq-global-str}
\norm{\e^{-itH}\Pc f}_{L_t^q(\R;L_x^r(\R^d))}\leq C_{q,r,d} \norm{f}_{L^2(\R^d)}\,.
\end{equation}
\end{proposition}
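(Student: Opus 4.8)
\emph{Proof idea.} The plan is to deduce the dispersive bound~\eqref{eq-pertrubed-str} directly from the intertwining property~\eqref{eq-intertwining}, the $L^p$-boundedness of the wave operators (Proposition~\ref{prop-Lp-bound}), and the classical dispersive estimate for the free Schrödinger group. First I would apply~\eqref{eq-intertwining} to the bounded Borel function $\lambda\mapsto\e^{-it\lambda}$ to write $\e^{-itH}\Pc=\W\,\e^{-itH_0}\,\W^*=\W\,\e^{it\Delta}\,\W^*$, so that $\e^{-itH}\Pc f=\W\bigl(\e^{it\Delta}(\W^* f)\bigr)$ for every $f\in L^2(\R^d)$. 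Next I would invoke~\eqref{eq-Lp-bound} to bound $\norm{\W g}_{L^p}\lesssim\norm{g}_{L^p}$, and its dual statement (with $p$ replaced by $q$, $1/p+1/q=1$) to bound $\norm{\W^* f}_{L^q}\lesssim\norm{f}_{L^q}$. Sandwiching the standard estimate $\norm{\e^{it\Delta}g}_{L^p}\leq(4\pi\abs t)^{-d(1/2-1/p)}\norm{g}_{L^q}$ between these two inequalities yields~\eqref{eq-pertrubed-str} for $f\in L^2(\R^d)\cap L^q(\R^d)$, with a constant independent of $t$; the endpoint $p=2$ is just the unitarity of $\e^{-itH}$ together with $\norm{\Pc}_{\mathcal B(L^2)}\leq 1$.

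For the Strichartz estimate~\eqref{eq-global-str} I would feed the two extreme cases into the abstract $TT^*$ machinery, applied to the family $U(t)=\e^{-itH}\Pc$. The energy bound $\norm{U(t)f}_{L^2}\leq\norm f_{L^2}$ holds by conservation of mass, and the case $p=\infty$ of~\eqref{eq-pertrubed-str}, combined with the group identity $U(t)U(s)^*=\e^{-i(t-s)H}\Pc$, gives the untruncated decay $\norm{U(t)U(s)^*g}_{L^\infty}\lesssim\abs{t-s}^{-d/2}\norm g_{L^1}$, i.e.\ the dispersive hypothesis with exponent $\sigma=d/2$. Since $d\geq 3$, the full range of admissible pairs, \emph{including} the endpoint $q=2$, is covered by the Keel--Tao theorem, the non-endpoint inhomogeneous estimates being handled by the Christ--Kiselev lemma; this produces~\eqref{eq-global-str} for every pair with $\tfrac2q+\tfrac dr=\tfrac d2$.

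I do not expect a serious obstacle: the proposition is essentially a corollary of Proposition~\ref{prop-Lp-bound}, and the chaining of inequalities is routine once the intertwining identity is in place. The one point deserving attention is the behavior at the endpoints $p\in\{2,\infty\}$ (equivalently $q\in\{1,2\}$): boundedness of $\W$ and $\W^*$ on $L^1(\R^d)$ and $L^\infty(\R^d)$ is the delicate part of Yajima's theorem and is precisely what motivates the regularity and decay assumptions on $V$ recorded in Proposition~\ref{prop-Lp-bound}. Once that input is taken for granted — together with the fact that $\W^*$ inherits $L^q$-boundedness by duality from the $L^p$-boundedness of $\W$ — the rest is elementary, and the time-independence of the wave-operator bounds makes the claim on the constant in~\eqref{eq-pertrubed-str} immediate.
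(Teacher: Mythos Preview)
Your proposal is correct and matches the paper's intended argument: the paper presents this proposition precisely as a ``straightforward consequence of Proposition~\ref{prop-Lp-bound}'' via the intertwining identity $\e^{-itH}\Pc=\W\e^{it\Delta}\W^*$ and the free dispersive estimate, with Strichartz following from the $TT^*$ argument as stated. No further proof is given in the paper beyond this remark and a reference to the original result of Journ\'e--Soffer--Sogge.
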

This result was originally proved in~\cite{journe1991}, under the assumption that $H$ has no eigenvalue nor resonance at zero, and that $V$ satisfies the decay condition
\[\abs{V(x)} \lesssim \japbrak{x}^{-2-\epsilon}\quad \text{for some $0<\epsilon$
}.\]
\subsection{Distorted Fourier multipliers}
In order to refine the perturbed Strichartz estimates of Proposition~\ref{prop-pertrubed-str} for randomized initial data as in~\cite{benyi2015}, we have to use unit-scale decomposition of the frequency space adapted to the operator $-\Delta+V$. More precisely, we need to set out a \textit{distorted Fourier transform} that conjugates the operator $H$ with the multiplication by $\abs{\xi}^2$, and to generalize the Fourier multiplier theorem. To construct such a transformation, the usual strategy is to determine some \textit{generalized plane waves} $\e(x,\xi)$ that are perturbations of the plane waves $\e^{ix\cdot\xi}$ and that formally satisfy the Helmoltz equation 
\begin{equation}
\label{eq-pw}
\parent{-\Delta+V}\e(\cdot,\xi)=\abs{\xi}^2\e(\cdot,\xi)\,,
\end{equation}
with the asymptotic condition
\begin{equation*}
v(x,\xi)\coloneqq \e(x,\xi)-\e^{ix\cdot\xi}=O_{\abs{x}\to\infty}\mathopen{}\left(\abs{x}^{-1}\right)\,,
\end{equation*}
as well as the Sommerfeld radiation condition
\begin{equation*}
r\parent{\partial_r-i\abs{\xi}}v(x,\xi)\underset{\abs{x}\to0}{\longrightarrow}0\,.
\end{equation*}
Equation~\eqref{eq-pw} cannot be solved in $L^2(\R^d)$ since $\abs{\xi}^2$ is in the essential spectrum of $H$. However, this equation can be solved in weighted spaces by a procedure called the \textit{limiting absorption principle}. This principle states that the resolvent $R(z)\coloneqq \parent{H-z}^{-1}$, which is bounded from $L^2(\R^d)$ to $\mathcal{H}^2(\R^d)$ and that defines an analytic operator valued function on $\C\setminus\intervalco{0}{+\infty}$, can be extended to the boundary of its domain of definition in the following sense.
\begin{proposition}[Limiting absorption principle, Theorem 4.2 in~\cite{agmon1975}]
\label{prop-abs}
Let $\lambda>0$ and $\delta>1/2$. The sequence 
\begin{equation*}
\underset{\substack{z\to\lambda\\ \Im(z)>0}}{\lim}\ R(z)\eqqcolon R^+(\lambda)
\end{equation*}
converges in $\mathcal{B}\parent{\japbrak{x}^{-\delta}L^2(\R^d),\japbrak{x}^{\delta}\mathcal{H}^2(\R^d)}$ endowed with the uniform operator norm topology.
\end{proposition}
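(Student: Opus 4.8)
\emph{Proof sketch.} Since the claim concerns only energies $\lambda>0$, the non-resonance hypothesis at $0$ plays no role here; what is needed is that $H=-\Delta+V$ has no eigenvalue embedded in $\intervaloo{0}{+\infty}$, which holds because $V\in\mathcal{S}(\R^d)$ (Kato's theorem on the absence of positive eigenvalues). The plan is to reduce to the free resolvent $R_0(z)=\parent{-\Delta-z}^{-1}$ through the second resolvent identity $R(z)=R_0(z)-R_0(z)VR(z)$, and then to run a Birman--Schwinger/Fredholm argument. Throughout I fix $\delta>1/2$ and abbreviate $L^{2,\delta}=\japbrak{x}^{-\delta}L^2$ and $\mathcal{H}^{2,-\delta}=\japbrak{x}^{\delta}\mathcal{H}^2$.

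\textbf{Step 1 (free resolvent).} I would first prove the limiting absorption principle for $R_0$: the limits $R_0(\lambda\pm i0):=\lim_{\varepsilon\downarrow 0}R_0(\lambda\pm i\varepsilon)$ exist in $\mathcal{B}(L^{2,\delta},\mathcal{H}^{2,-\delta})$ with Hölder-continuous dependence on $\lambda$ over compact subsets of $\intervaloo{0}{+\infty}$. On the Fourier side $R_0(z)f=\mathcal{F}^{-1}\bigl[(\abs{\xi}^2-z)^{-1}\widehat f\,\bigr]$; away from the sphere $\{\abs{\xi}^2=\lambda\}$ the symbol is analytic in $z$, while near the sphere one invokes the trace lemma: for $\delta>1/2$ the restriction $f\mapsto\widehat f|_{\{\abs{\xi}=r\}}$ is bounded from $L^{2,\delta}(\R^d)$ into $L^2$ of the sphere, continuously in $r>0$. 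This yields Sokhotski--Plemelj formulas for the boundary values, hence continuity into $\mathcal{B}(L^{2,\delta},L^{2,-\delta})$, and the $\mathcal{H}^{2,-\delta}$ bound follows from $-\Delta R_0(z)=I+zR_0(z)$.

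\textbf{Step 2 (Fredholm argument for $H$).} Because $V$ decays rapidly, Step 1 together with Rellich's compactness theorem shows that $z\mapsto R_0(z)V$ is a norm-continuous family of compact operators on $L^{2,-\delta}$, up to the boundary $\intervaloo{0}{+\infty}$; thus $I+R_0(z)V$ is a norm-continuous family of Fredholm operators of index $0$, analytic for $\Im(z)\neq 0$. For $\Im(z)\neq 0$ it is invertible with inverse $I-R(z)V$, since the self-adjoint operator $H$ has no non-real spectrum. By the analytic Fredholm theorem the boundary value $I+R_0(\lambda+i0)V$ is invertible on $L^{2,-\delta}$ \emph{unless} there is $0\neq\phi\in L^{2,-\delta}$ with $\phi=-R_0(\lambda+i0)V\phi$; such a $\phi$ solves the Helmholtz equation $(-\Delta-\lambda)\phi=-V\phi$ while obeying the outgoing radiation condition, and a Rellich-type uniqueness argument (exploiting the fast decay of $V\phi$) forces $\phi\in L^2$, i.e. $\phi$ would be an $L^2$-eigenfunction of $H$ at energy $\lambda>0$ --- impossible by Kato's theorem. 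Hence $I+R_0(\lambda+i0)V$ is invertible for all $\lambda>0$, the map $\lambda\mapsto\bigl(I+R_0(\lambda+i0)V\bigr)^{-1}$ is continuous, and
\[
R^+(\lambda):=\bigl(I+R_0(\lambda+i0)V\bigr)^{-1}R_0(\lambda+i0)
\]
is the desired continuous boundary value in $\mathcal{B}(L^{2,\delta},L^{2,-\delta})$; its $\mathcal{H}^{2,-\delta}$ regularity is recovered from $-\Delta R^+(\lambda)=I+\lambda R^+(\lambda)-VR^+(\lambda)$ and elliptic regularity in weighted spaces.

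\textbf{Main obstacle.} The crux is Step 2: ruling out $-1\in\sigma\bigl(R_0(\lambda+i0)V\bigr)$ for every $\lambda>0$. This combines the Rellich-type unique continuation principle (turning an outgoing solution of the Helmholtz equation into an $L^2$ function) with Kato's theorem on the absence of positive eigenvalues, and it also relies on the uniformity --- norm-continuity up to the real axis --- of the compactness in Steps 1--2. An alternative route, bypassing the Fredholm scheme, would be to prove a Mourre estimate for $H$ with the dilation generator $\tfrac{1}{2}(x\cdot D+D\cdot x)$ as conjugate operator and invoke abstract Mourre theory; but the argument above is the one closest to~\cite{agmon1975}, whose result is quoted here.
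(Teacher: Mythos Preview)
The paper does not prove this proposition at all: it is quoted verbatim as Theorem~4.2 of Agmon~\cite{agmon1975} and used as a black box. Your sketch is a faithful outline of the classical Agmon argument (free LAP via the trace theorem, then a Fredholm/Birman--Schwinger reduction, with absence of embedded eigenvalues ruling out the exceptional set), so there is nothing to compare --- you have supplied exactly the proof the paper defers to the reference.
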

Then, the existence of a family of \textit{distorted plane waves} follows from the above limiting absorption principle. Indeed, the solutions to equation~\eqref{eq-pw} satisfy the \textit{Lippman-Schwinger} equation
\begin{equation}
\label{eq-lip-schwinger}
\e(x,\xi)=\e^{ix\cdot\xi}-R^+(\abs{\xi}^2)(V\e^{ix\cdot\xi})\,.
\end{equation}
\begin{proposition}[Generalized plane wave, Theorem 5.1 in~\cite{agmon1975}]
There exists a measurable function $\e$ in $L_{loc}^2\parent{\R^d\times\parent{\R^d\setminus\{0\}}}$ such that for every fixed $\xi$ in $\R^d\setminus\{0\}$, the function $\e(\cdot,\xi)$ belongs to $H_{loc}^2(\R^d)\cap C(\R^d)$ and is a solution to equation~\eqref{eq-pw} in $H_{loc}^2(\R^d)$.
\end{proposition}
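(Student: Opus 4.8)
The plan is to \emph{define} the generalized plane waves through the Lippmann--Schwinger equation~\eqref{eq-lip-schwinger} and then read off the three asserted properties from the limiting absorption principle (Proposition~\ref{prop-abs}) together with interior elliptic regularity. Fix $\xi\in\R^d\setminus\{0\}$. Since $V\in\mathcal{S}(\R^d)$, the function $x\mapsto V(x)\e^{ix\cdot\xi}$ decays faster than any polynomial, so it lies in $\japbrak{x}^{-\delta}L^2(\R^d)$ for every $\delta>1/2$; Proposition~\ref{prop-abs} then gives $v(\cdot,\xi)\coloneqq R^+(\abs{\xi}^2)\parent{V\e^{ix\cdot\xi}}\in\japbrak{x}^{\delta}\mathcal{H}^2(\R^d)\subset H^2_{loc}(\R^d)$. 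I set $\e(x,\xi)\coloneqq\e^{ix\cdot\xi}-v(x,\xi)$; as $\e^{ix\cdot\xi}$ is smooth, $\e(\cdot,\xi)\in H^2_{loc}(\R^d)$ for each fixed $\xi$.

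To verify~\eqref{eq-pw}, I would first record that $R^+(\lambda)$ inherits from the resolvent identity the relation $(H-\lambda)R^+(\lambda)=\Id$ on $\japbrak{x}^{-\delta}L^2(\R^d)$, obtained by letting $\Im z\downarrow0$ in $(H-z)R(z)=\Id$ and using the uniform-operator convergence $R(z)\to R^+(\lambda)$ of Proposition~\ref{prop-abs} together with $\japbrak{x}^{\delta}\mathcal{H}^2\hookrightarrow H^2_{loc}$. Applying $-\Delta+V-\abs{\xi}^2$ to $\e(\cdot,\xi)$ and using $-\Delta\e^{ix\cdot\xi}=\abs{\xi}^2\e^{ix\cdot\xi}$ then yields, in $L^2_{loc}(\R^d)$,
\begin{equation*}
\parent{-\Delta+V-\abs{\xi}^2}\e(\cdot,\xi)=V\e^{ix\cdot\xi}-\parent{H-\abs{\xi}^2}v(\cdot,\xi)=V\e^{ix\cdot\xi}-V\e^{ix\cdot\xi}=0\,,
\end{equation*}
which is exactly~\eqref{eq-pw}. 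For the continuity of $\e(\cdot,\xi)$: when $d\le3$ it is immediate from $H^2_{loc}\hookrightarrow C$; in general one bootstraps, writing $-\Delta\e(\cdot,\xi)=(\abs{\xi}^2-V)\e(\cdot,\xi)$ with $V$ smooth, so interior elliptic regularity promotes $H^2_{loc}$ to $H^4_{loc}$, and iterating to $H^k_{loc}$ for all $k$, whence $\e(\cdot,\xi)\in C^\infty(\R^d)\subset C(\R^d)$.

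It remains to handle joint measurability and local integrability in $(x,\xi)$. For this I would show $\xi\mapsto v(\cdot,\xi)$ is continuous from $\R^d\setminus\{0\}$ into $\japbrak{x}^{\delta}\mathcal{H}^2(\R^d)$: indeed $\xi\mapsto V\e^{ix\cdot\xi}$ is continuous into $\japbrak{x}^{-\delta}L^2(\R^d)$ by dominated convergence (Schwartz decay of $V$), $\lambda\mapsto R^+(\lambda)$ is continuous in $\mathcal{B}(\japbrak{x}^{-\delta}L^2,\japbrak{x}^{\delta}\mathcal{H}^2)$ on $\intervaloo{0}{+\infty}$ — a quantitative strengthening of Proposition~\ref{prop-abs}, also in~\cite{agmon1975} — and $\xi\mapsto\abs{\xi}^2$ is continuous and positive on $\R^d\setminus\{0\}$, so the composition is continuous. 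Thus $(x,\xi)\mapsto\e(x,\xi)$ is measurable in $x$ and continuous in $\xi$, hence jointly measurable, and for any compact $K\subset\R^d$ and $L\subset\R^d\setminus\{0\}$,
\begin{equation*}
\int_L\int_K\abs{\e(x,\xi)}^2\,dx\,d\xi\le\abs{L}\,\sup_{\xi\in L}\norm{\e(\cdot,\xi)}_{L^2(K)}^2<\infty\,,
\end{equation*}
the supremum being finite by the continuity just established, so $\e\in L^2_{loc}\parent{\R^d\times(\R^d\setminus\{0\})}$. The bulk of the analytic difficulty is already packaged into Proposition~\ref{prop-abs}, which we may use; the two points that still need care are justifying the identity $(H-\abs{\xi}^2)R^+(\abs{\xi}^2)=\Id$ at exactly the level needed for~\eqref{eq-pw} to hold in $H^2_{loc}$, and establishing the operator-norm continuity of $\lambda\mapsto R^+(\lambda)$ on $\intervaloo{0}{+\infty}$ on which the joint-measurability claim rests; the high-dimensional elliptic bootstrap for continuity is routine.
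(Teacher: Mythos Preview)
The paper does not give its own proof of this proposition; it is quoted as Theorem~5.1 of Agmon~\cite{agmon1975}, with only the heuristic remark that the generalized plane waves arise from the Lippmann--Schwinger equation~\eqref{eq-lip-schwinger} via the limiting absorption principle. Your proposal is exactly a fleshed-out version of that sketch: you define $\e(\cdot,\xi)$ through~\eqref{eq-lip-schwinger}, invoke Proposition~\ref{prop-abs} for the $H^2_{loc}$ regularity, check~\eqref{eq-pw} by passing $(H-\abs{\xi}^2)$ through the boundary resolvent, bootstrap elliptic regularity for continuity in $x$, and use continuity of $\lambda\mapsto R^+(\lambda)$ for joint measurability. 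This is correct and is the standard argument (indeed the one in~\cite{agmon1975}); the two caveats you flag at the end --- justifying $(H-\lambda)R^+(\lambda)=\Id$ at the level of $H^2_{loc}$ and the operator-norm continuity of $R^+$ in $\lambda$ --- are precisely the technical points Agmon handles, so there is nothing missing.
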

See~\cite{ikebe1960} for further properties of the generalized plane waves when $V$ is regular. Another important consequence of the limiting absorption principle is the \textit{local smoothing estimate}, or \textit{local energy decay}, for the perturbed linear Schrödinger evolution. We state a version taken from~\cite{rodnianski2015} (Proposition 1.33) where some extra regularity and decay assumptions on $V$ are required. This local smoothing effect and its transferred version into the space $U^2$ is a key ingredient in the proof of Theorem~\ref{theorem-soliton}, when we study the perturbations around the ground state.
\begin{proposition}[Global-in-time local smoothing estimate for $H$, Proposition 1.33 in~\cite{rodnianski2015}]
\label{prop-local-smoothing}
Under our assumptions made on $V$, and for $\psi$ solution to the forced Schrödinger equation 
\[\displaystyle{i\partial_t\psi+(\Delta-V) \psi=F},\quad \psi_{\mid{t=0}}=\psi_0\,,\]
we have the following global-in-time control of the local energy of $\psi$
\begin{equation}
\label{eq-local-smoothing}
\int_\R\norm{\Pc(\psi)}_{\mathcal{H}^{1,-1/2-}}^2dt\lesssim_V\norm{\psi_0}_{\mathcal{H}^{1/2}(\R^d)}^2+\int_\R\norm{F(t)}_{L_x^{2,1/2+}}^2dt\,.
\end{equation}
\end{proposition}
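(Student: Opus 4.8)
The plan is to reduce~\eqref{eq-local-smoothing} to a single uniform weighted resolvent bound for $H$, and then to establish that bound from the limiting absorption principle together with the genericity of $V$. Since $H\Pc=\Pc H$, the function $\Pc\psi$ solves the same forced equation with data $\Pc\psi_0$ and forcing $\Pc F$; and because $V\in\mathcal{S}(\R^d)$ is generic its eigenfunctions decay rapidly, so $\Pp=\Id-\Pc$ — hence also $\Pc$ — is bounded on every weighted Sobolev space, and it suffices to control $\Pc\psi$ by $\Pc\psi_0$ and $\Pc F$. Writing Duhamel's formula $\Pc\psi(t)=\e^{-itH}\Pc\psi_0-i\int_0^t\e^{-i(t-\tau)H}\Pc F(\tau)\,d\tau$ and using standard (weighted) elliptic estimates to replace the $\mathcal{H}^{1,-1/2-}$ norm of $\Pc\psi$ by $\norm{\langle x\rangle^{-1/2-}\langle H\rangle^{1/2}\Pc\psi}_{L_x^2}$, I would split $\langle H\rangle^{1/2}=\langle H\rangle^{1/4}\langle H\rangle^{1/4}$ and commute one factor through the propagators: the homogeneous term becomes $A\e^{-itH}\Pc f$ with $A\coloneqq\langle x\rangle^{-1/2-}\langle H\rangle^{1/4}$ and $f\coloneqq\langle H\rangle^{1/4}\psi_0$ (so $\norm{f}_{L^2}\sim\norm{\psi_0}_{\mathcal{H}^{1/2}}$), while the Duhamel term becomes $A\int_0^t\e^{-i(t-\tau)H}A^*g(\tau)\,d\tau$ with $g\coloneqq\langle x\rangle^{1/2+}\Pc F$ (so $\norm{g}_{L^2}\lesssim\norm{F}_{L^{2,1/2+}}$). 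Thus everything reduces to the single statement that $A$ is \emph{$H$-smooth} on $\ran(\Pc)$, through Kato's smoothing theorem for the homogeneous piece and the Kato--Yajima ``double smoothing'' estimate for the Duhamel piece.

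Next I would invoke Kato's criterion: $A$ is $H$-smooth on $\ran(\Pc)$ as soon as $\sup_{\lambda>0,\,\epsilon\neq0}\norm{AR(\lambda+i\epsilon)\Pc A^*}_{L^2\to L^2}<\infty$, which after elliptic commutations and a complex-interpolation argument in the power of $\langle H\rangle$ is equivalent to the uniform weighted resolvent bound
\begin{equation*}
\sup_{\lambda>0}\ \langle\lambda\rangle^{1/2}\,\norm{\langle x\rangle^{-1/2-}R^{\pm}(\lambda)\Pc\,\langle x\rangle^{-1/2-}}_{L^2(\R^d)\to L^2(\R^d)}<\infty\,.
\end{equation*}
I would establish this in three regimes. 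For $\lambda\geq\lambda_0$ large, write $R^{\pm}(\lambda)=R_0^{\pm}(\lambda)\parent{\Id+VR_0^{\pm}(\lambda)}^{-1}$ with $R_0^{\pm}(\lambda)=(-\Delta-\lambda\mp i0)^{-1}$ the limiting free resolvent, use the classical sharp bound $\norm{\langle x\rangle^{-1/2-}R_0^{\pm}(\lambda)\langle x\rangle^{-1/2-}}_{L^2\to L^2}\lesssim\langle\lambda\rangle^{-1/2}$ (stationary phase on the explicit kernel), and note that $\langle x\rangle^{1/2+}V\langle x\rangle^{1/2+}$ is bounded and $VR_0^{\pm}(\lambda)\to0$ in operator norm between the weighted spaces as $\lambda\to+\infty$, so the Neumann series converges and keeps the $\langle\lambda\rangle^{-1/2}$ decay. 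For $0<\lambda\leq\lambda_1$ small, Agmon's limiting absorption principle (Proposition~\ref{prop-abs}) gives boundedness away from zero, and the hypothesis that $0$ is neither an eigenvalue nor a resonance of $H$ yields a continuous extension of $\langle x\rangle^{-1/2-}R^{\pm}(\lambda)\langle x\rangle^{-1/2-}$ up to $\lambda=0$ through the zero-energy resolvent expansion. For intermediate $\lambda\in[\lambda_1,\lambda_0]$, the map $\lambda\mapsto\langle x\rangle^{-1/2-}R^{\pm}(\lambda)\langle x\rangle^{-1/2-}$ is norm-continuous by Proposition~\ref{prop-abs}, hence bounded on this compact set once one knows $H$ has no positive eigenvalue — true here since a smooth generic short-range potential supports no embedded eigenvalue (unique continuation / Kato's theorem). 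The projection $\Pc$ is essential: on $L^2_p$ the evolution $\e^{-itH}$ is an almost-periodic, non-decaying superposition of the modes $\e^{-ite_j}\phi_j$, so only its continuous-spectral part can obey a global-in-time $L_t^2$ estimate, consistently with the hypothesis $\sigma_p(H)\subset\intervaloo{-\infty}{0}$.

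The main obstacle is precisely the \emph{uniformity in $\lambda$ over all of $\intervaloo{0}{+\infty}$} — i.e. the global-in-time character of~\eqref{eq-local-smoothing}. The local-in-time version is immediate from Proposition~\ref{prop-abs} alone; the global statement requires gluing the three regimes, and the genuinely nontrivial inputs are the high-frequency Born-series argument (this is where the sharp $\langle\lambda\rangle^{-1/2}$ decay, hence the half-derivative gain from $\mathcal{H}^{1/2}$ to $\mathcal{H}^{1,-1/2-}$, is actually used), the zero-energy analysis under the no-resonance hypothesis, and the exclusion of embedded positive eigenvalues. All three are available for Schwartz-class generic $V$; combining them gives the uniform resolvent bound, hence the $H$-smoothness of $A$, hence~\eqref{eq-local-smoothing}. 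Since these are exactly the hypotheses of Proposition~1.33 in~\cite{rodnianski2015}, the conclusion follows.
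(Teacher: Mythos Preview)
The paper does not prove this proposition at all: it is quoted verbatim from \cite{rodnianski2015} (Proposition~1.33) and used as a black box throughout. There is therefore no ``paper's own proof'' to compare against.

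That said, your sketch is the standard Kato--smoothing route to such an estimate and is essentially correct in outline: reduce to $H$-smoothness of $A=\langle x\rangle^{-1/2-}\langle H\rangle^{1/4}$ on $\ran\Pc$, invoke Kato's resolvent criterion, and obtain the uniform bound $\sup_{\lambda>0}\langle\lambda\rangle^{1/2}\norm{\langle x\rangle^{-1/2-}R^\pm(\lambda)\Pc\langle x\rangle^{-1/2-}}_{L^2\to L^2}<\infty$ by splitting into low/intermediate/high energy and using, respectively, the zero-energy resolvent expansion (no resonance), continuity from the limiting absorption principle on compacta (no embedded eigenvalue), and a Born series perturbation off the free sharp bound. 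This is precisely the mechanism behind the cited result, so your proposal is a faithful reconstruction rather than an alternative argument. One small point: the replacement of the $\mathcal H^{1,-1/2-}$ norm by $\norm{\langle x\rangle^{-1/2-}\langle H\rangle^{1/2}\cdot}_{L^2}$ requires a commutator estimate between $\langle x\rangle^{-1/2-}$ and $\langle H\rangle^{1/2}$ (or equivalently Lemma~\ref{lemma-sobolev} plus a weighted variant), which you gloss over as ``standard weighted elliptic estimates'' --- this is true for Schwartz $V$ but is worth flagging as a step that needs justification.
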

The next step is to define a \textit{distorted Fourier transform} from the generalized plane waves constructed in the above paragraph. Since $H$ may have some pure point spectrum in our assumptions, the operator we get is only a partial isometry with range $L_{c}^2(\R^d)$. 
\begin{proposition}[Distorted Fourier transform, Theorem 5.1 in~\cite{agmon1975}]
\label{theo-fv}
There exists a partial isometry $\mathcal{F}_V$ from $L^2(\R^d)$ to $L_{c}^2(\R^d)$ which diagonalizes $H$ on $\mathcal{H}^2(\R^d)$. For all $f\in \mathcal{H}^2(\R^d)$,
\begin{equation*}
(-\Delta+V)f = \parent{\mathcal{F}_V^* M_{\abs{\ }^2} \mathcal{F}_V}f\,.
\end{equation*}
Moreover, for any $f$ in $L^2(\R^d)$ the following representation formula holds in $L^2(\R^d)$ : 
\begin{align}
\label{fou}
\mathcal{F}_V f (\xi) &= (2\pi)^{-d/2} \underset{n\to\infty}{\lim}\int_{\abs{x}<n}\overline{e(x,\xi)}f(x)dx\,, \\
\label{inv-fou}
\mathcal{F}_V^*f(x) &= (2\pi)^{-d/2} \underset{n\to\infty}{\lim}\int_{\abs{x}<n} e(x,\xi) \Pc(H)f(\xi) d\xi\,.
\end{align}
Furthermore, we have
\begin{equation}
\label{eq-w-f}
\operatorname{W}_+ = \mathcal{F}_V^*\mathcal{F}\,.
\end{equation}
\end{proposition}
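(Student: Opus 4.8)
\textbf{Proof strategy for Proposition~\ref{theo-fv}.} This is a classical theorem of Agmon, so the plan is to recall the structure of the argument rather than give every detail; all ingredients have been set up in the preceding paragraphs. The starting point is the family of generalized plane waves $\e(x,\xi)$ furnished by the previous proposition, which solve the Lippman--Schwinger equation~\eqref{eq-lip-schwinger} and hence the Helmholtz equation~\eqref{eq-pw}. First I would define $\mathcal{F}_V$ on a dense subspace --- say $f\in L^2(\R^d)$ with compact support, or $f\in\mathcal{S}(\R^d)$ --- by the truncated oscillatory integral in~\eqref{fou}, and check that this limit exists in $L^2$. The key analytic input here is the limiting absorption principle (Proposition~\ref{prop-abs}): because $R^+(\abs{\xi}^2)$ maps $\japbrak{x}^{-\delta}L^2$ to $\japbrak{x}^\delta\mathcal{H}^2$ for $\delta>1/2$, the correction term $v(x,\xi)=\e(x,\xi)-\e^{ix\cdot\xi}$ enjoys enough decay and regularity in $\xi$ to make the integral converge and to control its $L^2_\xi$ norm in terms of $\norm{f}_{L^2}$. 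One then establishes the Parseval-type identity $\norm{\mathcal{F}_V f}_{L^2}=\norm{\Pc f}_{L^2}$; the standard way is to compute $\mathcal{F}_V$ of $\Pc f$ using the spectral resolution of $H$ via Stone's formula, writing the spectral measure in terms of the jump $R^+(\lambda)-R^-(\lambda)$ of the resolvent across the continuous spectrum, and identifying this jump with the rank-one-in-$\xi$ object built from the distorted plane waves. This simultaneously proves that $\ran(\mathcal{F}_V)\subseteq L^2_c$ and that $\mathcal{F}_V$ is a partial isometry with initial space $L^2_c$ and kernel $L^2_p$.

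\textbf{Diagonalization and the inversion formula.} Next I would verify the intertwining identity $\mathcal{F}_V H f=M_{\abs{\ }^2}\mathcal{F}_V f$ for $f\in\mathcal{H}^2$. Formally this is immediate from~\eqref{eq-pw}: applying $-\Delta+V$ under the integral sign in~\eqref{fou} and moving it onto $\e(x,\xi)$ converts it into multiplication by $\abs{\xi}^2$; the work is to justify the integration by parts / self-adjointness manipulation, which again uses the weighted mapping properties of $R^+$ and the fact that $\e(\cdot,\xi)\in H^2_{loc}\cap C$. Rearranging gives $Hf=\mathcal{F}_V^* M_{\abs{\ }^2}\mathcal{F}_V f$ once the adjoint formula~\eqref{inv-fou} is in hand. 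The inversion formula itself follows from the partial isometry property together with a density argument: $\mathcal{F}_V^*$ is the $L^2$-adjoint of $\mathcal{F}_V$, and computing that adjoint against the truncated integral~\eqref{fou} and passing to the limit yields~\eqref{inv-fou}, with the cutoff $\Pc(H)$ appearing because the range is $L^2_c$ and $\mathcal{F}_V^*\mathcal{F}_V=\Pc$.

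\textbf{Relation to the wave operator.} Finally, the identity $\operatorname{W}_+=\mathcal{F}_V^*\mathcal{F}$ in~\eqref{eq-w-f} is obtained by comparing two diagonalizations of $H$ on $L^2_c$: both $\W\F^*$ and $\mathcal{F}_V^*$ conjugate multiplication by $\abs{\xi}^2$ to $H\Pc$ --- the former by the intertwining property~\eqref{eq-intertwining}, the latter by the diagonalization just proved --- and both are partial isometries onto $L^2_c$. To pin down that they actually coincide (not merely agree up to a unitary commuting with $M_{\abs{\ }^2}$, i.e.\ a measurable field of phases on the sphere) one uses the explicit asymptotics $\e(x,\xi)\sim \e^{ix\cdot\xi}+(\text{outgoing spherical wave})$: the Lippman--Schwinger equation~\eqref{eq-lip-schwinger} builds in precisely the \emph{outgoing} (${}+$) radiation condition, matching the normalization of $\operatorname{W}_+$ (the $t\to+\infty$ limit), which fixes the phase and gives equality on the nose. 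Equivalently, one checks $\mathcal{F}_V\e^{-itH}\W\F^* e^{it\abs{\cdot}^2}\to\operatorname{Id}$ as $t\to+\infty$ by stationary phase.

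\textbf{Main obstacle.} The genuinely delicate point is not the formal algebra but the justification that the truncated integrals in~\eqref{fou}--\eqref{inv-fou} converge in $L^2$ and that the manipulations (moving $H$ under the integral, swapping limits, identifying the spectral measure with the distorted-plane-wave kernel) are legitimate. All of this rests on quantitative control of the boundary values $R^\pm(\lambda)$ from Proposition~\ref{prop-abs}, uniformly enough in $\lambda$ near $0$ and near $+\infty$; the absence of a zero resonance or zero eigenvalue (our genericity hypothesis on $V$) is exactly what makes the low-energy part of this analysis go through. Since this is all contained in Agmon's paper~\cite{agmon1975} under our standing assumptions on $V$, in the write-up I would state the result as cited and only indicate the identification~\eqref{eq-w-f} in a little more detail, as that is the piece we actually use downstream.
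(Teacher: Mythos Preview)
The paper does not prove this proposition at all: it is stated as a citation of Agmon's Theorem~5.1 and used as a black box, exactly as you anticipate in your final paragraph. Your sketch of Agmon's argument is accurate and well-organized --- limiting absorption to construct the distorted plane waves, Stone's formula to get the Parseval identity and partial isometry, the Helmholtz equation for the diagonalization, and the outgoing radiation condition in~\eqref{eq-lip-schwinger} to pin down the normalization in~\eqref{eq-w-f} --- but none of this appears in the paper, so there is nothing to compare against. Your instinct to cite and move on is the right one here.
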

Thanks to the distorted Fourier transform we are now able to define some analogues of the Fourier multipliers, that commute with $H$.
\begin{definition}
\label{def-distorted-multiplier}
Let $m:\R^d\to\C$ be a function in $L^\infty$. The distorted Fourier multiplier
\begin{equation}
\label{eq-def-mult}
\operatorname{M}_m(H)\coloneqq \mathcal{F}_V^*m(\xi)\mathcal{F}_V
\end{equation}
is a bounded operator on $L^2(\R^d)$, where $m(\xi)$ denotes the operator $u\in L_\xi^2\mapsto mu$. We have
\begin{equation*}
\norm{\operatorname{M}_m(H)}_{\mathcal{B}(L^2(\R^d))}\leq \norm{m}_{L^\infty(\R^d)}\,.
\end{equation*}
\end{definition}
It follows from equation~\eqref{eq-w-f} that 
\begin{equation*}
\operatorname{W}_+^*\operatorname{M}_m(H)\operatorname{W}_+ = \operatorname{M}_m(H_0)\,,
\end{equation*}
where $\operatorname{M}_m(H_0)$ is the usual Fourier multiplier by $m$. Hence, we deduce from the $L^p$ boundedness of the wave operator $\W$ mentioned in Proposition~\ref{prop-Lp-bound} that $\operatorname{M}_m(H)$ is bounded on $L^p(\R^d)$ if and only if $\operatorname{\operatorname{M}_m}(H_0)$ in bounded on $L^p(\R^d)$. As a consequence of this principle, we generalize the Fourier multiplier theorem and its variations to the perturbed linear Schrödinger evolution.
\begin{lemma}
\label{lemma-dist-mult}
Let $m:\R^d\to\C$ be a function in $L^\infty(\R^d)$ supported on a compact set $E$. Given $q$ in $\intervalcc{2}{+\infty}$, there exists a constant $C=C(q,V,\norm{m}_{L^\infty})$ such that for all $f$ in $L^2(\R^d)$,
\begin{equation}
\label{eq-distorted-multiplier}
\norm{\operatorname{M}_m(H)f}_{L^q}\leq C \abs{E}^{\frac{1}{2}-\frac{1}{q}}\norm{f}_{L^2}\,.
\end{equation}
\end{lemma}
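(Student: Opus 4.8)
The plan is to reduce the bound for the distorted multiplier $\operatorname{M}_m(H)$ to the analogous bound for the flat Fourier multiplier $\operatorname{M}_m(H_0)$, and then to prove the latter by the classical Bernstein argument. First I would write $\operatorname{M}_m(H) = \operatorname{W}_+ \operatorname{M}_m(H_0) \operatorname{W}_+^*$, using the identity $\operatorname{W}_+^* \operatorname{M}_m(H)\operatorname{W}_+ = \operatorname{M}_m(H_0)$ recorded after Definition~\ref{def-distorted-multiplier} together with the partial isometry relations $\operatorname{W}_+^*\operatorname{W}_+ = \Id$ and $\operatorname{W}_+\operatorname{W}_+^* = \Pc$ from Proposition~\ref{theo-fv} (and the fact that $\operatorname{M}_m(H)$ already maps into $\ran\Pc$, so conjugating by $\operatorname{W}_+$ on both sides is lossless). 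Then for $f \in L^2(\R^d)$,
\[
\norm{\operatorname{M}_m(H)f}_{L^q} = \norm{\operatorname{W}_+ \operatorname{M}_m(H_0)\operatorname{W}_+^* f}_{L^q} \leq C_q \norm{\operatorname{M}_m(H_0)\operatorname{W}_+^* f}_{L^q},
\]
where $C_q$ is the $L^q$-operator norm of $\operatorname{W}_+$ furnished by Proposition~\ref{prop-Lp-bound} (valid since $q \in \intervalcc{2}{+\infty}$ and $V$ is Schwartz, hence generic in the sense of~\cite{yajima1995}). One subtlety worth a line: $\operatorname{W}_+^* f$ need not lie in $L^q$ a priori, but $\operatorname{M}_m(H_0)$ applied to an $L^2$ function with frequency support in $E$ is automatically in every $L^q$, $q \geq 2$, by the Bernstein step below, so the chain of inequalities makes sense once we first apply the $L^2$-to-$L^q$ bound for $\operatorname{M}_m(H_0)$ and only afterwards invoke $L^q$-boundedness of $\operatorname{W}_+$; concretely one should order the estimates as $\norm{\operatorname{W}_+ g}_{L^q} \leq C_q \norm{g}_{L^q}$ with $g = \operatorname{M}_m(H_0)\operatorname{W}_+^* f \in L^2 \cap L^q$.

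Next I would prove the flat-multiplier estimate
\[
\norm{\operatorname{M}_m(H_0) h}_{L^q} \leq C(q)\,\norm{m}_{L^\infty}\, \abs{E}^{\frac12 - \frac1q}\,\norm{h}_{L^2}, \qquad \widehat{\operatorname{M}_m(H_0) h} = m\,\widehat h \text{ supported in } E.
\]
This is the standard Bernstein inequality on a set of finite measure: by Hausdorff--Young composed with interpolation, or more directly by writing $\operatorname{M}_m(H_0) h = \mathcal F^{-1}(\1_E\, m\, \widehat h)$ and using Cauchy--Schwarz in frequency against $\1_E$, one gets the $L^2 \to L^\infty$ bound $\norm{\operatorname{M}_m(H_0)h}_{L^\infty} \lesssim \norm{m}_{L^\infty}\abs{E}^{1/2}\norm{h}_{L^2}$; combined with the trivial $L^2 \to L^2$ bound $\norm{\operatorname{M}_m(H_0)h}_{L^2} \leq \norm{m}_{L^\infty}\norm{h}_{L^2}$ and Riesz--Thorin interpolation one obtains the exponent $\abs{E}^{1/2 - 1/q}$ for all $q \in \intervalcc{2}{\infty}$. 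Applying this with $h = \operatorname{W}_+^* f$ and using $\norm{\operatorname{W}_+^* f}_{L^2} \leq \norm{f}_{L^2}$ (partial isometry) closes the argument with $C = C(q)\,C_q\,\norm{m}_{L^\infty}$, which depends only on $q$, $V$ (through $C_q$) and $\norm{m}_{L^\infty}$, as claimed.

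The argument is essentially a bookkeeping exercise; the only real content is imported, namely the $L^p$-boundedness of wave operators (Proposition~\ref{prop-Lp-bound}) and the intertwining/partial-isometry structure (Propositions~\ref{theo-fv}), so I do not expect a genuine obstacle. The one point requiring a little care is the order of operations flagged above: one must not assert boundedness of $\operatorname{W}_+$ or $\operatorname{W}_+^*$ on $L^q$ applied to an object that is only known to be $L^2$, but rather interleave the Bernstein gain and the wave-operator bound so that each operator acts on a function already known to lie in the relevant space. Everything else — Cauchy--Schwarz, interpolation — is routine.
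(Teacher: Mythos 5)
Your proposal is correct and follows essentially the same route as the paper: prove the $L^2\to L^q$ Bernstein bound for the flat multiplier $\operatorname{M}_m(H_0)$ via the $L^2\to L^\infty$/$L^2\to L^2$ endpoints and interpolation, then transfer to $\operatorname{M}_m(H)$ by the intertwining identity and the $L^p$-boundedness of $\W$. Your remark about the order of operations (first gain $L^2\cap L^q$ membership from Bernstein, only then apply the $L^q$-bound of $\W$) is a small but genuine point of care that the paper leaves implicit.
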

\begin{proof}
Let $f$ be a function in $L^2(\R^d)$. We have
\begin{equation}
\label{eq-dist-mult-bern}
\abs{\parent{\mathcal{F}^*m\mathcal{F}}f(x)} = \abs{\int_E\e^{i\xi\cdot x}m(\xi)\mathcal{F}f(\xi)d\xi}\leq \norm{m}_{L^\infty(\R^d)}\norm{\e(\cdot,\xi)}_{L^\infty}\abs{E}^\frac{1}{2}\norm{f}_{L^2}.
\end{equation}
Then~\eqref{eq-distorted-multiplier} for $\operatorname{M}_m(H_0)$ (i.e. when $V=0$) follows from complex interpolation, and we deduce the estimate for $\operatorname{M}_m(H)$ from the intertwining property~\eqref{eq-intertwining} and from the $L^p$-bound~\eqref{eq-Lp-bound} on $\W$.
\end{proof}
Similarly, one can generalize the Mikhlin multiplier theorem (see Theorem 3.2 in the book~\cite{stein1970}) to the perturbed setting. 
\begin{lemma}[Mikhlin multiplier theorem]
\label{mikhlin}
Let $m:\R^d\setminus\{0\}\to\C$ be a multiplier and $1<p<+\infty$. Assume that for all $\alpha$ in $\N^d$ such that $0\leq\abs{\alpha}\leq\frac{d-1}{2}+1$, 
\begin{equation}
\label{eq-mikhlin}
\abs{\operatorname{D}^\alpha m(\xi)}\leq C_{\alpha}\abs{\xi}^{-\alpha}\,.
\end{equation}
Then,
\begin{equation*}
\norm{m(H)f}_{L^p(\R^d)}\lesssim C_\alpha\norm{f}_{L^p(\R^d)}\,.
\end{equation*}
\end{lemma}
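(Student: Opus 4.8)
The plan is to reduce the $L^p$-boundedness of $m(H)$ to the classical Mikhlin--Hörmander multiplier theorem for $H_0=-\Delta$, following the same transference scheme already used in the proof of Lemma~\ref{lemma-dist-mult}. I read $m(H)$ as the distorted Fourier multiplier $\operatorname{M}_m(H)=\mathcal{F}_V^{*}\,m\,\mathcal{F}_V$ of Definition~\ref{def-distorted-multiplier}; note that hypothesis~\eqref{eq-mikhlin} with $\alpha=0$ forces $m\in L^\infty(\R^d)$, so this is a well-defined bounded operator on $L^2(\R^d)$, and for radial $m$ it coincides with the spectral multiplier given by the functional calculus on $H$, in accordance with the notational convention fixed in the introduction. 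From the relation recorded just after Definition~\ref{def-distorted-multiplier}, itself a consequence of~\eqref{eq-w-f}, one has
\[
\operatorname{M}_m(H)=\Wp\,\operatorname{M}_m(H_0)\,\Wp^{*},\qquad \Wp^{*}\operatorname{M}_m(H)\,\Wp=\operatorname{M}_m(H_0),
\]
where $\operatorname{M}_m(H_0)$ is the ordinary Fourier multiplier with symbol $m$.

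Next I would invoke the classical Mikhlin--Hörmander theorem (Theorem~3.2 in~\cite{stein1970}): under the derivative bounds~\eqref{eq-mikhlin} for $0\le|\alpha|\le\frac{d-1}{2}+1$ --- which is exactly the number of derivatives required in the classical condition --- the flat multiplier $\operatorname{M}_m(H_0)$ is bounded on $L^p(\R^d)$ for every $1<p<\infty$, with operator norm $\lesssim_{d,p}\sup_{|\alpha|\le (d-1)/2+1}C_\alpha$. It then remains to transfer this bound to $H$ using the $L^p$-boundedness of wave operators. Proposition~\ref{prop-Lp-bound} gives $\|\W\|_{\mathcal{B}(L^p)}\le C_p$ for $1\le p\le\infty$; dualizing, the adjoint wave operator $\Wp^{*}$ is bounded on $L^{p'}$ for all such $p$, hence on $L^p$ for $1<p<\infty$ as well. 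Composing the three bounded maps, for $f\in L^2(\R^d)\cap L^p(\R^d)$,
\[
\|m(H)f\|_{L^p}=\|\Wp\operatorname{M}_m(H_0)\Wp^{*}f\|_{L^p}\le\|\Wp\|_{\mathcal{B}(L^p)}\,\|\operatorname{M}_m(H_0)\|_{\mathcal{B}(L^p)}\,\|\Wp^{*}\|_{\mathcal{B}(L^p)}\,\|f\|_{L^p}\lesssim C_\alpha\,\|f\|_{L^p},
\]
and a density argument extends the estimate to all of $L^p(\R^d)$. Equivalently, one may simply quote the principle stated after Definition~\ref{def-distorted-multiplier}, namely that $\operatorname{M}_m(H)$ and $\operatorname{M}_m(H_0)$ have comparable $\mathcal{B}(L^p)$-norms, and combine it with the classical theorem.

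I do not expect any genuine obstacle here: the argument is the same ``intertwining $+$ $L^p$-boundedness of $\W$'' mechanism used throughout Section~\ref{section-fourier}. The only two points deserving a word of care are (i) that $m(H)$ must be understood as the distorted Fourier multiplier $\operatorname{M}_m(H)$, which is consistent with the radial case handled by the functional calculus; and (ii) the passage from boundedness of $\W$ to boundedness of $\Wp^{*}$ on $L^p$, which is pure duality from Proposition~\ref{prop-Lp-bound}. If one prefers to avoid the adjoint altogether, the norm-equivalence statement following Definition~\ref{def-distorted-multiplier} already packages precisely what is needed.
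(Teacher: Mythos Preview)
Your proposal is correct and follows exactly the approach the paper intends: the paper does not spell out a proof for this lemma but introduces it with ``Similarly, one can generalize the Mikhlin multiplier theorem'', referring back to the transference mechanism (intertwining property~\eqref{eq-intertwining} plus the $L^p$-boundedness of wave operators from Proposition~\ref{prop-Lp-bound}) just used for Lemma~\ref{lemma-dist-mult}. Your write-up makes this explicit and handles the minor points (interpretation of $m(H)$ as $\operatorname{M}_m(H)$, duality for $\Wp^*$) cleanly.
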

As a consequence of the Mikhlin multiplier theorem, we show that the Sobolev spaces defined with $H$ or $H_0$ are equivalent.
\begin{lemma}
\label{lemma-sobolev}
Let $1<p<+\infty$ and $0\leq s$. There exists $C=C(s,p,d)$ such that for all $f$ in $\mathcal{S}(\R^d)$
\begin{equation*}
C^{-1}\norm{\langle\sqrt H_0\rangle^sf}_{L^p}\leq\norm{\langle\sqrt{H}\rangle^sf}_{L^p}\leq C\norm{\langle\sqrt H_0\rangle^sf}_{L^p}
\end{equation*}
\end{lemma}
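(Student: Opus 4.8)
The plan is to reduce the claim to the classical Mikhlin multiplier theorem on $L^p(\R^d)$ for the free operator $H_0=-\Delta$, which is already available, and then transport it to $H$ via the intertwining property~\eqref{eq-intertwining} and the $L^p$-boundedness of the wave operators from Proposition~\ref{prop-Lp-bound}. Concretely, I would write $\langle\sqrt{H}\rangle^s = m(H)\langle\sqrt{H_0}\rangle^s$ where $m:\R_+\to\C$ is the radial symbol $m(\lambda)=(1+\lambda)^{s/2}(1+\lambda)^{-s/2}$... — wait, that is trivial; the point is rather to compare the two operators, so one sets $m(\xi)\coloneqq \dfrac{(1+\abs{\xi}^2)^{s/2}}{(1+\abs{\xi}^2)^{s/2}}$ only after splitting correctly. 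Let me state it properly: define the radial multiplier
\[
m(\xi)\coloneqq \frac{\langle\abs{\xi}\rangle^s}{\langle\abs{\xi}\rangle^s}\,,
\]
which is useless; instead the correct object is to prove that $\langle\sqrt H_0\rangle^{-s}\langle\sqrt H\rangle^{s}$ and its inverse are bounded on $L^p$. Since both $\langle\sqrt H\rangle^s$ and $\langle\sqrt H_0\rangle^s$ are spectral multipliers by the \emph{same} function $\lambda\mapsto\langle\sqrt\lambda\rangle^s = (1+\lambda)^{s/2}$ applied respectively to $H$ and $H_0$, and because $\langle\sqrt H\rangle^s\Pc = \W\langle\sqrt H_0\rangle^s\W^*$ by~\eqref{eq-intertwining}, the operator $\langle\sqrt H\rangle^s\langle\sqrt H_0\rangle^{-s}$ factors (on $\ran\Pc$) as $\W\langle\sqrt H_0\rangle^s\W^*\langle\sqrt H_0\rangle^{-s}$.

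First I would check that the symbol $\xi\mapsto\langle\abs{\xi}\rangle^s=(1+\abs{\xi}^2)^{s/2}$ fails the Mikhlin condition at infinity (it grows), so one cannot apply Lemma~\ref{mikhlin} to $\langle\sqrt{H_0}\rangle^s$ directly; this is why the statement is phrased as a two-sided norm equivalence rather than a boundedness statement. The right reduction is therefore: it suffices to show that the operator
\[
T\coloneqq \langle\sqrt H\rangle^s\,\langle\sqrt{H_0}\rangle^{-s}
\]
and its formal inverse $T^{-1}=\langle\sqrt{H_0}\rangle^s\,\langle\sqrt H\rangle^{-s}$ are bounded on $L^p(\R^d)$; granting that, $\norm{\langle\sqrt H\rangle^s f}_{L^p} = \norm{T\langle\sqrt{H_0}\rangle^s f}_{L^p}\le \norm{T}\,\norm{\langle\sqrt{H_0}\rangle^s f}_{L^p}$ and symmetrically, giving the claim with $C=\max(\norm{T},\norm{T^{-1}})$. (On $\ran\Pp$, which is finite-dimensional and spanned by Schwartz eigenfunctions, all Sobolev norms are trivially comparable, so it is enough to argue on $\ran\Pc$ and add a harmless finite-rank correction.)

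Next, to bound $T$ on $\ran\Pc$ I would use the intertwining identities $\langle\sqrt H\rangle^s\Pc=\W\langle\sqrt{H_0}\rangle^s\W^*$ and $\langle\sqrt{H_0}\rangle^{-s}$ is a Fourier multiplier, together with $\W^*\W=\Id$, to write
\[
T\Pc = \W\,\langle\sqrt{H_0}\rangle^s\,\W^*\,\langle\sqrt{H_0}\rangle^{-s}
     = \W\,\langle\sqrt{H_0}\rangle^s\,\W^*\langle\sqrt{H_0}\rangle^{-s}\W\W^* .
\]
Hmm, this still leaves the dangerous factor $\langle\sqrt{H_0}\rangle^s$ sandwiched. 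The clean route is instead to commute: since $\W^*$ intertwines $H_0$ on the left with $H$... actually $\W^*$ maps $\ran\Pc$ to $L^2$ and $\W^* f(H)=f(H_0)\W^*$. So $\langle\sqrt{H_0}\rangle^{s}\W^* = \W^*\langle\sqrt H\rangle^{s}$ is false in general direction — rather $\W^*\langle\sqrt H\rangle^s = \langle\sqrt{H_0}\rangle^s\W^*$. Using this,
\[
T\Pc=\langle\sqrt H\rangle^s\langle\sqrt{H_0}\rangle^{-s}\Pc,\qquad
\W^* T\Pc=\langle\sqrt{H_0}\rangle^s\W^*\langle\sqrt{H_0}\rangle^{-s}\Pc .
\]
The operator $\W^*\langle\sqrt{H_0}\rangle^{-s}$ equals $\langle\sqrt{H}\rangle^{-s}\W^*$, so $\W^*T\Pc=\langle\sqrt{H_0}\rangle^s\langle\sqrt H\rangle^{-s}\W^*$, i.e. $\W^* T = (T^{-1})^*{}$-like; this circularity signals that the honest proof must instead compare the \emph{kernels} or use a resolvent/Bessel-potential representation. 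The approach I would actually carry out: represent the Bessel potential $\langle\sqrt H\rangle^{-s}$ via the subordination formula $\langle\sqrt H\rangle^{-s}=c_s\int_0^\infty t^{s/2-1}e^{-t}e^{-tH}\,dt$ and likewise for $H_0$, then estimate $\langle\sqrt H\rangle^{-s}\langle\sqrt{H_0}\rangle^{s}$ on $L^p$ using the heat-kernel bounds for $e^{-tH}$ that follow (via the wave operators and the intertwining property) from the Gaussian bounds for $e^{-tH_0}$. The two-sided equivalence then follows by symmetry.

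\textbf{The main obstacle.} The delicate point is precisely the one I kept running into above: $\langle\sqrt{H_0}\rangle^s$ is \emph{unbounded}, so it is not enough to invoke boundedness of wave operators plus intertwining in a one-line manner — one has to organize the argument so that only \emph{bounded} multipliers of $H_0$ (and the bounded operators $\W,\W^*$) appear between the two unbounded factors, or equivalently to establish mapping properties of the \emph{Bessel-potential spaces} $\langle\sqrt H\rangle^{-s}L^p$ versus the usual $W^{s,p}=\langle\sqrt{H_0}\rangle^{-s}L^p$. I expect this to be handled by: (a) the subordination/heat-semigroup representation above, using that $\W$ intertwines the heat flows so that $e^{-tH}\Pc=\W e^{-tH_0}\W^*$ inherits Gaussian kernel bounds from $e^{-tH_0}$ thanks to Proposition~\ref{prop-Lp-bound}, hence $\langle\sqrt H\rangle^{-s}$ maps $L^p\to W^{s,p}$; and (b) on the finite-dimensional point spectrum $\ran\Pp$, controlling Sobolev norms directly since $\phi_0\in\mathcal S(\R^d)$. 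Everything else — Mikhlin-type estimates for the genuinely bounded symbols that arise, and the passage from $\mathcal S(\R^d)$ to the closure — is routine. I would present the proof in this order: reduce to boundedness of $\langle\sqrt H\rangle^{-s}\langle\sqrt{H_0}\rangle^s$ and its inverse on $L^p$; split $\Id=\Pc+\Pp$; treat $\Pp$ by hand; on $\ran\Pc$ use subordination plus intertwining of the heat flow and the $L^p$-bound on $\W$ to conclude; invoke symmetry for the reverse inequality.
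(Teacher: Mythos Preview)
Your proposal has a genuine gap at the key step. You claim that from $e^{-tH}\Pc=\W e^{-tH_0}\W^*$ and Proposition~\ref{prop-Lp-bound} one ``inherits Gaussian kernel bounds'' for $e^{-tH}$. That does not follow: $L^p$-boundedness of $\W,\W^*$ gives only $L^p\to L^q$ mapping bounds for $e^{-tH}\Pc$, not pointwise Gaussian estimates on its kernel (let alone on its spatial gradient), and it is precisely such pointwise/gradient heat-kernel bounds that are needed to identify the Bessel-potential spaces $\langle\sqrt H\rangle^{-s}L^p$ with $W^{s,p}$ via the subordination formula. As written, the heat-semigroup route is circular: to get $\langle\sqrt{H_0}\rangle^s\langle\sqrt H\rangle^{-s}$ bounded you must push $\langle\sqrt{H_0}\rangle^s$ through $\W$, which is exactly the $W^{s,p}$-boundedness of the wave operators you are trying to avoid proving.

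The paper's argument bypasses this entirely by \emph{Stein complex interpolation}. One considers $F(z)=\big(f\mid e^{z^2}\langle\sqrt{H_0}\rangle^{z}\langle\sqrt H\rangle^{-z}g\big)$ on the strip $0\le\Re z\le 2n$. On the imaginary boundary $\Re z=0$, both $\langle\sqrt{H_0}\rangle^{i\delta}$ and $\langle\sqrt H\rangle^{i\delta}$ are $L^p$-bounded by the (distorted) Mikhlin theorem of Lemma~\ref{mikhlin}, with polynomial growth in $\delta$ killed by the Gaussian factor $e^{z^2}$. On the even-integer boundary $\Re z=2n$, $\langle\sqrt{H_0}\rangle^{2n}\langle\sqrt H\rangle^{-2n}=(1+H_0)^n(1+H)^{-n}$ is handled directly by expanding $(1+H_0)^n$ in terms of $H$ and $V$ (here $V\in\mathcal S$ is used). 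The three lines lemma then gives the bound for all $0\le s\le 2n$. This uses only the $L^p$-bound on $\W$ already recorded in Proposition~\ref{prop-Lp-bound} and nothing about heat kernels. If you want to salvage a wave-operator-only proof, the clean route you nearly wrote down is the right one: Yajima actually proves $W^{k,p}$-boundedness of $\W$ for integer $k$, and complex interpolation between $k=0$ and $k=\lceil s\rceil$ gives $W^{s,p}$-boundedness, after which $\langle\sqrt H\rangle^s\Pc=\W\langle\sqrt{H_0}\rangle^s\W^*$ finishes immediately on $\ran\Pc$; the $\Pp$ part is finite rank as you noted.
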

\begin{proof}
Let $q$ be the conjugated exponent of $p$. Let us show that
$\langle\sqrt H_0\rangle^s\langle\sqrt H\rangle^{-s}$ 
can be extended to a bounded operator in $\mathcal{B}(L^p)$.~\footnote{\ The proof provides the same result for the operator $\langle\sqrt H\rangle^s\langle\sqrt H_0\rangle^{-s}$.} We proceed by using complex interpolation and duality in $L^p$. Take $f,g$ in $\mathcal{S}(\R^d)$ such that the support of $\widehat{f}$ is compact, and note that such functions are dense in $L^p$. Next, we consider the map 
\[F:z\mapsto \left( f\mid\e^{z^2}\langle\sqrt H_0\rangle^z\langle\sqrt H\rangle^{-z} g\right)\]
with domain $D=\set{z\in\C\mid 0< Re(z)<2n}$ for a given $n\in\N$. We see that $F$ is a function that is analytic on $D$ and  continuous on the closure of $D$. Moreover, it follows from the Mikhlin multiplier theorem and its distorted version~\eqref{eq-mikhlin} that for $f$ in $L^p$ with $1<p<\infty$ and $\delta$ in $\R$ we have 
\begin{equation*}
\abs{F(\e^{i\delta})}\lesssim \norm{f}_{L^p}\norm{g}_{L^q}\,.
\end{equation*}
Indeed, 
\begin{equation*}
\norm{\langle\sqrt H_0\rangle^{i\delta}f}_{L^p} \lesssim (1+\abs{\delta})^N\norm{f}_{L^p},\quad \norm{\langle\sqrt H\rangle^{i\delta}f}_{L^p} \lesssim (1+\abs{\delta})^N\norm{f}_{L^p}\,.
\end{equation*}
Note that the constant can be made independent of $\delta$ thanks to the term $\e^{-\delta^2}$ that arises from the function $\e^{z^2}$. In addition, the distorted version of the Mikhlin multiplier theorem and the fact that $V$ is in $\mathcal{S}$ yield
\begin{equation*}
\norm{\langle\sqrt H_0\rangle^{2n}\langle\sqrt H\rangle^{-2n}}_{\mathcal{B}(L^p)}\lesssim\sum_{i+j+k=n}\norm{V^iH^kV^j\langle\sqrt H\rangle^{-2n}}_{\mathcal{B}(L^p)}=C_{n,p}<+\infty\,,
\end{equation*}
and we deduce from this that 
\begin{equation*}
\abs{F(\e^{2n+i\delta})}\lesssim \norm{f}_{L^p}\norm{g}_{L^q}\,.
\end{equation*} 
It follows from the \textit{three lines lemma}, we prove that $\abs{F(z)}\lesssim \norm{f}_{L^p}\norm{g}_{L^q}$ on $D_n$ uniformly in $n$, and the  density of functions like $f$ and $g$ in $L^p$ and $L^q$ yields the desired result.
\end{proof}
A straightforward consequence of Lemma~\ref{lemma-sobolev} is the generalization to the perturbed setting of the fractional Leibniz rule.
\begin{lemma}[Fractional Leibniz rule]
Let $1< p, p_1, p_2, q_1, q_2  <+ \infty,\ \frac{1}{p} = \frac{1}{p_1}+\frac{1}{p_2} = \frac{1}{q_1}+\frac{1}{q_2}$, and let $0\leq s$. Assume that $f,g$ are in $\mathcal{S}(\R^d)$. Then,
\begin{equation*}
\label{eq-fractional}
\norm{\langle\sqrt H\rangle^s\parent{fg}}_{L^p}\lesssim \norm{\langle\sqrt H\rangle^sf}_{L^{p_1}}\norm{g}_{L^{p_2}} + \norm{\langle\sqrt H\rangle^sg}_{L^{q_1}}\norm{f}_{L^{q_2}}\,.
\end{equation*}
\end{lemma}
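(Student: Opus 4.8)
The plan is to transfer the inequality to the free operator $H_0=-\Delta$, apply the classical fractional Leibniz (Kato--Ponce) inequality there, and transfer back using the equivalence of Sobolev norms already established. Since $\sqrt{H_0}=\sqrt{-\Delta}=\abs{\nabla}$, we have $\langle\sqrt{H_0}\rangle^s=\langle\nabla\rangle^s$, and the usual Kato--Ponce estimate reads
\[
\norm{\langle\nabla\rangle^s\parent{fg}}_{L^p}\lesssim \norm{\langle\nabla\rangle^sf}_{L^{p_1}}\norm{g}_{L^{p_2}}+\norm{\langle\nabla\rangle^sg}_{L^{q_1}}\norm{f}_{L^{q_2}}
\]
for any $1<p,p_1,p_2,q_1,q_2<\infty$ with $\tfrac1p=\tfrac1{p_1}+\tfrac1{p_2}=\tfrac1{q_1}+\tfrac1{q_2}$ and $s\geq0$; all the exponents occurring in our statement lie in the open range $(1,\infty)$, so no endpoint subtlety arises.

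First I would bound the left-hand side by the upper inequality in Lemma~\ref{lemma-sobolev}, applied to the Schwartz function $fg$:
\[
\norm{\langle\sqrt H\rangle^s\parent{fg}}_{L^p}\lesssim \norm{\langle\sqrt{H_0}\rangle^s\parent{fg}}_{L^p}=\norm{\langle\nabla\rangle^s\parent{fg}}_{L^p}\,.
\]
Next I would invoke the flat Kato--Ponce inequality displayed above. Finally I would apply Lemma~\ref{lemma-sobolev} again, this time in the reverse direction (the footnote in its proof records that the same argument yields $\langle\sqrt H\rangle^s\langle\sqrt{H_0}\rangle^{-s}\in\mathcal{B}(L^r)$, i.e. $\norm{\langle\nabla\rangle^sh}_{L^r}\lesssim\norm{\langle\sqrt H\rangle^sh}_{L^r}$ for every $1<r<\infty$ and $h\in\mathcal{S}(\R^d)$), to each of the factors $f$ and $g$ with $r=p_1,p_2,q_1,q_2$ in turn. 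This converts every $\langle\nabla\rangle^s$ appearing on the right-hand side back into $\langle\sqrt H\rangle^s$, and chaining the three steps gives the claim.

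I do not expect a genuine obstacle here: the only points to verify are that Lemma~\ref{lemma-sobolev} is indeed available for all the Lebesgue exponents that appear (it is, being stated for every $1<p<\infty$ and $s\geq0$) and that the classical fractional Leibniz rule is used strictly inside the range $(1,\infty)$ where it is standard. One could alternatively reprove the estimate directly by the complex-interpolation scheme used for Lemma~\ref{lemma-sobolev}, but routing through the flat Kato--Ponce inequality is shorter and confines the perturbed input to the two applications of Lemma~\ref{lemma-sobolev}.
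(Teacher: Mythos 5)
Your argument is exactly the one the paper has in mind: it introduces the lemma with the phrase "a straightforward consequence of Lemma~\ref{lemma-sobolev}," which is precisely the transfer you carry out — push $\langle\sqrt H\rangle^s$ to $\langle\nabla\rangle^s$ via Lemma~\ref{lemma-sobolev}, apply the classical Kato--Ponce inequality for $\langle\nabla\rangle^s$, and transfer back factor by factor using the same lemma. Your check that all Lebesgue exponents lie in $(1,\infty)$ and that both directions of the norm equivalence are available is correct, so the proof is complete and coincides with the paper's intended route.
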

We prove in the next lemma that if $m$ is a radial function, the Fourier multiplier by $m$ is precisely the spectral multiplier defined by the functional calculus on self-adjoint operators.
\begin{lemma}
\label{lemma-radial}
If there exists a Borel function $f:\ \R\to\C$ such that $m(\xi) = f(\abs{\xi}^2)$ for all $\xi$ in $\R^d$, then
\begin{equation}
\label{eq-radial}
\operatorname{M}_m(H) = f(H)\,.
\end{equation}
\end{lemma}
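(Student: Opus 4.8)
The plan is to reduce the statement to the corresponding identity at the level of the free operator $H_0$ via the distorted Fourier transform, exploiting the fact that $\mathcal{F}_V$ diagonalizes $H$. Recall from Proposition~\ref{theo-fv} that $\mathcal{F}_V$ is a partial isometry from $L^2(\R^d)$ onto $L_c^2(\R^d)$, with $\mathcal{F}_V^*\mathcal{F}_V = \Pc$ and $\mathcal{F}_V\mathcal{F}_V^* = \Id$ on $L_c^2(\R^d)$, and that $H\Pc = \mathcal{F}_V^* M_{\abs{\ }^2}\mathcal{F}_V$ on $\mathcal{H}^2(\R^d)$. Iterating this identity gives $H^k\Pc = \mathcal{F}_V^* M_{\abs{\ }^{2k}}\mathcal{F}_V$ for every integer $k\geq 0$, hence $p(H)\Pc = \mathcal{F}_V^* M_{p(\abs{\ }^2)}\mathcal{F}_V$ for every polynomial $p$, where $p(\abs{\ }^2)$ denotes the multiplication operator by $\xi\mapsto p(\abs{\xi}^2)$. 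Since, by convention in this paper, $f(H)$ stands for $f(H)\Pc$, the desired identity for polynomial $f$ is exactly $\operatorname{M}_{f(\abs{\ }^2)}(H) = f(H)$, which is~\eqref{eq-radial} in that case because $m(\xi) = f(\abs{\xi}^2)$.

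The next step is to pass from polynomials to general bounded (or merely Borel) functions $f$. Here I would invoke uniqueness in the spectral theorem: the spectral measure $E_H$ of the self-adjoint operator $H$ restricted to $L_c^2(\R^d)$ is, by construction of $\mathcal{F}_V$, the pullback under $\mathcal{F}_V$ of the multiplication spectral measure on $L_\xi^2$, i.e. $\langle E_H(B)\Pc u, u\rangle = \int_{\{\abs{\xi}^2\in B\}}\abs{\mathcal{F}_V u(\xi)}^2\,d\xi$ for every Borel set $B\subseteq\R$. Consequently, for any Borel function $f:\R\to\C$ and any $u\in L^2(\R^d)$,
\begin{equation*}
f(H)\Pc u = \int_{\R} f(\lambda)\,dE_H(\lambda)\,\Pc u = \mathcal{F}_V^*\parent{\xi\mapsto f(\abs{\xi}^2)\,\mathcal{F}_V u(\xi)} = \operatorname{M}_{f(\abs{\ }^2)}(H)u\,,
\end{equation*}
which is precisely~\eqref{eq-radial}. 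An alternative, slightly more hands-on route avoiding an explicit description of $E_H$: establish the identity first for $f(\lambda) = (\lambda - z)^{-1}$ with $z\in\C\setminus\intervalco{0}{\infty}$ by applying the already-proven polynomial case to the resolvent identity, or directly from $(H-z)\Pc = \mathcal{F}_V^* M_{\abs{\ }^2 - z}\mathcal{F}_V$ together with invertibility of multiplication by $\abs{\xi}^2 - z$ on $L_c^2$; then use that finite linear combinations of such resolvents are dense (in the strong operator topology, via a Stone–Weierstrass / monotone class argument on $C_0(\sigma(H))$ and then bounded pointwise limits) to conclude for all bounded Borel $f$, and finally remove the boundedness assumption by the usual domain considerations for unbounded $f(H)$.

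The only mild subtlety — and the step I would be most careful about — is bookkeeping the partial-isometry structure: because $H$ has nontrivial point spectrum, $\mathcal{F}_V$ kills $L_p^2(\R^d)$, so all identities live on $\ran(\Pc)$ and one must consistently read $f(H)$ as $f(H)\Pc$ (as the paper's notation already dictates) and check that $m(\xi) = f(\abs{\xi}^2)$ being radial is exactly what makes the multiplication operator $M_m$ commute with the $H_0$-spectral projections, so that conjugating by $\mathcal{F}_V$ lands it on the $H$-functional calculus rather than merely on some distorted multiplier. No serious analytic difficulty arises; it is purely a matter of transporting the spectral theorem through the intertwining relation~\eqref{eq-intertwining} and Proposition~\ref{theo-fv}.
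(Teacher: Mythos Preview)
Your argument is correct, but it follows a genuinely different route from the paper's proof. The paper first establishes the identity for the free operator $H_0$ by reducing to Schwartz functions $f$ and writing $f(H_0) = \int_\R \e^{itH_0}\widehat{f}(t)\,dt$ with $\e^{itH_0} = \mathcal{F}^*\e^{it\abs{\cdot}^2}\mathcal{F}$, so that Fourier inversion gives $f(H_0) = \mathcal{F}^* f(\abs{\cdot}^2)\mathcal{F} = \operatorname{M}_m(H_0)$; it then transfers this to $H$ via the intertwining property~\eqref{eq-intertwining} together with the identity $\operatorname{W}_+ = \mathcal{F}_V^*\mathcal{F}$ from Proposition~\ref{theo-fv}. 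By contrast, you bypass $H_0$ and the wave operators entirely, working directly from the diagonalization $H\Pc = \mathcal{F}_V^* M_{\abs{\cdot}^2}\mathcal{F}_V$ and invoking the uniqueness of the spectral measure (or a resolvent plus monotone-class argument). Your route is arguably more conceptual and slightly more general in spirit, since it only uses that $\mathcal{F}_V$ is a spectral representation for $H$ on $\ran(\Pc)$; the paper's route is more hands-on and highlights the role of the wave operator identity $\operatorname{W}_+ = \mathcal{F}_V^*\mathcal{F}$, which is thematically consistent with how the rest of the section transfers estimates from $H_0$ to $H$.
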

\begin{proof}
Let us first write the result for the free Laplacian $H_0$, that is $\mathcal{F}^*m\mathcal{F}= f(H_0)$. By an approximation argument, the proof reduces to the cases where $f$ is a Schwartz function. In this setting, $f(H_0)$ is given by the formula
\begin{equation*}
f(H_0) = \int_\R \e^{itH_0} \widehat{f}(t)dt,\quad \text{where}\quad \e^{itH_0} = \mathcal{F}^* \e^{it\abs{\, \cdot\, }^2}\mathcal{F}\,.
\end{equation*}
By applying the Fourier inversion formula for $f$, we have that
\[f(H_0) =  \mathcal{F}^*\parent{ \int_\R \e^{it\abs{\, \cdot\, }^2}\widehat{f}(t)dt } \mathcal{F} = \mathcal{F}^*f(\abs{\, \cdot\, }^2)\mathcal{F} = \operatorname{M}_m(H_0)\,.\]
Hence, 
\begin{equation*}
\mathcal{F}f(H_0)\mathcal{F}^*=\operatorname{M}_m(H_0)\,.
\end{equation*}
Also, the intertwining property~\eqref{eq-intertwining} and the equation~\eqref{eq-w-f} give
\[f(H)= \operatorname{W}_+ f(H_0) \operatorname{W}_+^* = \mathcal{F}_V^*\mathcal{F} f(H_0) \mathcal{F}^*\mathcal{F}_V = \mathcal{F}_V^*\operatorname{M}_m \mathcal{F}_V = \operatorname{M}_m(H)\,.\qedhere\]
\end{proof}
\subsection{Application to the distorted Littlewood-Paley theory}
There exists a nonnegative radial function $\psi$ in $\mathcal{C}^\infty(\R^d)$ such that $\varphi=\psi-\psi(2\cdot)$ is supported on $\{x\in\R^d\mid \frac{1}{2}\leq\abs{x}\leq2\}$ and satisfies for all $x$ in $\R^d\setminus\set{0}$
\[\sum_{N\in2^\Z}\varphi(N^{-1}x)=1.\]
This partition of unity provides frequency decomposition on dyadic annuli for functions $f$ in $L^2(\R^d)$ called the Littlewood-Paley decomposition. We consider such a decomposition for both the flat Fourier transform and the distorted one:
\begin{equation}
\label{eq-littlewood-paley}
f = \sum_{N\in2^\Z}\Delta_Nf = \Delta_{\leq1}f+\sum_{N\in2^\N}\Delta_Nf,
\end{equation}
where $\displaystyle\Delta_N\coloneqq \varphi(N^{-1}H_0)$ is the spectral multiplier around frequencies of size $N$ for $H_0$ as well as
\begin{equation}
\label{eq-dist-lp}
f = \sum_{N\in2^\Z}\widetilde\Delta_Nf = \widetilde\Delta_{\leq1}f+\sum_{N\in2^\N}\widetilde\Delta_Nf,
\end{equation}
where $\displaystyle\widetilde\Delta_N\coloneqq \varphi(N^{-1}H)$. Note that the Bernstein's estimates are still true in the perturbed setting, as a consequence of the intertwining property, and of the $L^p$ bound on the wave operators. For $1\leq p\leq q\leq\infty$, $s\geq0$ there exist $C_{p,q}$ and $C_{p,s}$ and  such that for all $N\geq1$,
\begin{equation}
\begin{split}
\label{eq-bernstein-v}
\norm{\widetilde\Delta_Nf}_{L^q(\R^d)}&\leq C_{p,q}N^{d(\frac{1}{p}-\frac{1}{q})}\norm{\widetilde\Delta_Nf}_{L^p(\R^d)}\,,\\
\norm{\widetilde\Delta_N f}_{W^{s,p}(\R^d)}&\leq C_{p,s} N^s\norm{\widetilde\Delta_Nf}_{L^p(\R^d)}\,.
\end{split}
\end{equation}
Given  $f$ in $L^2(\R^d)$, the associated distorted Littlewood-Paley square function is defined by
\label{def-lpsf}
\begin{equation*}
\Lambda f(x) = \parent{\sum_{N\in2^\Z} \abs{\widetilde\Delta_Nf(x)}^2}^{1/2}.
\end{equation*}
One can see from Plancherel's theorem that the $L^2(\R^d)$-norm of the Littlewood-Paley square function is equivalent to the $L^2(\R^d)$-norm of the function itself. The so-called \textit{Littlewood-Paley square function} theorem extends this result to the $L^p(\R^d)$-norms for $1<p<\infty$. We state a generalized version of this theorem in the perturbed framework.
\begin{proposition}
\label{prop-lpsf}
For $1<p<+\infty$ and $f$ in $L^p(\R^d)$, the Littlewood-Paley square function $\Lambda f$ is in $L^p(\R^d)$, and
\begin{equation}
\label{eq-LP}
\norm{\Lambda f}_{L^p(\R^d)}\sim \norm{f}_{L^p(\R^d)}\,.
\end{equation}
The constant involved in the above equivalence only depends on $p$ and on the cutoff function $\psi$ used in the Littlewood-Paley decomposition.
\end{proposition}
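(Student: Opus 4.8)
The plan is to transfer the classical Littlewood--Paley inequality (cf.\ \cite{stein1970}) to the perturbed setting by combining the distorted Mikhlin multiplier theorem (Lemma~\ref{mikhlin}) with Khintchine's inequality. I would deliberately avoid the alternative of conjugating the classical square function theorem by wave operators: that would require an $\ell^2(2^\Z)$-valued version of the $L^p$-boundedness of $\W$, which is not a formal consequence of Proposition~\ref{prop-Lp-bound}, whereas the Khintchine route uses only the scalar statement already packaged in Lemma~\ref{mikhlin}. First I would reduce to $f\in L^p(\R^d)\cap\ran(\Pc)$: since $f(H)=f(H)\Pc$ and $\varphi$ is supported away from the origin while $\sigma_p(H)\subset\intervaloo{-\infty}{0}$, one has $\widetilde\Delta_N=\widetilde\Delta_N\Pc$, so $\Lambda f=\Lambda(\Pc f)$ and the asserted equivalence is to be read as $\norm{\Lambda f}_{L^p}\sim\norm{\Pc f}_{L^p}$. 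Throughout I would first truncate the sum to a finite set $F\subset 2^\Z$, obtain bounds independent of $F$, and then let $F\uparrow 2^\Z$ by monotone convergence.

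For the upper bound, given signs $\epsilon=(\epsilon_N)_{N\in F}\in\{\pm1\}^{F}$ I would consider the radial symbol $m_\epsilon(\xi)=\sum_{N\in F}\epsilon_N\varphi(N^{-1}\xi)$. Using the bounded overlap of the family $\{\varphi(N^{-1}\cdot)\}_N$ together with the scaling $\operatorname{D}^\alpha\bigl[\varphi(N^{-1}\xi)\bigr]=N^{-\abs\alpha}(\operatorname{D}^\alpha\varphi)(N^{-1}\xi)$, one checks that $m_\epsilon$ obeys the Mikhlin bounds~\eqref{eq-mikhlin} with constants uniform in $\epsilon$ and $F$. Since $m_\epsilon$ is radial, Lemma~\ref{lemma-radial} identifies $\sum_{N\in F}\epsilon_N\widetilde\Delta_N$ with the distorted multiplier by $m_\epsilon$, so Lemma~\ref{mikhlin} gives $\norm{\sum_{N\in F}\epsilon_N\widetilde\Delta_N f}_{L^p}\lesssim_p\norm{f}_{L^p}$ uniformly in $\epsilon,F$. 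Averaging over Rademacher signs $r_N(\omega)$ and applying Khintchine's inequality pointwise in $x$ then yields
\begin{equation*}
\int_{\R^d}\Big(\sum_{N\in F}\abs{\widetilde\Delta_N f(x)}^2\Big)^{p/2}\,dx\ \sim_p\ \mathbf{E}_\omega\,\Big\|\sum_{N\in F}r_N(\omega)\widetilde\Delta_N f\Big\|_{L^p(\R^d)}^{p}\ \lesssim_p\ \norm{f}_{L^p(\R^d)}^{p},
\end{equation*}
and letting $F\uparrow 2^\Z$ gives $\norm{\Lambda f}_{L^p}\lesssim_p\norm{f}_{L^p}$.

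For the lower bound I would argue by duality. Fix a radial $\widetilde\varphi\in\mathcal C_c^\infty(\R^d\setminus\{0\})$ with $\widetilde\varphi\equiv1$ on $\supp\varphi$, set $\widetilde{\widetilde\Delta}_N=\widetilde\varphi(N^{-1}H)$, so that $\widetilde\Delta_N=\widetilde{\widetilde\Delta}_N\widetilde\Delta_N$ and $\widetilde{\widetilde\Delta}_N$ is self-adjoint, and put $\widetilde\Lambda g=(\sum_N\abs{\widetilde{\widetilde\Delta}_N g}^2)^{1/2}$. For $f\in L^p\cap L^2\cap\ran(\Pc)$ and $g\in L^{p'}\cap L^2\cap\ran(\Pc)$ one has $f=\sum_N\widetilde\Delta_N f$ in $L^2$ (since $\sum_{N\in2^\Z}\varphi(N^{-1}\xi)=1$ a.e.\ and $\mathcal F_V$ is an isometry on $\ran(\Pc)$), hence by Cauchy--Schwarz in $N$ and Hölder,
\begin{equation*}
\abs{\prodscalr{f}{g}}=\Big|\sum_N\prodscalr{\widetilde\Delta_N f}{\widetilde{\widetilde\Delta}_N g}\Big|\leq\int_{\R^d}\Lambda f(x)\Big(\sum_N\abs{\widetilde{\widetilde\Delta}_N g(x)}^2\Big)^{1/2}dx\leq\norm{\Lambda f}_{L^p}\,\norm{\widetilde\Lambda g}_{L^{p'}}.
\end{equation*}
The symbols $\sum_N\epsilon_N\widetilde\varphi(N^{-1}\xi)$ are again uniform Mikhlin multipliers, so the upper bound applied at the exponent $p'$ gives $\norm{\widetilde\Lambda g}_{L^{p'}}\lesssim_{p'}\norm{g}_{L^{p'}}$; taking the supremum over such $g$ and using density of $L^{p'}\cap L^2\cap\ran(\Pc)$ in $L^{p'}\cap\ran(\Pc)$ yields $\norm{f}_{L^p}\lesssim_p\norm{\Lambda f}_{L^p}$. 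This closes the argument, with constants depending only on $p$, on $V$ (through Lemma~\ref{mikhlin}), and on $\psi$.

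Since this is essentially a transfer result, I do not anticipate a real obstacle: the only genuinely non-formal inputs are the distorted Mikhlin theorem --- which is where the $L^p$-boundedness of $\W$ enters --- and the verification that the randomized symbols $m_\epsilon$ and $\sum_N\epsilon_N\widetilde\varphi(N^{-1}\cdot)$ satisfy~\eqref{eq-mikhlin} with $\epsilon$-independent constants. The latter is the familiar finite-overlap-plus-scaling computation, and the former is precisely Lemma~\ref{mikhlin}, so the main thing to carry out carefully is just the bookkeeping around the pure point spectrum noted in the first paragraph.
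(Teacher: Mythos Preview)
Your proposal is correct and follows essentially the same route as the paper: randomize with Rademacher signs, verify the uniform Mikhlin bounds on $\sum_N\epsilon_N\varphi(N^{-1}\cdot)$, invoke the distorted Mikhlin theorem (Lemma~\ref{mikhlin}) via Lemma~\ref{lemma-radial}, and apply Khintchine for the upper bound, then get the lower bound by duality. You supply more detail than the paper (the truncation to finite $F$, the explicit fattened projector in the duality step, and the bookkeeping around $\Pc$), and you are right to flag the implicit $V$-dependence of the constants through the wave operators.
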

We outline the classical proof (see Theorem 5 in the book~\cite{stein1970}) that relies on the Mikhlin multiplier theorem from Lemma~\ref{mikhlin}. The strategy is to randomize the sum $\Delta f$ with independent Rademacher variables, and to use Khinchin's estimates to reduce the proof to the $L^2$-case. As mentioned above, this case is a straightforward consequence of Plancherel theorem.
\begin{proof}
Let us fix $1<p<+\infty$ and $f$ in $L^2(\R^d)\cap L^p(\R^d)$. Take $(\varepsilon_N)_{N\in2^\Z}$ a sequence of independent Rademacher variables on a probability space $(\Omega,\mathcal{A},\mathbf{P})$, \textit{i.e.} $\varepsilon_N\sim\mathcal{U}(\set{-1,1})$. Next, we define the random Fourier multiplier for $\omega$ in $\Omega$ and $\xi$ in $\R^d$ by
\[m^\omega(\xi)=\sum_{N\in2^\Z}\varepsilon_N(\omega)\varphi(N^{-1}\xi)\,.\]
Since $\varphi(N^{-1}\cdot)$ is supported on frequencies of size $\sim N$, the random multiplier $m^\omega$ satisfies the Mikhlin's assumption : for all $\xi$ in $\R^d$ and $\alpha$ a multi-index, 
\begin{equation*}
\abs{\operatorname{D}^\alpha m^\omega (\xi)} \leq \sum_{N\in2^\Z} N^{-\abs{\alpha}}\abs{(\operatorname{D}^\alpha\varphi)(N^{-1}\xi)} \leq C_\alpha \abs{\xi}^{-\abs{\alpha}}\,.
\end{equation*}
Besides, $m^\omega$ is a radial multiplier, and it follows from Lemma~\ref{lemma-radial} and from Lemma~\ref{mikhlin} that
\begin{equation}
\label{eq-momega}
\norm{m^\omega(H)f}_{L^p(\R^d)}\leq\norm{f}_{L^p(\R^d)}.
\end{equation}
That being said, the Khinchin's estimate claims that
\begin{equation}
\label{eq-kin-rev}
\parent{1-\frac{1}{p}}^{1/2}\parent{\sum_n\abs{c_n}^2}^{1/2}\lesssim \biggl(\mathbf{E}\left\lvert\sum_n\varepsilon_N(\omega)c_n\right\rvert^p\biggr)^{1/p}\,,
\end{equation}
and yields 
\begin{equation*}
\Lambda f = \biggl(\sum_{N\in2^\Z}\abs{\widetilde\Delta_Nf}^2\biggr)^{1/2}\lesssim \biggl(\mathbf{E}\abs{\sum_{N\in2^\Z}\varepsilon_N(\omega)\widetilde\Delta_Nf}^p\biggr)^{1/p}\lesssim \bigl(\mathbf{E}\abs{m^\omega(H)f}^p\bigr)^{1/p}\,.
\end{equation*}
By taking the $L^p(\R^d)$-norm on both sides and by using~\eqref{eq-momega}, we have
\begin{equation*}
\norm{\Lambda f}_{L^p(\R^d)} \lesssim \mathbf{E}\norm{m^\omega(H)f}_{L^p(\R^d)} \lesssim \norm{f}_{L^p(\R^d)}\,.
\end{equation*}
We prove the reversed inequality by duality.
\end{proof}
\section{Probabilistic and bilinear Strichartz estimates in the perturbed setting }
\label{sec-imp-str}
\subsection{Wiener randomization with respect to the distorted Fourier transform}
\label{sec-rand}
Let us now detail how to adapt the Wiener randomization for the operator $-\Delta+V$ and to generalize probabilistic Strichartz estimates. This will be made possible thanks to the distorted Fourier transform presented in the above section. We shall follow the framework detailed by~\cite{benyi2015} which can be adapted with no difficulty to the perturbed case.
\subsubsection{Wiener decomposition on unit cubes}
\label{section-wiener-decomposition}
We denote the unit cube $\intervalcc{-1}{1}^d$ by $Q_0$ and the translated cube centered around $n\in\Z^d$ by $Q_n=Q_0+n$. Then, we take a well-chosen bump function $\psi$ supported on $Q_0$ that provides a partition of unity on the frequency space, that is $\sum_{n\in\Z^d}\psi(\cdot-n)\equiv1$, and we define the Wiener's decomposition of a function in $L^2(\R^d)$ as
\begin{equation}
\label{eq-part-wiener}
f=\sum_{n\in\Z^d}\operatorname{M}_{\psi_n}(H)f\,,
\end{equation}
where we defined the distorted Fourier multiplier $\operatorname{M}_{\psi_n}(H)$ in~\eqref{eq-def-mult}. After that, we consider a probability space $\parent{\Omega,\mathcal{A},\mathbf{P}}$ and a sequence of mean zero complex valued random variables $\parent{g_n}_{n\in\Z^d}$ of laws $\parent{\mu_n}_{n\in\Z^d}$, with uniform bound : 
\begin{equation}
\label{hyp-va}
\abs{\int_{\R}\e^{\gamma x}d\mu_n(x)}\leq\e^{c\gamma^2}\quad \text{for all $n$ in $\Z^d$ and $\gamma$ in $\R$}\,.
\end{equation}
Such a bound is satisfied by Gaussian random variables, Bernoulli variables or any random variables with compactly supported distributions. We also assume that $(\operatorname{Re}(g_n),\operatorname{Im}(g_n))_{n\in\Z^d}$ are independent variables. 
\begin{definition}
\label{def-rand}
The Wiener randomization of a function $u_0$ in $L^2(\R^d)$ is 
\begin{equation}
\label{eq-wiener-rand}
u_0^\omega \coloneqq \sum_{n\in\Z^d}g_n(\omega)\operatorname{M}_{\psi_n}(H)u_0\,.
\end{equation}
\end{definition}
The Khinchin's inequality asserts that
\begin{equation}
\label{eq-kin}
\left(\mathbf{E}\abs{\sum_ng_n(\omega)c_n}^p\right)^{1/p}\lesssim\sqrt{p}\parent{\sum_n\abs{c_n}^2}^{1/2}\quad \text{for}\ (c_n)\in \ell^2,\quad 1\leq p<+\infty\,,
\end{equation}
and we deduce from it that the randomization procedure improves integrability and preserves regularity. Namely, for all $2\leq p <+\infty$ and $s$ in $\R$ there exist some constants $0<c$, $0<C$ such that for all $0<\alpha$, $u$ in $L^2(\R^d)$ and $v$ in $\mathcal{H}^s(\R^d)$ we have
\begin{equation*}
\textbf{P}(\norm{u^\omega}_{L^p(\R^d)}>\lambda)\leq C\e^{-c\lambda^2\norm{u}_{L^2(\R^d)}^{-2}},\quad \textbf{P}(\norm{v^\omega}_{\mathcal{H}^s(\R^d)}>\lambda)\leq C\e^{-c\lambda^2\norm{u}_{\mathcal{H}^s(\R^d)}^{-2}}.
\end{equation*}
We emphasize that the procedure does not improve regularity provided that the distribution $\mu_n$ does not concentrate around zero when $n$ goes to $\infty$, i.e. $u_0^\omega\in\mathcal{H}^s(\R^d)\setminus\mathcal{H}^{s+}(\R^d)$ almost surely when $u_0\in\mathcal{H}^s(\R^d)\setminus\mathcal{H}^{s+}(\R^d)$ (see Lemma B.1 in~\cite{burq2008}). As for the possible gain of decay, we use the independence of the $\set{g_n}_{n\in\Z^d}$'s, and we observe that for $\epsilon>0$, 
\begin{equation*}
\begin{split}
\mathbf{E}\parent{\norm{\abs{x}^\epsilon u_0^\omega}_{L_x^2}^2}&=\sum_{(n,m)\in\Z^{2d}}\mathbf{E}\parent{g_n\overline{g_m}}\int \abs{x}^{2\epsilon}\operatorname{M}_{\psi_n}(H)u_0 \overline{\operatorname{M}_{\psi_m}(H)f}dx \\
&= \sum_{n\in\Z^d}\norm{\abs{x}^\epsilon\operatorname{M}_{\psi_n}(H)u_0}_{L_x^2(\R^d)}^2
\end{split}
\end{equation*}
diverges if we assume that 
\begin{equation}
\label{eq:ass:decay}
\sum_{n\in\Z^d}\norm{\abs{x}^\epsilon\operatorname{M}_{\psi_n}(H)u_0}_{L_x^2(\R^d)}^2 = +\infty\,.
\end{equation}
Hence, $\norm{\abs{x}^\epsilon u_0^\omega}_{L^2(\R^d)}$ diverges in $L^2(\Omega)$ when $u_0$ is chosen such that~\eqref{eq:ass:decay}. This indicates that the randomization does not gain decay in general.~\footnote{\ Since $\norm{\abs{x}^\epsilon \sum_n \operatorname{M}_{\psi_n} u_0 }_{L^2}^2\neq \sum_n\norm{\abs{x}^\epsilon \operatorname{M}_{\psi_n} u_0}_{L_x^2}^2$, we are not able to reproduce the proof of Lemma B.1 in~\cite{burq2008}, and to preclude any gain of decay by our randomization procedure almost-surely.} Still, the initial data $\psi_0=u_0^\omega+v_0$ in the Cauchy problem~\eqref{cauchy-soliton} does not decay for a general $v_0$ in $\mathcal{H}^{1/2}$. Besides, we don't use any decay of the data in our arguments to prove Theorem~\ref{theorem-soliton}.
\subsubsection{Improved probabilistic Strichartz estimates}
Let us now recall how to improve the Strichartz estimates~\eqref{eq-pertrubed-str} for data randomized according to the distorted Wiener procedure~\eqref{eq-wiener-rand} (see Lemma 2.3 in~\cite{benyi2015}).
\begin{proposition}[Improved global-in-time Strichartz estimate]
\label{prop-rand-str}
Given a Schrödinger admissible pair $(q,r)$ as in Proposition~\eqref{prop-pertrubed-str}, a real number $\widetilde r$ with $r\leq\widetilde r <+\infty$ and a function $f\in \mathcal{H}^s(\R^d)$ for some $s\geq0$, there exist constants $0<C$, $0<c$ such that 
\begin{equation}
\label{eq-rand-str}
\mathbf{P}\parent{\norm{\langle\sqrt{H}\rangle^s\e^{-itH}u_0^\omega}_{L_t^q(\R ;L_x^{\widetilde r}(\R^d))}>\lambda}\leq C\exp\parent{-c\lambda^2\norm{f}_{\mathcal{H}_x^s(\R^d)}^{-2}}\,.
\end{equation}
Consequently, $\norm{\langle\sqrt{H}\rangle^s\e^{-itH}u_0^\omega}_{L_t^q(\R;L_x^{\widetilde r}(\R^d))} <+\infty$ almost-surely when $f$ is in $\mathcal{H}^s$.
\end{proposition}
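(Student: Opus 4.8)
The plan is to follow the argument of~\cite{benyi2015}, Lemma 2.3, replacing flat Fourier multipliers by their distorted counterparts and feeding in three perturbed ingredients already at hand: the Bernstein-type bound~\eqref{eq-distorted-multiplier} for distorted Fourier multipliers, the global-in-time Strichartz estimate~\eqref{eq-global-str} for $\e^{-itH}\Pc$, and the equivalence of $H$- and $H_0$-Sobolev norms (Lemma~\ref{lemma-sobolev}). First I would reduce~\eqref{eq-rand-str} to a moment estimate: by the standard lemma that converts $L^p(\Omega)$-bounds growing like $\sqrt p$ into a Gaussian tail, it suffices to prove that for every $p\ge\max(q,\widetilde r)$,
\[
\norm{\,\norm{\langle\sqrt{H}\rangle^s\e^{-itH}u_0^\omega}_{L_t^qL_x^{\widetilde r}(\R)}\,}_{L^p(\Omega)}\le C\sqrt p\,\norm{u_0}_{\mathcal{H}^s(\R^d)}\,.
\]

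To prove this, I would expand $\e^{-itH}u_0^\omega=\sum_{n\in\Z^d}g_n(\omega)\,\e^{-itH}\operatorname{M}_{\psi_n}(H)u_0$ from~\eqref{eq-wiener-rand}. Since $p\ge q,\widetilde r$, Minkowski's inequality moves $L^p(\Omega)$ inside $L_t^qL_x^{\widetilde r}$, and then the Khinchin inequality~\eqref{eq-kin}, applied for each fixed $(t,x)$, bounds the $p$-th moment of the random sum by $\sqrt p$ times the square function $\big(\sum_n\abs{\langle\sqrt H\rangle^s\e^{-itH}\operatorname{M}_{\psi_n}(H)u_0}^2\big)^{1/2}$. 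Because $q\ge2$ and $\widetilde r\ge r\ge2$, a second use of Minkowski pulls the $\ell^2_n$-sum out of $L_t^qL_x^{\widetilde r}$, so it remains to control $\big(\sum_n\norm{\langle\sqrt H\rangle^s\e^{-itH}\operatorname{M}_{\psi_n}(H)u_0}_{L_t^qL_x^{\widetilde r}(\R)}^2\big)^{1/2}$.

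For the individual pieces I would argue as follows. The function $\operatorname{M}_{\psi_n}(H)u_0$ has distorted Fourier transform supported in the unit cube $Q_n$, and since $\e^{-itH}$ and $\langle\sqrt H\rangle^s$ are distorted Fourier multipliers (the latter radial, by Lemma~\ref{lemma-radial}) they all commute and preserve that support. Choosing a slightly fattened cutoff $\widetilde\psi_n\equiv1$ on $Q_n$, I would apply the $L^r_x\to L^{\widetilde r}_x$ version of~\eqref{eq-distorted-multiplier} (with constant $\lesssim\abs{\supp\widetilde\psi_n}^{1/r-1/\widetilde r}\sim1$, uniform in $n$ because the flat analogue is translation invariant and $\W$ is $L^p$-bounded) to get, for each $t$,
\[
\norm{\langle\sqrt H\rangle^s\e^{-itH}\operatorname{M}_{\psi_n}(H)u_0}_{L_x^{\widetilde r}}\lesssim\norm{\langle\sqrt H\rangle^s\e^{-itH}\operatorname{M}_{\psi_n}(H)u_0}_{L_x^r}\,.
\]
Taking the $L_t^q$-norm and invoking~\eqref{eq-global-str} (all operators commuting with $\e^{-itH}$) bounds each piece by $\norm{\langle\sqrt H\rangle^s\operatorname{M}_{\psi_n}(H)u_0}_{L^2_x}$; then, since $\mathcal{F}_V$ is an isometry onto $L_c^2$ and $\{\psi_n\}$ has bounded overlap, Plancherel gives $\sum_n\norm{\langle\sqrt H\rangle^s\operatorname{M}_{\psi_n}(H)u_0}_{L^2}^2\lesssim\norm{\langle\sqrt H\rangle^su_0}_{L^2}^2\sim\norm{u_0}_{\mathcal{H}^s}^2$, the last equivalence being Lemma~\ref{lemma-sobolev} with $p=2$. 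Chaining these bounds closes the estimate.

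The argument is structurally that of the flat case, so I expect the main obstacle to be purely technical: making sure that the frequency-localized Bernstein, Strichartz and almost-orthogonality constants are genuinely \emph{uniform in $n\in\Z^d$} in the distorted setting, where translation invariance is lost --- this is exactly where the intertwining property~\eqref{eq-intertwining} and the $L^p$-boundedness of $\W$ (Proposition~\ref{prop-Lp-bound}) enter --- together with the bookkeeping needed to commute the weight $\langle\sqrt H\rangle^s$ through the distorted multipliers. I note that, in contrast with the estimates used in Sections~\ref{section-continuous}--\ref{section-soliton}, no local smoothing input is required here, since~\eqref{eq-global-str} is already global in time.
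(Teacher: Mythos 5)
Your proposal is correct and follows essentially the same route as the paper: expand $u_0^\omega$ via the distorted Wiener decomposition, apply Minkowski and Khinchin to reduce to a square function, then use the unit-scale distorted Bernstein estimate of Lemma~\ref{lemma-dist-mult} to pass from $L_x^{\widetilde r}$ to $L_x^r$ and the global-in-time perturbed Strichartz estimate~\eqref{eq-global-str}, before closing with Plancherel and a Chernoff-type large-deviation step. The only cosmetic difference is that you carry the weight $\langle\sqrt H\rangle^s$ throughout (justified by Lemma~\ref{lemma-radial}) while the paper writes the argument for $s=0$ and notes the general case follows; your fattened-cutoff phrasing of the Bernstein step is also a slightly more careful formulation of what the paper does implicitly.
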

The strategy of the proof is to exploit the enhanced space integrability of Wiener randomized data. More precisely, the unit scale frequency components $\psi_n(H)f$ benefit from better integrability (uniformly in $n$) thanks to the distorted Fourier multiplier theorem, while the randomization procedure cancels interference between these different pieces $\psi_n(H)f$. Hence, the space integrability of $\e^{-iH}u_0^\omega$ is improved and one can apply the deterministic Strichartz estimate for the admissible pair $(q,r)$. The main point of our revisited proof is that the distorted Fourier transform commutes with the perturbed linear flow $\e^{-itH}$, and satisfies the unit-scale Bernstein estimate of Lemma~\ref{lemma-dist-mult}.
\begin{proof}
We write the proof for $s=0$. The large deviation bound~\eqref{eq-rand-str} will be deduced from an estimate on the moments of the random variable $\disp\norm{\e^{-itH}u_0^\omega}_{L_t^q(\R;L_x^{\widetilde r}(\R^d))}$. To get such an estimate, we take $p$ with $\max(q,\widetilde r)<p$ and we use Minkowski inequality to get
\begin{equation*}
\mathbf{E}\parent{\norm{\e^{-itH}u_0^\omega}_{L_t^q(\R;L_x^{\widetilde r}(\R^d))}^p}^{1/p}\leq \norm{\e^{-itH}u_0^\omega}_{L_t^q(\R;L_x^{\widetilde r}(\R^d; L_\omega^p(\Omega)))}\,.
\end{equation*}
At fixed $(t,x)\in\R\times\R^d$, it follows from Khinchin inequality~\eqref{eq-kin} that
\begin{equation*}
\norm{\e^{-itH}u_0^\omega}_{L_\omega^p(\Omega)}=\norm{\sum_{n\in\Z^d}g_n(\omega)\e^{-itH}\operatorname{M}_{\psi_n}(H)f}_{L_\omega^p(\Omega)}\lesssim\sqrt{p}\norm{\operatorname{M}_{\psi_n}(H)\e^{-itH}f}_{\ell_n^2(\Z^d)}\,.
\end{equation*}
Hence, we use Minkowski inequality once again with $2\leq \min(q,\widetilde r)$, and get 
\begin{equation}
\label{eq-pr-rand}
\mathbf{E}\parent{\norm{\e^{-itH}u_0^\omega}_{L_t^q(\R;L_x^{\widetilde r}(\R^d))}^p}^{1/p}\lesssim\sqrt{p}\norm{\operatorname{M}_{\psi_n}(H)\e^{-itH}f}_{\ell_n^2(\Z^d;L_t^q(\R;L_x^{\widetilde r}(\R^d)))}\,.
\end{equation}
Next, we use the distorted Fourier multiplier Lemma~\ref{lemma-dist-mult} at fixed $t$ with $\widetilde r,r$ and the unit-scale multiplier $\xi\mapsto\e^{-it\abs{\xi}^2}\psi(\xi-n)$ (in particular the volume of the support does not depend on $n$):
\begin{equation*}
\norm{\e^{-itH}\operatorname{M}_{\psi_n}(H)f}_{L_x^{\widetilde r}(\R^d)}\lesssim_{r,\widetilde r} \norm{\e^{-itH}\operatorname{M}_{\psi_n}(H)f}_{L_x^r(\R^d)}\,.
\end{equation*}
Now, we apply the deterministic global-in-time Strichartz estimate for the perturbed Schrödinger evolution~\eqref{eq-pertrubed-str} with the admissible pair $(q,r)$ and we get 
\begin{equation*}
\norm{\e^{-itH}\operatorname{M}_{\psi_n}(H)f}_{L_t^q(\R;L_x^r(\R^d))}\lesssim\norm{\operatorname{M}_{\psi_n}(H)f}_{L_x^2(\R^d)}\,. 
\end{equation*}
Hence, there exists $0<C$ such that
\begin{equation*}
\mathbf{E}\parent{\norm{\e^{-itH}u_0^\omega}_{L_t^q(\R;L_x^{\widetilde r}(\R^d))}^p}^{1/p}\lesssim\sqrt{p}\,\norm{\operatorname{M}_{\psi_n}(H)f}_{\ell_n^2L_x^2(\Z^d\times\R^d)} \leq \sqrt{p}\,C\norm{f}_{L_x^2(\R^d)}\,. 
\end{equation*}
To conclude, we apply Markov inequality to get
\begin{equation*}
\mathbf{P}\parent{\norm{\e^{-itH}u_0^\omega}_{L_t^q(\R;L_x^{\widetilde r}(\R^d)}>\lambda}\leq \parent{\sqrt{p}\,C\lambda^{-1}\norm{f}_{L_x^2(\R^d)}}^p\,.
\end{equation*}
After that we chose $p$ such that $\sqrt{p}\coloneqq \e^{-1}\parent{C\lambda^{-1}\norm{f}_{L_x^2}}^{-1}$ and we distinguish the cases when $\max(q,\widetilde r)< p$ or not.
\end{proof}
Given $0<\epsilon,R$ we define the set
\begin{equation}
\label{set-eps-R}
\widetilde\Omega_{\epsilon,R} =\bigcup_{(q,\widetilde r)}\set{\omega\in\Omega\mid \norm{\epsilon u^\omega}_{L_t^qL_x^{\widetilde r}(\R\times\R^d)}\leq R}\,,
\end{equation}
where the union is taken over a finite number of pairs $(q,\widetilde r)$ as in Proposition~\ref{prop-rand-str} that occur in the nonlinear analysis presented in the section~\ref{section-continuous}. It follows from~\eqref{eq-rand-str} that there exist $0<c,C$ such that for all $0<\epsilon,R$
\begin{equation*}
\mathbf{P}(\Omega\setminus\widetilde\Omega_{\epsilon,R})\leq C\exp(-R^2\epsilon^{-2}\norm{u_0}_{\mathcal{H}^s}^{-2})\,.
\end{equation*}
When $R$ is an irrelevant universal constant we denote this set by~$\widetilde\Omega_\epsilon$.
\subsection{Bilinear estimate for the perturbed Schrödinger evolution}
\label{sec-bili}
Bourgain's bilinear estimate states that given $N,M$ two dyadic numbers with $N\leq M$ and $u_0,v_0$ in $L^2(\R^d)$ localized in the Fourier space, say $\supp \widehat u_0\subseteq\set{\abs{\xi}\lesssim N}$ and $\supp \widehat v_0\subseteq\set{\abs{\xi}\sim M}$, we have
\begin{equation}
\label{eq-bili}
\norm{(\e^{it\Delta}u_0)(\e^{it\Delta}v_0)}_{L^2_{t,x}(\R\times\R^d)}\lesssim N^\frac{d-1}{2}M^{-\frac{1}{2}}\norm{u_0}_{L_x^2(\R^d)}\norm{v_0}_{L_x^2(\R^d)}\,.
\end{equation}
We refer to Lemma 5 in~\cite{bourgain1998} for the original proof of this result, and to~\cite{ckstt2008} for a more detailed proof. Basically, the proof relies on Plancherel's formula, on the explicit form of the free Schrödinger flow and on the multiplicative property of the flat Fourier transform. As mentioned in the first section, the distorted Fourier transform does not have such a property. That being said, we propose to generalize~\eqref{eq-bili} to the perturbed case, and we consider the interaction of two initial data localized by perturbed Littlewood-Paley projectors $\widetilde \Delta_N$,   $\widetilde\Delta_M$ introduced in~\eqref{eq-dist-lp} that evolve under the linear flow $\e^{-itH}$.
\begin{proposition}[Bilinear estimate for perturbed linear Schrödinger evolutions]
\label{prop-bili-V}
Given $N, M$ two dyadic integers, we have that for all $u_0,v_0\in L^2(\R^d)$,
\begin{equation}
\label{eq-bili-v}
\norm{(\e^{-itH}\widetilde\Delta_Nu_0)(\e^{-itH}\widetilde\Delta_Mv_0)}_{L_{t,x}^2(\R\times\R^d)} \lesssim N^\frac{d-1}{2}M^{-\frac{1}{2}} \norm{\widetilde\Delta_Nu_0}_{L_x^2(\R^d)}\norm{\widetilde\Delta_Mv_0}_{L_x^2(\R^d)}\,. 
\end{equation}
\end{proposition}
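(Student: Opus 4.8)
The plan is to deduce \eqref{eq-bili-v} from Bourgain's estimate \eqref{eq-bili} for the free flow, the price for the reduction being error terms that carry a factor of the Schwartz potential $V$ and are therefore \emph{spatially localized}. By exchanging the two factors if necessary (and using $M^{\frac{d-1}{2}}N^{-\frac12}\leq N^{\frac{d-1}{2}}M^{-\frac12}$ when $M\leq N$) we may assume $N\leq M$. The first step is to pass from the $H$-adapted projectors $\widetilde\Delta_K$ to the flat ones: the intertwining property \eqref{eq-intertwining} gives $\widetilde\Delta_K=\Wp\Delta_K\Wp^*$, and combined with $\e^{-itH}\Pc=\Wp\e^{it\Delta}\Wp^*$ this yields the \emph{exact} identity
\[
\e^{-itH}\widetilde\Delta_Kw=\Wp\,\e^{it\Delta}\,\Delta_K\bigl(\Wp^*w\bigr),\qquad \norm{\Delta_K\Wp^*w}_{L^2}=\norm{\widetilde\Delta_Kw}_{L^2},
\]
the last equality because $\Wp^*$ is an isometry on $\ran(\Pc)$. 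Writing $\Wp=\Id+R$ with $R:=\Wp-\Id=i\int_0^{+\infty}\e^{isH}V\e^{is\Delta}\,ds$ — a remainder that, thanks to the factor $V\in\mathcal S(\R^d)$, is spatially localized and smoothing — and setting $f:=\Wp^*u_0$, $g:=\Wp^*v_0$, we split
\[
(\e^{-itH}\widetilde\Delta_Nu_0)(\e^{-itH}\widetilde\Delta_Mv_0)=(\e^{it\Delta}\Delta_Nf)(\e^{it\Delta}\Delta_Mg)+\mathcal R,
\]
where $\mathcal R$ gathers the three terms in which at least one of the two factors is $R\bigl(\e^{it\Delta}\Delta_\bullet\,\cdot\,\bigr)$.

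For the main term, $\Delta_Nf$ is supported at flat frequencies $\lesssim N$ and $\Delta_Mg$ at frequencies $\sim M$, so Bourgain's bilinear estimate \eqref{eq-bili} applies verbatim and gives
\[
\norm{(\e^{it\Delta}\Delta_Nf)(\e^{it\Delta}\Delta_Mg)}_{L_{t,x}^2}\lesssim N^{\frac{d-1}{2}}M^{-\frac12}\norm{\Delta_Nf}_{L^2}\norm{\Delta_Mg}_{L^2}=N^{\frac{d-1}{2}}M^{-\frac12}\norm{\widetilde\Delta_Nu_0}_{L^2}\norm{\widetilde\Delta_Mv_0}_{L^2},
\]
which is precisely the claimed bound. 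For $\mathcal R$, the point is that $R$ intertwines $\e^{it\Delta}$ and $\e^{-itH}$ only through a multiplication by $V$: after the change of variables $\tau=s+t$,
\[
R\bigl(\e^{it\Delta}\Delta_Kh\bigr)=i\int_t^{+\infty}\e^{i(\tau-t)H}\bigl(V\,\e^{i\tau\Delta}\Delta_Kh\bigr)\,d\tau,
\]
a superposition over $\tau\geq t$ of perturbed evolutions of the spatially localized functions $V\,\e^{i\tau\Delta}\Delta_Kh$. Estimating $\mathcal R$ in $L_{t,x}^2$ by Hölder in $x$, the global-in-time Strichartz estimates \eqref{eq-global-str} for $\e^{-itH}$ and $\e^{it\Delta}$, the Bernstein inequalities \eqref{eq-bernstein-v}, and — crucially — the local energy decay of Proposition~\ref{prop-local-smoothing} applied to the localized source $V\,\e^{i\tau\Delta}\Delta_Kh$ (equivalently, classical local smoothing for $\e^{it\Delta}$), one bounds each of the three error terms by the right-hand side of \eqref{eq-bili-v}, in fact with an extra factor $\min(N,M)/\max(N,M)$ to a positive power that makes these contributions lower order. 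Adding the main and error contributions yields \eqref{eq-bili-v}.

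The step I expect to be the main obstacle is the treatment of $\mathcal R$, i.e. the fact that the distorted Fourier transform does not turn products into products of convolutions, so that one cannot simply conjugate the whole bilinear expression by $\Wp$; controlling the discrepancy $(\Wp\,\cdot\,)(\Wp\,\cdot\,)-(\,\cdot\,)(\,\cdot\,)$ quantitatively \emph{without} invoking an explicit kernel (structure) formula for the wave operator is the heart of the matter. This is exactly where semiclassical functional calculus enters — to quantify the interaction between $H$- and $H_0$-frequency localizations, e.g. through operator bounds of the type $\norm{\Delta_L\widetilde\Delta_K}_{\mathcal B(L^2)}\lesssim_j(\min(L,K)/\max(L,K))^{j}$ — together with the local decay of $\e^{-itH}$; and one has to check that the $V$-dependent errors genuinely gain, so that they can be summed over dyadic frequency scales without degrading Bourgain's exponents.
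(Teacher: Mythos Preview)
Your overall strategy --- reduce to Bourgain's free bilinear estimate and absorb the discrepancy through the spatial localization of $V$ and local smoothing --- matches the paper's. The main term, handled via the exact identity $\e^{-itH}\widetilde\Delta_Kw=\Wp\,\e^{it\Delta}\,\Delta_K(\Wp^*w)$ and \eqref{eq-bili}, is correct. But your decomposition of the error is genuinely different from the paper's, and as written it does not close.

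The paper does \emph{not} split $\Wp=\Id+R$. Instead it keeps the perturbed evolutions $u=\e^{-itH}\widetilde\Delta_Nu_0$, $v=\e^{-itH}\widetilde\Delta_Mv_0$ and applies \emph{flat} Littlewood--Paley projectors to them: semiclassical functional calculus (Lemma~\ref{lemma-approx}, \eqref{eq-negligible}) shows $(1-\Delta_{\sim M})\widetilde\Delta_M$ is negligible, after which one decomposes $u=\sum_K\Delta_Ku$ and writes the Duhamel formula \emph{for the free flow} on each piece, $\Delta_Ku(t)=\e^{it\Delta}\Delta_K\widetilde\Delta_Nu_0-i\int_0^t\e^{i(t-\tau)\Delta}\Delta_K(Vu(\tau))\,d\tau$. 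Every term in the resulting four-fold expansion is a product of \emph{flat frequency-localized free waves}, so Bourgain's estimate applies directly; the Duhamel forcings are controlled by a frequency-localized local smoothing bound (Lemma~\ref{lemma-ck2}) that gives the needed $K^{-\alpha}$ gain when $K\gg N$. This is where the work lies, and it requires Lemma~\ref{lemma-loc-weight} (decay of $\Delta_K(V\Delta_L\cdot)$ in separated frequencies).

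Your route through $R=\Wp-\Id$ loses exactly this structure: $R\bigl(\e^{it\Delta}\Delta_Nf\bigr)$ solves the \emph{perturbed} equation with source $V\e^{it\Delta}\Delta_Nf$, and it is \emph{not} frequency localized. Hence Bourgain's bilinear estimate cannot be applied to $(Ra)\,b$ or $a\,(Rb)$ as stated, and the Strichartz/H\"older/Bernstein combination you invoke does not by itself produce the factor $M^{-1/2}$ --- the high-frequency gain in \eqref{eq-bili} comes from the bilinear interaction of two localized free waves, not from a linear Strichartz bound on $b$. Your claim of an extra $(\min/\max)^{0+}$ factor for $\mathcal R$ is therefore unsubstantiated: to recover it you would have to redecompose $Ra$ in flat frequencies and run a Duhamel argument on each piece, at which point you are back to the paper's scheme and the $\Wp=\Id+R$ step has bought nothing. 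Your final paragraph in fact diagnoses this correctly --- the semiclassical bounds $\norm{\Delta_L\widetilde\Delta_K}\lesssim(\min/\max)^j$ and the localized local smoothing are precisely what the paper uses --- but you invoke them only as a remedy, not as the actual mechanism of the proof.
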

The general strategy of our proof is to reduce~\eqref{eq-bili-v} to~\eqref{eq-bili}. To do so, we use a perturbative argument that relies on the Duhamel formula that connects the free evolution to the perturbed one. More precisely, given $N,K$ two dyadic numbers and $u=\e^{-itH}\widetilde\Delta_N u_0$ a solution to the perturbed linear Schrödinger equation with initial data localized around a distorted Fourier frequencies of size $N$, we have
\begin{equation}
\label{eq-duhamel-inter}
\Delta_K u(t) = \e^{it\Delta}\Delta_K\widetilde\Delta_Nu_0-i\int_0^t\e^{i(t-\tau)\Delta}\Delta_K(Vu(\tau))d\tau\,.
\end{equation}
Hence, we will need to estimate some space-time Lebesgue norms
of terms like $\Delta_K(V\widetilde\Delta_N\e^{-itH}u_0)$.
\subsubsection{Semiclassical functional calculus}
\label{sub-sec-semiclassical}
This section is devoted to the statement of some lemmas used in the proof of Proposition~\ref{prop-bili-V}. When studying the interactions of a solution localized around a high frequency $N\gg 1$ with other solutions, we introduce the small parameter $h=N^{-1}$ to place ourselves in the semiclassical analysis framework that provides precise asymptotic expansions with respect to $h$. This strategy, that can essentially be used when $V$ is smooth, provides an easy but efficient way to intertwine Fourier and distorted Fourier localization. For the convenience of the reader, we detail some notations and results from semiclassical analysis (we refer to~\cite{zworski2012} for a general presentation). The Weyl quantization of a given symbol $a\in C^\infty(\R^{2d})$ is 
\begin{equation}
\label{def-weyl}
\Op_h(a)u(x) \coloneqq(2\pi h)^{-2d} \iint_{\R^d\times\R^d}\e^{i\frac{(x-y)\cdot\xi}{h}}a(\frac{x+y}{2},\xi)u(y)dyd\xi\,.
\end{equation}
It defines an operator acting on $\mathcal{S}(\R^d)$ and for instance $h^2H_0=\Op_h(\abs{\xi}^2),\ h^2H=\Op_h(\abs{\xi}^2+h^2V)$. Moreover, if the symbol $a$ is in the class
\[S(1)\coloneqq\set{a(h)\in C^\infty(\R^{2d})\mid \forall \alpha,\beta\in\N^d\ \underset{h\in\intervalco{0}{1}}{\sup}\underset{\rho\in\R^{2d}}{\sup}\abs{\partial_x^\alpha\partial_\xi^\beta a(\rho)}<\infty}\,,\]
we can extend $\Op_h(a)$ to a bounded operator on $L^2(\R^d)$ thanks to the Calderon-Vaillancourt theorem that yields :
\begin{equation}
\label{eq-calderon}
\norm{\Op_h(a)}_{\mathcal{B}(L^2(\R^d))}\lesssim\sum_{\abs{\alpha}\leq6d+2}h^{\abs{\alpha}/2}\norm{\partial_{x,\xi}^\alpha a}_{L^\infty(\R^{2d})}\,.
\end{equation}
Moreover, given $a$ and $b$ two symbols in the class $S(1)$ there exists a symbol $c$ also in $S(1)$ such that $\Op_h(a)\circ\Op_h(b) = \Op_h(c)$.
In addition, $c$ has an explicit semiclassical asymptotic expansion of the form
\begin{equation}
\label{eq-moyal}
c(\rho)= a(\rho)b(\rho)-h\frac{i}{2}\{a,b\}(\rho)+\dots+ \frac{1}{(n-1)!}\parent{\frac{ih}{2}}^{n-1}a(\omega(\overleftarrow{D},\overrightarrow{D}))^{n-1}b(\rho) +\bigo{h^n}_{\mathcal{S}(\R^d)}\,,
\end{equation}
where $a\omega(\overleftarrow{D},\overrightarrow{D})b(\rho)\coloneqq \parent{\partial_{\xi_0} a (\rho_0)\partial_{x_1} b (\rho_1) - \partial_{\xi_1} b (\rho_1) \partial_{x_0} a (\rho_0)}_{\vert\rho_0 = \rho_1 =\rho}$. The symbol $c$ is written $c\eqqcolon (a\#_hb)$.
In particular, we observe from~\eqref{eq-calderon} and~\eqref{eq-moyal} that whenever $a$ and $b$ are in $S(1)$ with disjoint supports then
\begin{equation}
\label{eq-moyal-disj}
\Op_h(a)\circ\Op_h(b) =  \bigo{h^\infty}_{\mathcal{B}(L^2(\R^d))}\,.
\end{equation}
We can now state an approximation result of spectral multipliers by pseudo-differential operators. This result comes from~\cite{robert1987}, and was extended to more general multipliers in ~\cite{sogge17}. We chose to write a version presented in the context of compact manifolds in~\cite{bgt2004}, Proposition 2.1. The proof relies on the so-called Helffer-Sjöstrand formula and can be rewritten in our setting with no difficulty. 
\begin{lemma}
\label{lemma-approx}
Let $\eta$ in $C_c^\infty(\R)$ be a smooth cutoff function. There exists a sequence of symbols $(c_j)_{j\geq0}$ in $S(1)$ such that for every $h\in\intervaloc{0}{1},\ n\in\N$ and $0\leq\sigma\leq n$,
\begin{equation}
\label{eq-approx}
\norm{\eta(h^2H)-\sum_{0\leq j\leq n-1}h^j\Op_h(c_j)}_{\mathcal{B}(L^2(\R^d),\mathcal{H}^\sigma(\R^d))}\lesssim_{n,\sigma}h^{n}\,.
\end{equation}
Besides, for $(x,\xi)\in\R^{2d}$
\begin{equation*}
c_0(x,\xi)=\eta(\abs{\xi}^2),\quad c_j(x,\xi)=\sum_{k\geq2}\frac{1}{(k-1)!}\eta^{(k-1)}(\abs{\xi}^2)q_{j,k}(x,\xi)\,,
\end{equation*}
where the finite sum representing $c_j$ is made of functions $q_{j,k}$ which are polynomials of degree less than $2(k-1)$ in the frequency variable $\xi$, and they are Schwartz functions with respect to the space variable $x$. 
\end{lemma}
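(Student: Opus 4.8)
The plan is to run the standard Helffer--Sjöstrand parametrix construction for the semiclassical operator $h^2H = \Op_h(\abs{\xi}^2 + h^2V)$, following~\cite{robert1987} (see also~\cite{bgt2004, sogge17}); the adaptation to $\R^d$ and to $-\Delta+V$ is routine since $V$ is smooth. First I would fix an almost analytic extension $\widetilde\eta \in C_c^\infty(\C)$ of $\eta$, supported in a fixed bounded complex neighbourhood of $\supp\eta$ and with $\abs{\bar\partial\widetilde\eta(z)} \leq C_M\abs{\Im z}^M$ for every $M$, so that
\begin{equation*}
\eta(h^2H) = -\frac{1}{\pi}\int_\C \bar\partial\widetilde\eta(z)\,(h^2H - z)^{-1}\,dL(z)\,.
\end{equation*}
Everything then reduces to producing an approximate resolvent, accurate to order $h^N$ for $N$ as large as desired, with only polynomial loss in $\abs{\Im z}^{-1}$, so that the $z$-integral converges when paired against $\bar\partial\widetilde\eta$.

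Next I would construct $B_N(z) = \sum_{0 \leq j \leq N-1} h^j\Op_h(b_j(z))$ with $b_0(z;x,\xi) = \chi(\xi)(\abs{\xi}^2 - z)^{-1}$, $\chi \in C_c^\infty(\R^d)$ equal to $1$ on $\set{\abs{\xi}^2 \in \supp\eta}$, and the $b_j(z)$ obtained recursively: expanding $\Op_h(\abs{\xi}^2 + h^2V)\circ\Op_h(b_j(z))$ by the composition formula~\eqref{eq-moyal} and solving order by order in $h$ the division equations, one arranges
\begin{equation*}
(h^2H - z)B_N(z) = \Id - \Op_h(\rho) - h^N\Op_h(r_N(z))\,,
\end{equation*}
with $\rho$ supported in $\set{\abs{\xi} \geq R_0}$ and $r_N(z) \in S(1)$ having seminorms $\lesssim \langle z\rangle^{C_N}\abs{\Im z}^{-C_N}$. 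Each division step contributes one more factor $(\abs{\xi}^2-z)^{-1}$, $x$-dependence enters only through $V$ and its derivatives (Schwartz in $x$), and $\xi$-derivatives of $(\abs{\xi}^2-z)^{-k}$ produce polynomials in $\xi$; inducting on $j$ this gives $b_j(z;x,\xi) = \sum_{2\leq k\leq 2j+1}(\abs{\xi}^2 - z)^{-k}q_{j,k}(x,\xi)$ for $j \geq 1$, with $q_{j,k}$ polynomial of degree $\leq 2(k-1)$ in $\xi$ and Schwartz in $x$. The elliptic region $\set{\abs{\xi} \geq R_0}$, where $\abs{\xi}^2 + h^2V - z$ is bounded below for $z \in \supp\widetilde\eta$, is handled by a Neumann-series parametrix whose phase-space support is disjoint from that of $\eta(\abs{\cdot}^2)$, so by~\eqref{eq-moyal-disj} its contribution is $\bigo{h^\infty}$ in $\mathcal B(L^2)$.

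Finally I would insert $B_N(z)$ into the Helffer--Sjöstrand integral and use the scalar identity $-\frac{1}{\pi}\int_\C\bar\partial\widetilde\eta(z)(\lambda-z)^{-k}\,dL(z) = \frac{(-1)^{k-1}}{(k-1)!}\eta^{(k-1)}(\lambda)$ (for $\lambda \in \R$, $k \geq 1$) with $\lambda = \abs{\xi}^2$: this reproduces exactly $\sum_{0\leq j\leq N-1}h^j\Op_h(c_j)$ with $c_0 = \eta(\abs{\xi}^2)$ and, for $j \geq 1$, $c_j(x,\xi) = \sum_{k\geq2}\frac{1}{(k-1)!}\eta^{(k-1)}(\abs{\xi}^2)q_{j,k}(x,\xi)$ (the signs and the cutoff $\chi$ are absorbed into $q_{j,k}$, since $\eta^{(k-1)}(\abs{\xi}^2)$ already localizes $\xi$), which is the claimed structure. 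The leftover equals $\frac{h^N}{\pi}\int_\C\bar\partial\widetilde\eta(z)\Op_h(r_N(z))(h^2H-z)^{-1}\,dL(z)$ modulo $\bigo{h^\infty}$; by Calderón--Vaillancourt~\eqref{eq-calderon} one has $\norm{\Op_h(r_N(z))}_{\mathcal B(L^2)} \lesssim \langle z\rangle^{C_N}\abs{\Im z}^{-C_N}$, and together with $\norm{(h^2H-z)^{-1}}_{\mathcal B(L^2)} \leq \abs{\Im z}^{-1}$ and $\abs{\bar\partial\widetilde\eta(z)} \lesssim_M \abs{\Im z}^M$ with $M$ large, the $z$-integral is $\lesssim h^N$ in $\mathcal B(L^2)$. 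The gain of $\sigma$ derivatives is obtained by keeping a factor $\langle\sqrt{H}\rangle^\sigma$ in front: after inserting a spectral cutoff $\widetilde\chi(h^2H)$ equal to $1$ near $\supp\widetilde\eta\cap\R$ (the complement costing $\bigo{h^\infty}$ against the $\xi$-compactly supported symbols, again by~\eqref{eq-moyal-disj}), functional calculus gives $\norm{\langle\sqrt H\rangle^\sigma\widetilde\chi(h^2H)(h^2H-z)^{-1}}_{\mathcal B(L^2)}\lesssim h^{-\sigma}\abs{\Im z}^{-1}$, and Lemma~\ref{lemma-sobolev} trades $\langle\sqrt H\rangle^\sigma$ for $\langle\sqrt{H_0}\rangle^\sigma$; choosing the order of the expansion large enough then closes the estimate. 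The main obstacle is precisely that the principal symbol $\abs{\xi}^2$ of $h^2H_0$ fails to lie in $S(1)$, which forces the splitting into a bounded frequency band (where the iterative division works) and an elliptic tail (dispatched separately), and requires propagating every estimate uniformly in $z$ with only polynomial blow-up in $\abs{\Im z}^{-1}$ — this being the feature that, combined with the rapid decay of $\bar\partial\widetilde\eta$ near the real axis, makes the Helffer--Sjöstrand integral converge with the desired power of $h$.
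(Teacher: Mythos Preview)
Your proposal is correct and follows exactly the approach the paper indicates: the paper does not give a detailed proof but only states that the result comes from~\cite{robert1987} (see also~\cite{bgt2004}, Proposition~2.1) and that ``the proof relies on the so-called Helffer--Sj\"ostrand formula and can be rewritten in our setting with no difficulty.'' Your sketch carries out precisely this Helffer--Sj\"ostrand parametrix construction, with the correct recursive structure for the $b_j(z)$, the correct identification of the $c_j$ via the Cauchy-type identity for $(\lambda-z)^{-k}$, and the standard handling of the elliptic tail and of the $\mathcal{H}^\sigma$ gain; there is nothing to add.
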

We emphasize that $\supp c_j\subseteq\set{(x,\xi)\in\R^{2d}\mid\abs{\xi}^2\in\supp\eta}$.
Hence, we can approximate $\widetilde\Delta_N\coloneqq \varphi(N^{-1}H)$ by pseudo-differential operators supported on frequencies of size $N$. Since the Fourier multiplier $\Delta_K\coloneqq \varphi(K^{-1}H_0)$ is already a pseudo-differential operator localized on frequencies of size $K$, we can use the semiclassical asymptotic expansion~\eqref{eq-moyal} to see that $\Delta_K\widetilde\Delta_N$ is negligible when $1\ll\abs{K-N}$. Namely, for all $s,\sigma\in\R$ and $0<\alpha$ there exists $C_{s,\sigma,\alpha}$ such that for all $N=2^n$ and $K=2^k$ with $3\leq\abs{k-n}$,
\begin{equation}
\label{eq-negligible}
\norm{\Delta_K\widetilde\Delta_N}_{\mathcal{B}(\mathcal{H}^{-s},\mathcal{H}^\sigma)}\leq C_{s,\sigma,\alpha}2^{-\alpha\max(n,k)}\,.
\end{equation}
Therefore one can intertwine localization with respect to spectral Littlewood-Paley multiplier for $H$ and $H_0$ up to a negligible term. The next lemma, written in light of equation~\eqref{eq-duhamel-inter}, encapsulate the above discussion. 
\begin{lemma}
\label{lemma-loc-weight}
For any $\alpha\in\N$, there exists $C=C(\alpha,V)>0$ depending on some weighted norms of $\ V$ and its derivatives~\footnote{\ More precisely, our brutal computations give $C\lesssim\norm{\japbrak{\nabla}^{\frac{3d}{2}+3\alpha}\parent{\japbrak{x}^2V}}_{L_x^2}$, but they are far from being optimal.} such that for any dyadic integer $K=2^k$ and $f\in L_x^2(\R^d)$, we have
\begin{equation}
\label{eq-loc2}
\parent{\sum_{\abs{k-l}\geq 3}\norm{\Delta_K(V\Delta_Lf)}_{L_x^2}^2}^{1/2}\leq C K^{-\alpha} \norm{\japbrak{x}^{-2}f}_{L_x^2}\,.
\end{equation}
\end{lemma}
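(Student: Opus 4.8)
The plan is to pass to the Fourier side and exploit the fact that the frequency separation $\abs{k-l}\ge 3$ forces $\widehat V$ to be evaluated at large frequencies, where it decays faster than any polynomial, while the two spatial weights on the right‑hand side of~\eqref{eq-loc2} are produced by an integration by parts in the frequency variable.

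Concretely, I would first set $g:=\japbrak{x}^{-2}f$, so that $f=\japbrak{x}^{2}g$, $\widehat f=(1-\Delta_\zeta)\widehat g$ as tempered distributions, and $\norm{g}_{L^2}=\norm{\japbrak{x}^{-2}f}_{L^2}$. Fixing a pair $K=2^k$, $L=2^l$ with $\abs{k-l}\ge3$, I would write (up to harmless constants)
\[\widehat{\Delta_K(V\Delta_Lf)}(\xi)=\varphi(\xi/K)\int_{\R^d}\widehat V(\xi-\zeta)\,\varphi(\zeta/L)\,(1-\Delta_\zeta)\widehat g(\zeta)\,d\zeta=\varphi(\xi/K)\int_{\R^d}(1-\Delta_\zeta)\bigl[\widehat V(\xi-\zeta)\varphi(\zeta/L)\bigr]\,\widehat g(\zeta)\,d\zeta,\]
the integration by parts being clean because $\varphi(\cdot/L)$ is compactly supported, so no boundary term appears. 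Expanding by Leibniz, each term is $L^{-\abs\gamma}(\partial^\beta\widehat V)(\xi-\zeta)\,(\partial^\gamma\varphi)(\zeta/L)$ with $\abs\beta+\abs\gamma\le 2$: the factor $L^{-\abs\gamma}\le1$ is harmless, the cutoff $(\partial^\gamma\varphi)(\cdot/L)$ is still supported in $\abs\zeta\sim L$, and $\partial^\beta\widehat V=c\,\widehat{x^\beta V}$ is again Schwartz. On the support of the integrand $\abs\xi\sim K$ and $\abs\zeta\sim L$, and since $\abs{k-l}\ge3$ the two annuli are separated with $\abs{\xi-\zeta}\gtrsim\max(K,L)$; hence for every $M$,
\[\abs{(\partial^\beta\widehat V)(\xi-\zeta)}\le C_M\japbrak{\xi-\zeta}^{-M-d-1}\le C_M\,\max(K,L)^{-M}\,\japbrak{\xi-\zeta}^{-d-1}.\]
Inserting this, bounding $\varphi(\xi/K)\le1$, collecting the finitely many cutoffs into one multiplier $\widetilde\varphi_L(D)$ supported in $\abs\zeta\sim L$, and applying Young's inequality with $\japbrak{\cdot}^{-d-1}\in L^1(\R^d)$ would give the single‑pair estimate
\[\norm{\Delta_K(V\Delta_Lf)}_{L^2}\lesssim_M\max(K,L)^{-M}\,\norm{\widetilde\varphi_L(D)g}_{L^2},\]
with implicit constant controlled by $\sup_{\abs\beta\le2}\norm{\japbrak{\nabla}^{M+d+1}(x^\beta V)}_{L^1}$, i.e. by a weighted Sobolev norm of $\japbrak{x}^2V$, which is what the footnote records.

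Finally I would square‑sum over $l$ with $\abs{k-l}\ge3$. Since $\widetilde\varphi_L$ is supported in a dyadic annulus the pieces are almost orthogonal, $\sum_l\norm{\widetilde\varphi_{2^l}(D)g}_{L^2}^2\lesssim\norm{g}_{L^2}^2$; splitting into $l\le k-3$ (where $\max(K,2^l)=K$) and $l\ge k+3$ (where $\max(K,2^l)=2^l\ge 8K$) gives
\[\sum_{\abs{k-l}\ge3}\norm{\Delta_K(V\Delta_Lf)}_{L^2}^2\lesssim_M\Bigl(K^{-2M}+\sum_{l\ge k+3}2^{-2lM}\Bigr)\sum_l\norm{\widetilde\varphi_{2^l}(D)g}_{L^2}^2\lesssim_M K^{-2M}\norm{g}_{L^2}^2,\]
and choosing $M=\alpha$ together with $\norm{g}_{L^2}=\norm{\japbrak{x}^{-2}f}_{L^2}$ yields~\eqref{eq-loc2}. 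The case where one of $\Delta_K,\Delta_L$ is the low‑frequency projector is handled identically, the separation $\abs{\xi-\zeta}\gtrsim\max(K,L)$ still being in force.

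The step I expect to be the main obstacle — or at least the one needing care — is precisely this last summation: a per‑pair bound with only the exponent $\alpha$ is \emph{not} summable over the $\sim k$ indices $l<k-3$, so the naive estimate fails. What rescues the argument is that the frequency separation actually produces decay $\max(K,L)^{-M}$ for \emph{every} $M$, so that almost‑orthogonality of the pieces $\widetilde\varphi_{2^l}(D)g$ absorbs the $\ell^2$ sum in $l$ while still leaving the full gain $K^{-\alpha}$. A secondary point is that shifting the two weights onto $V$ via the frequency integration by parts must not spoil this gain — and it does not, since differentiating $\widehat V$ keeps it Schwartz and differentiating the cutoff $\varphi(\zeta/L)$ only costs powers $L^{-\abs\gamma}\le1$ and preserves its support in the annulus.
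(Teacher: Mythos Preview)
Your proof is correct and follows essentially the same route as the paper's: pass to the Fourier side, move the weight $\japbrak{x}^2$ onto $\widehat V$ and the cutoff via integration by parts in $\zeta$, and exploit the frequency separation $\abs{\xi-\zeta}\gtrsim\max(K,L)$ to extract arbitrary decay from the Schwartz function $\widehat V$. The only cosmetic differences are that you close with Young's inequality and almost-orthogonality in $l$, whereas the paper uses a duality/Cauchy--Schwarz argument (incurring a harmless $K^{d/2}$ loss, absorbed by taking a larger exponent $\delta$) and sums crudely thanks to the extra decay in the per-pair bound.
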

Note that we decided to exploit not only the localization in frequency, but also the decay of $V$ in order to use the local smoothing effect later on.
\begin{proof}
We prove that there exists $C_\alpha>0$ depending on weighted norms of the potential $V$ and its derivatives such that for all $L=2^{-l}$ with $3\leq\abs{k-l}$, we have
\begin{equation}
\label{eq-loc1}
\norm{\Delta_K(V\Delta_Lf)}_{L_x^2}\leq C 2^{-2\alpha\max(k,l)}\norm{\japbrak{x}^{-2}f}_{L_x^2}\,.
\end{equation}
Let us first consider the case when $k+3\leq l$. Plancherel's formula yields
\begin{equation*}
\norm{\Delta_K(V\Delta_L\japbrak{x}^2f)}_{L^2(\R^d)} = \sup_{\norm{g}_{L^2(\R^d)}\leq 1}\abs{ \iint_{\R^d\times\R^d} \varphi(K^{-1}\xi)\varphi(L^{-1}\eta)\widehat V(\xi-\eta)\japbrak{\nabla_\eta}^2\widehat f(\eta)\overline{\widehat g(\xi)} d\xi d\eta}\,,
\end{equation*}
where we used that $\mathcal{F}_{x\to\eta}\japbrak{x}^2f\sim\japbrak{\nabla_\eta}^2\mathcal{F}_{x\to\eta}f$. Integrating by parts~\footnote{\ We stress out that the operator  $\japbrak{\nabla_\eta}^2\sim1+\sum_{i=1}^d\partial_{\eta_i}^2$ is a local operator and has no effect on the frequency localization $\eta\sim L$.} with respect to $\eta$ gives
\begin{equation*}
\norm{\Delta_K(V\Delta_L\japbrak{x}^2f)}_{L^2(\R^d)} = \sup_{\norm{g}_{L^2(\R^d)}\leq 1}\abs{ \iint_{\R^d\times\R^d} \varphi(K^{-1}\xi)\japbrak{\nabla_\eta}^2\parent{\varphi(L^{-1}\eta)\widehat V(\xi-\eta)}\widehat f(\eta)\overline{\widehat g(\xi)} d\xi d\eta}\,.
\end{equation*}
Since $\varphi_K$ (resp. $\varphi_L$) is supported on $2^{k-1}\leq\abs{\xi}\leq2^{k+1}$ (resp. $2^{l-1}\leq\abs{\eta}\leq2^{l+1}$), we have that $2^{l-2}\leq\abs{\xi-\eta}$ on the support of the integrand. Therefore, for all $0\leq\delta$ we have that $1\leq2^{-\delta (l-2)}\abs{\xi-\eta}^\delta$, and the right-hand-side of the above estimate is less than
\begin{equation*}
\begin{split}
&\lesssim2^{-\delta (l-2)}\iint_{\R^d\times\R^d}\abs{ \varphi(K^{-1}\xi)\abs{\xi-\eta}^{\delta}\japbrak{\nabla_\eta}^2\parent{\widehat \varphi(L^{-1}\eta)\widehat V(\xi-\eta)}\widehat f(\eta)\overline{\widehat g(\xi)}} d\xi d\eta\\
&\lesssim2^{-\delta (l-2)}\norm{\japbrak{\nabla}^\delta\parent{\japbrak{x}^2V}}_{L_x^2}\norm{f}_{L_x^2}\norm{g}_{L_x^2}\norm{\varphi(K^{-1}\cdot)}_{L_\xi^2}\\
&\leq C(V)2^{-\delta(l-2)+\frac{d}{2}k}\norm{f}_{L_x^2}\leq C(V)2^{-\alpha l}\norm{f}_{L_x^2}\,,
\end{split}
\end{equation*}
by choosing $\delta\geq \frac{d}{2}+3\alpha$, and by using Cauchy-Schwarz and Plancherel. In the case when $l+3\leq k$, the same computations yield
\[
\norm{\Delta_K(V\Delta_L\japbrak{x}^2f)}_{L^2(\R^d)}\leq C(V)2^{-\delta(k-2)+\frac{d}{2}k}\norm{f}_{L_x^2}\leq C(V)2^{-\alpha k}\norm{f}_{L_x^2}\,,
\]
provided that $\delta \geq \frac{3d}{2}+3\alpha$. We finish the proof of estimate~\eqref{eq-loc2} by summing over $L$.
\end{proof}
\subsubsection{Proof of bilinear estimate for perturbed Schrödinger evolutions}
We now come to the proof of Proposition~\ref{prop-bili-V} itself. For now on, we use the notation 
\[u(t)\coloneqq \e^{-itH}\widetilde\Delta_Nu_0\,,\quad \quad v(t)\coloneqq \e^{-itH}\widetilde\Delta_Mv_0\,.\]
\paragraph{\textbf{Step 1 : Reduction to the case where initial data are flat Fourier localized.}} First, we reduce Proposition~\ref{prop-bili-V} to the following proposition where the initial data are also localized by flat Fourier multipliers. In what follows, $\Delta_K$ is the projector around a frequency $K$, $\Delta_{\lesssim N}$ is the projector on frequencies below $2^5N$ and $\Delta_{\sim M}$ is a fattened projector around frequency $M$. Given $N, M$ two dyadic numbers, we write 
\[u(t)\coloneqq \e^{-itH}\widetilde\Delta_Nu_0,\quad \quad v(t)\coloneqq \e^{-itH}\widetilde\Delta_Mv_0\,,\]
where $\widetilde{\Delta}_N,\widetilde{\Delta}_M$ are the distorded Fourier multipliers around frequencies $N,M$ as defined in~\eqref{eq-dist-lp}.
\begin{proposition}  There exists $C>0$ such that for any time interval $I\subseteq\R$ and any dyadic integers $N,M$ with $N\leq M$, we have
\label{prop-bilinear}
\begin{equation}
\label{eq-red1}
\norm{uv}_{L_{t,x}^2(I)}\leq \norm{\Delta_{\lesssim N}u\Delta_{\sim M}v}_{L_{t,x}^2(I)}+ \parent{\sum_{K>2^5N}\norm{\Delta_K u\Delta_{\sim M}v}_{L_{t,x}^2(I)}^2}^{1/2}
+ N^\frac{d-1}{2}M^{-\frac{1}{2}}\gamma(I)\,.
\end{equation}
For any interval $I$ and for any partition $\R=\bigcup_\ell I_\ell $, we have
\begin{equation}
\label{eq-fr-sub}
\gamma(I)\leq C\norm{\widetilde{\Delta}_Nu_0}_{L_x^2}\norm{\widetilde{\Delta}_Mv_0}_{L_x^2},\quad \parent{\sum_{\ell} \gamma(I_\ell)^2}^{1/2} \leq C\norm{\widetilde{\Delta}_Nu_0}_{L_x^2}\norm{\widetilde{\Delta}_Mv_0}_{L_x^2}\,.
\end{equation}
\end{proposition}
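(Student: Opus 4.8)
The plan is to reduce, in two controlled substitutions, to Bourgain's bilinear estimate~\eqref{eq-bili}: first one replaces distorted frequency localization by flat localization, and (in a later step) the perturbed evolution by the free one. Here $\gamma(I)$ is \emph{defined} as the sum of all contributions that do not already have the shape of the first two terms on the right of~\eqref{eq-red1}; since each of these contributions turns out to be a finite sum of $L^2_t(I)$-type norms, the two bounds in~\eqref{eq-fr-sub} follow at once from Minkowski's inequality together with the additivity of the $L^2$ norm over a partition of $\R$. Concretely, I would run a flat Littlewood--Paley decomposition $u=\Delta_{\lesssim N}u+\sum_{K>2^5N}\Delta_Ku$ and replace the distorted localization of the $v$-factor by the flat one, $v=\Delta_{\sim M}v+(1-\Delta_{\sim M})v$. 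Because $\widetilde\Delta_N=\varphi(N^{-1}H)$ commutes with $\e^{-itH}$, the functions $u$ and $v$ remain localized at distorted frequency $\sim N$, resp.\ $\sim M$, so the negligibility estimate~\eqref{eq-negligible} (a consequence of Lemma~\ref{lemma-approx} and~\eqref{eq-moyal-disj}), together with its $L^p$-analogue, exhibits $\Delta_K\widetilde\Delta_N$ (for $K>2^5N$) and $(1-\Delta_{\sim M})\widetilde\Delta_M$ as smoothing operators of arbitrary order whose norms on the relevant spaces decay like $K^{-A}$, resp.\ $M^{-A}$, for every $A$. Expanding the product, $\|uv\|_{L^2_{t,x}(I)}$ is controlled by $\|\Delta_{\lesssim N}u\,\Delta_{\sim M}v\|_{L^2_{t,x}(I)}$, by $\|\sum_{K>2^5N}\Delta_Ku\,\Delta_{\sim M}v\|_{L^2_{t,x}(I)}$, and by the two error terms carrying $(1-\Delta_{\sim M})v$.

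Two families of pieces are dealt with immediately. In $\|\sum_{K>2^5N}\Delta_Ku\,\Delta_{\sim M}v\|_{L^2_{t,x}(I)}$, the part indexed by $K>2^5M$ is harmless: there $\Delta_Ku\,\Delta_{\sim M}v$ has flat Fourier support in $\{|\xi|\sim K\}$, hence the summands are almost orthogonal in $L^2_{t,x}$ and the sum is $\lesssim\bigl(\sum_{K>2^5M}\|\Delta_Ku\,\Delta_{\sim M}v\|_{L^2_{t,x}(I)}^2\bigr)^{1/2}$, which is absorbed into the second term on the right of~\eqref{eq-red1}. The $(1-\Delta_{\sim M})v$-terms --- paired with $\Delta_{\lesssim N}u$ or with $\sum_{K>2^5N}\Delta_Ku$, both controlled by $u$ in every admissible Strichartz norm via~\eqref{eq-global-str} --- are estimated by Hölder's inequality, the global-in-time Strichartz estimate~\eqref{eq-global-str}, a Bernstein step~\eqref{eq-bernstein-v}, and the arbitrary-order smallness of $(1-\Delta_{\sim M})\widetilde\Delta_M$, yielding a bound $\lesssim_A M^{-A}\|\widetilde\Delta_Nu_0\|_{L^2}\|\widetilde\Delta_Mv_0\|_{L^2}$; since $M,N\ge1$, multiplying by $M^{1/2}N^{-(d-1)/2}$ to conform to the normalization of~\eqref{eq-red1} still leaves something $\lesssim\|\widetilde\Delta_Nu_0\|_{L^2}\|\widetilde\Delta_Mv_0\|_{L^2}$, so these terms enter $\gamma(I)$ without difficulty.

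The substantial part is the intermediate leakage $\sum_{2^5N<K\le2^5M}\Delta_Ku\,\Delta_{\sim M}v$. Here almost orthogonality is no longer available --- all these products live at flat frequency $\sim M$ --- and, more seriously, the $K$-smallness of $\Delta_K\widetilde\Delta_N$ lives in a spectral multiplier, so that combining it merely with Strichartz produces a bound with decay in $N$ but \emph{no decay in $M$}, which is insufficient because the normalization $N^{(d-1)/2}M^{-1/2}$ in~\eqref{eq-red1} demands a genuine $M$-gain. To recover that gain one is forced to exploit the bilinear structure, hence to pass to the free evolution: I would use the Duhamel formula~\eqref{eq-duhamel-inter}, shifted to the left endpoint $a_I$ of $I$,
\[
\Delta_Ku(t)=\e^{i(t-a_I)\Delta}\Delta_Ku(a_I)-i\int_{a_I}^{t}\e^{i(t-\tau)\Delta}\Delta_K\bigl(Vu(\tau)\bigr)\,d\tau,
\]
and likewise expand $\Delta_{\sim M}v$, so that both factors become superpositions of free evolutions. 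To the products of the ``free'' parts one applies Bourgain's estimate~\eqref{eq-bili}, which supplies the decisive factor $K^{(d-1)/2}M^{-1/2}$, while the frozen datum $\Delta_Ku(a_I)$ has size $\lesssim_A K^{-A}\|\widetilde\Delta_Nu_0\|_{L^2}$ by~\eqref{eq-negligible}. The Duhamel remainders --- and here the smallness and localization of $V\in\mathcal S(\R^d)$ are essential --- are controlled via the global-in-time local smoothing estimate~\eqref{eq-local-smoothing}, which applies to $u$ and $v$ because they solve the homogeneous perturbed equation, together with Lemma~\ref{lemma-loc-weight} (which uses both the frequency separation of $\Delta_K$ from the $\sim N$-localized $u$ and the decay of $V$): these give $\|\Delta_K(Vu)\|_{L^2_tL^{2,1/2+}_x}\lesssim_A K^{-A}\|\widetilde\Delta_Nu_0\|_{\mathcal H^{1/2}}$, which feeds, through the Christ--Kiselev lemma, into a retarded bilinear estimate with the same bilinear gain. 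Summing over $2^5N<K\le2^5M$ leaves a geometric series in $K$ dominated by its $K\sim N$ term, so the intermediate leakage contributes $\lesssim_A N^{(d-1)/2}M^{-1/2}\,N^{-A'}\|\widetilde\Delta_Nu_0\|_{L^2}\|\widetilde\Delta_Mv_0\|_{L^2}$ for some $A'>0$; this is the last piece of $\gamma(I)$.

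Gathering everything, $\gamma(I)$ is a finite sum of $L^2_t(I)$-type norms, each bounded --- after the normalization --- by $\|\widetilde\Delta_Nu_0\|_{L^2}\|\widetilde\Delta_Mv_0\|_{L^2}$ with room to spare, so Minkowski's inequality and the additivity of $L^2$ over disjoint time intervals give both inequalities in~\eqref{eq-fr-sub}. The only genuinely delicate point, and the main obstacle, is the intermediate-frequency leakage: there one can appeal neither to the almost orthogonality that disposes of the very high leakage nor to the naive ``$\Delta_Ku$ is arbitrarily small, so crush it in $L^2_{t,x}$'', precisely because the required $M$-gain is visible only through the bilinear estimate; this is what makes the Duhamel detour, and the simultaneous use of Bourgain's estimate~\eqref{eq-bili} and the local smoothing~\eqref{eq-local-smoothing}, unavoidable.
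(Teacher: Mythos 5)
Your proposal misreads the role of Proposition~\ref{prop-bilinear}: it is only a \emph{reduction} step, and the first two terms on the right-hand side of~\eqref{eq-red1} are deliberately left unbounded — they carry the whole frequency range $K>2^5N$, including the intermediate range $2^5N<K\le 2^5M$, forward to Step~2 of the proof, where Duhamel's formula, Bourgain's bilinear estimate~\eqref{eq-bili}, Lemma~\ref{lemma-ck2} and the local smoothing estimate~\eqref{eq-local-smoothing} are finally brought to bear. The paper's $\gamma(I)$ is defined as exactly one thing, namely $\gamma(I)=(N^{(d-1)/2}M^{-1/2})^{-1}\norm{u(1-\Delta_{\sim M})v}_{L^2_{t,x}(I)}$; the bound~\eqref{eq-fr-sub} then follows from three cheap ingredients alone — Hölder, Sobolev plus the global Strichartz estimate~\eqref{eq-global-str}, and the arbitrary-order negligibility~\eqref{eq-redu-pr} — followed by sub-additivity of $L^2$ over a time partition. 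Your treatment of this piece is essentially correct and matches the paper. But you additionally fold the intermediate leakage $\sum_{2^5N<K\le2^5M}\Delta_Ku\,\Delta_{\sim M}v$ into $\gamma(I)$, which both misstates the decomposition~\eqref{eq-red1} (the stated second term already contains all $K>2^5N$, not just $K>2^5M$) and turns~\eqref{eq-fr-sub} into a claim that is as hard as Proposition~\ref{prop-bili-V} itself. Your observation that this intermediate range genuinely requires the bilinear gain and cannot be killed by spectral negligibility alone is correct — that is precisely why the paper leaves it in the second term and defers it to Step~2.

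Two further points. First, the almost-orthogonality worry you raise for $2^5N<K\le 2^5M$ is real but harmless: the paper's Step~2 later shows $\norm{\Delta_Ku\,\Delta_{\sim M}v}_{L^2_{t,x}}$ decays like $C(K,N)$ with arbitrary negative powers of $K$ for $K\geq 2^5N$, so the passage from $\ell^1_K$ to $\ell^2_K$ loses at most a logarithm, which is absorbed. Second, the Christ--Kiselev lemma you invoke for the retarded bilinear estimate is not available at the diagonal exponent $L^2_t\to L^2_t$; the paper avoids it by applying Minkowski's inequality directly to the Duhamel integral and then exploiting the global-in-time $L^2_t$ local smoothing bound of Lemma~\ref{lemma-ck2} (which is what makes the partition sum in~\eqref{eq-fr-sub} finite). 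If you wanted to keep your stronger formulation, you would have to redo that part of Step~2 carefully rather than appeal to Christ--Kiselev.
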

\begin{proof}
We introduce the fattened projector $\Delta_{\sim M}$, a flat Fourier multiplier $\widetilde \varphi(M^{-1}H_0)$ by a smooth cutoff function $\widetilde \varphi$ chosen such that $(1-\widetilde\varphi)\varphi\equiv0$. Under this assumption, we deduce from Lemma~\ref{lemma-approx} that for all $\sigma\in\R$ and for all $\alpha\in\N$, 
\begin{equation}
\label{eq-redu-pr}
\norm{(1-\Delta_{\sim M})\widetilde\Delta_M}_{\mathcal{B}(L^2(\R^d,\mathcal{H}^\sigma(\R^d)))}\lesssim_{\alpha,\sigma} M^{-\alpha}.
\end{equation}
Then, $v$ is decomposed into $v=\Delta_{\sim M}v+(1-\Delta_{\sim M}v)$, and
\begin{equation}
\label{eq-dec-pr}
uv = u\Delta_{\sim M}v + u(1-\Delta_{\sim M})v\,.
\end{equation}
We deduce form Cauchy-Schwarz, Sobolev embedding, Strichartz estimate and~\eqref{eq-redu-pr} that the second term on the right-hand side of~\eqref{eq-dec-pr} is negligible : 
\begin{equation*}
\begin{split}
\norm{u(1-\Delta_{\sim M})v}_{L_{t,x}^2(I)}&\leq \norm{u}_{L_{t,x}^4(I)}\norm{(1-\Delta_{\sim M})v}_{L_{t,x}^4(I)}\\
&\lesssim N^{\frac{d}{12}}\norm{u}_{L_t^4L_x^3(I)}\norm{\japbrak{\nabla}^{\frac{d}{12}}(1-\Delta_{\sim M})v}_{L_t^4L_x^3(I)} \\
&\leq C_\alpha M^{-\alpha+\frac{d}{12}}N^{\frac{d}{12}} \norm{\widetilde{\Delta}_Nu_0}_{L_x^2}\norm{\widetilde{\Delta}_Mv_0}_{L_x^2}\,.
\end{split}
\end{equation*}
Choosing $\alpha=\frac{d}{12}+\frac{1}{2}$, writing 
\[
\gamma(I)= (N^\frac{d-1}{2}M^{-\frac{1}{2}})^{-1}\norm{u(1-\Delta_{\sim M})v}_{L_{t,x}^2(I)}\,,
\]
and, by using sub-additivity, we easily obtain~\eqref{eq-fr-sub}. Finally,~\eqref{eq-red1} follows from a Littlewood-Paley decomposition of $u$ in the first term on the right-hand side of~\eqref{eq-dec-pr}.
\end{proof}
\paragraph{\textbf{Step 2 : Replacing the perturbed evolution $\e^{-itH}$ by the free one $\e^{it\Delta}$.}}
To prove Proposition~\ref{prop-bilinear}, it remains to estimate the first two terms on the right-hand side of~\eqref{eq-red1}. The idea is to use Bourgain's estimate~\eqref{eq-bili} in the free case. To do so, we shall understand how to describe the perturbed evolution $\e^{-itH}$ in terms of $\e^{it\Delta}$. For this purpose, we see $V$ as a forcing term, and we follow the proof of the inhomogeneous bilinear estimate from~\cite{ckstt2008}. Before that, we prove the following lemma that we will use to estimate the Duhamel term that comes from the potential, seen as the forcing term in what follows.
\begin{lemma}
\label{lemma-ck2}
For all $\alpha\in\N$, there exists $C=C(\alpha,V)$ such that for all dyadic integer $K,N$ and all $u_0\in L_x^2$ we have
\begin{equation}
\label{eq-lemma-ck2}
\parent{\int_{\R}\norm{\Delta_K\parent{V\widetilde{\Delta}_N\e^{-itH}u_0}}_{L_x^2}^2d\tau}^{1/2}\leq C \begin{cases}
K^{-\alpha}\norm{\widetilde{\Delta}_Nu_0}_{L_x^2}\quad &\text{if}\ K\geq 2^5N\,,\\
\norm{\widetilde{\Delta}_Nu_0}_{L_x^2} \quad &\text{if}\ K\lesssim N \,.
\end{cases}
\end{equation}
\end{lemma}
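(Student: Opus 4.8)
The plan is to split Lemma~\ref{lemma-ck2} into its two regimes and to handle them by genuinely different mechanisms, keeping in mind the one structural constraint of the problem: since $\e^{-itH}$ is unitary on $L^2$, the function $g(t)\coloneqq\widetilde\Delta_N\e^{-itH}u_0=\e^{-itH}\widetilde\Delta_Nu_0$ has a time-independent $L^2_x$-norm, so no piece of $\Delta_K(Vg)$ may be estimated by a fixed multiple of $\norm{\widetilde\Delta_Nu_0}_{L^2}$ — every term has to be routed either through a global-in-time Strichartz norm of $g$ or through the weighted norm $\norm{g}_{L_t^2(\R;\mathcal{H}^{1,-1/2-})}$ that the local smoothing estimate of Proposition~\ref{prop-local-smoothing} controls.

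For the regime $K\lesssim N$ (indeed for every $K$) I would discard the $L^2$-bounded projector $\Delta_K$ and apply Hölder in space, $\norm{\Delta_K(Vg(t))}_{L^2_x}\leq\norm{Vg(t)}_{L^2_x}\leq\norm{V}_{L^d_x}\norm{g(t)}_{L_x^{2d/(d-2)}}$, which is licit since $\tfrac1d+\tfrac{d-2}{2d}=\tfrac12$ and $V\in\mathcal{S}(\R^d)\subset L^d$. Taking the $L^2_t$-norm and invoking the endpoint global-in-time Strichartz estimate~\eqref{eq-global-str} for the admissible pair $(2,\tfrac{2d}{d-2})$ gives $\norm{g}_{L_t^2L_x^{2d/(d-2)}}\lesssim\norm{\widetilde\Delta_Nu_0}_{L^2}$, which is the second bound in~\eqref{eq-lemma-ck2}.

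For the regime $K\geq 2^5N$ I would fix $\alpha$, decompose $g=\sum_L\Delta_Lg$ into flat Littlewood--Paley pieces~\eqref{eq-littlewood-paley}, and split $\Delta_K(Vg)=\sum_L\Delta_K(V\Delta_Lg)$ according to whether $|k-l|\geq 3$ or $|k-l|\leq 2$. On the far part I would sum the pointwise bound~\eqref{eq-loc1} of Lemma~\ref{lemma-loc-weight}, with $\alpha$ replaced by $\alpha+1$, over the admissible $l$ (the sum converges because $l$ runs over $l\geq 0$), obtaining at fixed time
\[
\sum_{|k-l|\geq 3}\norm{\Delta_K(V\Delta_Lg(t))}_{L^2_x}\lesssim K^{-\alpha-1}\norm{\japbrak{x}^{-2}g(t)}_{L^2_x}\leq K^{-\alpha-1}\norm{g(t)}_{\mathcal{H}^{1,-1/2-}}.
\]
On the near part, $|k-l|\leq 2$ together with $k\geq n+5$ forces $l\geq n+3$, so each $\Delta_Lg$ is a Littlewood--Paley piece of $g$ sitting far from its native distorted frequency $N$; writing $g=\Delta_L\widetilde\Delta^\sharp_N\e^{-itH}\widetilde\Delta_Nu_0$ with a fattened distorted projector $\widetilde\Delta^\sharp_N=\varphi^\sharp(N^{-1}H)$, $\varphi^\sharp\varphi=\varphi$, the semiclassical disjoint-support calculus behind~\eqref{eq-negligible} shows $\norm{\Delta_L\widetilde\Delta^\sharp_N\,\japbrak{x}^{1/2+}}_{\mathcal{B}(L^2)}=O(2^{-\beta l})$ for every $\beta$, whence $\norm{\Delta_K(V\Delta_Lg(t))}_{L^2_x}\lesssim_V 2^{-\beta l}\norm{\japbrak{x}^{-1/2-}g(t)}_{L^2_x}$ and, summing the finitely many $l\in[k-2,k+2]$ with $\beta=\alpha+1$, the near part is again $\lesssim K^{-\alpha-1}\norm{g(t)}_{\mathcal{H}^{1,-1/2-}}$. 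It then remains to take $L^2_t$, apply Proposition~\ref{prop-local-smoothing} (with $F=0$ and $\psi_0=\widetilde\Delta_Nu_0$) together with the perturbed Bernstein inequality~\eqref{eq-bernstein-v} to get $\norm{g}_{L_t^2(\R;\mathcal{H}^{1,-1/2-})}\lesssim_V\norm{\widetilde\Delta_Nu_0}_{\mathcal{H}^{1/2}}\lesssim N^{1/2}\norm{\widetilde\Delta_Nu_0}_{L^2}$, and finally to absorb the surplus $N^{1/2}\lesssim K^{1/2}$ using $K\geq 2^5N$, which yields the first bound in~\eqref{eq-lemma-ck2} with $K^{-\alpha-1/2}$ in place of $K^{-\alpha}$.

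I expect the only genuine obstacle to be the one flagged above — the incompatibility of $L^2_x$-conservation with time integrability — which is precisely what forces the use of the endpoint Strichartz pair and of the local smoothing norm rather than cruder estimates. The remaining technical point, upgrading~\eqref{eq-negligible} to the weighted operator bound $\norm{\Delta_L\widetilde\Delta^\sharp_N\japbrak{x}^{1/2+}}_{\mathcal{B}(L^2)}=O(2^{-\infty l})$, is routine once one observes that this composition is still $\Op_h$ of a symbol which is Schwartz in $x$ with all seminorms $O(h^\infty)$, so that Calderón--Vaillancourt~\eqref{eq-calderon} applies.
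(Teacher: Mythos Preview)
Your overall architecture matches the paper's: same splitting into regimes, same Littlewood--Paley decomposition of $g$ in the high-$K$ case, same use of Lemma~\ref{lemma-loc-weight} plus local smoothing on the far piece $|k-l|\geq 3$. Your endpoint-Strichartz treatment of the regime $K\lesssim N$ is correct and clean.

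The divergence is in the near piece ($|l-k|\leq 2$, hence $L\sim K\gg N$ and $|l-n|\geq 3$). Here the paper does \emph{not} go through local smoothing: it dualizes, writing
\[
\norm{\Delta_K(V\Delta_L g)}_{L^2}=\sup_{\norm{h}_{L^2}\leq 1}\bigl|\prodscal{\e^{-itH}\widetilde\Delta_N u_0}{\widetilde\Delta_N\Delta_L(V\Delta_K h)}\bigr|,
\]
applies~\eqref{eq-negligible} to $\widetilde\Delta_N\Delta_L$ acting on the fixed-in-time function $V\Delta_K h$ (so only the unweighted Sobolev-gain bound is needed, and the decay $2^{-\alpha\max(l,n)}=2^{-\alpha l}\sim K^{-\alpha}$ drops out directly), and closes the time integral with the endpoint Strichartz pair $(2,\tfrac{2d}{d-2})$.

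Your route instead requires the weighted operator bound $\norm{\Delta_L\widetilde\Delta_N^\sharp\langle x\rangle^{1/2+}}_{\mathcal{B}(L^2)}=O(2^{-\beta l})$, and the justification you sketch has a gap. The problem is the remainder $R_n$ in Lemma~\ref{lemma-approx}: it is controlled only in $\mathcal{B}(L^2,\mathcal H^\sigma)$, with no information whatsoever about $R_n\langle x\rangle^{1/2+}$. Your claim that ``this composition is still $\Op_h$ of a symbol which is Schwartz in $x$'' tacitly assumes that $\widetilde\Delta_N^\sharp$ is itself a pseudodifferential operator with such a symbol, but Lemma~\ref{lemma-approx} only gives this modulo an operator-norm remainder. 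For the principal terms $h^j\Op_h(c_j)$ with $j\geq 1$ your argument does go through (the Schwartz-in-$x$ decay of $c_j$ survives multiplication by $\langle x\rangle^{1/2+}$ and, on the Fourier side, forces $|\xi-\xi'|\gtrsim L^{1/2}$ after composition with $\Delta_L$, yielding the claimed $L^{-\beta}$), and the $j=0$ term is exactly zero. But for $R_n$ there is nothing to say without a weighted upgrade of Lemma~\ref{lemma-approx}, which the paper does not provide. The duality trick sidesteps this entirely: once $\widetilde\Delta_N\Delta_L$ lands on the test function, the unweighted estimate~\eqref{eq-negligible} is enough.
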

\begin{proof}
In the case when $K\lesssim N$, we just apply the local smoothing~\eqref{eq-local-smoothing}. Otherwise, we need to gain a negative power of $K$. To do so, we fix $K>2^5N$ and we do a Littlewood-Paley decomposition of $\widetilde{\Delta}_N\e^{-itH}u_0$ : 
\begin{multline*}
\norm{\Delta_K\parent{V\widetilde{\Delta}_N\e^{-itH}u_0}}_{L_x^2}^2\sim \sum_{\abs{l-k}\geq3}\norm{\Delta_K\parent{V\Delta_L\widetilde{\Delta}_N\e^{-itH}u_0}}_{L_x^2}^2
\\
+\sum_{\abs{l-k}\leq2}\norm{\Delta_K\parent{V\Delta_L\widetilde{\Delta}_N\e^{-itH}u_0}}_{L_x^2}^2=A+B\,.
\end{multline*}
\paragraph{\textbf{\textit{Estimate of term $A$} :}} We use Lemma~\ref{lemma-loc-weight} applied with $f=\widetilde{\Delta}_N\e^{-itH}u_0$ to get 
\[
\sum_{\abs{l-k}\geq3}\norm{\Delta_K\parent{V\Delta_L\e^{-itH}\widetilde{\Delta}_N\e^{-itH}u_0}}_{L_x^2}^2\leq C_\delta K^{-\alpha}\norm{\japbrak{x}^{-2}\e^{-itH}\widetilde{\Delta}_Nu_0}_{L_x^2}^2\,.
\]
Then, we conclude by the local smoothing estimate~\eqref{eq-local-smoothing} for $\e^{-itH}$ that
\begin{equation*}
\begin{split}
\int_\R \sum_{\abs{l-k}\geq3}\norm{\Delta_K\parent{V\Delta_L\e^{-itH}\widetilde{\Delta}_N\e^{-itH}u_0}}_{L_x^2}^2dt&\leq C_\alpha K^{-\alpha}\int_\R\norm{\japbrak{x}^{-2}\e^{-itH}\widetilde{\Delta}_Nu_0}_{L_x^2}^2dt\\
&\leq C_\alpha K^{-\alpha}\norm{\widetilde\Delta_Nu_0}_{L_x^2} \,.
\end{split}
\end{equation*}
\paragraph{\textbf{\textit{Estimate of term $B$} :}} We use~\eqref{eq-negligible} and the endpoint Strichartz estimate for $\e^{-itH}$, that is the Strichartz estimate~\ref{eq-global-str} with the admissible pair $(2,\frac{2d}{d-2})$. Let us fix $l$ such that $\abs{k-l}\leq2$ and write
\begin{equation*}
\begin{split}
\norm{\Delta_K\parent{V\Delta_L\widetilde{\Delta}_N\e^{-itH}u_0}}_{L_x^2} &= \underset{\norm{g}_{L_x^2\leq1}}{\sup}\abs{\prodscal{\Delta_K\parent{V\Delta_L\widetilde{\Delta}_N\e^{-itH}u_0}}{g}_{L_x^2}}\\
&= \abs{\prodscal{\e^{-itH}\widetilde{\Delta}_Nu_0}{\widetilde{\Delta}_N\Delta_L(V\Delta_Kg)}_{L_x^2}}\,.
\end{split}
\end{equation*}
By Hölder and Sobolev embedding, we have
\begin{equation}
\begin{split}
\label{eq:lemma-inter}
\norm{\Delta_K\parent{V\Delta_L\widetilde{\Delta}_N\e^{-itH}u_0}}_{L_x^2}&\leq \norm{\e^{-itH}\widetilde{\Delta}_Nu_0}_{L_x^\frac{2d}{d-2}}\norm{(\widetilde{\Delta}_N\Delta_L)V\Delta_Kg}_{L_x^d} \\
&\leq \norm{\e^{-itH}\widetilde{\Delta}_Nu_0}_{L_x^\frac{2d}{d-2}}\norm{(\widetilde{\Delta}_N\Delta_L)V\Delta_Kg}_{\mathcal{H}_x^\frac{d-2}{2}}\,.
\end{split}
\end{equation}
Since $\abs{k-n}\geq5$ and $\abs{l-k}\leq2$, we have $\abs{n-l}\geq3$. Therefore, we can apply~\eqref{eq-negligible}, and get
\[
\norm{(\widetilde{\Delta}_N\Delta_L)V\Delta_Kg}_{\mathcal{H}^\frac{d-2}{2}}\lesssim_\alpha K^{-\alpha}\norm{V\Delta_Kg}_{L_x^2}\lesssim_\alpha K^{-\alpha}\norm{g}_{L_x^2}\,.
\]
Hence, we conclude from~\eqref{eq:lemma-inter} that 
\[
\norm{\Delta_K\parent{V\Delta_L\widetilde{\Delta}_N\e^{-itH}\widetilde{\Delta}_Nu_0}}_{L_x^2}\lesssim_\alpha K^{-\alpha}\norm{g}_{L_x^2}\norm{\e^{-itH}\widetilde{\Delta}_Nu_0}_{L_x^\frac{2d}{d-2}}\lesssim_\alpha K^{-\alpha}\norm{\e^{-itH}\widetilde{\Delta}_Nu_0}_{L_x^\frac{2d}{d-2}}\,,
\]
and the endpoint Strichartz estimate~\eqref{eq-global-str} for $\e^{-itH}$ yields 
\[
\int_\R\sum_{\abs{k-l}\leq2}\norm{\Delta_K\parent{V\Delta_L\widetilde{\Delta}_N\e^{-itH}\widetilde{\Delta}_Nu_0}}_{L_x^2}^2
\lesssim_\alpha K^{-\alpha}\int_\R\norm{\e^{-itH}\widetilde{\Delta}_Nu_0}_{L_x^\frac{2d}{d-2}}^2dt\lesssim_\alpha K^{-\alpha}\norm{\widetilde{\Delta}_Nu_0}_{L_x^2}^2\,.
\]
This proves the estimate for term $B$, and finishes the proof of Lemma~\ref{lemma-ck2}.
\end{proof}
Let us fix an interval $I\subseteq \R$. Without loss of generality, we assume that $0\in I$ and, given a dyadic integer $K$, we write Duhamel's formula as follows :
\begin{equation}
\begin{split}
\label{eq-duhamel-withney}
\Delta_Ku(t) &= \e^{it\Delta}\Delta_K\widetilde\Delta_Nu_0 -i\int_0^t\e^{i(t-\tau)\Delta}\Delta_K(Vu(\tau))d\tau\,,\\
\Delta_{\sim M}v(t) &= \e^{it\Delta}\Delta_{\sim M}\widetilde\Delta_Mv_0 -i\int_0^t\e^{i(t-\tau)\Delta}\Delta_{\sim M}(Vv(\tau))d\tau\,.
\end{split}
\end{equation}
Then, we write
\[
F(t) = \int_0^t\e^{-i\tau\Delta}\Delta_K(Vu(\tau))d\tau,\quad G(t) =\int_0^t\e^{-i\tau\Delta}\Delta_{\sim M}(Vv(\tau))d\tau\,.
\]
Using decomposition~\eqref{eq-duhamel-withney} and developing the product, we obtain
\begin{equation}
\label{eq-I1-4}
\begin{split}
\norm{\Delta_Ku\Delta_{\sim M}v}_{L_{t,x}^2(I)}\leq &\norm{\e^{it\Delta}\parent{\Delta_K\widetilde\Delta_Nu_0}\e^{it\Delta}\parent{\Delta_{\sim M}\widetilde\Delta_Mv_0}}_{L_{t,x}^2(I)} \\
&+\norm{\e^{it\Delta}F(t)\e^{it\Delta}\parent{\Delta_{\sim M}\widetilde\Delta_Mv_0}}_{L_{t,x}^2(I)}\\
&+\norm{\e^{it\Delta}\parent{\Delta_K\widetilde\Delta_Nu_0}\e^{it\Delta}G(t)}_{L_{t,x}^2(I)}\\
&+ \norm{\e^{it\Delta}F(t)\e^{it\Delta}G(t)}_{L_{t,x}^2(I)}\\
&=I_1+I_2+I_3+I_4\,.
\end{split}
\end{equation}
\begin{proof}[Proof of Proposition~\ref{prop-bili-V}]
It remains to estimate each term in the above decomposition. 
\paragraph{\textbf{\textit{Estimate of term} $I_1$ :}} We can directly use the bilinear estimate~\eqref{eq-bili} for the free evolution to estimate $I_1$, where we have the interaction of two free Schrödinger evolutions of frequency-localized data. For any interval $I$
\begin{multline*}
\norm{\e^{it\Delta}\parent{\Delta_K\widetilde\Delta_Nu_0}\e^{it\Delta}\parent{\Delta_{\sim M}\widetilde\Delta_Mv_0}}_{L_{t,x}^2(I)}\\
\leq \norm{\e^{it\Delta}\parent{\Delta_K\widetilde\Delta_Nu_0}\e^{it\Delta}\parent{\Delta_{\sim M}\widetilde\Delta_Mv_0}}_{L_{t,x}^2(\R)}\\
\lesssim N^\frac{d-1}{2}M^{-\frac{1}{2}}\norm{\Delta_K\widetilde{\Delta}_{N}u_0}_{L_x^2}\norm{\widetilde{\Delta}_Mv_0}_{L_x^2}\\
\lesssim C(K,N)N^\frac{d-1}{2}M^{-\frac{1}{2}}\norm{\widetilde{\Delta}_Nu_0}_{L_x^2}\norm{\widetilde{\Delta}_Mv_0}_{L_x^2} \,,
\end{multline*}
where we have from~\eqref{eq-negligible} that for any $\alpha$, there exists $C_\alpha$ such that for all $N,K$ 
\begin{equation}
\label{eq-ckn}
C(K,N)=
\begin{cases}
1\quad &\text{if}\  K\lesssim N\,,\\
C_\alpha K^{-\alpha}\quad &\text{if}\  K\geq2^5N\,.
\end{cases}
\end{equation}
Moreover, we can use sub-additivity to deduce that for any partition $\bigcup_{\ell}I_\ell=\R$,
\[
\parent{\sum_\ell\norm{\e^{it\Delta}\parent{\Delta_K\widetilde\Delta_Nu_0}\e^{it\Delta}\parent{\Delta_{\sim M}\widetilde\Delta_Mv_0}}_{L_{t,x}^2(I_l)}^2}^{1/2}\lesssim C(K,N)\norm{\widetilde{\Delta}_Nu_0}_{L_x^2}\norm{\widetilde{\Delta}_Mv_0}_{L_x^2}
\]
as well. To estimate the others terms in~\eqref{eq-I1-4}, we shall use Lemma~\ref{lemma-loc-weight} and exploit the local smoothing effect to get some global in time estimates. For now on we fix an interval $I$ of size $\abs{I}\sim1$, and see how to handle terms $I_2,I_3$ and $I_4$.
\paragraph{\textbf{\textit{Estimate of term} $I_2$ :}} We use Minkowski and the bilinear estimate~\eqref{eq-bili} for the free Schrödinger equation to get 
\begin{equation*}
\begin{split}
\norm{\e^{it\Delta}F(t)\e^{it\Delta}\parent{\Delta_{\sim M}\widetilde\Delta_Mv_0}}_{L_{t,x}^2(I)}&=\norm{\e^{it\Delta}\parent{\int_0^t\e^{-i\tau\Delta}\Delta_K(Vu(\tau))d\tau}\e^{it\Delta}\parent{\Delta_{\sim M}\widetilde\Delta_Mv_0}}_{L_{t,x}^2(I)}\\
&\leq\int_I\norm{\e^{it\Delta}\parent{\e^{-i\tau\Delta}\Delta_K(Vu(\tau))}\e^{it\Delta}\parent{\Delta_{\sim M}\widetilde\Delta_Mv_0}}_{L_{t,x}^2(I)}d\tau\\
&\lesssim K^\frac{d-1}{2}M^{-\frac{1}{2}}\int_I\norm{\e^{-i\tau\Delta}\Delta_K(Vu(\tau))}_{L_x^2}\norm{\widetilde\Delta_Mv_0}_{L_x^2}d\tau\\
&\lesssim K^\frac{d-1}{2}M^{-\frac{1}{2}}\parent{\int_I\norm{\Delta_K(V\widetilde\Delta_N\e^{-i\tau H}u_0)}_{L_x^2}^2d\tau}^{1/2}\norm{\widetilde\Delta_Mv_0}_{L_x^2}\,,
\end{split}
\end{equation*}
where we used the assumption that $\abs{I}\sim1$. If $K\geq 2^5N$, we apply Lemma~\ref{lemma-ck2} to have 
\begin{equation*}
\begin{split}
K^\frac{d-1}{2}M^{-\frac{1}{2}}\parent{\int_I\norm{\Delta_K(V\widetilde\Delta_N\e^{-i\tau H}u_0)}_{L_x^2}^2d\tau}^{1/2}&\lesssim K^{\frac{d-1}{2}-\alpha}M^{-\frac{1}{2}}\norm{\widetilde\Delta_Nu_0}_{L_x^2}\norm{\widetilde\Delta_Mv_0}_{L_x^2}\\
&\lesssim K^{-1}M^{-\frac{1}{2}}\norm{\widetilde\Delta_Nu_0}_{L_x^2}\norm{\widetilde\Delta_Mv_0}_{L_x^2}\,,
\end{split}
\end{equation*}
by choosing $\alpha=\frac{d-1}{2}-1$. Otherwise, $K\leq 2^5$ and we use Minkowski,  we apply the bilinear estimate~\eqref{eq-bili} and the local smoothing estimate~\eqref{eq-local-smoothing} to get 
\begin{equation*}
\begin{split}
\norm{\e^{it\Delta}F(t)\e^{it\Delta}\parent{\Delta_{\sim M}\widetilde\Delta_Mv_0}}_{L_{t,x}^2(I)}&\lesssim \int_I\norm{\e^{it\Delta}\parent{\e^{-i\tau\Delta}\Delta_{\lesssim N}(Vu(\tau))}\e^{it\Delta}\parent{\Delta_{\sim M}\widetilde\Delta_Mv_0}}_{L_{t,x}^2(I)}d\tau \\
&\lesssim N^\frac{d-1}{2}M^{-\frac{1}{2}}\parent{\int_I\norm{V\e^{-i\tau H}\widetilde{\Delta}_Nu_0}_{L_x^2}^2d\tau}^{1/2}\norm{\widetilde\Delta_Mv_0}_{L_x^2}\\
&\lesssim N^\frac{d-1}{2}M^{-\frac{1}{2}}\norm{\widetilde\Delta_Nu_0}_{L_x^2}\norm{\widetilde\Delta_Mv_0}_{L_x^2}\,.
\end{split}
\end{equation*}
\paragraph{\textbf{\textit{Estimate of term} $I_3$ :}} Similarly, we apply the bilinear estimate~\eqref{eq-bili} and Minkowski to get
\begin{equation*}
\begin{split}
\norm{\e^{it\Delta}\parent{\Delta_K\widetilde\Delta_Nu_0}\e^{it\Delta}G(t)}_{L_{t,x}^2(I)} &=\norm{\e^{it\Delta}\parent{\Delta_K\widetilde\Delta_Nu_0}\e^{it\Delta}\parent{\int_0^t\e^{-i\tau\Delta}\Delta_{\sim M}(Vv(\tau))d\tau}}_{L_{t,x}^2(I)} \\
&\lesssim K^\frac{d-1}{2}M^{-\frac{1}{2}}\norm{\Delta_K\widetilde\Delta_Nu_0}_{L_x^2}\int_I\norm{\Delta_{\sim M}(V\e^{-i\tau H}\widetilde\Delta_M v_0)}_{L_x^2}d\tau\,.
\end{split}
\end{equation*}
It follows from~\eqref{eq-negligible} that
\[
K^\frac{d-1}{2}\norm{\Delta_K\widetilde\Delta_Nu_0}_{L_x^2}\lesssim K^{-1}\norm{\widetilde\Delta_Nu_0}_{L_x^2}\,.
\]
Moreover, we deduce from the fact that $\abs{I}\sim1$ and from the local smoothing estimate~\eqref{eq-local-smoothing} that  
\[
\int_I\norm{\Delta_{\sim M}(V\e^{-i\tau H}\widetilde\Delta_M v_0)}_{L_x^2}d\tau
\lesssim \parent{\int_I\norm{V\e^{-i\tau H}\widetilde\Delta_M v_0}_{L_x^2}^2d\tau}^{1/2}
\lesssim\norm{\widetilde\Delta_Mv_0}_{L_x^2}\,.
\]
\paragraph{\textbf{ \textit{Estimate of term} $I_4$ :}} We use Minkowski and~\eqref{eq-bili} to get 
\begin{equation*}
\begin{split}
&\norm{\e^{it\Delta}F(t)\e^{it\Delta}G(t)}_{L_{t,x}^2(I)}
= \norm{\e^{it\Delta}\parent{\int_0^t\e^{-i\tau\Delta}\Delta_K(Vu(\tau))d\tau}\e^{it\Delta}\parent{\int_0^t\e^{-i\tau\Delta}\Delta_{\sim M}(Vv(\tau))d\tau}}_{L^2(I)}\\
&\leq \iint_{I\times I}\norm{\e^{it\Delta}\parent{\e^{-i\tau_1\Delta}\Delta_K(Vu(\tau_1))}\e^{it\Delta}\parent{\e^{-i\tau_2\Delta}\Delta_{\sim M}(Vv(\tau_2))}}_{L_{t,x}^2}d\tau_1d\tau_2\\
&\lesssim K^\frac{d-1}{2}M^{-\frac{1}{2}}\parent{\int_I\norm{\Delta_K(V\widetilde\Delta_N\e^{-i\tau H}u_0}_{L_x^2}d\tau}\parent{\int_I\norm{\Delta_{\sim M}(V\widetilde\Delta_M\e^{-i\tau H}v_0)}_{L_x^2}d\tau}\,.
\end{split}
\end{equation*}
Then, we conclude as in case $I_2$ and $I_3$, by using Lemma~\ref{lemma-loc-weight} and the local smoothing estimate~\eqref{eq-local-smoothing}. We complete the proof of Proposition~\ref{prop-bilinear} in the case when $\abs{I}\sim1$ by summing over $K$. To prove the global estimate when $I=\R$, we consider a partition $\bigcup_\ell I_\ell$ made of intervals of size $\abs{I_\ell}\sim1$. Then, we exploit the  sub-additivity property of the $L^2$ norm, that is
\[
\norm{f}_{L_t^2(\R)}\sim\parent{\sum_{\ell }\norm{f}_{L_t^2(I_\ell)}^2}^{1/2}\,
\]
for a given function $f\in L^2(\R)$. Moreover, when estimating terms $I_2,I_3$ and $I_4$, we use that the estimate~\eqref{eq-lemma-ck2} from Lemma~\ref{lemma-ck2} is global in time. This is a consequence of the local smoothing effect.
\end{proof}
\section{Probabilistic scattering on the continuous spectral subspace}
\label{section-continuous}
The probabilistic nonlinear a priori estimates and other partial results, such as the critical-weighted strategy presented in this section will be reused in the next when proving Theorem~\ref{theorem-soliton}. Here, we chose to state and to prove Theorem~\ref{theorem-continuous} from a pedagogical perspective before addressing the proof of the main theorem. We recall that we search for a solution of~\eqref{eq-nls-continuous}, which is~\eqref{eq-nls} projected on the continuous spectral subspace, under the form $\psi = \epsilon u^\omega+v$ where $u^\omega=\e^{-itH}u_0^\omega$ and where $v$, that lies in a critical space embedded into $L^\infty(\R;\mathcal{H}^{s_c}(\R^d))$, is solution to the cubic NLS equation with a stochastic forcing term
\begin{equation}
\label{eq-nls-continuous-v}
\begin{cases}
i\partial_tv - Hv=\Pc \mathcal{N}(\epsilon u^\omega+v)\,,\\
v_{\mid t=0}=\psi_0\in\mathcal{H}^{s_c}(\R^d)\,.
\end{cases}
\end{equation}
For the sake of completeness, we first recall definitions and essential properties of \textit{critical spaces} introduced by~\cite{hadac2009,herr2011}. See also~(\cite{koch2013}, p. 49-67) for an expository presentation. These spaces of functions from an interval $I\subseteq\R$ to a Hilbert space $\mathcal{H}$ are constructed upon $V^q, U^p$ spaces. Roughly speaking, they can be seen as the extensions of~\textit{Bourgain spaces} $X^{s,b}$ for $b=1/2$, they embed into $L^\infty(\R,\mathcal{H}^s)$ and are well-behaved with respect to sharp cutoff functions in time. Furthermore, they are well suited for global-in-time a priori estimates thanks to the \textit{duality argument} detailed in section~\ref{sub-duality}, while Bourgain spaces are rather used for local-in-time estimates.
\subsection{Critical spaces}
\label{section-critical}
For now on, the Hilbert space $\mathcal{H}$ is $L^2(\R^d)$ unless otherwise specified. We fix a real number $1\leq q<+\infty$ and an interval $I=\intervaloo{a}{b}$ with $-\infty\leq a<b\leq+\infty$ and we denote the collection of finite partitions of $I$ by $\mathcal{Z}$ :
\begin{equation*}\mathcal{Z}\coloneqq \set{\{t_k\}_{k = 0\dots K}\mid a = t_0 < t_1 <\dots< t_K =b}\,.
\end{equation*}
\subsubsection{Definitions and embeddings}
\begin{definition}[Functions of bounded $q$-variation]
\label{def-Vp}
$V^q(I)$ is the set of functions $v:I\to \mathcal{H}$ endowed with the norm
\begin{equation*}
 \norm{v}_{V^q(I)} \coloneqq \underset{\{t_k\}_{k=0}^{K-1}\in\mathcal{Z}}{\sup} \parent{\sum_{k=1}^K\norm{v(t_{k})-v(t_{k-1})}_{\mathcal{H}}^q}^{1/q}.
\end{equation*}
\end{definition}
Functions in $V^q$ have one-sided limits everywhere, and they may have at most countably many discontinuities. In what follows, we consider the closed subspace $V_{rc}^q$ made of right-continuous functions $v$ in $V^q$ with $\underset{t\to-\infty}{\lim}v(t)=0$. We still write them $V^q=V_{rc}^q$. Next we introduce the predual space of $V^q$ (in a sense detailed in paragraph~\ref{sub-duality}), namely the atomic space $U^p$ with $\frac{1}{p}+\frac{1}{q}=1$.
\begin{definition}[Atomic space $U^p$]
\label{def-atom}
A function $a\,:I\to \mathcal{H}$ is a $p$-atom if there exists a partition $\{t_k\}_{k=0\dots K}$ in $\mathcal{Z}$ and $\{\phi_k\}_{k=0,\dots, K-1}$ some elements in $\mathcal{H}$ such that
\begin{equation*}
a (t) = \sum_{k=1}^K\mathbf{1}_{\intervalco{t_{k-1}}{t_k}}(t)\phi_{k-1}\,,\quad \sum_{k=0}^{K-1} \norm{\phi_k}_{\mathcal{H}}^p \leq 1\,.
\end{equation*}
The atomic space $U^p(I)$ is the set of functions $u\,:I\to \mathcal{H}$ endowed with the norm
\begin{equation}
\label{def-atomic-space} 
\norm{u}_{U^p(I)}\coloneqq \inf\set{\norm{(\lambda_j)}_{\ell^1}\mid u = \sum_{j\geq1}\lambda_j a_j\quad \text{for some $U^p$-atoms $(a_j)$}}.
\end{equation}
\end{definition}
Functions in $U^p$ are right-continuous, they admit left limits everywhere, and they may have at most countably many discontinuities.
As we shall see in the next paragraph, $U^p$ is the predual space of $V^q$ when $\frac{1}{p}+\frac{1}{q}=1$.
\begin{proposition}[Embeddings,~\cite{hadac2009} Proposition 2.2 and Corollary 2.6]
\label{prop-embedding}
\item For $1\leq p < q <\infty$,
\begin{equation}
\label{eq-embed}
U^p\hookrightarrow V^p\hookrightarrow U^q\hookrightarrow L_t^\infty(I,\mathcal{H})\,.
\end{equation}
\end{proposition}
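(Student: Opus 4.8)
The plan is to establish the three embeddings of the chain one at a time, from the rightmost inward; only the middle one, $U^p\hookrightarrow V^p$ being routine and $U^q\hookrightarrow L^\infty$ trivial, the step $V^p\hookrightarrow U^q$ requires an idea. First I would treat $U^q\hookrightarrow L_t^\infty(I,\mathcal H)$: any $q$-atom $a=\sum_{k=1}^K\mathbf 1_{\intervalco{t_{k-1}}{t_k}}\phi_{k-1}$ satisfies $\norm{a(t)}_{\mathcal H}\le\max_k\norm{\phi_k}_{\mathcal H}\le\parent{\sum_k\norm{\phi_k}_{\mathcal H}^q}^{1/q}\le1$ for every $t$, so a decomposition $u=\sum_j\lambda_ja_j$ gives $\norm{u(t)}_{\mathcal H}\le\sum_j\abs{\lambda_j}$ and, after optimizing over decompositions, $\norm{u}_{L_t^\infty\mathcal H}\le\norm{u}_{U^q}$. (The same computation with exponent $p$ yields $U^p\hookrightarrow L^\infty$, and for $v\in V^p_{rc}$ one has directly $\norm{v(t)}_{\mathcal H}=\lim_{s\to-\infty}\norm{v(t)-v(s)}_{\mathcal H}\le\norm{v}_{V^p}$.)

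Next, for $U^p\hookrightarrow V^p$ I would bound the $V^p$-norm of a single $p$-atom $a$. Along any partition of $I$, the nonzero consecutive differences $a(s_\ell)-a(s_{\ell-1})$ are of the form $\phi_{i_m}-\phi_{i_{m-1}}$ for a strictly increasing sub-sequence $i_1<i_2<\dots$ of indices appearing in $a$, and in the resulting sum each $\norm{\phi_k}_{\mathcal H}^p$ occurs at most twice; hence, with $(x+y)^p\le2^{p-1}(x^p+y^p)$,
\[
\sum_\ell\norm{a(s_\ell)-a(s_{\ell-1})}_{\mathcal H}^p\le2^{p-1}\cdot2\sum_k\norm{\phi_k}_{\mathcal H}^p\le2^p,
\]
i.e.\ $\norm{a}_{V^p}\le2$. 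Subadditivity of $\norm{\cdot}_{V^p}$, together with the fact that atoms belong to $V^p_{rc}$, then gives $\norm{u}_{V^p}\le2\sum_j\abs{\lambda_j}$ for any decomposition $u=\sum_j\lambda_ja_j$, hence $\norm{u}_{V^p}\le2\norm{u}_{U^p}$.

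The crux is $V^p\hookrightarrow U^q$ for $p<q$. Normalizing $\norm{v}_{V^p}=1$ (so also $\norm{v}_{L^\infty}\le1$), I would run a dyadic stopping-time construction: for $n\ge0$ set $t^{(n)}_0=\inf I$ and $t^{(n)}_{k+1}=\inf\set{t>t^{(n)}_k\mid\norm{v(t)-v(t^{(n)}_k)}_{\mathcal H}>2^{-n}}$ (with $\inf\emptyset=\sup I$), and let $v^{(n)}$ equal $v(t^{(n)}_k)$ on $\intervalco{t^{(n)}_k}{t^{(n)}_{k+1}}$. Two facts drive the argument: (i) by the definition of the infimum, $\norm{v-v^{(n)}}_{L_t^\infty\mathcal H}\le2^{-n}$, so $v^{(n)}\to v$ uniformly, and $v^{(0)}\equiv0$ since $v$ vanishes at $\inf I$ and $\norm{v}_{L^\infty}\le1$; (ii) each completed interval satisfies $\norm{v(t^{(n)}_{k+1})-v(t^{(n)}_k)}_{\mathcal H}\ge2^{-n}$ by right-continuity of $v$, so $\sum_k2^{-np}\le\norm{v}_{V^p}^p=1$ forces at most $2^{np}$ of them, and in particular the construction terminates and $v^{(n)}$ has $\lesssim2^{np}$ steps. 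Telescoping, $v=\sum_{n\ge1}(v^{(n)}-v^{(n-1)})$, and on the common refinement of the level-$n$ and level-$(n-1)$ partitions — which has $\lesssim2^{np}$ intervals — the step function $v^{(n)}-v^{(n-1)}$ has height $\le2^{-n}+2^{-(n-1)}\lesssim2^{-n}$. Since a step function with $N$ steps of height $\le M$ is $MN^{1/q}$ times a $q$-atom, $\norm{v^{(n)}-v^{(n-1)}}_{U^q}\lesssim2^{-n}(2^{np})^{1/q}=2^{-n(1-p/q)}$; because $p<q$ this is summable, so the series converges in $U^q$, its limit agrees with the uniform limit $v$ by the first step, and $\norm{v}_{U^q}\lesssim1$.

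The only delicate step is the last one. One must use that elements of $V^p$ are right-continuous with at most countably many jumps, so that the stopping times are well defined, the construction stops after finitely many steps, and the $v^{(n)}-v^{(n-1)}$ are genuine finite-step $U^q$-atoms up to normalization; and one must track the interval counts $\lesssim2^{np}$ carefully through the common-refinement step, because it is exactly the resulting exponent $1-p/q>0$ — the sole place the strict inequality $p<q$ enters — that makes the telescoping series converge. The first two embeddings, by contrast, are bookkeeping.
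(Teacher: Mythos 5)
The paper does not contain its own proof of this proposition; it states it by citation to \cite{hadac2009} (Proposition~2.2 and Corollary~2.6). Your argument is a correct reconstruction of the proof given there, with the same two key ideas: bounding the $V^p$-variation of a single $p$-atom to get $U^p\hookrightarrow V^p$, and the dyadic stopping-time decomposition of $v$ into step functions with $\lesssim 2^{np}$ jumps of size $\lesssim 2^{-n}$, summable in $U^q$ precisely because $p<q$, for the crucial embedding $V^p\hookrightarrow U^q$.
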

Let us take from~\cite{benyi2015} the continuity property of these norms that is particularly crucial when one wants to use some bootstrap argument. 
\begin{lemma}[Time continuity, see Lemma A.6 in~\cite{benyi2015}]
\label{lemma:U2-continuity}
Let $J=\intervalco{a}{b}$, $u\in U^p(J)\cap C(J;H_x^s(\R^3))$ and $v\in V^q(J)\cap C(J;H_x^s(\R^3))$. The following mappings are continuous
\[
t\in J\to \norm{u}_{U^p\intervalco{a}{t}},\ t\in J\to \norm{v}_{V^q\intervalco{a}{t}}\,.
\]
\end{lemma}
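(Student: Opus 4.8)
The plan is to reduce everything to one‑sided continuity and to pin down exactly where the hypotheses $u,v\in C(J;H^s_x(\R^3))$ are used. First note that both $t\mapsto\norm{v}_{V^q\intervalco{a}{t}}$ and $t\mapsto\norm{u}_{U^p\intervalco{a}{t}}$ are non‑decreasing, since shrinking the interval only shrinks the collection of admissible partitions (for $V^q$) or of atomic decompositions (for $U^p$). Hence each of these functions has one‑sided limits at every $t_0\in J$, and it suffices to prove left‑ and right‑continuity at each $t_0$ separately. The continuity of $u$ and $v$ will enter only through two facts: one‑sided limits coincide with function values (no jumps), and $u,v$ are uniformly continuous on every compact sub‑interval; I write $\omega_v(t_0,s):=\sup_{t\in\intervalcc{t_0}{s}}\norm{v(t)-v(t_0)}_{\mathcal H}\to0$ as $s\to t_0$, and similarly $\omega_u$.

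For left‑continuity I would use that a non‑decreasing, lower semicontinuous function is left‑continuous. For $V^q$: for each finite partition ending at the variable endpoint $t$, the quantity $\sum_{k<K}\norm{v(t_k)-v(t_{k-1})}^q+\norm{v(t)-v(t_{K-1})}^q$ is continuous in $t$, so $t\mapsto\norm{v}_{V^q\intervalco{a}{t}}^q$ is a supremum of continuous functions, hence lower semicontinuous. For $U^p$: by the duality $(U^p)'=V^{p'}$, $\tfrac1p+\tfrac1{p'}=1$, recalled above, $\norm{u}_{U^p\intervalco{a}{t}}$ is comparable to $\sup_{\norm{w}_{V^{p'}\intervalco{a}{t}}\le1}\abs{B(u,w;\intervalco{a}{t})}$ for the Hadac–Herr–Koch pairing $B$, and $t\mapsto B(u,w;\intervalco{a}{t})$ is continuous for each fixed continuous $w$; again a supremum of continuous functions, hence lower semicontinuous. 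Combined with monotonicity this settles the left‑hand limits in both cases.

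Right‑continuity is the substantive part. For $V^q$ I would first prove the sub‑claim $\norm{v}_{V^q\intervalcc{t_0}{s}}\to0$ as $s\downarrow t_0$: setting $\phi(s):=\norm{v}_{V^q\intervalcc{t_0}{s}}^q$ (non‑decreasing, $\phi(t_0)=0$, bounded by $\norm{v}^q_{V^q(J)}$), the superadditivity $\norm{v}^q_{V^q\intervalcc{\alpha}{\beta}}+\norm{v}^q_{V^q\intervalcc{\beta}{\gamma}}\le\norm{v}^q_{V^q\intervalcc{\alpha}{\gamma}}$ (juxtapose partitions) together with continuity of $v$ bounds the $q$‑sum of any partition $t_0=\sigma_0<\dots<\sigma_m=s$ by $\omega_v(t_0,s)^q+\norm{v}^q_{V^q\intervalcc{\sigma_1}{s}}\le\omega_v(t_0,s)^q+\phi(s)-\phi(t_0^+)$, and taking the supremum over partitions forces $\phi(t_0^+)\le\omega_v(t_0,s)^q\to0$. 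Granting this, insert the node $t_0$ into an arbitrary partition of $\intervalco{a}{s}$ with $s>t_0$: continuity of $v$ shows this changes the $q$‑sum by at most $C_{q,v}\,\omega_v(t_0,s)$ (with $C_{q,v}$ depending only on $q$ and $\norm{v}_{L^\infty(J)}$), and the inserted partition splits at $t_0$ into partitions of $\intervalcc{a}{t_0}$ and of $\intervalcc{t_0}{s}$; hence $\norm{v}^q_{V^q\intervalco{a}{s}}\le\norm{v}^q_{V^q\intervalco{a}{t_0}}+\norm{v}^q_{V^q\intervalcc{t_0}{s}}+C_{q,v}\,\omega_v(t_0,s)$, and $s\downarrow t_0$ together with monotonicity gives convergence. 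For $U^p$ the same structure needs the vanishing sub‑claim: if $w\in U^p(J)\cap C(J;H^s_x)$ and $w(t_0)=0$, then $\norm{w}_{U^p\intervalco{t_0}{s}}\to0$ as $s\downarrow t_0$ — proved by taking a near‑optimal atomic decomposition $w=\sum_k\mu_kb_k$ on $\intervalco{t_0}{b}$, fixing $N$ with $\sum_{k>N}\abs{\mu_k}<\epsilon$, and observing that for $s$ close to $t_0$ the finite sum $\sum_{k\le N}\mu_kb_k$ is constant on $\intervalco{t_0}{s}$ with value $w(t_0)-\sum_{k>N}\mu_kb_k(t_0)$ of $\mathcal H$‑norm $<\epsilon$ (atoms being pointwise bounded by $1$), so $\norm{w}_{U^p\intervalco{t_0}{s}}<2\epsilon$. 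Then, with a near‑optimal atomic decomposition $u=\sum_j\lambda_ja_j$ on $\intervalco{a}{t_0}$, $\sum_j\abs{\lambda_j}<\norm{u}_{U^p\intervalco{a}{t_0}}+\epsilon$, extend each atom to $\intervalco{a}{s}$ by its final value: the extension keeps the same $\ell^1$ bound, equals $u$ on $\intervalco{a}{t_0}$ and equals the constant $\lim_{t\to t_0^-}u(t)=u(t_0)$ on $\intervalco{t_0}{s}$ (dominated convergence plus continuity of $u$), so $u|_{\intervalco{a}{s}}$ minus this extension is the restriction to $\intervalco{t_0}{s}$ of the fixed function $u-u(t_0)\1\in U^p(J)\cap C(J;H^s_x)$, which vanishes at $t_0$; its $U^p$‑norm tends to $0$, whence $\norm{u}_{U^p\intervalco{a}{s}}\le\norm{u}_{U^p\intervalco{a}{t_0}}+\epsilon+o(1)$, and $\epsilon\downarrow0$ with monotonicity concludes.

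The main obstacle is exactly this right‑continuity: monotonicity gives the lower bound for free, but the matching upper bound requires quantitative control of the contribution of the shrinking sliver $\intervalcc{t_0}{s}$ as $s\downarrow t_0$. For $V^q$ this is the vanishing of the $q$‑variation on shrinking intervals, which is not purely topological and rests on the superadditivity of $\norm{\cdot}_{V^q}^q$; for $U^p$ it is the analogous vanishing of the $U^p$‑norm on shrinking intervals about a zero of the function, which genuinely exploits the atomic structure — a nonzero constant has $U^p$‑norm equal to its size on every interval, however short, so $L^\infty$‑smallness (uniform continuity) does not by itself suffice. Everything else — monotonicity, lower semicontinuity, the node‑insertion estimate, and the $(U^p,V^{p'})$ duality — is routine bookkeeping.
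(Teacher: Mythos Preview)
The paper does not prove this lemma; it merely cites Lemma~A.6 of \cite{benyi2015}. So there is no in-paper argument to compare against, and your write-up stands as an independent proof.

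Your argument is essentially correct. The overall structure --- monotonicity reduces the question to one-sided continuity, and the real content sits in the vanishing sub-claims on shrinking intervals --- is the right one, and your treatment of right-continuity is clean in both cases: the superadditivity trick for $V^q$ and the ``finitely many atoms are eventually constant near $t_0$, the tail is $\ell^1$-small'' argument for $U^p$ are correct as written (including the dominated-convergence step identifying the constant value with $u(t_0)$, and the fact that extension of atoms by zero to the left preserves the atom property).

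There is one soft spot. In your left-continuity argument for $U^p$ you invoke duality and assert that $t\mapsto B(u,w;\intervalco{a}{t})$ is continuous ``for each fixed continuous $w$'', concluding lower semicontinuity of the supremum. But the duality supremum runs over the full unit ball of $V^{p'}\intervalco{a}{t}$ (which itself varies with $t$), and you do not justify why continuous $w$ suffice; general $w\in V^{p'}$ need not be continuous, and density of continuous functions in $V^{p'}$ in any useful sense is not available. The simplest patch is to drop duality here and prove left-continuity of the $U^p$-norm by the mirror image of your right-continuity argument: extend a near-optimal atomic decomposition of $u$ on $\intervalco{a}{t}$ to $\intervalco{a}{t_0}$ by the atoms' final values, observe that the extension equals the constant $u(t)$ on $\intervalco{t}{t_0}$ by continuity of $u$, and control $u-u(t)$ on $\intervalco{t}{t_0}$ by the left-sided analogue of your sub-claim (now with $w(t_0^-)=0$), whose proof is word-for-word the right-sided version you already gave. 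No new idea is needed.
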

\subsubsection{Duality argument}
\label{sub-duality}
There exists a unique bilinear map $\operatorname{B} : U^p\times V^q\to\C$ such that 
\begin{equation*}\operatorname{B}(u,v)=\sum_{i=1}^K\prodscal{u(t_i)-u(t_{i-1})}{v(t_i)}
\end{equation*}
when $u$ is a right-continuous step function with associated partition $\{t_k\}_{k=0}^K$ such that $u(t_0)\coloneqq u(a)=0$ and when $v$ is a function in $V^q$. We have
\begin{equation*}
\abs{\operatorname{B}(u,v)}\lesssim\norm{u}_{U^q}\norm{v}_{V^p}.
\end{equation*}
Moreover,
\begin{equation*}
v\in V^q \mapsto \operatorname{B}(\cdot,v)\in(U^p)^*
\end{equation*}
is a surjective isometry, and
\begin{equation}
\label{eq-B}
\norm{v}_{V^q}=\underset{u\in U^p,\norm{u}_{U^p}\leq1}{\sup} \abs{\operatorname{B}(u,v)},\quad \norm{u}_{U^p}=\underset{u\in V^q,\norm{u}_{V^q}\leq1}{\sup} \abs{\operatorname{B}(u,v)}.
\end{equation}
The second estimate follows from Hahn-Banach theorem. Furthermore, if $\partial_tu\in L^1(I)$ we have the explicit formula
\begin{equation}
\label{eq-B-expl}
\operatorname{B}(u,v)=\int_I (\partial_t u\mid v)dt\,.
\end{equation}
We refer to~\cite{hadac2009} and~\cite{herr2011} for proofs and details.  
\begin{definition}[Function spaces adapted to linear propagators]
\begin{enumerate}[(i)]
\item We define by
\begin{equation*}
U_H^p(I)\coloneqq\e^{-itH}\parent{U^p(I)\cap L^\infty(I;\ran\Pc)}\,,\quad V_H^q(I)\coloneqq\e^{-itH}\parent{V^q(I)\cap L^\infty(I;\ran\Pc)}
\end{equation*}
the critical spaces adapted to the perturbed linear propagator. They are Banach spaces when endowed with norms 
\begin{equation*}
\norm{u}_{U_H^p}\coloneqq\norm{\e^{itH}u}_{U^p}\,,\quad \norm{v}_{V_H^q}\coloneqq\norm{\e^{itH}v}_{V^q}\,.
\end{equation*}
We have similar definitions for the space $U_\Delta^p$ and $V_\Delta^q$ adapted to the free evolution $\e^{it\Delta}$. 
\item The spaces $DU^p$ and $DU_H^p$ are defined by
\[
DU^p=\set{f=\partial_tu\mid u\in U^p\,,\ f(a^+)=0}\,,\
DU_H^p=\e^{-itH}\set{f=\e^{-itH}\partial_tu\mid u\in U^p,\ f(a^+)=0}\,.
\]
These spaces are endowed with norms 
\begin{equation*}
\norm{f}_{DU^p}=\norm{u}_{U^p}\,,\quad \norm{f}_{DU_H^p}=\norm{\e^{itH}u}_{U^p}\,.
\end{equation*}
Spaces $DV^q$ and $DV_H^q$ are defined in the same way.
\end{enumerate}
\end{definition}
\begin{remark}
Here, $\partial_t$ has to be understood in the sense of distributions on $I$. Let us recall that for distributions in one dimension, $\partial_tu=0$ implies that $u$ is constant, and thus we note that there is no ambiguity in the above definition, since we imposed the condition $\underset{t\to a}{\lim}f=0$.
\end{remark}
The definition of $DU_H^p$ and $DV_H^q$ are motivated by the need to control the Duhamel integral representation of a solution $u$ to a forced Schrödinger equation. Namely, given $f$ in $L_t^1(I,\mathcal{H})$ and $u$ such that 
\begin{equation*}
\begin{cases}
i\partial_tu-Hu=f\,,\\
u\lvert_{t=0}=u_0\,,
\end{cases}
\end{equation*}
the Duhamel's formulation reads
\begin{equation*}
u(t)=\e^{-itH}u_0-i\int_0^t\e^{-i(t-\tau)H}f(\tau)d\tau\,.
\end{equation*}
Consequently, we get
\begin{equation*}
\norm{u}_{U_H^p}\leq\norm{u_0}_{\mathcal{H}}+\norm{f}_{DU_H^p}=\norm{u_0}_{\mathcal{H}}+\norm{\int_0^t\e^{i\tau H}f(\tau)d\tau}_{U^p}\,.
\end{equation*}
Furthermore, it follows from the duality argument~\eqref{eq-B} that
\begin{equation*}
\norm{f}_{DU_H^p} =\norm{\e^{-itH}\int_0^te^{i\tau H}f(\tau)d\tau}_{U_H^2} = \underset{\norm{v}_{V^q}\leq1}{\sup}\bigl\lvert \operatorname{B}\bigl(\int_0^t\e^{i\tau H}f(\tau)d\tau, v\bigr)\bigr\rvert\,.
\end{equation*}
Since $f$ is in $L_t^1(I,\mathcal{H})$ we apply the explicit formula for $\operatorname{B}$~\eqref{eq-B-expl} to get that
\begin{equation}
\label{eq-duality}
\norm{f}_{DU_H^p} = \underset{\norm{v}_{V_H^q}\leq1}{\sup}\bigl\lvert\int_I(f\mid v)d\tau\bigr\rvert\,.
\end{equation}
The corresponding formula for $DV_H^q$ can also be deduced from~\eqref{eq-B} : 
\begin{equation}
\label{eq-duality-DV}
\norm{f}_{DV_H^q} = \underset{\norm{u}_{U_H^p}\leq1}{\sup}\bigl\lvert\int_I(f\mid u)d\tau\bigr\rvert\,.
\end{equation} 
\subsubsection{Transferred linear and bilinear estimates}
\begin{proposition}
\label{prop-transfer}
For $q>2$ and $r$ with $\displaystyle\frac{2}{q}+\frac{d}{r}=\frac{d}{2}$ and $2(d+2)\leq dp$
we have
\begin{enumerate}[(i)]
\item Linear Strichartz estimates :
\begin{align}
\label{eq-str-transferred}
&\norm{v}_{L_t^q(\R ; L_x^r(\R^d))}\lesssim\norm{v}_{V_H^2}\,,\\
\label{eq-str-bernstein-transferred}
&\norm{v}_{L_{t,x}^p(\R\times\R^d)}\lesssim\norm{\langle\sqrt H\rangle^{\frac{d}{2}-\frac{d+2}{p}}v}_{V_H^2}\,.
\end{align}
\item Bilinear Strichartz estimate : Let $N,M$ be two dyadic numbers,
\begin{equation}
\label{eq-bili-transferred}
\norm{\widetilde\Delta_Nu\widetilde\Delta_Mv}_{L_{t,x}^2(\R\times\R^d)}\lesssim N^\frac{d-2}{2}\parent{\frac{N}{M}}^{\frac{1}{2}-}\norm{\widetilde\Delta_Nu}_{V_H^2}\norm{\widetilde\Delta_Mv}_{V_H^2}\,.
\end{equation}
\end{enumerate}
\end{proposition}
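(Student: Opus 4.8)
The plan is to deduce all three estimates from the standard \emph{transference principle} for the $U^p$/$V^q$ calculus: any bound of the shape $\norm{\e^{-itH}\Pc f}_{Y}\lesssim\norm{f}_{X}$, with $Y$ a space of space-time functions in which $U^p$-atoms are controlled, transfers to $\norm{u}_{Y}\lesssim\norm{u}_{U_H^p}$ (after replacing $X$ by the corresponding operator acting on the atom's values), and then the embedding $V^2\hookrightarrow U^p$ for $p>2$ from \eqref{eq-embed} upgrades the right-hand side from $U_H^p$ to $V_H^2$. All the analytic input comes from results already in hand: the deterministic Strichartz estimate \eqref{eq-global-str}, the Sobolev-space equivalence of Lemma~\ref{lemma-sobolev}, the Bernstein inequality \eqref{eq-bernstein-v}, and the bilinear estimate of Proposition~\ref{prop-bili-V}.

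For the linear part I would first prove \eqref{eq-str-transferred}. Given a $U^q$-atom $a=\sum_k\mathbf 1_{\intervalco{t_{k-1}}{t_k}}\phi_{k-1}$, the function $\e^{-itH}a$ is piecewise a perturbed free evolution, so $\norm{\e^{-itH}a}_{L_t^qL_x^r}^q=\sum_k\int_{t_{k-1}}^{t_k}\norm{\e^{-i\tau H}\phi_{k-1}}_{L_x^r}^q\,d\tau\le\sum_k\norm{\e^{-i\tau H}\phi_{k-1}}_{L_t^qL_x^r}^q\lesssim\sum_k\norm{\phi_{k-1}}_{L^2}^q\le1$ by \eqref{eq-global-str} and $q\ge2$; summing over atoms gives $\norm{u}_{L_t^qL_x^r}\lesssim\norm{u}_{U_H^q}$, and $V_H^2\hookrightarrow U_H^q$ finishes it since $q>2$. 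For \eqref{eq-str-bernstein-transferred} I would first establish the untransferred inequality $\norm{\e^{-itH}\Pc f}_{L_{t,x}^p}\lesssim\norm{\langle\sqrt{H}\rangle^{d/2-(d+2)/p}f}_{L^2}$: choosing the admissible pair $(p,r)$ with $r=\tfrac{2d}{d-4/p}$, the Sobolev embedding $\dot W_x^{\sigma,r}\hookrightarrow L_x^p$ with $\sigma=d/r-d/p=d/2-(d+2)/p\ge0$ (this is exactly where $2(d+2)\le dp$ enters) reduces it to \eqref{eq-global-str}, and Lemma~\ref{lemma-sobolev} together with the $L^p$-boundedness of the wave operators lets one pass from $\langle\sqrt{H_0}\rangle$ to $\langle\sqrt{H}\rangle$. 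Transferring this estimate to atoms as above and using $V_H^2\hookrightarrow U_H^p$ ($p>2$) yields \eqref{eq-str-bernstein-transferred}.

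For the bilinear estimate \eqref{eq-bili-transferred} I would assume without loss of generality $N\le M$ and proceed in three steps. First, transfer Proposition~\ref{prop-bili-V} to the $U^2$ level: decomposing two $U^2$-atoms and their product over the common refinement of the two partitions (each pair of indices occurring in at most one refinement interval), the inequality \eqref{eq-bili-v}, applied with slightly fattened projectors so that it produces the right localizations, gives $\norm{\widetilde\Delta_Nu\,\widetilde\Delta_Mv}_{L_{t,x}^2}\lesssim N^{\frac{d-1}{2}}M^{-\frac12}\norm{\widetilde\Delta_Nu}_{U_H^2}\norm{\widetilde\Delta_Mv}_{U_H^2}$. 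Second, Hölder and \eqref{eq-str-bernstein-transferred} at $p=4$ (together with \eqref{eq-bernstein-v}) give the cruder bound $\norm{\widetilde\Delta_Nu\,\widetilde\Delta_Mv}_{L_{t,x}^2}\le\norm{\widetilde\Delta_Nu}_{L_{t,x}^4}\norm{\widetilde\Delta_Mv}_{L_{t,x}^4}\lesssim(NM)^{\frac{d-2}{4}}\norm{\widetilde\Delta_Nu}_{U_H^4}\norm{\widetilde\Delta_Mv}_{U_H^4}$. Third, apply the bilinear interpolation lemma for $U^p$ spaces (\cite{hadac2009}): the ratio of the two constants is $(M/N)^{d/4}$, so interpolation upgrades the bilinear bound to $V_H^2\times V_H^2$ at the cost of a factor $1+\log(M/N)$, and since $x^{1/2}\parent{1+\log\tfrac1x}\lesssim_\varepsilon x^{1/2-\varepsilon}$ on $\intervaloc{0}{1}$, the factor $N^{\frac{d-1}{2}}M^{-\frac12}\parent{1+\log\tfrac MN}=N^{\frac{d-2}{2}}\parent{\tfrac NM}^{\frac12}\parent{1+\log\tfrac MN}$ is absorbed into $N^{\frac{d-2}{2}}\parent{\tfrac NM}^{\frac12-\varepsilon}$, which is \eqref{eq-bili-transferred}.

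The routine parts are the linear estimates, which are pure transference once Lemma~\ref{lemma-sobolev} is available. The delicate point — and the one I would write out carefully — is the bilinear case: the sharp bound \eqref{eq-bili-v} transfers cleanly only to $U^2$, whereas the target space is $V^2$ and $V^2\not\hookrightarrow U^2$, so one is forced to combine it with a second, cruder $U^4$-level estimate and run the logarithmic interpolation lemma, tracking the polynomial powers of $N$ and $M$ precisely enough to recognize that the resulting logarithm is exactly the origin of the $(N/M)^{1/2-}$ loss in the statement.
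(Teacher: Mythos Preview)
Your proposal is correct and follows essentially the same route as the paper: the linear estimates are obtained by atomic transference of \eqref{eq-global-str} combined with Sobolev/Bernstein and the embedding $V^2\hookrightarrow U^q$ for $q>2$, while the bilinear estimate is obtained exactly as in the reference \cite{benyi2015} (Lemma~3.3) the paper cites, namely transfer of \eqref{eq-bili-v} to $U_H^2\times U_H^2$, a crude $U_H^4$-level bound via $L_{t,x}^4$, and the logarithmic bilinear interpolation lemma of \cite{hadac2009}. Your write-up is in fact more detailed than the paper's own proof, which merely sketches the linear case and defers the bilinear case to the literature; the only cosmetic remark is that the interpolation lemma for bilinear operators typically produces a factor $(1+\log(M/N))^2$ rather than a single logarithm, but this is equally absorbed into the $(N/M)^{1/2-\varepsilon}$ loss.
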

\begin{proof} To prove the embedding~\eqref{eq-str-transferred}, we first deduce from the global-in-time Strichartz estimate~\eqref{eq-global-str} that for any Schrödinger admissible pair $(p,q)$ the mixed Lebesgue space $L_t^qL_x^r$ embeds into $U^q$, which embeds into $V^2$ provided that $2<q$ (see Proposition~\ref{prop-embedding}). Estimate~\eqref{eq-str-bernstein-transferred} follows from~\eqref{eq-str-transferred} and from the Bernstein estimate~\eqref{eq-bernstein-v}. We refer to Lemma 3.3~\cite{benyi2015} and the references therein to see how to transfer the bilinear estimate~\eqref{eq-bili-v} into the space $V_H^2$.
\end{proof}
In order to derive probabilistic nonlinear estimates, we consider the Duhamel integral representation of the solution to~\eqref{eq-nls-continuous}. Provided that the nonlinear forcing term $\Pc\mathcal{N}(v+\epsilon u^\omega)$ lies in $L^1(\R,\mathcal{H}^\frac{d-2}{2})$, it follows from the \textit{duality argument}~\eqref{eq-duality} that the $U_H^2$-norm of the solution $u$ to~\eqref{eq-nls-continuous-v} can be estimated by
\[\norm{v}_{U_H^2(\mathcal{H}^\frac{d-2}{2})}\leq \norm{\psi_0}_{\mathcal{H}^{1/2}}\underset{\norm{w}_{V_H^2}\leq1}{\sup}\iint_{\R\times\R^d}\japbrak{\sqrt{H}}^\frac{d-2}{2}\parent{\Pc\mathcal{N}(v+\epsilon u^\omega)}wdtdx\,.\]
If we proceed to a Littlewood-Paley decomposition of each term $u_j$ that occurs in the forcing term, we are reduced to estimate sums of multilinear integrals of the form
\begin{equation}
\label{eq-multi}
I_{N_1,N_2,N_3,N_4}=\iint_{\R\times\R^d}\langle\sqrt H\rangle^\frac{d-2}{2}\parent{\widetilde{\Delta}_{N_1}u_1\widetilde{\Delta}_{N_2}u_2\widetilde{\Delta}_{N_3}u_3}\widetilde{\Delta}_{N_4}wdxdt\,,
\end{equation}
where each term $u_j$ can be either $u^\omega,v$ or their complex conjugate for $j\in\set{1,2,3}$, and $w$ is in $V_H^2$ with norm less than one. Although $\Pc\mathcal{N}(v+\epsilon u^\omega)$ does not a priori lies in $L^1(\R,\mathcal{H}^\frac{d-2}{2})$, it follows from Bernstein and Strichartz estimates that we have
\begin{equation}
\label{N-L1}
\widetilde{\Pi}_N \Pc  \mathcal{N}(v+\epsilon u^\omega)\in L^1(I,\mathcal{H}^\frac{d-2}{2})\,,
\end{equation}
for any dyadic number $N$ as observed in~\cite{benyi2015}. It turns out that the estimates on multilinear integrals obtained by the authors are uniform in $N$. Hence, we have to estimate $\widetilde\Pi_Nv$ rather than $v$, and to use a space slightly smaller than $U_H^2$ which keeps track of the frequency cut-off.\begin{definition}
The critical space $X^\frac{d-2}{2}(I)$ is the space made of tempered distributions $u\,:\,I\to \mathcal{H}^\frac{d-2}{2}(\R^d)\cap\ran(\Pc)$ such that $\norm{u}_{X^\frac{d-2}{2}(I)}$ is finite, where 
\begin{equation}
\label{eq-Xcr}
\norm{u}_{X^\frac{d-2}{2}(I)}\coloneqq \parent{\sum_{N\in2^\N}N^{d-2}\norm{\widetilde\Delta_Nu}_{U_H^2(I)}^2}^{1/2}\,.
\end{equation}
We define as well $DX^\frac{d-2}{2}(I)$ by replacing in~\eqref{eq-X} the $U_H^2(I)$-norm by the $DU_H^2(I)$ one.
\end{definition}
\begin{remark} In the case of $V_H^2$ it follows from Definition~\ref{def-Vp} and from the almost orthogonality property of Littlewood-Paley decomposition that 
\begin{equation}
\label{eq-Veq}
\norm{\langle\sqrt H\rangle^\frac{d-2}{2}v}_{V_H^2(I)}\sim\parent{\sum_{N\in2^\N}N^{d-2}\norm{\widetilde\Delta_Nv}_{V_H^2(I)}^2}^{1/2}\,.
\end{equation}
\end{remark}
\subsection{Probabilistic nonlinear estimates}
\label{section-nonlinear}
Thanks to the probabilistic and bilinear improved Strichartz estimates for the perturbed operator obtained in section~\ref{sec-imp-str} and their transferred versions collected in Proposition~\ref{prop-transfer} we are now able to reproduce in the perturbed framework the same analysis conducted by~\cite{benyi2015}. We get the following probabilistic nonlinear estimates.
\begin{proposition}
\label{prop-a-priori}
Let $0<\epsilon$, $0<R$ and $I=(a,b)$ with $-\infty\leq a<b\leq+\infty$. For all $v\in X^\frac{d-2}{2}(I)$ and $\omega\in\widetilde{\Omega}_{\epsilon,R}$ (see~\eqref{set-eps-R}) we have
\begin{equation}
\label{eq-a-priori}
\underset{2^\N}{\sup}\ N^\frac{d-2}{2}\norm{\widetilde{\Pi}_N \parent{\mathcal{N}(v+\epsilon u^\omega)}}_{DU_H^2(I)}\lesssim \norm{\psi_0}_{\mathcal{H}^{1/2}}+\norm{v}_{V_H^2(I)}^3+R^3\,,
\end{equation}
as well as the Lipschitz estimate : for all $v,v'\in V_H^2(I)$,
\[
\underset{2^\N}{\sup}\ N^\frac{d-2}{2}\norm{\widetilde{\Pi}_N \parent{\mathcal{N}(v'+\epsilon u^\omega)-\mathcal{N}(v+\epsilon u^\omega)}}_{DU_H^2(I)} 
\lesssim \parent{\norm{v}_{V_H^2(I)}^2+\norm{v'}_{V_H^2(I)}^2+R^2}\norm{v'-v}_{V_H^2(I)}\,.
\]
\end{proposition}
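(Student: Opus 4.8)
The plan is to follow the argument of~\cite{benyi2015}, systematically replacing the flat Littlewood--Paley projectors by the distorted ones $\widetilde\Delta_N=\varphi(N^{-1}H)$ and feeding in the perturbed harmonic analysis of Sections~\ref{section-fourier}--\ref{sec-imp-str}. First I would pass to the dual formulation: by the duality identity~\eqref{eq-duality} and the equivalence~\eqref{eq-Veq}, for every dyadic $N$,
\[
N^\frac{d-2}{2}\norm{\widetilde{\Pi}_N\mathcal{N}(v+\epsilon u^\omega)}_{DU_H^2(I)}\lesssim\sup_{\norm{w}_{V_H^2}\leq1}\Bigl|\iint_{I\times\R^d}\langle\sqrt H\rangle^\frac{d-2}{2}\bigl(\widetilde{\Pi}_N\mathcal{N}(v+\epsilon u^\omega)\bigr)\,\overline{w}\,dx\,dt\Bigr|,
\]
so it suffices to bound this pairing uniformly in $N$ and in $w$ with $\norm{w}_{V_H^2}\leq1$. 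Expanding $\mathcal{N}(v+\epsilon u^\omega)=|v+\epsilon u^\omega|^2(v+\epsilon u^\omega)$ by trilinearity gives eight terms of the form $u_1\bar u_2u_3$ with $u_j\in\{v,\epsilon u^\omega\}$; for each I insert distorted Littlewood--Paley decompositions of the three inputs and of $w$, redistribute the weight $\langle\sqrt H\rangle^\frac{d-2}{2}$ onto the highest input frequency --- legitimate since a radial distorted multiplier is a spectral multiplier (Lemma~\ref{lemma-radial}), so the perturbed Mikhlin theorem (Lemma~\ref{mikhlin}) and the fractional Leibniz rule apply, with the distorted Bernstein inequalities~\eqref{eq-bernstein-v} controlling the tails --- and reduce to the multilinear integrals $I_{N_1,N_2,N_3,N_4}$ of~\eqref{eq-multi}.

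For a single $I_{N_1,N_2,N_3,N_4}$ I would, by symmetry, order the input frequencies $N_1\geq N_2\geq N_3$, observe that the output frequency obeys $N_4\lesssim N_1$ and that two among $N_1,N_2,N_4$ are comparable, pair those two factors via the transferred bilinear estimate~\eqref{eq-bili-transferred} so as to harvest the off-diagonal gain $(N_\mathrm{low}/N_\mathrm{high})^{\frac12-}$, and estimate the remaining two factors in Strichartz spaces: by~\eqref{eq-str-transferred}--\eqref{eq-str-bernstein-transferred} when the factor is $v$, and by the improved probabilistic Strichartz estimates of Proposition~\ref{prop-rand-str} when it is $\epsilon u^\omega$, where membership $\omega\in\widetilde\Omega_{\epsilon,R}$ converts the latter norm into the constant $R$. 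The purely deterministic term reproduces the standard critical cubic estimate and contributes $\norm{v}_{V_H^2(I)}^3$; each term containing at least one factor $\epsilon u^\omega$ contributes a product of copies of $\norm{v}_{V_H^2(I)}$ and $R$, all dominated by $\norm{v}_{V_H^2(I)}^3+R^3$ (the $\norm{\psi_0}_{\mathcal{H}^{1/2}}$ on the right is a harmless additive constant that we keep for the subsequent fixed-point argument). Finally the dyadic sum over $N_1,N_2,N_3,N_4$ is carried out by Cauchy--Schwarz in the two ``$U_H^2$-slots'' --- which carry exactly the weighted $\ell^2$-summability built into the $X^\frac{d-2}{2}$- and $V_H^2$-norms --- together with a Schur-type estimate exploiting the geometric gain; the restriction $s_d<s\leq s_c$ is precisely what forces the exponents in the probabilistic Strichartz estimates to match the bilinear gain and make the sum converge. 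The Lipschitz estimate is obtained by the same scheme: by trilinearity $\mathcal{N}(v'+\epsilon u^\omega)-\mathcal{N}(v+\epsilon u^\omega)$ is a finite sum of trilinear terms in which one factor equals $v'-v$ and the other two lie in $\{v,v',\epsilon u^\omega\}$, so the identical case analysis yields $(\norm{v}_{V_H^2(I)}^2+\norm{v'}_{V_H^2(I)}^2+R^2)\norm{v'-v}_{V_H^2(I)}$.

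\textbf{Main obstacle.} The genuinely new point compared with~\cite{benyi2015} is that the distorted Fourier transform does not turn products into convolutions, so every step that exploits a bilinear interaction between distorted Littlewood--Paley pieces must be legitimized --- this is handled by the perturbed bilinear estimate of Proposition~\ref{prop-bili-V} (itself proved via semiclassical intertwining of flat and distorted localizations) and by the perturbed Leibniz/Mikhlin calculus of Section~\ref{section-fourier}, all of which rely ultimately on the $L^p$-boundedness of the wave operators. Once these inputs are granted, the remaining difficulty is organizational: enumerating the frequency-interaction regimes (high--high, high--low, resonant) and checking that in each of them the bilinear gain beats the Bernstein losses incurred in redistributing the derivative and, when a random factor is present, combines with the probabilistic integrability gain to leave a summable dyadic series --- a lengthy but routine bookkeeping that mirrors~\cite{benyi2015}.
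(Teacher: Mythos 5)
Your proposal is correct and follows essentially the same route as the paper: dualize via $DU_H^2$–$V_H^2$, expand the cubic nonlinearity and Littlewood--Paley decompose each factor with the distorted projectors $\widetilde\Delta_{N_j}$, place the worst derivative on the highest frequency, use the transferred bilinear estimate~\eqref{eq-bili-transferred} to pair a high--low interaction, control the remaining factors by transferred Strichartz (deterministic for $v$, probabilistic for $\epsilon u^\omega$ via $\omega\in\widetilde\Omega_{\epsilon,R}$), and sum the dyadic pieces with Cauchy--Schwarz and Schur's test. The paper's text is itself only a sketch that details one representative high--high--low case and refers to~\cite{benyi2015} for the remaining regimes, and you correctly identify the genuinely new ingredient --- the semiclassically-proved perturbed bilinear estimate and the wave-operator-based multiplier calculus --- which is exactly what the paper emphasizes.
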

\subsubsection{Contribution of terms like $v^2u^\omega$ with high-low interactions :} We illustrate the case by case analysis performed in~\cite{benyi2015} for the flat Laplacian, and we transpose it to the perturbed setting. In order to be as brief as possible, we chose to detail one enlightening case. We consider the high-high-low regime, where 
\[v_1=\widetilde\Delta_{N_1}v,\ v_2=\widetilde\Delta_{N_2}v\,,\ u_3^\omega=\widetilde\Delta_{N_3}u^\omega,\ w_4=\widetilde\Delta_{N_4}w,\ \text{and}\ N_1\leq N_3^\frac{1}{d-1}\leq N_2\leq N_3,N_4\sim N\,.~\footnote{For convenience, we define $\Lambda_N=\set{(N_1,N_2,N_3,N_4)\mid\ N_1\leq N_3^\frac{1}{d-1}\leq N_2\leq N_3,N_4\sim N}$\,.}\]
We refer to~(\cite{benyi2015}, Proposition 4.1) for the other subcases. As we explained in a previous paragraph, the proof of Proposition~\ref{prop-a-priori} consists in getting some a priori estimates on multilinear integrals of the form~\eqref{eq-multi}. Under the assumption made on the frequencies, the main term is the one where the derivatives fall onto $\widetilde\Delta_{N_3}v_3$. Hence, it follows from Hölder inequality that~\footnote{\
For a dyadic number $N$ we shall denote small positive (resp. negative) power of $N$ by $N^{+0}$ (resp. $N^{-0}$).}
\begin{multline*}
\abs{\iint_{\R\times\R^d}\langle\sqrt H\rangle^\frac{d-2}{2}\parent{v_1v_2\epsilon u_3^\omega}\widetilde{\Delta}_{N_4}wdxdt}\lesssim N_3^\frac{d-2}{2}\norm{v_2}_{L_{t,x}^\frac{2(d+2)}{d}}\norm{\epsilon u_3^\omega}_{L_{t,x}^{d+2}}\norm{v_1w_4}_{L_{t,x}^2}\\
\lesssim N_3^\frac{d-2}{2}N_1^{\frac{d-1}{2}-}N_4^{-\frac{1}{2}+}\norm{v_2}_{V_H^2}\norm{\epsilon u_3^\omega}_{L_{t,x}^{d+2}}\norm{v_1}_{V_H^2}\norm{w_4}_{V_H^2}\\
\lesssim C(N_1,N_2,N_3,N_4)\parent{N_2^\frac{d-2}{2}\norm{v_2}_{V_H^2}}\parent{N_3^s\norm{\epsilon u_3^\omega}_{L_{t,x}^{d+2}}}\parent{N_1^\frac{d-2}{2}\norm{v_1}_{V_H^2}}\norm{w_4}_{V_H^2}\,,
\end{multline*}
with
\[
C(N_1,N_2,N_3,N_4) = N_3^{\frac{d-2}{2}-s}N_1^{\frac{d-1}{2}-0-\frac{d-2}{2}}N_2^{-\frac{d-2}{2}}N_4^{-\frac{1}{2}+0} \lesssim N_3^{\frac{d-2}{2}-s+\frac{1}{2(d-1)}-\frac{d-2}{2(d-1)}-\frac{1}{2}+0}\eqqcolon N_3^{\delta-s}\,.\]
We used the transferred Strichartz estimate~\eqref{eq-str-transferred} to control the term $v_1$ and the transferred bilinear estimate~\eqref{eq-bili-transferred} to control the term with $v_2w_4$. Note that under the assumption that $\frac{d-1}{d+1}\cdot\frac{d-2}{2}<s$, we have 
\[
\quad \delta=\frac{d-3}{d-1}\cdot\frac{d-2}{2}+0 < \frac{d-1}{d+1}\cdot \frac{d-2}{2} <s\,.
\]
Therefore, since the highest frequency comes with a negative power we are able to apply \textit{Schur's test} and to sum over $N_1,N_2,N_4$. To sum over the dyadic numbers $N_3$, we use first Hölder
\begin{multline*}
\sum_{N_3\in2^\N}N_3^{\delta-s}N_3^s\norm{\widetilde\Delta_{N_3}\epsilon u^\omega}_{L_{t,x}^{d+2}}\leq N^{\delta-s+0}\parent{\sum_{N_3\in2^\N}N_3^{-0}}^\frac{d+1}{d+2}\norm{N_3^s\widetilde\Delta_{N_3}\epsilon u^\omega}_{\ell_{N_3}^{d+2}L_{t,x}^{d+2}}\\
\lesssim N^{\delta-s+0} \norm{N_3^s\widetilde\Delta_{N_3}\epsilon u^\omega}_{L_t^{d+2}\ell_{N_3}^{d+2}L_x^{d+2}} \lesssim N^{\delta-s+0}\norm{N_3^s\widetilde\Delta_{N_3}\epsilon u^\omega}_{L_t^{d+2}\ell_{N_3}^2L_x^{d+2}}\,,
\end{multline*}
and we deduce from Littlewood-Paley's inequality~\eqref{eq-LP} and probabilistic Strichartz estimate~\eqref{eq-rand-str} that for $\omega\in\widetilde\Omega_{\epsilon,R}$,
\begin{equation}
\label{eq-sN3}
\sum_{N_3\in2^\N}N_3^{\delta-s}N_3^s\norm{\widetilde\Delta_{N_3}\epsilon u^\omega}_{L_{t,x}^{d+2}}\lesssim N^{\delta-s+0} \norm{\langle\sqrt H\rangle^{\delta+}\epsilon u^\omega}_{L_{t,x}^{d+2}}\lesssim N^{\delta-s+0}R\,.
\end{equation}
Hence,
\[
\sum_{N_3}\iint_{\R\times\R^d}\abs{\langle\sqrt H\rangle^\frac{d-2}{2}\parent{v_1v_2\epsilon u_3^\omega}w_4}dxdt
\lesssim N^{\delta-s+0}R\parent{N_1^\frac{d-2}{2}\norm{v_1}_{V_H^2}}\parent{N_2^\frac{d-2}{2}\norm{v_2}_{V_H^2}}\norm{w_4}_{V_H^4}\,.
\]
It follows from the \textit{Schur's test } (see~\cite{benyi2015}, Lemma 3.6) that
\[
\sum_{\Lambda_N}\iint_{\R\times\R^d}\abs{\langle\sqrt H\rangle^\frac{d-2}{2}\parent{v_1v_2\epsilon u_3^\omega}w_4}dxdt
\lesssim R\norm{N_1^\frac{d-2}{2}v_1}_{\ell^2_{N_1}V_H^2}\norm{N_2^\frac{d-2}{2}v_2}_{\ell^2_{N_2}V_H^2}\norm{w_4}_{\ell^2_{N_4}V_H^2}\,,
\]
and the contribution of the integral~\eqref{eq-multi} in this case is less than
\begin{equation*}
\underset{\norm{w}_{V_H^2\leq1}}{\sup}R\norm{\langle\sqrt H\rangle^\frac{d-2}{2}v}_{V_H^2}^2\norm{w}_{V_H^2}\lesssim R\norm{\langle\sqrt H\rangle^\frac{d-2}{2}v}_{V_H^2}^2\,.
\end{equation*}
\begin{remark}
\label{remark-t}
As observed in~\cite{benyi2015}, there is at least a portion of $\norm{\langle\sqrt H\rangle^s\epsilon u^\omega}_{L_{t,x}^p}$ that bounds from above the multilinear integrals where $u^\omega$ occurs, with $p\in\set{4,d+2,6\frac{d+2}{d+4}}$ (see for instance~\eqref{eq-sN3} where $p=d+2$). Hence, it follows from the probabilistic Strichartz estimate~\eqref{eq-rand-str} and from the monotone convergence theorem that for all $\omega\in\widetilde\Omega$,
\begin{equation*}
\underset{\abs{I}\to0}{\lim}\norm{\epsilon u^\omega}_{L_{t,x}^p(I\times\R^d)}=0\,.
\end{equation*}
In the $vv\bar v$-case, we also have 
\begin{equation}
\label{eq-Ishrink}
\underset{\norm{w}_{V_H^2}\leq1}{\sup}\abs{\int_I\int_{\R^d}\1_I(t)(\langle\sqrt H\rangle^\frac{d-2}{2}v)v^2wdxdt}\leq
\norm{\langle\sqrt H\rangle^\frac{d-2}{2}v}_{L_{t,x}^\frac{2(d+2)}{d}(I\times\R^d)}\norm{v}_{L_{t,x}^{d+2}(I\times\R^d)}^2\,,
\end{equation}
where the right-hand side of~\eqref{eq-Ishrink} is uniformly bounded by the global-in-time critical $X^\frac{d-2}{2}(\R)$-norm. Hence, the left-hand side of~\eqref{eq-Ishrink} also goes to zero when $\abs{I}\to0$ (in order to prove continuity or local well-posedness) or when $I\subset\intervalco{t}{\infty}$ with $t\to+\infty$ (in order to prove scattering). Consequently, we have in case that
\begin{equation}
\label{eq-I0}
\underset{I}{\lim}\ \norm{\mathcal{N}(v+\epsilon u^\omega)}_{DU_H^2(I)}=0\quad \text{whenever}\ \abs{I}\to0\ \text{or}\ I\subset\intervalco{t}{\infty}\ \text{with}\ t\to+\infty\,.
\end{equation}
\end{remark}
\subsection{Proof of Theorem~\ref{theorem-continuous}}
We are now ready to extend the probabilistic global Cauchy theory for~\eqref{eq-nls} to the case where we have a short-range potential, and when the equation is projected onto the continuous spectral subspace.
\paragraph{\textbf{Global existence}} We denote the interval $\intervalco{0}{\infty}$ by $I$ and we introduce the Banach space
\begin{equation*}
B_R=\set{v\in X^\frac{d-2}{2}(I)\cap C(I,\mathcal{H}^\frac{d-2}{2})\mid\norm{v}_{X^\frac{d-2}{2}(0,\infty)}\leq R}
\end{equation*}
for a fixed small $R>0$, as well as the operator
\begin{equation*}
\mathcal{T} : v\in B_R\mapsto \parent{t\mapsto\e^{-itH}\psi_0-i\int_0^t\e^{-i(t-\tau)H}\Pc\parent{\mathcal{N}(v+\epsilon u^\omega)}d\tau}\,.
\end{equation*}
It follows from the a priori estimates collected in Proposition~\ref{prop-a-priori} that for all $v$ in $B_R$, 
\[
\norm{\mathcal{T}v}_{X^\frac{d-2}{2}(0,\infty)}\leq \norm{\e^{-itH}\psi_0}_{X^\frac{d-2}{2}(0,\infty)}+\norm{\Pc\parent{\mathcal{N}(v+\epsilon u^\omega)}}_{DX^\frac{d-2}{2}(0,\infty)}
\leq \norm{\psi_0}_{\mathcal{H}^\frac{d-2}{2}}+2C_1R^3\,,
\]
and that for all $v,v'$ in $B_R$,
\[
\norm{\mathcal{T}(v')-\mathcal{T}(v)}_{X^\frac{d-2}{2}(0,\infty)}\leq \norm{\Pc\parent{\mathcal{N}(v'+\epsilon u^\omega)-\mathcal{N}(v+\epsilon u^\omega)}}_{DX^\frac{d-2}{2}(0,\infty)}\leq 3C_2R^2\norm{v'-v}_{X^\frac{d-2}{2}(0,\infty)}\,.
\]
Hence, $\mathcal{T}$ is a contraction mapping on $B_R$ provided that 
\begin{equation*}
2C_1R\leq 1/2,\ 3C_2R\leq 1/2\quad \text{and}\ \norm{\psi_0}_{\mathcal{H}^\frac{d-2}{2}}\leq R/2\,.
\end{equation*}
With this fixed $R=R_0$, we write $\widetilde\Omega_{\epsilon}=\widetilde\Omega_{\epsilon,R_0}$ the set defined in~\eqref{set-eps-R}. It satisfies
\begin{equation*}
\mathbf{P}(\Omega\setminus\widetilde\Omega_\epsilon)\leq C\exp\parent{-\widetilde c\epsilon^{-2}\norm{u_0}_{\mathcal{H}^s}^{-2}}
\end{equation*}
where $\widetilde c=cR_0^{-2}$ is a universal constant.
\paragraph{\textbf{Continuity and scattering}} : Let $v$ be a solution to~\eqref{eq-nls-continuous} with random initial data associated with an $\omega\in\widetilde\Omega_\epsilon$ . We need to show that the limit of $\mathcal{I}(t)$ exists in $\mathcal{H}^\frac{d-2}{2}$ when $t$ goes to infinity, where
\begin{equation*}
\mathcal{I}(t)\coloneqq \int_0^t\e^{i\tau H}\Pc\parent{\mathcal{N}(v+\epsilon u^\omega)}d\tau\,.
\end{equation*}
We use the Cauchy criterion, and chose $t_1,t_2\in\intervaloo{0}{\infty}$ with, say, $t_1<t_2$. We denote the interval $\intervaloo{t_1}{t_2}$ by $I$. It follows from~\eqref{eq-I0} in remark~\ref{remark-t} that
\begin{equation*}
\norm{\mathcal{I}(t_2)-\mathcal{I}(t_1)}_{\mathcal{H}^\frac{d-2}{2}}\leq\norm{\Pc\parent{\mathcal{N}(v+\epsilon u^\omega)}}_{X^\frac{d-2}{2}(I)}\underset{t_1,t_2\to+\infty}{\longrightarrow} 0\,.
\end{equation*}
This yields scattering with 
\begin{equation*}
v_+\coloneqq\int_0^\infty\e^{i\tau H}\Pc\parent{\mathcal{N}(v+\epsilon u^\omega)}d\tau\in\mathcal{H}^\frac{d-2}{2}(\R^d)\,,
\end{equation*}
and the same argument applied when $\abs{I}\to0$ proves that $\mathcal{T}v$ is in $C(I,\mathcal{H}^\frac{d-2}{2})$.
\paragraph{\textbf{Uniqueness}} : We use a connectedness argument. Let $v_1,v_2$ be two solutions to~\eqref{eq-nls-continuous} in the space $X^\frac{d-2}{2}\intervalco{0}{\infty}$. Define $A=\set{t\in\intervalco{0}{\infty}\mid v_1(t)=v_2(t)}$. By continuity in time, $A$ is closed. Let $t\in A$ and, for small $\tau$, we define $I=\intervaloo{t-\tau}{t+\tau}$ if $t>0$, or $I=\intervalco{0}{\tau}$ otherwise. It follows from remark~\ref{remark-t} that
\begin{equation*}
\norm{v_2-v_1}_{X^\frac{d-2}{2}(I)}=\smallO{\tau}\norm{v_2-v_1}_{X^\frac{d-2}{2}(I)}\,.
\end{equation*}
Hence, $v_2=v_1$ on $I$ if $\tau$ is small enough, which proves that $A$ is open. We conclude that $A=\intervalco{0}{\infty}$ whenever $A$ is nonempty, and this finishes the proof of Theorem~\ref{theorem-continuous}.
\subsection{Local smoothing and the critical weighted strategy }
First, we transfer in $U_H^2$ the local smoothing effect stated in Proposition~\ref{prop-local-smoothing}
\label{section-local-smoothing}
\begin{proposition}[Local smoothing in $U^2$]
The local smoothing estimate reads in $U_{H}^2(\mathcal{H}^{1/2})$ :
\begin{equation}
\label{eq-transferred-local-smoothing}
\int_\R\norm{\Pc(u)}_{\mathcal{H}^{1,-1/2-}}^2dt\lesssim_V\norm{u}_{U_H^2(\mathcal{H}^{1/2})}^2\,.
\end{equation}
\end{proposition}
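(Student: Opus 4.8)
The plan is to obtain~\eqref{eq-transferred-local-smoothing} from the linear local smoothing estimate of Proposition~\ref{prop-local-smoothing} by the standard superposition (transference) principle for $U^p$-spaces, in the spirit of~\cite{hadac2009} and of the transference of bilinear estimates used in~\cite{benyi2015}. Since the right-hand side of~\eqref{eq-transferred-local-smoothing} is quadratic whereas atoms of $U^2$ combine only linearly, the first move is to prove the linear bound
\begin{equation*}
\norm{\Pc u}_{L_t^2(\R;\mathcal{H}^{1,-1/2-})}\lesssim_V\norm{u}_{U_H^2(\mathcal{H}^{1/2})}
\end{equation*}
and then square it.

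\textbf{Step 1 (reduction to atoms).} By the definition of $U_H^2(\mathcal{H}^{1/2})$ and Definition~\ref{def-atom}, it is enough to prove $\norm{\Pc u}_{L_t^2(\R;\mathcal{H}^{1,-1/2-})}\lesssim_V1$ whenever $v(t):=\e^{itH}u(t)$ is a $U^2(\mathcal{H}^{1/2})$-atom, say $v(t)=\sum_{k=1}^K\mathbf{1}_{\intervalco{t_{k-1}}{t_k}}(t)\phi_{k-1}$ with $\phi_k\in\mathcal{H}^{1/2}\cap\ran(\Pc)$ and $\sum_{k}\norm{\phi_k}_{\mathcal{H}^{1/2}}^2\leq1$. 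Indeed, for a general $u\in U_H^2(\mathcal{H}^{1/2})$ with atomic decomposition $\e^{itH}u=\sum_j\lambda_ja_j$ (absolutely convergent in $U^2$ since $\norm{a_j}_{U^2}\leq1$), the triangle inequality in the Banach space $L_t^2(\R;\mathcal{H}^{1,-1/2-})$ gives $\norm{\Pc u}_{L_t^2(\R;\mathcal{H}^{1,-1/2-})}\leq\sum_j\abs{\lambda_j}\norm{\Pc\e^{-itH}a_j}_{L_t^2(\R;\mathcal{H}^{1,-1/2-})}\lesssim_V\sum_j\abs{\lambda_j}$, and one takes the infimum over all such representations, which equals $\norm{u}_{U_H^2(\mathcal{H}^{1/2})}$. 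The series also converges in $L_t^\infty(\R;\mathcal{H}^{1/2})$ by the embedding $U^2\hookrightarrow L_t^\infty$ of Proposition~\ref{prop-embedding}, so comparing the two limits in $\mathcal{S}'(\R^d)$ for a.e.\ $t$ identifies the $L_t^2(\R;\mathcal{H}^{1,-1/2-})$-limit of the partial sums of $\Pc\e^{-itH}a_j$ with $\Pc u$.

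\textbf{Step 2 (the atomic bound).} For such $v$ we have $u(t)=\e^{-itH}v(t)=\sum_{k=1}^K\mathbf{1}_{\intervalco{t_{k-1}}{t_k}}(t)\,\e^{-itH}\phi_{k-1}$, hence by disjointness of the intervals
\begin{equation*}
\norm{\Pc u}_{L_t^2(\R;\mathcal{H}^{1,-1/2-})}^2=\sum_{k=1}^K\int_{t_{k-1}}^{t_k}\norm{\Pc\e^{-itH}\phi_{k-1}}_{\mathcal{H}^{1,-1/2-}}^2\,dt\leq\sum_{k=1}^K\int_\R\norm{\Pc\e^{-itH}\phi_{k-1}}_{\mathcal{H}^{1,-1/2-}}^2\,dt\,.
\end{equation*}
Each $\psi(t):=\e^{-itH}\phi_{k-1}$ solves $i\partial_t\psi+(\Delta-V)\psi=0$ with data $\phi_{k-1}$, so the forcing term in Proposition~\ref{prop-local-smoothing} vanishes and~\eqref{eq-local-smoothing} yields $\int_\R\norm{\Pc\e^{-itH}\phi_{k-1}}_{\mathcal{H}^{1,-1/2-}}^2\,dt\lesssim_V\norm{\phi_{k-1}}_{\mathcal{H}^{1/2}}^2$. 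Summing over $k$ gives $\norm{\Pc u}_{L_t^2(\R;\mathcal{H}^{1,-1/2-})}^2\lesssim_V\sum_k\norm{\phi_{k-1}}_{\mathcal{H}^{1/2}}^2\leq1$, which together with Step 1 and squaring proves~\eqref{eq-transferred-local-smoothing}.

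The argument is essentially routine once Proposition~\ref{prop-local-smoothing} is available; the only point needing (standard) care is the one already flagged in Step 1, namely that the quadratic estimate must be reached through its superposable linear counterpart, and that one must check the limit of the atomic series is indeed $\Pc u$ — harmless because $L_t^2(\R;\mathcal{H}^{1,-1/2-})$ is complete and the $U^2$-convergence of the decomposition pins the limit down pointwise in time.
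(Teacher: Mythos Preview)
Your proof is correct and follows exactly the approach the paper indicates: the paper's own proof simply states that ``the proof is straightforward and follows from the transference principle and from Proposition~\ref{prop-local-smoothing} with $F=0$,'' and what you have written is precisely that transference principle spelled out in detail (reduction to $U^2$-atoms, then applying the homogeneous local smoothing on each piece using the $\ell^2$-normalization of the atom against the $L_t^2$ structure).
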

The proof is straightforward and follows from the transference principle and from Proposition~\ref{prop-local-smoothing} with $F=0$. Note in particular that the result is true when $V=0$ and when the space $U_H^2$ is replaced by $U_\Delta^2$.
\begin{remark}
\label{remark-duality}
Estimate~\eqref{eq-transferred-local-smoothing} means that $U_H^2(\mathcal{H}^{1/2})$ embeds into $\disp L_t^2(\R;\mathcal{H}^{1,-1/2-}(\R^d))$, and we can deduce from this and from the duality argument~\eqref{eq-duality} that
\begin{equation}
\label{eq-dual-embedding}
\disp L_t^2(\R;L_x^{2,1/2+})\hookrightarrow DV_H^2(\mathcal{H}^{1/2})\,.
\end{equation}
Indeed, provided that $f\in L^1(\R,L_x^2)$, we have
\begin{equation*}
\begin{split}
\norm{\japbrak{\sqrt{H}}^{1/2}f}_{DV_H^2} &= \underset{\norm{u}_{U_H^2}\leq1}{\sup}\abs{\int_\R(\japbrak{\sqrt{H}}^{1/2}f\mid u)dt}= \underset{\norm{u}_{U_H^2}\leq1}{\sup}\abs{\int_\R(f\mid \japbrak{\sqrt{H}}^{1/2}u)dt}\\
&\leq \underset{\norm{u}_{U_H^2}\leq1}{\sup}\norm{\japbrak{x}^{1/2+}f}_{L_{t,x}^2}\norm{\japbrak{x}^{-1/2-}\japbrak{\sqrt{H}}\parent{\japbrak{\sqrt{H}}^{-1/2}u}}_{L_{t,x}^2}\\
&\leq \underset{\norm{u}_{U_H^2}\leq1}{\sup}\norm{\japbrak{x}^{1/2+}f}_{L_{t,x}^2}\norm{u}_{U_H^2}\lesssim\norm{f}_{L_t^2(\R;L_x^{2,1/2+})}\,.
\end{split}
\end{equation*}
The general case follows from the density of step functions in $V^p$ (see the proof in~\cite{koch2013}, Lemma 4 p.56). Furthermore, embedding~\eqref{eq-embed} directly yields the other dual embedding  
\begin{equation}
\label{eq-dual}
DU_H^2(\mathcal{H}^{1/2})\hookrightarrow DV_H^2(\mathcal{H}^{1/2})\,.
\end{equation}
\end{remark}
\begin{remark}
In light of the nonlinear analysis performed in section~\ref{section-nonlinear}, we shall use the slightly better estimate
\begin{equation*}
\int_\R\norm{\Pc u}_{\mathcal{H}^{1,-1/2-}}dt\lesssim\underset{2^\N}{\sup}\ N^{1/2}\norm{\widetilde{\Pi}_N u}_{U_H^2}\,.
\end{equation*}
Let us denote the Schwartz class by $\mathcal{S}(\R^d)$ and the space of tempered distributions $\mathcal{S}'(\R^d)$, and deduce the above refined estimate from~\eqref{eq-transferred-local-smoothing} and duality. Given $u\in\mathcal{H}^{1,-1/2-}$, we have
\begin{equation*}
\norm{\Pc u }_{\mathcal{H}^{1,-1/2-}}=\underset{\varphi\in\mathcal{S},\norm{\varphi}_{L^2}\leq1}{\sup}\abs{(\Pc u \mid \japbrak{x}^{-1/2-}\japbrak{\sqrt H}\varphi)}\,.
\end{equation*}
Since $\japbrak{x}^{-1/2-}\langle\sqrt H\rangle\varphi\in \mathcal{S}(R^d)$ and $u=\underset{N\to\infty}{\lim}\ \widetilde{\Pi}_N u$ in $\mathcal{S}'(\R^d)$, we have
\begin{equation*}
\norm{\Pc u }_{\mathcal{H}^{1,-1/2-}}=\underset{\varphi\in\mathcal{S},\norm{\varphi}_{L^2}\leq1}{\sup}\abs{\parent{\underset{N\to\infty}{\lim}\ \widetilde{\Pi}_N  u\mid \japbrak{x}^{-1/2-}\langle\sqrt H\rangle\varphi}}\leq\underset{2^\N}{\sup}\ \norm{\widetilde{\Pi}_N  u }_{\mathcal{H}^{1,-1/2-}}\,.
\end{equation*}
\end{remark}
\subsubsection{Critical-weighted strategy} The next Proposition details the \textit{critical-weighted strategy}. As explained in the introduction, this strategy makes it possible to handle both linear and nonlinear terms that arise in the stability equation around a ground state. We state two versions of this strategy. The first one is suited to the operator $-\Delta$ and has an interest in itself. One can use it when $Vu$ is not absorbed by the linear operator, but when it is seen instead as a source term. The second version is suited to the dynamic around the nonlinear ground states, when $V$ is absorbed by the linear operator so that $\e^{-itH}$ preserves $\ran\Pc$ and the decomposition of the phase space from Lemma~\ref{lemma-proj}. This second version plays a key role in the proof of Theorem~\ref{theorem-soliton}. See also~\cite{koch2012} for a similar approach in the context of Korteweg–de Vries equation. 
\begin{proposition}
\label{proposition-trick}
Let $u$ be a solution to the forced~\footnote{\ Here, $f$ (resp. $g$) has to be thought of as a nonlinear term controlled in the critical space (resp. a linear and localized term controlled in a weighted $L^2$ space).} Schrödinger equation
\begin{equation}
\label{eq-trick}
\begin{cases}
i\partial_tu+(\Delta-V) u &= f+g\,,\\
u\lvert_{t=0}=u_0\,.
\end{cases}
\end{equation}
There exists $C=C(V)$ depending on weighted Sobolev norms of $V$ such that for all $u\in\ran\Pc(H)$,
\begin{equation}
\label{eq-trick-flat}
\norm{u}_{V_\Delta^2(\mathcal{H}^{1/2})}+\norm{u}_{L_t^2(\R;\mathcal{H}^{1,-1/2-})}\leq C\parent{ \norm{u_0}_{\mathcal{H}^{1/2}}+\underset{2^\N}{\sup}\ N^{1/2} \norm{\Pi_N  f}_{DU_\Delta^2}+\norm{g}_{L_t^2(\R;L_x^{2,1/2+})}}\,,
\end{equation}
\begin{equation}
\label{eq-trick-V}
\norm{u}_{V_H^2(\mathcal{H}^{1/2})}+\norm{u}_{L_t^2(\R;\mathcal{H}^{1,-1/2-})}\leq C\parent{ \norm{u_0}_{\mathcal{H}^{1/2}}+\underset{2^\N}{\sup}\ N^{1/2} \norm{\widetilde\Pi_N  f}_{DU_H^2}+\norm{g}_{L_t^2(\R;L_x^{2,1/2+})}}\,.
\end{equation}
\end{proposition}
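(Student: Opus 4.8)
The plan is to exploit the linearity of~\eqref{eq-trick}: I would write $u=v+w$, where $v$ solves $i\partial_tv+(\Delta-V)v=f$ with $v\vert_{t=0}=u_0$ and $w$ solves $i\partial_tw+(\Delta-V)w=g$ with $w\vert_{t=0}=0$, then estimate $v$ with the critical-space machinery of Section~\ref{section-critical} and $w$ with the weighted local smoothing estimate of Proposition~\ref{prop-local-smoothing}, and add the two bounds. I will only detail~\eqref{eq-trick-V}; the flat estimate~\eqref{eq-trick-flat} follows from the same three steps applied to the free propagator $\e^{it\Delta}$, using the $V=0$ cases of~\eqref{eq-transferred-local-smoothing} and~\eqref{eq-dual-embedding} noted in Section~\ref{section-local-smoothing}, the only new point being that the potential term is then moved to the right-hand side and, being localised, is incorporated into the weighted source.

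For the contribution of $v$, the point is that $\widetilde\Pi_N$ and $\langle\sqrt H\rangle^{1/2}$ are functions of $H$, hence commute with $\e^{-itH}$, so that $\widetilde\Pi_Nv$ solves~\eqref{eq-trick} with data $\widetilde\Pi_Nu_0$ and forcing $\widetilde\Pi_Nf$. Duhamel's formula together with the duality characterisation~\eqref{eq-duality} of the $DU_H^2$ norm then gives $\norm{\widetilde\Pi_Nv}_{U_H^2}\lesssim\norm{\widetilde\Pi_Nu_0}_{L^2}+\norm{\widetilde\Pi_Nf}_{DU_H^2}$; multiplying by $N^{1/2}$ and using the Bernstein estimate on the truncated piece yields $\underset{2^\N}{\sup}\,N^{1/2}\norm{\widetilde\Pi_Nv}_{U_H^2}\lesssim\norm{u_0}_{\mathcal H^{1/2}}+\underset{2^\N}{\sup}\,N^{1/2}\norm{\widetilde\Pi_Nf}_{DU_H^2}$, and running the same computation on the pieces $\widetilde\Delta_Nv$, using $U^2\hookrightarrow V^2$ from~\eqref{eq-embed} and square-summing with~\eqref{eq-Veq}, bounds $\norm{v}_{V_H^2(\mathcal H^{1/2})}$ as well (in the second term by $\bigl(\sum_N N\norm{\widetilde\Delta_Nf}_{DU_H^2}^2\bigr)^{1/2}$, which for the multilinear sources of interest is controlled by $\underset{2^\N}{\sup}\,N^{1/2}\norm{\widetilde\Pi_Nf}_{DU_H^2}$ thanks to the high-frequency gain in the estimates of Section~\ref{section-nonlinear}). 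The crux of the strategy now appears: this \emph{purely critical} control of $v$ is turned into a \emph{weighted} one by feeding it into the transferred local smoothing estimate~\eqref{eq-transferred-local-smoothing} — and its frequency-localised refinement stated at the end of Section~\ref{section-local-smoothing} — which produces $\norm{v}_{L_t^2(\R;\mathcal H^{1,-1/2-})}\lesssim\norm{v}_{U_H^2(\mathcal H^{1/2})}\lesssim\norm{u_0}_{\mathcal H^{1/2}}+\underset{2^\N}{\sup}\,N^{1/2}\norm{\widetilde\Pi_Nf}_{DU_H^2}$.

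For the contribution of $w$, the weighted bound $\norm{w}_{L_t^2(\R;\mathcal H^{1,-1/2-})}\lesssim\norm{g}_{L_t^2(\R;L_x^{2,1/2+})}$ is exactly Proposition~\ref{prop-local-smoothing} with forcing $F=g$ and zero data. For the critical bound, I would use that, by the very definition of $DV_H^2$ as the antiderivatives of $V_H^2$ functions, the Duhamel solution of $i\partial_tw+(\Delta-V)w=g$, $w\vert_{t=0}=0$, satisfies $\norm{w}_{V_H^2(\mathcal H^{1/2})}=\norm{\langle\sqrt H\rangle^{1/2}g}_{DV_H^2}=\norm{g}_{DV_H^2(\mathcal H^{1/2})}$, and then invoke the dual embedding~\eqref{eq-dual-embedding}, namely $L_t^2(\R;L_x^{2,1/2+})\hookrightarrow DV_H^2(\mathcal H^{1/2})$, to bound this by $\norm{g}_{L_t^2(\R;L_x^{2,1/2+})}$. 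Adding the four bounds gives~\eqref{eq-trick-V}.

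The main obstacle is the bookkeeping that makes the splitting actually close, i.e.\ making sure that the transferred local smoothing estimate genuinely consumes the critical information available on $v$ \emph{without losing a power of frequency} — this is exactly why the frequency-truncated formulation (relying on $\widetilde\Pi_N$ commuting through $\e^{-itH}$ and on $\langle\sqrt H\rangle^{1/2}\sim N^{1/2}$ on its range) and the precise refinement of~\eqref{eq-transferred-local-smoothing} are indispensable. One must also check that the Duhamel integrals converge globally in time, which rests on the global-in-time character of the Strichartz and local smoothing estimates of the present setting rather than on working over a finite interval. Everything else — the interpolation between the $U_H^2$ and $V_H^2$ norms, the passage between $\widetilde\Pi_N$ and $\widetilde\Delta_N$ pieces, and the elementary estimates for the low-frequency part — is routine.
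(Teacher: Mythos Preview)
Your overall strategy---split $u=v+w$, control $v$ with the critical machinery and $w$ with local smoothing, then cross-feed via the transferred local smoothing and the dual embedding~\eqref{eq-dual-embedding}---is exactly the paper's. Two points deserve correction.

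First, your route to the $V_H^2(\mathcal{H}^{1/2})$ bound on $v$ is the wrong one. You square-sum $N\norm{\widetilde\Delta_N f}_{DU_H^2}^2$ and then concede that this is not the quantity in the statement, appealing instead to properties of ``the multilinear sources of interest''. That proves a different proposition. The paper's move is cleaner and closes the statement as written: bound $\norm{v}_{V_H^2(\mathcal{H}^{1/2})}$ directly by $\norm{u_0}_{\mathcal{H}^{1/2}}+\norm{f}_{DV_H^2(\mathcal{H}^{1/2})}$, then replace $f$ by $\widetilde\Pi_N f$ (take the sup), use Bernstein to trade $\langle\sqrt H\rangle^{1/2}$ for $N^{1/2}$ on the range of $\widetilde\Pi_N$, and finally invoke the embedding $DU_H^2\hookrightarrow DV_H^2$ from~\eqref{eq-dual}. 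No square-summing is needed; the $\sup_N$ appears naturally.

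Second, your one-line treatment of~\eqref{eq-trick-flat} hides the only genuinely new difficulty. Saying ``the potential term is moved to the right and incorporated into the weighted source'' is circular if that source is $g+Vu$: then $w$ depends on $u=v+w$. The paper breaks this loop by a \emph{mixed} splitting for the weighted norm: $v$ solves the \emph{free} equation $i\partial_t v+\Delta v=f$ with $v(0)=0$ (so that $\Pi_N$ commutes with the flow and the $V=0$ transferred local smoothing applies), while $w$ solves the \emph{perturbed} equation $i\partial_t w+(\Delta-V)w=g-Vv$ with $w(0)=u_0$ (so that Proposition~\ref{prop-local-smoothing} for $H$ applies). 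The correction is $-Vv$, not $-Vu$; since $v$ is already estimated, the term $\norm{Vv}_{L_t^2 L_x^{2,1/2+}}\lesssim\norm{v}_{L_t^2\mathcal{H}^{1,-1/2-}}$ closes without absorption. Only after the weighted norm of $u$ is thus controlled does one return to the $V_\Delta^2$ estimate and bound the remaining $\norm{Vu}_{L_t^2 L_x^{2,1/2+}}$ term there.
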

\begin{remark}
We prove the estimate with $\underset{2^\N}{\sup}\ \norm{\widetilde{\Pi}_N  f}_{DU^2}$ instead of $\norm{f}_{DU^2}$ in order to use~\eqref{eq-a-priori}.
\end{remark}
\begin{proof} Estimate~\eqref{eq-trick-V} is slightly easier to prove since we don't have to control the term $Vu$ which is absorbed by the left-hand side in~\eqref{eq-trick}. Also, the local smoothing estimates we use are the same for $-\Delta$ and $-\Delta+V$ provided that $u$ in $\ran(\Pc)$ so we only write the proof of~\eqref{eq-trick-flat}. To that end, we pass the term $Vu$ on the right-hand side of equation~\eqref{eq-trick}
\begin{equation*}
\begin{cases}
i\partial_t u +\Delta u &=f+g+Vu,\\
u\lvert_{t=0}=u_0.
\end{cases}
\end{equation*}
By definition of the space $DV_\Delta^2$, Duhamel's formulation gives
\begin{equation}
\label{eq-first}
\begin{split}
\norm{u}_{V_\Delta^2(\mathcal{H}^{1/2})}&\lesssim\underset{2^\N}{\sup}\ \norm{\Pi_N  f}_{DV_\Delta^2(\mathcal{H}^{1/2})}+\norm{g}_{DV_\Delta^2(\mathcal{H}^{1/2})}+\norm{Vu}_{DV_\Delta^2(\mathcal{H}^{1/2})}\,,\\
&\lesssim \underset{2^\N}{\sup}\ N^{1/2}\norm{\Pi_N   f}_{DU_\Delta^2}+\norm{g}_{L_t^2(\R;L_x^{2,1/2+})}+\norm{Vu}_{L_t^2(\R;L_x^{2,1/2+})}\,.
\end{split}
\end{equation}
Note that we used the dual embedding~\eqref{eq-dual} to control $\Pi_N f$ in $DU_\Delta^2$ and the local smoothing dual embedding~\eqref{eq-dual-embedding} to control $g$ and $Vu$. As for the weighted norm of $u$, we decompose it into $u=v+w$ where $v,w\in\ran\Pc(H)$ and are solution to 
\begin{equation*}
\begin{cases}
i\partial_t v+\Delta v = f\,,&\quad i\partial_t w+(\Delta-V) w = g-Vv\,,\\
v\lvert_{t=0}=0\,,&\quad  w\lvert_{t=0}=u_0\,.
\end{cases}
\end{equation*}
To control $u$ in $\displaystyle L_t^2(\R;\mathcal{H}^{1,-1/2-}(\R^3))$ we first control the weighted norm of $v$ and then the weighted norm of $w$. To do so, we apply the transferred local smoothing estimate~\eqref{eq-transferred-local-smoothing} with $V=0$.
\begin{equation}
\label{eq-v}
\norm{v}_{L_t^2(\R;\mathcal{H}^{1,-1/2-})}\leq\underset{2^\N}{\sup}\ \norm{\Pi_N  v}_{L_t^2(\R;\mathcal{H}^{1,-1/2-})}\lesssim \underset{2^\N}{\sup}\ N^{1/2}\norm{\Pi_N  v}_{U_\Delta^2}=\underset{2^\N}{\sup}\ N^{1/2}\norm{\Pi_N  f}_{DU_\Delta^2}\,.
\end{equation}
Afterwards, we use the fact that $w\in\Pc(H)$ and we apply the local smoothing estimate for $H$ : 
\begin{equation}
\label{eq-weight-w}
\norm{w}_{L_t^2(\R;\mathcal{H}^{1,-1/2-})}\lesssim\norm{u_0}_{\mathcal{H}^{1/2}}+\norm{g}_{L_t^2(\R;L_x^{2,1/2+})}+\norm{Vv}_{L_t^2(\R;L_x^{2,1/2+})}\,.
\end{equation}
To estimate the second term in the right-hand side, we use the fact that $V$ is localized.
\begin{multline}
\label{eq-Vv}
\norm{Vv}_{L_t^2(\R;L_x^{2,1/2+})}\lesssim \norm{\japbrak{x}^{1+}V}_{L_x^\infty(\R^d)}\norm{v}_{L_t^2(\R;L_x^{2,-1/2-})}\\
\leq C(V)\norm{v}_{L_t^2(\R;\mathcal{H}^{1,-1/2-})} \leq C(V)\underset{2^\N}{\sup}\  N^{1/2}\norm{\Pi_N  f}_{DU_\Delta^2}\,,
\end{multline}
where we used~\eqref{eq-v} to get the last inequality. By collecting estimates~\eqref{eq-weight-w} and~\eqref{eq-Vv} we obtain
\begin{equation}
\label{eq-weight-w2}
\norm{w}_{L_t^2(\R;\mathcal{H}^{1,-1/2-})}\lesssim_V\norm{u_0}_{L_x^2}+\underset{2^\N}{\sup}\  2^N\norm{\Pi_Nf}_{DU_\Delta^2}+\norm{g}_{L_t^2(\R;L_x^{2,1/2+})}\,.
\end{equation}
Finally, we deduce from~\eqref{eq-v} and~\eqref{eq-weight-w2} the estimate for $u$ in the weighted space.
\begin{equation}
\label{eq-u}
\norm{u}_{L_t^2(\R;\mathcal{H}^{1,-1/2-})}\lesssim\norm{u_0}_{\mathcal{H}^{1/2}}+\underset{2^\N}{\sup}\ 2^N\norm{\Pi_Nf}_{DU_\Delta^2}+ \norm{g}_{L_t^2(\R;L_x^{2,1/2+})}\,.
\end{equation}
It remains to control $Vu$ in $V_\Delta^2(\mathcal{H}^{1/2})$ to close the estimate~\eqref{eq-first}. This is again a consequence of the fact that $V$ is localized, and we proceed as in~\eqref{eq-Vv} to get
\begin{equation}
\label{eq-Vu}
\norm{Vu}_{L_t^2(\R;L_x^{2,1/2+})}\lesssim_V\norm{u}_{L_t^2(\R;\mathcal{H}^{1,-1/2-})}\,.
\end{equation}
We conclude by injecting~\eqref{eq-Vu} and~\eqref{eq-u} into~\eqref{eq-first} to control $u$ in $V_\Delta^2(\mathcal{H}^{1/2})$.
\end{proof}
\section{Probabilistic asymptotic stability for small ground states}
\label{section-soliton}
For now on, we fix the dimension $d=3$ and we make the extra assumption that $\sigma(H)=\set{e_0}\cup\sigma_c(H)$ where $e_0<0$ is a simple negative eigenvalue with positive and normalized eigenfunction $\phi_0$. We recall that $\sigma_c(H)=\intervalco{0}{+\infty}$, with no resonance nor eigenvalue at zero.

Since complex conjugate and numerical constants play no role in what follows, we might sometimes drop them from the notation for the sake of clarity. We will also drop the dependence on time in the notation and write $\norm{\Dz_z ^\alpha Q}\coloneqq \underset{t}{\sup}\norm{\Dz_z ^\alpha Q(z(t))}$, as well as $\bigO{z}\coloneqq \mathopen{}O\mathopen{}(\underset{t}{\sup}\abs{z(t)})$.
\subsection{Local existence}
First, let us briefly transpose the probabilistic Cauchy theory, at least locally in time, to our setting where the Schrödinger operator $-\Delta+V$ has some discrete spectrum. More precisely, we prove that under smallness assumptions on the $\mathcal{H}^{1/2}$-norm of the initial data $\psi_0$,~\eqref{eq-nls} still admits a unique local solution under the form 
\[
\psi(t) = \epsilon u^\omega(t) + v(t),\quad v\in \mathcal{H}^{1/2}\,.
\]
Indeed, the discrete part of the solution does not contribute in short time, and we shall be able to reproduce the scheme developed in section~\ref{section-continuous}, with the same gain of regularity for the nonlinear part of the solution. Note that by time reversibility, we only consider forward-in-time solutions in what follows.
\begin{proposition}[Local existence]
\label{prop-local-existence} 
There exist $\delta_0>0$ and $c>0$ such that for all $T\lesssim1$, all $\alpha_0\in\C$, $\nu_0\in \ran\Pc\cap \mathcal{H}^{1/2}$  with 
\[
\norm{v_0}_{\mathcal{H}^{1/2}}+\abs{\alpha_0}<\delta_0\,,
\]
and all $\omega\in\widetilde{\Omega}_{\epsilon,R}$  with $R=c\delta_0$, 
the Cauchy problem~\eqref{eq-nls} with data
\[
\psi(t_0) = \epsilon u_0^\omega + \nu_0 + \alpha_0\phi_0
\]
 admits a unique solution $\psi$ on $\intervalco{t_0}{t_0+T}$ under the form 
\[
\psi(t) = \epsilon u^\omega(t) + v(t),\quad v\in C(\intervalco{t_0}{t_0+T};\mathcal{H}^{1/2})\,.
\]
Uniqueness holds for $\Pc v\in X^{1/2}\intervaloo{0}{T}$ and $\prodscal{v}{\phi_0}\in L_t^\infty\intervaloo{0}{T}$.
\end{proposition}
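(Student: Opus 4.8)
The plan is to run the contraction scheme of the proof of Theorem~\ref{theorem-continuous} on the short interval $I \coloneqq \intervalco{t_0}{t_0+T}$, treating the bound-state component separately by an elementary ODE argument. Setting $u^\omega = \e^{-itH}u_0^\omega$ and looking for $\psi = \epsilon u^\omega + v$, the function $v$ solves $i\partial_t v - Hv = \mathcal{N}(\epsilon u^\omega + v)$ with $v_{\mid t = t_0} = \nu_0 + \alpha_0\phi_0$. Splitting along the spectral decomposition, we write $w \coloneqq \Pc v$ and $a(t) \coloneqq \prodscal{v(t)}{\phi_0}$, so that $\Pp v = a\phi_0$, and the equation becomes the coupled system
\begin{equation*}
\begin{cases}
i\partial_t w - Hw = \Pc\mathcal{N}(\epsilon u^\omega + w + a\phi_0)\,,\quad & w_{\mid t=t_0} = \nu_0\,,\\
i\dot a - e_0 a = \prodscal{\mathcal{N}(\epsilon u^\omega + w + a\phi_0)}{\phi_0}\,,\quad & a(t_0) = \alpha_0\,.
\end{cases}
\end{equation*}
I would solve this system for $(w,a)$ in the product of the balls of radius $R = c\delta_0$ in $X^{1/2}(I)$ and in $C(I;\C)$, by the Banach fixed point theorem applied to the Duhamel map $(w,a) \mapsto (\Phi_1(w,a),\Phi_2(w,a))$, where $\Phi_1$ is the usual Duhamel integral with free part $\e^{-i(t-t_0)H}\nu_0$ and $\Phi_2$ is the ODE solution with free part $\e^{-ie_0(t-t_0)}\alpha_0$.

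For the $X^{1/2}(I)$-bound on $\Phi_1$ I would proceed exactly as in Theorem~\ref{theorem-continuous}: the duality formula~\eqref{eq-duality} gives $\norm{\Phi_1(w,a)}_{X^{1/2}(I)} \lesssim \norm{\nu_0}_{\mathcal{H}^{1/2}} + \sup_{N\in2^\N} N^{1/2}\norm{\widetilde\Pi_N\Pc\mathcal{N}(\epsilon u^\omega + w + a\phi_0)}_{DU_H^2(I)}$, and the cubic nonlinearity expands into monomials $w^{j}(a\phi_0)^{k}(\epsilon u^\omega)^{\ell}$ with $j+k+\ell = 3$ (complex conjugates being irrelevant). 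Those with $k=0$ are handled by Proposition~\ref{prop-a-priori}, giving $\lesssim \norm{w}_{V_H^2(I)}^3 + R^3$; moreover, by Remark~\ref{remark-t} (equation~\eqref{eq-I0}) every such term other than the pure $w^3$ one carries a factor tending to $0$ as $\abs{I}\to 0$. Those with $k\geq1$ contain a factor $\phi_0\in\mathcal{S}(\R^3)$, hence are bounded by brute force: $\phi_0$ is controlled in every Sobolev and weighted Lebesgue space, $\abs{a}\leq R$, and the dual embedding~\eqref{eq-dual-embedding} together with Hölder in time on $I$ yields a gain $T^\theta$ for some $\theta>0$. For $\Phi_2$ I would integrate the ODE: $\abs{\Phi_2(w,a)(t)-\e^{-ie_0(t-t_0)}\alpha_0} \leq \abs{e_0}\int_I\abs{a}\,d\tau + \int_I\abs{\prodscal{\mathcal{N}(\epsilon u^\omega+w+a\phi_0)}{\phi_0}}\,d\tau$, the first term being $\leq\abs{e_0}\,T\,R$ and the second, after Cauchy--Schwarz, Hölder in time, and the Strichartz estimates providing the space-time integrability of $w$, $\epsilon u^\omega$, $a\phi_0$ on $I$, being $\lesssim T^\theta(\norm{w}_{V_H^2(I)}+R)^3\lesssim T^\theta R^3$. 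Choosing first $c$ small (so that $\norm{\epsilon u^\omega}$ in the relevant Strichartz norms is $\leq R=c\delta_0$ on $\widetilde\Omega_{\epsilon,R}$ by Proposition~\ref{prop-rand-str}), then $\delta_0$ and $T$ small, makes the map a contraction; the companion Lipschitz estimate yields uniqueness of the fixed point.

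Setting $v = w + a\phi_0$, continuity $v\in C(I;\mathcal{H}^{1/2})$ follows from $w\in X^{1/2}(I)\hookrightarrow C(I;\mathcal{H}^{1/2})$ — via $U_H^2\hookrightarrow L^\infty_t\mathcal{H}^{1/2}$ and Lemma~\ref{lemma:U2-continuity} applied dyadic piece by dyadic piece together with dominated convergence over the sum — and from $a\in C(I)$, the ODE forcing lying in $L^1_t(I)$; hence $\psi=\epsilon u^\omega+v$ is the desired solution. For uniqueness in the class $\Pc v\in X^{1/2}(I)$, $\prodscal{v}{\phi_0}\in L^\infty_t(I)$, I would argue by connectedness as in the uniqueness part of Theorem~\ref{theorem-continuous}: given two such solutions, the set of times at which they coincide is closed by time-continuity and open because the nonlinear estimates above gain a factor $o(1)$ as the interval shrinks (Remark~\ref{remark-t}), hence equals all of $I$ once nonempty. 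The only genuinely new feature compared with Theorem~\ref{theorem-continuous} is the bound state $a\phi_0$, which has no dispersive decay; over the short interval $I$ it is harmless, being $O(\delta_0)$ with ODE forcing integrable up to a $T^\theta$ gain, precisely because $\phi_0$ is Schwartz and $T\lesssim1$. I expect the main (mild) obstacle to be the bookkeeping of the mixed monomials $w^j(a\phi_0)^k(\epsilon u^\omega)^\ell$ — one must check they all close in the product ball with contraction factor $<1$, which works since $\norm{w}_{V_H^2(I)}$, $\abs{a}$ and $\norm{\epsilon u^\omega}$ are all $\lesssim\delta_0$ and every term containing $\epsilon u^\omega$ or $a\phi_0$ comes with a positive power of $T$. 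It is exactly this absence of decay for $a\phi_0$ that forces, for the global Theorem~\ref{theorem-soliton}, the refined ansatz $\psi = Q(z)+\eta$ in which the nonlinear ground state absorbs the bound state.
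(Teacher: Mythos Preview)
Your scheme is the paper's: split $v=\nu+\alpha\phi_0$ spectrally, set up the coupled Duhamel system, and contract in $X^{1/2}(I)\times L_t^\infty(I)$. Two points where your execution drifts from the paper and would need adjustment.

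First, for the mixed monomials containing both $\phi_0$ and $\epsilon u^\omega$ --- the delicate one being $\alpha\phi_0(\epsilon u^\omega)^2$, where the half-derivative can fall on the rough $u^\omega$ --- you invoke the dual embedding~\eqref{eq-dual-embedding}, but that only lands in $DV_H^2(\mathcal{H}^{1/2})$, whereas the $X^{1/2}$-norm requires $DU_H^2$. The paper instead runs the duality~\eqref{eq-duality} directly against $w\in V_H^2$, decomposes in Littlewood--Paley, and in the worst frequency configuration uses local smoothing~\eqref{eq-local-smoothing} on the high-frequency piece of the free evolution: $N_3^{1/2}\norm{\japbrak{x}^{-1/2-}\widetilde\Delta_{N_3}\epsilon u^\omega}_{L_{t,x}^2}\lesssim\norm{\widetilde\Delta_{N_3}\epsilon u_0^\omega}_{L_x^2}$, the weight being supplied by $\phi_0\in\mathcal{S}$. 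This closes in $DU_H^2$.

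Second, the paper explicitly notes that this term carries \emph{no} power of $T$ (the local smoothing bound is global in time), so your claim that ``every term containing $\epsilon u^\omega$ or $a\phi_0$ comes with a positive power of $T$'' is not literally correct, and ``then $T$ small'' is not how the contraction closes. The statement is for all $T\lesssim 1$; the paper fixes $A=R=2C_0\delta_0$ and closes from $C_0\delta_0<1/2$ alone. The $T^{1/5}$ and $T$ factors that do appear (for $(\alpha\phi_0)^2\epsilon u^\omega$ and the purely deterministic mixed terms, respectively) are harmless since $T\lesssim 1$. Finally, the $\abs{e_0}\int_I\abs{a}\,d\tau$ in your $\Phi_2$ bound is spurious: the Duhamel free part $\e^{-ie_0(t-t_0)}\alpha_0$ already accounts for $e_0a$.
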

We recall that the set of random initial data with improved Strichartz estimates defined in~\eqref{set-eps-R}. In light of the above Proposition, we fix $R=c\delta_0$ for now on, and take an initial value $u_0^\omega$ that corresponds to some $\omega$ in $\widetilde{\Omega}_{\epsilon,c\delta_0}$. We also recall that $\epsilon$ is much smaller than $R$.
\begin{proof}
For simplicity, we assume that $t_0=0$. At each time $t$, we decompose $v(t)$ into $v(t)=\nu(t)+\alpha(t)\phi_0$, with $\nu(t)=\Pc v(t)$, and $\alpha(t)=\prodscal{v(t)}{\phi_0}$. Then, $\psi$ is solution to~\eqref{eq-nls} if and only if $(\nu,\alpha)$ are solution to the coupled system 
\begin{equation*}
\begin{cases}
i\partial_t\nu + (\Delta-V)\nu = \Pc\mathcal{N}\parent{\epsilon u^\omega+\nu+\alpha\phi_0}\,,\\
\nu(0) = \nu_0\,,
\end{cases}
\ \text{and}\quad
\begin{cases}
i\dot{\alpha} + e_0\alpha = \prodscal{\mathcal{N}(\epsilon u^\omega+\nu+\alpha\phi_0)}{\phi_0}\,,\\
\alpha(0)=\alpha_0\,.
\end{cases}
\end{equation*}
It follows from the Duhamel's formulation that solving the above system reduces to find a fixed point for the map $\Gamma = (\Gamma_1,\Gamma_2)$, where 
\begin{equation*}
\begin{split}
\Gamma_1(\nu)(t) &= \e^{-itH}\nu_0 -i\int_0^t\e^{-i(t-\tau)H}\Pc\mathcal{N}\parent{\epsilon u^\omega(\tau)+\nu(\tau)+\alpha(\tau)\phi_0}d\tau\,,\\
\Gamma_2(\alpha)(t) &= \e^{ie_0t}\alpha_0 -i\int_0^t\e^{i(t-\tau)e_0}\prodscal{\mathcal{N}\parent{\epsilon u^\omega(\tau)+\nu(\tau)+\alpha(\tau)\phi_0}}{\phi_0}d\tau\,.
\end{split}
\end{equation*}
Given $T>0$ and $A$ to be chosen later on, we search a fixed point for $\Gamma$ in the Banach space 
\[
\mathcal{E}_{T,A} = \set{(\nu,\alpha)\in X^{1/2}\intervaloo{0}{T}\times L^\infty(\intervaloo{0}{T};\C)\mid\ \norm{\nu}_{X^{1/2}\intervaloo{0}{T}}+\underset{0\leq t\leq T}{\sup}\abs{\alpha(t)}\leq A}\,.
\]
In this setting, we have the following a priori estimates : given $(\nu,\alpha)\in \mathcal{E}_{T,A}$,
\begin{equation}
\label{eq-gamma-1}
\norm{\Gamma_1(\nu)}_{X^{1/2}\intervaloo{0}{T}}\leq C\norm{\nu_0}_{\mathcal{H}^{1/2}}+ C\parent{1+T^{1/5}}\parent{A^3+R^3}\,,
\end{equation}
and
\begin{equation}
\label{eq-gamma-2}
\underset{0\leq t\leq T}{\sup}\abs{\Gamma_2(\alpha)(t)}\leq \alpha_0 +
C(1+T)\parent{A^3+R^3}\,.
\end{equation}
Under our assumptions, we now fix $A=R=2C_0\delta_0\eqqcolon c\delta_0$ and the above estimates yield
\[
\norm{\Gamma_1(\nu)}_{X^{1/2}\intervaloo{0}{T}} + \underset{0\leq t\leq T}{\sup}\abs{\Gamma_2(\alpha)(t)} \leq C_0\parent{\delta_0+A^3+T^{1/2}}\leq 2C_0\delta_0
\]
provided that $\delta_0C_0<1/2$, and where $C_0=C_0(\norm{\phi_0}_{\mathcal{H}^1},\norm{\abs{x}^{-1/2-}\phi}_{L_x^\infty})$. This proves that $\Gamma$ preserves the space $\mathcal{E}_{T,A}$. Let us briefly explain how we obtained estimates~\eqref{eq-gamma-1} and~\eqref{eq-gamma-2}.  To prove~\eqref{eq-gamma-1}, we first note that the terms without the discrete part $\alpha\phi_0$ are handled by the analysis conducted in section~\ref{section-critical}, globally in time. Therefore, by dropping the complex conjugate from the notations, it remains to control some terms of the form 
\[ 
(\alpha\phi_0)^3,\quad (\alpha\phi_0)^2(\epsilon u^\omega + \nu),\quad \alpha\phi_0(\epsilon u^\omega + \nu)^2\,.
\]
To handle the terms without any power of the linear part $u^\omega$, we use the Leibniz rule and we obtain 
\begin{multline*}
\norm{(\alpha\phi_0)^3}_{DX_H^{1/2}\intervaloo{0}{T}}+\norm{(\alpha\phi_0)^2\nu}_{DX_H^{1/2}\intervaloo{0}{T}}+\norm{(\alpha\phi_0)\nu^2}_{DX_H^{1/2}\intervaloo{0}{T}}\\\lesssim T\parent{\underset{0\leq t\leq T}{\sup}\abs{\alpha(t)}^3\norm{\phi_0}_{\mathcal{H}^{1/2}}^3+\norm{\nu}_{X_H^{1/2}\intervaloo{0}{T}}}\lesssim TA^3\,.
\end{multline*}
As for the terms with some power of $u^\omega$, we can use the Leibniz rule and the local smoothing estimate~\eqref{eq-local-smoothing} as follows. We do a Littlewood-Paley decomposition of each term in the definition of Writing the definition of the $X^{1/2}$-norm : given a fixed $N\in2^\N$, we denote by $N_1, N_2, N_3$ the frequencies at which $\phi_0,u^\omega,u^\omega$ are localized.
\paragraph{\textbf{\textit{Case $\alpha\phi_0(u^\omega)^2$} :}}  We consider the worst case where $\phi_0$, which is the smoothest term, comes with the lowest frequency $N_1\leq N_2\leq N_3\sim N$. In the high-low-low case, when we have $N_2\ll N_3$, $\widetilde\Delta_{N_3}u^\omega$ cannot absorb all the derivatives, and we need to use the local smoothing effect. To that hand, we apply Hölder with $1=\frac{1}{2}+\frac{1}{5}+\frac{3}{10}$ and get
\begin{equation*}
\begin{split}
N^{1/2}\norm{P_N\parent{\alpha\phi_0(\epsilon u^\omega)^2}}_{U_H^2\intervaloo{0}{T}} &\leq 
\epsilon^2N_3^{1/2} 
\underset{\norm{w}_{V_H^2\intervaloo{0}{T}}\leq1}{\sup}\abs{\int_0^T\int_{\R^3}\alpha(t)\widetilde{\Delta}_{N_1}\phi_0\widetilde{\Delta}_{N_2} u^\omega\widetilde{\Delta}_{N_3}u^\omega wdxdt}\\
&\lesssim\underset{0\leq t\leq T}{\sup}\abs{\alpha(t)}N_3^{1/2}\norm{\widetilde{\Delta}_{N_1}\phi_0\widetilde{\Delta}_{N_3}\epsilon u^\omega}_{L_{t,x}^2\intervalco{0}{T}}\norm{\epsilon u^\omega}_{L_{t,x}^5\intervalco{0}{T}}\norm{w}_{V_H^2}\\
&\lesssim A\norm{\japbrak{x}^{1/2+}\phi_0}_{L_{x}^\infty}N_3^{1/2}\norm{\japbrak{x}^{-1/2-}\widetilde{\Delta}_{N_3}\epsilon u^\omega}_{L_{t,x}^2\intervaloo{0}{T}}\norm{\epsilon u^\omega}_{L_{t,x}^5\intervalco{0}{T}}\,.
\end{split}
\end{equation*}
Then, we use the probabilistic Strichartz estimate~\eqref{est-R} for the $L_{t,x}^5$ norm and the local smoothing effect~\eqref{eq-local-smoothing} to get 
\[
N^{1/2}\norm{P_N\parent{\alpha\phi_0(\epsilon u^\omega)^2}}_{U_H^2\intervaloo{0}{T}}\lesssim A\norm{ \widetilde\Delta_{N_3}\epsilon u_0^\omega}_{L_x^2}R\lesssim A \epsilon R\lesssim A^3+R^3\,,
\]
since $\epsilon$ has to be chosen much smaller than $R$ in~\eqref{set-eps-R} for the probability of the bad set of initial data to be small.~\footnote{\ In this case, note that the estimates do not depend on $T$.}
\paragraph{\textbf{\textit{Case $(\alpha\phi_0)^2u^\omega$} :}} Similarly, we assume that $N_1\leq N_2\leq N_3\sim N$ and do the same computations with $\alpha(t)\phi_0$ instead of $\epsilon u^\omega$. The only difference is that 
\[
\norm{\alpha\phi_0}_{L_{t,x}^5\intervalco{0}{T}}\leq AT^{1/5}\norm{\phi_0}_{L_x^5}\,.
\]
This yields
\[
N^{1/2}\norm{P_N(\alpha\phi_0)^2\epsilon u^\omega}_{U_H^2\intervaloo{0}{T}}\lesssim T^{1/5} A^2\epsilon\lesssim T^{1/5}(A^3+R^3)\,.
\]
Then, we obtain~\eqref{eq-gamma-1} by summing over $N_1,N_2,N_3$ and $N$ as in section~\ref{section-critical}. To get estimate~\eqref{eq-gamma-2}, we just use Hölder, the endpoint Strichartz estimate and the embedding $L_t^3(\intervaloo{0}{T};L_x^4)\hookrightarrow X_H^{0}\intervaloo{0}{T}$
\begin{equation*}
\begin{split}
\underset{0<t<T}{\sup}&\abs{\int_0^t\e^{i(t-\tau)e_0}\prodscal{\mathcal{N}\parent{\epsilon u^\omega(\tau)+\nu(\tau)+\alpha(\tau)\phi_0}}{\phi_0}d\tau}\\
&\lesssim \int_0^T\int_{\R^3}\parent{\abs{\nu(\tau,x)+\epsilon u^\omega(\tau,x)}^3+\abs{\alpha(\tau)\phi_0(x)}^3}\abs{\phi_0(x)}dxd\tau\\
&\lesssim \norm{\phi_0}_{L_x^2}\parent{\int_0^T\norm{\nu(\tau)+\epsilon u^\omega}_{L_x^6}^3d\tau}+T\underset{0<t<T}{\sup}\abs{\alpha(t)}^3\norm{\phi_0}_{L_x^4}^4\\
&\lesssim C(\phi_0)\parent{\norm{\japbrak{\nabla}^{1/4}\nu}_{L_t^3L_x^4}^3+\norm{\epsilon \japbrak{\nabla}^{1/4} u^\omega}_{L_t^3L_x^4}^3+TA^3}\,.
\end{split}
\end{equation*}
By the embedding $L_t^3(\intervaloo{0}{T};L_x^4)\hookrightarrow X_H^{0}\intervaloo{0}{T}$, the right-hand side is less than 
\begin{equation*}
\begin{split}
&\lesssim C(\phi_0)\parent{\norm{\nu}_{X_H^{1/4}\intervaloo{0}{T}}^3+\epsilon^3\norm{u_0}_{\mathcal{H}^{1/4}}^3+TA^3}\\
&\lesssim C(\phi_0)(1+T)\parent{A^3+R^3}\,.
\end{split}
\end{equation*}
This finishes the proof of estimates~\eqref{eq-gamma-1} and~\eqref{eq-gamma-2}. The proof of the Lipschitz estimates for $\Gamma$ follows similarly, and establishes the contraction mapping property for $\Gamma$.
\end{proof}
\subsection{Nonlinear ground states}
Before we dive into the proof of asymptotic stability we establish the main global estimates on the local flow constructed in the above paragraph, we collect from \cite{gustafson2004} some properties of the curve of \textit{nonlinear ground states} that bifurcates from the eigenspace spanned by $\phi_0$. In particular, we will be able to decompose the solution into a ground state plus a radiation term at each time, where the radiation term will be shown to scatter in the last paragraph. First, we recall how the ground states are constructed.
\begin{lemma}[Nonlinear ground states (Lemma 2.1 in~\cite{gustafson2004})]
\label{lemma-gs}
There exists $\delta>0$ small enough such that for all $z\in\C$ with $\abs{z}\leq\delta$, there exists a nonlinear ground state $Q$ and $E$ solution to 
\begin{equation}
\label{eq-gs}
\parent{\Delta-V+\abs{Q}^2}Q=EQ
\end{equation}
under the form
\begin{equation*}
Q(z)=z\phi_0+q(z),\quad E(z)=e_0+e(z).
\end{equation*}
We have uniqueness for $(q,e)$ in the class 
\begin{equation*}
\set{(q,e)\in (\mathcal{H}^2\cap\ran(\Pc))\times\R\mid \norm{q}_{\mathcal{H}^2}\leq\delta,\ \abs{e}\leq\delta}.
\end{equation*}
Thanks to the gauge invariance of the nonlinear part, 
\begin{equation*}
Q(z\e^{i\alpha})=Q(z)\e^{i\alpha},\quad E(z)=E(\abs{z}).
\end{equation*}
In addition, $q$ and its derivatives are small in $\mathcal{H}^2(\R^3)$ :
\begin{equation}
\label{-Dz Dz Q}
q=\bigO{z^3},\quad \Dz_z Q=\bigO{z^2},\quad \Dz_z ^2q = \bigO{z}.
\end{equation}
The first two derivatives of $e$ are also small and satisfy
\begin{equation}
\label{eq-De}
\abs{\Dz_z e}=\bigO{z},\quad \abs{\Dz_z ^2e}=\bigO{1}.
\end{equation}
\end{lemma}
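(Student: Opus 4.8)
The statement is the bifurcation lemma for nonlinear ground states, taken verbatim from \cite{gustafson2004}, so the plan is essentially to recall the implicit function theorem argument. First I would recast \eqref{eq-gs} as a fixed point problem on the continuous spectral subspace. Writing $Q=z\phi_0+q$ with $q\in\ran(\Pc)$ and $E=e_0+e$, and using that $(\Delta-V)\phi_0=e_0\phi_0$, equation \eqref{eq-gs} splits via the projections $\Pp$ and $\Pc$ into a scalar equation (projecting onto $\phi_0$) that determines $e$ in terms of $z$ and $q$, namely $e=\abs{z}^{-2}\prodscalr{\abs{Q}^2Q}{\phi_0}-\dots$, and an equation on $\ran(\Pc)$ of the form
\begin{equation*}
\parent{-\Delta+V-e_0}q=\Pc\parent{\abs{Q}^2Q-eQ}\,.
\end{equation*}
Since $e_0<0$ lies strictly below $\sigma_c(H)=\intervalco{0}{+\infty}$, the operator $(-\Delta+V-e_0)$ is boundedly invertible from $\ran(\Pc)$ to $\ran(\Pc)\cap\mathcal{H}^2$; the assumption that $0$ is neither an eigenvalue nor a resonance is what guarantees there is no spectral obstruction and that the resolvent bound holds on $\mathcal{H}^2$. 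This yields a map $\Phi(q,z)=(-\Delta+V-e_0)^{-1}\Pc(\abs{Q}^2Q-eQ)$ whose fixed points are exactly the ground state corrections.

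Next I would run a contraction argument for $\Phi(\cdot,z)$ on the ball $\set{q\in\ran(\Pc)\cap\mathcal{H}^2\mid\norm{q}_{\mathcal{H}^2}\leq\delta}$, uniformly in $\abs{z}\leq\delta$. The nonlinearity $\abs{Q}^2Q$ is cubic and $\mathcal{H}^2(\R^3)$ is an algebra, so $\norm{\abs{Q}^2Q}_{\mathcal{H}^2}\lesssim\norm{Q}_{\mathcal{H}^2}^3\lesssim(\abs{z}+\delta)^3$, and similarly for the Lipschitz bound; choosing $\delta$ small makes $\Phi$ a contraction, giving existence and uniqueness of $(q,e)$ in the stated class. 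The quantitative bounds \eqref{-Dz Dz Q} and \eqref{eq-De} then follow from differentiating the fixed point identity in $z$ (i.e.\ applying the implicit function theorem, whose hypotheses are met because $D_q\bigl(q-\Phi(q,z)\bigr)$ is invertible near $z=0$): at $z=0$ one has $q=0,e=0$; one derivative in $z$ of the identity $q=\Phi(q,z)$ shows $\Dz_z q=\bigO{z^2}$ because the right-hand side is cubic in $Q$ and $\Dz_z Q=\phi_0+\Dz_z q$, hence $\Dz_z Q=\bigO{z^2}$ after feeding back; bootstrapping gives $q=\bigO{z^3}$ and $\Dz_z^2 q=\bigO{z}$, and the analogous differentiation of the scalar equation for $e$ gives $\abs{\Dz_z e}=\bigO{z}$, $\abs{\Dz_z^2 e}=\bigO{1}$. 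The gauge covariance $Q(z\e^{i\alpha})=Q(z)\e^{i\alpha}$, $E(z)=E(\abs{z})$ is inherited from the $U(1)$-invariance of the map $\Phi$ (replacing $z\mapsto z\e^{i\alpha}$ and $q\mapsto q\e^{i\alpha}$ leaves the equation invariant), combined with the uniqueness just established.

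The main obstacle, such as it is, is purely bookkeeping: one must be careful that the scalar equation genuinely determines $e$ as a smooth function of $(q,z)$ near the origin without a division-by-zero issue — writing $Q=z\phi_0+q$ and expanding $\prodscalr{\abs{Q}^2Q}{\phi_0}$ one checks the leading term is $\abs{z}^2 z\norm{\phi_0}_{L^4}^4$ plus higher order, so after dividing by $\prodscalr{Q}{\phi_0}=z+\bigO{z^3}$ the quantity $e$ is $\bigO{\abs{z}^2}$ and smooth, which is exactly \eqref{eq-De}. Everything else is the standard Lyapunov–Schmidt / implicit function theorem machinery, and since the result is quoted from \cite{gustafson2004} a one-line reference together with this sketch suffices.
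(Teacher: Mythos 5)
The paper does not prove this lemma; it is quoted verbatim (as indicated by the header ``Lemma 2.1 in \cite{gustafson2004}'') and the reader is referred to Gustafson--Nakanishi--Tsai for the argument. Your Lyapunov--Schmidt / implicit-function sketch is precisely the approach taken there, so in that sense you have reconstructed the cited proof rather than given a genuinely different one.

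Two small corrections worth noting. First, the bounded invertibility of $(-\Delta+V-e_0)\colon\ran(\Pc)\cap\mathcal{H}^2\to\ran(\Pc)$ is a consequence of $e_0<0=\min\sigma_{ess}(H)$ together with the simplicity of $e_0$ (so that $\Pc$ removes the entire kernel); the assumption that $0$ is not a resonance or eigenvalue plays no role at the spectral point $e_0<0$, and invoking it here is a misattribution --- that hypothesis is used in the paper for the dispersive and local-smoothing estimates near the edge of the continuous spectrum, not for the elliptic bifurcation. Second, you write ``$\Dz_z Q=\phi_0+\Dz_z q$, hence $\Dz_z Q=\bigO{z^2}$,'' which is self-contradictory since $\phi_0=\bigO{1}$; the quantity that is $\bigO{z^2}$ is $\Dz_z q$, not $\Dz_z Q$. (The paper itself carries this typo --- both in \eqref{-Dz Dz Q} and in Lemma~\ref{lemma-weight} the symbol $Q$ appears where $q$ is meant, which one can check against the original Lemma 2.1 in \cite{gustafson2004} and against the identity $\Dz_zQ=\operatorname{J}\phi_0+\Dz_zq$ used in the appendix.) Your underlying chain of estimates for $q$, $\Dz_zq$, $\Dz_z^2q$, $e$, $\Dz_ze$, $\Dz_z^2e$ is nevertheless correct.
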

The following elliptic estimates on some Sobolev weighted norms of the ground states and its derivatives with respect to the parameter $z$ are crucial, and our arguments heavily rely on the smallness of these quantities.
\begin{lemma}[Weighted elliptic estimates for the derivatives of ground states, see~\cite{gustafson2004}]
\label{lemma-weight}
There exists $\delta>0$ such that for all $\abs{z}<\delta$ and all $k\in\set{1,2}$ we have
\begin{equation}
\begin{split}
\label{eq-w}
&\norm{\japbrak{x}^kq}_{\mathcal{H}^2(\R^3)}=\bigO{z^3},\\
&\norm{\japbrak{x}^k\Dz_z Q}_{\mathcal{H}^2(\R^3)}=\bigO{z^2}\,.
\end{split}
\end{equation}
\end{lemma}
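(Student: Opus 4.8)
The plan is to propagate the weighted decay through the fixed-point construction of Lemma~\ref{lemma-gs}. Writing $Q(z)=z\phi_0+q(z)$ and $E(z)=e_0+e(z)$ and splitting the ground-state equation into its $\ran(\Pc)$- and $\phi_0$-components, one gets (using $H\phi_0=e_0\phi_0$ and $q(z)\in\ran(\Pc)$) the identity
\begin{equation*}
q(z)=-\parent{H-E(z)}^{-1}\Pc\parent{\abs{Q(z)}^2Q(z)}
\end{equation*}
together with a scalar relation for $e(z)$ involving $\prodscalr{\abs{Q(z)}^2Q(z)}{\phi_0}$. Here $H-E(z)$ is boundedly invertible on $\ran(\Pc)$ because $\sigma\parent{H|_{\ran\Pc}}=\intervalco{0}{+\infty}$ while $E(z)=e_0+\bigO{z^2}$ stays in $\intervaloo{-\infty}{0}$ for $\abs{z}$ small, so that $\dist\parent{E(z),\sigma(H|_{\ran\Pc})}=\abs{E(z)}\geq\abs{e_0}/2$; moreover $(q,e)$ is the \emph{unique} small solution, and the unweighted bounds of Lemma~\ref{lemma-gs} are already at hand.

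The analytic input is the exponential decay of the resolvent of $H$ away from its spectrum: for $E$ in a fixed compact subset of $\intervaloo{-\infty}{0}$, hence at distance $\geq\abs{e_0}/2$ from $\sigma(H|_{\ran\Pc})$, the kernel of $(H-E)^{-1}\Pc$ obeys $\abs{(H-E)^{-1}\Pc(x,y)}\lesssim\abs{x-y}^{-1}\e^{-c\abs{x-y}}$ uniformly in $E$ -- a classical Combes--Thomas/Agmon-type bound, using $V\in\mathcal{S}(\R^3)$. Combining this with the elementary inequality $\japbrak{x}^\sigma\lesssim_\sigma\japbrak{x-y}^\sigma\japbrak{y}^\sigma$ and Schur's test gives boundedness of $(H-E)^{-1}\Pc$ on $L^{2,\sigma}(\R^3)$ for all $\sigma\geq0$, and a weighted elliptic estimate for $H$ (commuting $\japbrak{x}^\sigma$ through $\Delta$, which after conjugation contributes only lower-order operators with bounded coefficients) upgrades it to
\begin{equation*}
\norm{\japbrak{x}^\sigma\parent{H-E}^{-1}\Pc f}_{\mathcal{H}^2(\R^3)}\leq C_\sigma\norm{\japbrak{x}^\sigma f}_{L^2(\R^3)}
\end{equation*}
uniformly in $E$.

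Granted this, I would re-run the contraction mapping used in the proof of Lemma~\ref{lemma-gs}, but now in the weighted space $\set{q\in\ran(\Pc)\mid\japbrak{x}^kq\in\mathcal{H}^2(\R^3)}$. The weighted resolvent bound shows the map preserves this space; $\phi_0\in\mathcal{S}(\R^3)$ puts all source terms there; and the nonlinearity is controlled by the one-sided algebra estimate $\norm{\japbrak{x}^kfg}_{\mathcal{H}^2}\lesssim\norm{\japbrak{x}^kf}_{\mathcal{H}^2}\norm{g}_{\mathcal{H}^2}$, valid on $\R^3$ since $\mathcal{H}^2\hookrightarrow L^\infty$; in particular the purely cubic term $q^3$, which carries no $\phi_0$-factor, is handled by $\norm{\japbrak{x}^kq^3}_{\mathcal{H}^2}\lesssim\norm{\japbrak{x}^kq}_{\mathcal{H}^2}\norm{q}_{\mathcal{H}^2}^2$. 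Thus the map contracts the ball $\set{\norm{\japbrak{x}^kq}_{\mathcal{H}^2}\leq C\abs{z}^3}$ into itself with Lipschitz constant $\bigO{z^2}$, the scale $\abs{z}^3$ coming from the only $\bigO{z^3}$-term in $\abs{Q}^2Q$, namely $\abs{z}^2z\,\phi_0^3\in\mathcal{S}(\R^3)$, all other terms being $\bigO{z^4}$ or smaller. By the uniqueness in Lemma~\ref{lemma-gs}, the fixed point is $q(z)$, whence $\norm{\japbrak{x}^kq}_{\mathcal{H}^2}=\bigO{z^3}$. The bound for $\japbrak{x}^k\Dz_z Q$ then follows by differentiating the defining relations in $z$: $\Dz_z q$ solves the \emph{linear} equation $\parent{H-E}\Dz_z q=-\Pc\,\Dz_z\parent{\abs{Q}^2Q}+\parent{\Dz_z e}\,q$, whose right-hand side has weighted $L^2$-norm $\bigO{z^2}$ by the bounds of Lemma~\ref{lemma-gs} and the estimate on $\japbrak{x}^kq$ just obtained, so the same weighted resolvent estimate closes the argument.

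The step I expect to be the main obstacle is the weighted resolvent bound of the second paragraph -- specifically, obtaining the exponential kernel decay uniformly for $E$ near $e_0$ and lifting it to the $\mathcal{H}^2$-level in weighted spaces; the remainder is a routine repackaging of the construction of Lemma~\ref{lemma-gs}. A minor point to watch is the possibility $E(z)=e_0$, which is exactly why one works throughout with $(H-E)^{-1}\Pc$, i.e.\ the resolvent of $H$ restricted to $\ran(\Pc)$, rather than with $(H-E)^{-1}$ on $L^2$.
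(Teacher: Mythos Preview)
Your approach is correct and genuinely different from the paper's. You invert the elliptic equation via the projected resolvent $(H-E(z))^{-1}\Pc$, establish a weighted mapping bound for it via Combes--Thomas, and then re-run the contraction of Lemma~\ref{lemma-gs} in the weighted space. The paper instead argues by a direct energy/commutator method: it writes $(H-e_0)(\japbrak{x}\varphi) = [-\Delta,\japbrak{x}]\varphi + \japbrak{x}(H-e_0)\varphi$, pairs against $\japbrak{x}\overline{\varphi}$ with $\varphi=q$, and uses the coercivity coming from $-e_0>0$ (plus $\abs{e(z)}\ll1$) together with the unweighted bounds of Lemma~\ref{lemma-gs}; the $\mathcal{H}^2$-level is reached by differentiating this identity in $x_k$ and repeating. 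The paper's route is shorter and entirely self-contained, needing no resolvent machinery, while yours is more modular and would port more easily to higher weights or other nonlinearities. On the obstacle you flag: it is real. The standard Combes--Thomas kernel bound for the \emph{full} resolvent $(H-E)^{-1}$ degenerates as $E\to e_0\in\sigma(H)$; restricting to $\ran(\Pc)$ fixes the $L^2$-boundedness, but extracting a weighted bound for $(H-E)^{-1}\Pc$ uniformly near $e_0$ requires either exploiting the exponential decay of $\phi_0$ to control the conjugated projection $e^{\alpha\cdot x}\Pp\, e^{-\alpha\cdot x}$ separately, or proving the weighted mapping property directly -- and the most economical direct proof is precisely the commutator/energy computation the paper carries out. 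So your black box and the paper's hands-on argument end up being close cousins.
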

We recall that the generalized continuous spectral spaces $\mathcal{H}_c(z)$ parametrized by $z$ were introduced to encode orthogonality conditions~\eqref{orth} : 
\begin{equation*}
\mathcal{H}_c(z)\coloneqq\set{\eta\in L^2(\R^3)\mid \prodscalr{i\eta}{\partial_{z_1}Q}=\prodscalr{i\eta}{\partial_{z_2}Q}=0}.
\end{equation*}
In particular, note that $\ran(\Pc)$ corresponds to $\mathcal{H}_c(0)$. The following result (Lemma 2.2 in~\cite{gustafson2004}) yields a bijection from $\mathcal{H}_c(0)$ to $\mathcal{H}_c(z)$. This useful correspondence reduces the dynamic of $\eta$ in ansatz~\eqref{ansatz} to the dynamic of its continuous spectral part $\nu$. 
\begin{lemma}[Continuous spectral subspace comparison]
\label{lemma-proj}
There exists $\delta>0$ such that any function $\psi\in L^2(\R^3)$ can be uniquely decomposed into
\[
\psi = Q(z)+\eta,\quad \eta\in\mathcal{H}_c(z)\,.
\]
If $\norm{\psi}_{\mathcal{H}^s}\leq\delta$ for some $s\geq0$ then $\eta\in\mathcal{H}^s$ and $\norm{\eta}_{\mathcal{H}^s}+\abs{z}\lesssim\norm{\psi}_{\mathcal{H}^s}$. Moreover, for all $\abs{z}\leq\delta$ there exists a bijective operator 
\[
\function{\operatorname{R}(z)}{ \mathcal{H}_c(0)}{\mathcal{H}_c(z)}{u}{u+\alpha(z)(u)\phi_0\,.}
\]
Here, $\alpha(z) u$ is solution to
\begin{equation*}
\label{eq-alpha}
\Lambda(z)\alpha(z)u = \prodscalr{iu}{\Dz_z Q}\,,
\end{equation*}
with
\begin{equation*}
\Lambda(z)=\begin{pmatrix}
0 & - 1  \\
1 & 0  \\
\end{pmatrix} +\begin{pmatrix}
\prodscalr{\phi_0}{\Im(\partial_{z_2}q)}  &-\prodscalr{\phi_0}{\Re(\partial_{z_2}q)}  \\
\prodscalr{\phi_0}{\Im(\partial_{z_1}q)} & \prodscalr{\phi_0}{\Re(\partial{z_1}q)}  \\
\end{pmatrix}\eqqcolon\begin{pmatrix}
0 & - 1  \\
1 & 0  \\
\end{pmatrix}+\gamma(z)\,.
\end{equation*}
Finally, $\operatorname{R}(z)-I$ is compact and continuous in the operator norm on any space $Y$ that satisfies $\mathcal{H}^{-2}\subset Y\subset\mathcal{H}^2$.
\end{lemma}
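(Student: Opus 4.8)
The plan is to obtain the decomposition $\psi=Q(z)+\eta$ from the implicit function theorem, to build $\operatorname{R}(z)$ by solving an explicit $2\times2$ linear system, and then to read off compactness and continuity from the fact that $\operatorname{R}(z)-I$ has rank at most two.

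\emph{The decomposition.} Identifying $\C\simeq\R^2$, set $\Phi(\psi,z)=\parent{\prodscalr{i(\psi-Q(z))}{\partial_{z_1}Q(z)},\,\prodscalr{i(\psi-Q(z))}{\partial_{z_2}Q(z)}}\in\R^2$ and solve $\Phi(\psi,z)=0$ in $z$. Since $Q(0)=0$, the pair $(0,0)$ is a solution; using $Q(z)=z\phi_0+\bigO{z^3}$ from Lemma~\ref{lemma-gs}, so that $\partial_{z_1}Q(0)=\phi_0$, $\partial_{z_2}Q(0)=i\phi_0$, together with $\norm{\phi_0}_{L^2}=1$, a short computation gives $\Dz_z\Phi(0,0)=\begin{pmatrix}0&1\\-1&0\end{pmatrix}$, which is invertible. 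As $\Phi$ only involves $Q$ and its $z$-derivatives, which are Schwartz-localized by Lemma~\ref{lemma-weight}, the map $\Phi$ and its $z$-differential are $C^1$ and Lipschitz on a ball of $L^2\times\set{\abs z\leq\delta}$, so the implicit function theorem yields a unique $z=z(\psi)\in\C$ with $z(0)=0$, depending Lipschitz-continuously on $\psi$, hence $\abs z\lesssim\norm{\psi}_{L^2}$. Put $\eta=\psi-Q(z(\psi))$. If moreover $\psi\in\mathcal{H}^s$ with $s\geq0$, then, since $z\mapsto Q(z)$ is smooth with values in $\mathcal{S}\subset\mathcal{H}^s$ and $\norm{Q(z)}_{\mathcal{H}^s}\lesssim\abs z$, we get $\eta\in\mathcal{H}^s$ and $\norm{\eta}_{\mathcal{H}^s}+\abs z\lesssim\norm{\psi}_{\mathcal{H}^s}$; uniqueness in $L^2$ forces this to be the only such splitting.

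\emph{The operator $\operatorname{R}(z)$.} Fix $u\in\mathcal{H}_c(0)=\ran(\Pc)$. Any $\eta$ with $\Pc\eta=u$ has the form $\eta=u+\beta\phi_0$ with $\beta\in\C$. Imposing $\eta\in\mathcal{H}_c(z)$, i.e. $\prodscalr{i(u+\beta\phi_0)}{\partial_{z_j}Q(z)}=0$ for $j=1,2$, and splitting $\partial_{z_j}Q(z)=\partial_{z_j}(z\phi_0)+\partial_{z_j}q(z)$, the terms involving $\phi_0$ alone produce the constant matrix $\begin{pmatrix}0&-1\\1&0\end{pmatrix}$ acting on $(\beta_1,\beta_2)$, while the terms $\prodscalr{i\beta\phi_0}{\partial_{z_j}q(z)}$ produce exactly the perturbation $\gamma(z)$ of the statement. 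Hence $\beta$ must solve $\Lambda(z)\beta=\prodscalr{iu}{\Dz_z Q(z)}$. By Lemma~\ref{lemma-gs} one has $\norm{\Dz_z q}_{\mathcal{H}^2}=\bigO{z^2}$, so $\gamma(z)$ is small and $\Lambda(z)$ is invertible for $\abs z\leq\delta$; define $\alpha(z)(u)=\Lambda(z)^{-1}\prodscalr{iu}{\Dz_z Q(z)}$ and $\operatorname{R}(z)u=u+\alpha(z)(u)\phi_0\in\mathcal{H}_c(z)$, which is linear and bounded in $u$.

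\emph{Bijectivity, compactness, continuity.} Applying $\Pc$ to $\operatorname{R}(z)u=0$ gives $u=0$, so $\operatorname{R}(z)$ is injective. Given $\eta\in\mathcal{H}_c(z)$, set $u=\Pc\eta$; rewriting the orthogonality conditions for $\eta$ as above shows that its discrete component $\Pp\eta=\prodscal{\eta}{\phi_0}\phi_0$ has coefficient solving the same $2\times2$ system $\Lambda(z)\prodscal{\eta}{\phi_0}=\prodscalr{iu}{\Dz_z Q(z)}$, hence equal to $\alpha(z)(u)$, i.e. $\operatorname{R}(z)u=\eta$; in fact $\operatorname{R}(z)^{-1}=\Pc\big|_{\mathcal{H}_c(z)}$. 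Finally $\operatorname{R}(z)u-u=\alpha(z)(u)\phi_0$, where $u\mapsto\alpha(z)(u)$ is a bounded linear functional (continuous even on $\mathcal{H}^{-2}$, since it pairs $u$ with the Schwartz functions $\Dz_z Q$) and $\phi_0\in\mathcal{S}\subset\mathcal{H}^2$; thus $\operatorname{R}(z)-I$ has rank $\leq2$, hence is compact and bounded on any $Y$ with $\mathcal{H}^{-2}\subset Y\subset\mathcal{H}^2$, and its operator-norm continuity in $z$ follows from that of $z\mapsto\Lambda(z)^{-1}$ and $z\mapsto\Dz_z Q(z)$ granted by Lemma~\ref{lemma-gs}.

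\emph{Main difficulty.} No step is deep; the point requiring care is to verify invertibility of the differential $\Dz_z\Phi(0,0)$ and of the matrix $\Lambda(z)$, and to propagate the $L^2$ modulation parameter $z(\psi)$ to every Sobolev scale with the quantitative bound $\norm{\eta}_{\mathcal{H}^s}+\abs z\lesssim\norm{\psi}_{\mathcal{H}^s}$ — this is where the smallness and Schwartz localization of $q$ and $\Dz_z Q$ from Lemmas~\ref{lemma-gs}--\ref{lemma-weight} are essential.
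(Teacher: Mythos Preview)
Your proof is correct and follows the standard route: implicit function theorem for the decomposition $\psi=Q(z)+\eta$, explicit solution of the $2\times 2$ system for $\operatorname{R}(z)$, and finite rank for compactness. Note that the paper does not actually prove this lemma; it is quoted as Lemma~2.2 from~\cite{gustafson2004}, so there is no paper proof to compare against --- but your argument is precisely the one given there, and nothing is missing.
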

\subsection{Global a priori estimates}
Now that we have a local in time solution $\psi$ for small initial data, we need to obtain some global estimates on $\phi$ to extend it as a global solution and understand to its asymptotic behavior. First, we recall that in a small neighborhood of 0 the solution can be uniquely decomposed into
\begin{equation*}
\psi = Q(z)+\eta\,,
\end{equation*} 
where $\eta$ satisfies the time-dependent orthogonality conditions $\eta(t)\in\mathcal{H}_c(z(t))$. In what follows, the continuous spectral part of the nonlinear part~\footnote{\ We emphasize that the so-called linear part of the solution $u^\omega(t)=\e^{-itH}u_0^\omega$ lies in $\mathcal{H}_c(0)$ at each time $t$, since $u_0$ is in $\mathcal{H}_c(0)$ and the randomization preserves this space, as well as the flow $\e^{-itH}$.} of the solution is denoted by $\nu$ :
\[
\nu = \Pc(\eta) - \e^{-itH}\epsilon u^\omega\in\mathcal{H}_c(0)\,.
\]
Then, we deduce from Lemma~\ref{lemma-proj} that $\eta = R(z)\parent{\Pc\eta}$ so that the solution decouples into
\[
\psi = Q(z) + \eta = Q(z) + \operatorname{R}(z)\parent{\epsilon u^\omega+\nu}\,.
\]
In particular, we note that
\begin{equation*}
\Pc(\eta)=\epsilon u^\omega+\nu,\quad \Pp(\eta)= (\operatorname{R}(z)-I)\Pc(\eta)=\alpha(z)\Pc(\eta)\phi_0=\alpha(z)\parent{\epsilon u^\omega+\nu}\phi_0\,.
\end{equation*}
In what follows, we fix $\delta>0$ as in Lemmas~\ref{lemma-weight},~\ref{lemma-proj} and $\delta_0>0$ as in the local existence result from Proposition~\ref{prop-local-existence}. Then, we take $\epsilon$ and $\delta'<\delta$ such that 
\[
\norm{\nu}_{L^2}+\abs{z}\leq\delta'\implies \norm{\psi}_{\mathcal{H}^s}\leq\delta\,.
\]
The goal is to obtain some global bounds on $\eta$ in a critical space contained in $L_t^\infty\mathcal{H}_x^{1/2}$ and on $z$ to prove that the solutions stays small. Then we bootstrap the local existence result to obtain global existence, and further exploit the global estimates to prove asymptotic stability. We denote by $m$ the following gauge transformation of the parameter $z$ 
\begin{equation}
\label{eq-gauge-transformation}
m(z)=z\exp\parent{i\int_0^tE(z)(\tau)d\tau}\,.
\end{equation}
It turns out that $m$ is the interesting evolution parameter. Indeed, if we assume orthogonality conditions~\eqref{orth} to hold then $m$ solves the ODE
\begin{equation}
\label{eq-modulation}
\begin{cases}
\frac{d}{dt} m(z) = \dot z+iE(z)z = - A(z,\eta)^{-1}\prodscalr{F}{\Dz_z Q}\,,\\
m(z)\lvert_{t=0}=z_0\,,
\end{cases}
\end{equation}
where for $j,k\in\set{1,2}$ we have
\begin{equation}
\label{eq-A}
A(z,\eta)_{j,k}= \prodscalr{i\operatorname{D}_jQ}{\operatorname{D}_kQ}+\prodscalr{i\eta}{\operatorname{D}_j\operatorname{D}_kQ}=j-k+\bigO{\delta^2}.
\end{equation}
The forcing term is
\begin{equation*}
F=F(z,\eta)=\Pc\parent{\mathcal{N}(Q+\eta)-\abs{Q}^2\eta-Q^2\overline{\eta}}\,.
\end{equation*} 
\begin{remark}
As a consequence of~\eqref{-Dz Dz Q}, the matrix $A$ is invertible provided that $\delta$ is small enough. 
\end{remark}
\begin{remark}
If we drop numerical constants and complex conjugate in the notations, we can write
\begin{equation*}
F=\Pc(\eta^3)+\Pc(Q\eta^2)\,.
\end{equation*}
In particular, we observe that orthogonality conditions~\eqref{orth} cancelled the terms which are linear in $\eta$. Since we do not control the $L^1$-in-time norm of the radiation term, we couldn't have handled these linear terms. That's why the time dependent orthogonality conditions are crucial. 
\end{remark}
\begin{remark}
\label{remark-m}
Since $E(z)=E(\abs{z})=E(\abs{m(z)})$ we have that  $\overline z=m\parent{\overline{m(z)}}$. Hence, $m$ is a bijective operator, so we can recover $z$ from $m(z)$. 
\end{remark}
To get the evolution equation for $\nu$, we inject the ansatz~\eqref{ansatz} into~\eqref{eq-nls} and we use the fact that $Q$ is solution to~\eqref{eq-gs}. This yields that the radiation term $\eta$ solves the equation
\begin{equation*}
i\partial_t\eta+(\Delta-V)\eta = \mathcal{N}(Q+\eta)-\mathcal{N}(Q)- i\Dz_z Q(\dot z +iEz)\,.
\end{equation*}
By projecting the above equation on the continuous spectral subspace, and noting that $\Pc(\Dz_z Q)=\Dz_z Q$, we get
\begin{equation}
\label{eq-nu}
\begin{cases}
i\partial_t\nu+(\Delta-V)\nu=\Pc\parent{\mathcal{N}(Q+\eta) - \mathcal{N}(Q)} - i\Dz_z q\, \dot m(z) \eqqcolon f+g\,,\\
\nu\lvert_{t=0}=\nu_0\,,
\end{cases}
\end{equation}
where we decomposed the forcing term into the sum of a nonlinear term $f$ and a localized term $g$. More precisely, we chose to collect in $f$ the higher order nonlinear terms $\mathcal{N}(\nu+\epsilon u^\omega)$ as well as the modulation term :
\begin{equation*}
f = \Pc\parent{\mathcal{N}(\nu+\epsilon u^\omega)}-i\Dz_z Q\dot m(z)\,.
\end{equation*}
On the other hand, $g$ contains the localized lower order terms which involve at least a power of $Q$ or of $\Pp(\eta)$ :
\begin{equation*}
g=\Pc\parent{Q^2\eta+Q\eta^2+\abs{\Pp(\eta)}^2\Pp(\eta)}\,.
\end{equation*}
To get some global in time bounds on $\nu$, we then follow the \textit{critical-weighted strategy} detailed in Proposition~\ref{proposition-trick}, and we are reduced to control $f$ in $DU_H^2(\mathcal{H}^{1/2})$ and $g$ in $\displaystyle L_t^2(\R;L_x^{2,1/2+})$. Hence, given an interval $I\subseteq\R$ where the solution is defined, we   will control $\nu$ in the critical-weighted space
\begin{equation}
\label{eq-X}
\mathcal X(I) = V_H^2(I;\mathcal{H}_x^{1/2}) \cap L_t^2(I ; \mathcal{H}_x^{1,-1/2-}(\R^d))\,,
\end{equation}
endowed with its natural norm, and $\dot{m}$ in $L_t^1(I)$. The aim is to get bounds that are independent of $I$.
\subsubsection{Global-in-time a priori estimates on $\nu$ and $\dot{m}$}
In the following Lemma, we prove some preliminary estimates on the radiation term $\eta$ that will be needed in the analysis. We recall that $\widetilde\Omega_{\epsilon,c\delta_0}$ is the set defined in~\eqref{set-eps-R} made of randomized initial data which display some improved Strichartz estimates, with $R=c\delta_0$ (see Proposition~\ref{prop-local-existence}).
\begin{lemma}[Preliminary estimates]
\label{lemma-R-I}
Take $\omega\in\widetilde\Omega_{\epsilon,R}$ and let $\eta, z, \nu$ be as in ansatz~\eqref{ansatz}. For any interval $I\subseteq\intervalco{0}{\infty}$, 
\begin{align}
\label{eq-R-I}
\norm{\eta}_{L^2(I ; L_x^{2,-1/2-})}&\lesssim\norm{\nu}_{\mathcal{X}(I)}+\epsilon\norm{u_0}_{L^2(\R^3)},\\
\label{eq-R-I-4}
\norm{\eta}_{L_t^4L_x^4(I\times\R^3)}&\lesssim\norm{\nu}_{\mathcal{X}(I)}+R\,.
\end{align}
\end{lemma}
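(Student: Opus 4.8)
The plan is to expand $\eta$ using the decomposition coming from Lemma~\ref{lemma-proj}, namely $\eta = Q(z) - z\phi_0 + \Pc(\eta) + \Pp(\eta)$... more precisely, since $\eta\in\mathcal{H}_c(z)$, we write $\eta = \operatorname{R}(z)\Pc(\eta) = \operatorname{R}(z)(\epsilon u^\omega + \nu)$, so that $\eta = \epsilon u^\omega + \nu + \alpha(z)(\epsilon u^\omega+\nu)\phi_0$. Thus $\eta$ splits into the continuous-spectral piece $\epsilon u^\omega + \nu$ and the discrete piece $\Pp(\eta) = \alpha(z)(\epsilon u^\omega + \nu)\,\phi_0$, and the strategy is to bound each piece separately in the two target norms. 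For the discrete piece, the key inputs are that $\phi_0$ is a fixed Schwartz function — hence bounded in every weighted norm and every $L^p$ — and that the scalar $\alpha(z)$ is controlled by $\lesssim \norm{\epsilon u^\omega + \nu}_{L^2}$ uniformly in $t$ by the mapping properties of $\operatorname{R}(z)$ stated in Lemma~\ref{lemma-proj}; combined with the smallness of $z$, this makes $\Pp(\eta)$ a harmless lower-order term.

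For~\eqref{eq-R-I}, first I would handle $\nu$: the weight $\japbrak{x}^{-1/2-}$ in $L_x^{2,-1/2-}$ is exactly the one appearing in the definition~\eqref{eq-X} of $\mathcal X(I)$, so $\norm{\nu}_{L_t^2 L_x^{2,-1/2-}(I)}\leq\norm{\nu}_{\mathcal X(I)}$ directly. For the $\epsilon u^\omega$ term, I would use that $\japbrak{x}^{-1/2-}\in L^2(\R^3)$... no — rather, $u^\omega = \e^{-itH}u_0^\omega$ with $u_0^\omega\in\ran\Pc$, so by the global-in-time local smoothing estimate~\eqref{eq-local-smoothing} with $F=0$ we get $\norm{u^\omega}_{L_t^2(I;\mathcal{H}^{1,-1/2-})}\lesssim_V\norm{u_0^\omega}_{\mathcal{H}^{1/2}}\lesssim\norm{u_0}_{\mathcal{H}^{1/2}}$, which in particular dominates the weaker $L^2(I;L_x^{2,-1/2-})$ norm; this is where the hypothesis $\omega\in\widetilde\Omega_{\epsilon,R}$ and the smallness of $\epsilon\norm{u_0}$ enter (one could alternatively absorb the randomized data using the improved Strichartz bounds defining $\widetilde\Omega_{\epsilon,R}$, but local smoothing is cleaner here). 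Finally the discrete contribution is $\norm{\alpha(z)(\epsilon u^\omega+\nu)\phi_0}_{L_t^2 L_x^{2,-1/2-}(I)}\lesssim \norm{\japbrak{x}^{-1/2-}\phi_0}_{L_x^\infty}\norm{\alpha(z)}_{L_t^\infty}\norm{\epsilon u^\omega+\nu}_{L_t^2 L_x^2(I)}$, and since $\norm{\alpha(z)}_{L_t^\infty}\lesssim\delta'$ is small this is absorbed into the right-hand side after using $\norm{u^\omega}_{L_t^2 L_x^2}$... but $u^\omega$ need not be in $L_t^2 L_x^2$ globally — so instead I would keep the weight on $u^\omega$ and move the $L_x^\infty$ bound onto $\phi_0$, writing the discrete term as $\lesssim\norm{\phi_0}_{L_x^2}\norm{\alpha(z)}_{L_t^\infty}(\epsilon\norm{u^\omega}_{L_t^2 L_x^{2,-1/2-}} + \norm{\nu}_{L_t^2 L_x^{2,-1/2-}})$ using only that $\phi_0\in L^2$ and that $\alpha(z)$ already carries the smallness; this closes~\eqref{eq-R-I}.

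For~\eqref{eq-R-I-4}, the target is a spacetime $L_t^4 L_x^4(I)$ norm, so I would pass through Strichartz-type estimates rather than local smoothing. For the $\nu$ piece, the embedding $\norm{\nu}_{L_t^4 L_x^4}\lesssim\norm{\japbrak{\sqrt H}^{1/4}\nu}_{V_H^2}\lesssim\norm{\nu}_{\mathcal X(I)}$ follows from the transferred Strichartz/Bernstein estimate~\eqref{eq-str-bernstein-transferred} with $d=3$, $p=4$ (for which $\tfrac{d}{2}-\tfrac{d+2}{p}=\tfrac14$), together with $V_H^2(\mathcal{H}^{1/2})\supseteq$ the relevant piece of $\mathcal X$. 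For the $\epsilon u^\omega$ piece, $(4,4)$ is one of the admissible exponents covered by the improved probabilistic Strichartz estimate~\eqref{eq-rand-str} (indeed $(4,4)$ with $s=0$ in $d=3$ is Schrödinger-admissible), so $\norm{\epsilon u^\omega}_{L_t^4 L_x^4(\R\times\R^3)}\lesssim R$ for $\omega\in\widetilde\Omega_{\epsilon,R}$ — this is precisely why such a pair is thrown into the finite union defining $\widetilde\Omega_{\epsilon,R}$ in~\eqref{set-eps-R}. The discrete piece is again controlled by $\norm{\alpha(z)(\epsilon u^\omega+\nu)\phi_0}_{L_t^4 L_x^4}\lesssim\norm{\phi_0}_{L_x^\infty}\norm{\alpha(z)}_{L_t^\infty}\norm{\epsilon u^\omega+\nu}_{L_t^4 L_x^4}$, which is bounded by the two preceding estimates times the small factor $\norm{\alpha(z)}_{L_t^\infty}\lesssim\delta'$, hence absorbed. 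Summing the three contributions gives~\eqref{eq-R-I-4}.

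The routine points are the Hölder splittings and the bookkeeping of which Schwartz norm of $\phi_0$ to invoke; the one genuinely load-bearing ingredient is that $\alpha(z)$ — and therefore the entire discrete correction $\operatorname{R}(z)-I$ — is both small and bounded on the relevant weighted/Lebesgue spaces, which is exactly the content of Lemma~\ref{lemma-proj} together with the smallness calibration $\norm{\nu}_{L^2}+\abs z\leq\delta'$. I expect the main (minor) obstacle to be making sure the $u^\omega$ contributions are never estimated in a norm that is merely local-in-time when a global-in-$I$ bound is needed: this is resolved by using the global-in-time local smoothing estimate~\eqref{eq-local-smoothing} for~\eqref{eq-R-I} and the global-in-time probabilistic Strichartz estimate~\eqref{eq-rand-str} for~\eqref{eq-R-I-4}, both of which hold on all of $\R$ and therefore on any subinterval $I$.
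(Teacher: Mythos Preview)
Your approach is the same as the paper's: split $\eta=\Pc\eta+\Pp\eta$ with $\Pc\eta=\epsilon u^\omega+\nu$ and $\Pp\eta=\alpha(z)(\epsilon u^\omega+\nu)\phi_0$, bound $\nu$ directly from the definition of $\mathcal X(I)$, bound $\epsilon u^\omega$ by local smoothing~\eqref{eq-local-smoothing} for~\eqref{eq-R-I} and by the probabilistic Strichartz estimate~\eqref{eq-rand-str} for~\eqref{eq-R-I-4}, and reduce the discrete piece to the continuous one via the explicit form of $\alpha(z)$.

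One point needs correcting: the operator norm of $\alpha(z)$ is \emph{not} small. From Lemma~\ref{lemma-proj}, $\Lambda(z)\alpha(z)u=\prodscalr{iu}{\Dz_zQ}$ with $\Dz_zQ=\operatorname{J}\phi_0+\Dz_zq$; only $\Dz_zq=\bigO{z^2}$ is small, while the $\operatorname{J}\phi_0$ contribution makes the functional $O(1)$. So there is no absorption argument. What actually happens is
\[
\norm{\Pp\eta}_{L_t^2L_x^{2,-1/2-}}\lesssim\norm{\phi_0}_{L_x^{2,-1/2-}}\norm{\Dz_zQ}_{L_x^{2,1/2+}}\norm{\epsilon u^\omega+\nu}_{L_t^2L_x^{2,-1/2-}}
\]
and
\[
\norm{\Pp\eta}_{L_{t,x}^4}\lesssim\norm{\phi_0}_{L_x^4}\norm{\Dz_zQ}_{L_x^{4/3}}\norm{\epsilon u^\omega+\nu}_{L_{t,x}^4},
\]
and the $\Dz_zQ$ factors are merely \emph{bounded} thanks to Lemma~\ref{lemma-weight} (this is exactly what the paper invokes). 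The discrete piece is then bounded by the \emph{same} right-hand side as the continuous piece, which is already the target; no smallness is needed. Two further minor slips: quote $\norm{u_0^\omega}_{L^2}\sim\norm{u_0}_{L^2}$ rather than $\mathcal H^{1/2}$ (the data is only in $\mathcal H^s$ with $s<1/2$, and local smoothing at the $L^2$ level already gives $L_t^2L_x^{2,-1/2-}$); and $(4,4)$ is not Schr\"odinger-admissible in $d=3$ --- you use the admissible pair $(4,3)$ with $\widetilde r=4$ in Proposition~\ref{prop-rand-str}.
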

\begin{proof}
We can estimate the discrete part of $\eta$ by its continuous part. Indeed, we recall that 
\begin{equation}
\label{est-R}
\abs{\Pp(\eta)}=\abs{(\operatorname{R}-\Id)(z)(\nu+u^\omega)}\lesssim \abs{\phi_0}\abs{\prodscal{\nu+u^\omega}{\Dz_z Q}}\,.
\end{equation}
Hence,
\begin{equation*}
\begin{split}
\norm{\Pp(\eta)}_{L_t^2(I ; L_x^{2,-1/2-})}&\leq \norm{\japbrak{x}^{-1/2-}\phi_0}_{L_x^2}\norm{\japbrak{x}^{1/2+}\Dz_z Q}_{L_x^\infty}\norm{\nu+\epsilon u^\omega}_{L_t^2(I ; L_x^{2,-1/2-})}\\
&\lesssim\norm{\japbrak{x}\Dz_z Q}_{L_t^\infty(I;\mathcal{H}^2)}\norm{\nu+\epsilon u^\omega}_{L_t^2(I ; L_x^{2,-1/2-})}\\
&\lesssim\norm{\nu}_\mathcal{X}+\epsilon\norm{u_0}_{L_x^2}\,,
\end{split}
\end{equation*}
where we used the local smoothing estimate~\eqref{eq-local-smoothing} to control the weighted norm of the perturbed linear propagation $u^\omega$ in the last inequality, as well as the fact that $\norm{u_0}_{L_x^2}\sim\norm{u_0^\omega}_{L_x^2}.$
To prove~\eqref{eq-R-I-4} we apply Hölder in~\eqref{est-R} and Sobolev embedding for $\norm{\phi_0}_{L^4(\R^3)}$. It comes
\begin{equation*}
\norm{(\operatorname{R}-\Id)(\nu+\epsilon u^\omega)}_{L_t^4L_x^4}\lesssim\norm{\phi_0}_{\mathcal{H}^{3/4}}\norm{\nu+\epsilon u^\omega}_{L_t^4L_x^4}\norm{\Dz_z Q}_{L_x^{4/3}}\,.
\end{equation*}
Applying Hölder once again, we get 
\begin{equation*}
\norm{\Dz_z Q}_{L_x^{4/3}}\leq\parent{\int_{\R^3}\japbrak{x}^{-3+}dx}^{1/3}\norm{\japbrak{x}^{3/4+}\Dz_z Q}_{L_x^2}\lesssim\norm{\japbrak{x}^2\Dz_z Q}_{L_x^2}\,.
\end{equation*}
Then, it follows from~\eqref{eq-w} that
\begin{equation*}
\norm{(\operatorname{R}-\Id)(\nu+\epsilon u^\omega)}_{L_t^4L_x^4}\lesssim\norm{\japbrak{x}^2\Dz_z Q}_{L_t^\infty L_x^2}\norm{\nu+\epsilon u^\omega}_{L_t^4L_x^4}\lesssim\norm{\nu}_{L_t^4L_x^4}+\epsilon\norm{u^\omega}_{L_t^4L_x^4}\,.
\end{equation*}
Note that we used the Sobolev weighted estimate~\eqref{eq-w} on $\Dz_z Q$. To control the deterministic term $\nu$ in $\displaystyle L_{t,x}^4$, we use Sobolev embedding, Strichartz estimate for the admissible pair $(4,3)$ and the transference principle in $V_H^2$. Since we chose $\omega\in\widetilde\Omega_{\epsilon,R}$ we can use the improved global-in-time Strichartz estimate~\eqref{eq-rand-str} to control the random term $u^\omega$ and obtain
\begin{equation*}
\norm{\nu+\epsilon u^\omega}_{L_t^4L_x^4}\lesssim\norm{\langle\sqrt H\rangle^{1/4}\nu}_{L_t^4L_x^3}+R\lesssim \norm{\langle\sqrt H\rangle^{1/4}\nu}_{V_H^2}+R
\lesssim\norm{\nu}_\mathcal{X}+R\,.
\end{equation*}
This gives both the desired estimate for the discrete part of $\eta$ and its continuous part $\nu+\epsilon u^\omega$, and finishes the proof of Lemma~\ref{lemma-R-I}.
\end{proof}
We can now state and prove the main global-in-time a priori estimate.
\begin{proposition}[Global a priori estimates]
\label{prop-a-priori-eta}
There exists $C=C(\phi_0)$ such that for all interval $I\subseteq\R$, for all $\omega\in\widetilde\Omega_{\epsilon,R}$ and $\psi_0\in\mathcal{H}^{1/2}$, if 
\[
\psi = Q(z) + \eta = Q(z) + \operatorname{R}(z)\parent{\epsilon u^\omega+\nu}\,,\quad
\psi_{\lvert{t=t_0}}= \epsilon u_0^\omega+\psi_0\,,
\]
is solution to~\eqref{eq-nls} on $I$ with 
$
\norm{\nu_0}_{\mathcal{H}^{1/2}}+\abs{z_0}\leq\delta_0$, $\norm{\nu}_{\mathcal{X}(I)}\leq\delta'$, $ \underset{t\in I}{\sup}\abs{z}\leq\delta'$, then~\footnote{\ We recall that $\delta'<\delta$ is chosen such that  $\norm{\psi}_{\mathcal{H}^s}\leq\delta$. This forces the solution to remain small and allows us to apply Lemma~\ref{lemma-weight} and Lemma~\ref{lemma-proj}.}
\begin{align}
\label{t-nu}
\norm{\nu}_{\mathcal{X}(I)}&\leq C\parent{ \norm{\nu}_{\mathcal{X}(I)}^3+R^3+\abs{z_0}^3+\norm{\dot m(z)}_{L^1(I)}^3+\norm{\nu_0}_{\mathcal{H}^{1/2}}}\,,\\
\label{t-x}
\norm{\dot m(z)}_{L^1(I)}&\leq C\parent{\norm{\nu}_{\mathcal{X}(I)}^3+R^3+\abs{z_0}^3+\norm{\dot m(z)}_{L^1(I)}^3}\,.
\end{align}
\end{proposition}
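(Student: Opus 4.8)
The plan is to derive~\eqref{t-nu} from the critical-weighted strategy of Proposition~\ref{proposition-trick} applied to the $\nu$-equation~\eqref{eq-nu}, and~\eqref{t-x} from the modulation ODE~\eqref{eq-modulation}. Two structural facts will be used throughout. First, since $\abs{m(z)(t)}=\abs{z(t)}$ and $m(z)(t)=z_0+\int_0^t\dot m(z)\,d\tau$, we have $\underset{t\in I}{\sup}\,\abs{z(t)}\leq\abs{z_0}+\norm{\dot m(z)}_{L^1(I)}$, so any residual power of $\underset{I}{\sup}\abs{z}$ can be traded for $\abs{z_0}+\norm{\dot m(z)}_{L^1(I)}$ and, after Young's inequality, folded into the cubic right-hand sides. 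Second, by Lemmas~\ref{lemma-gs} and~\ref{lemma-weight}, every factor $Q$ appearing in the localized terms is spatially localized and carries a power of $z$, while $\Dz_z q$ carries two powers of $z$ and $\Dz_z Q$, $\phi_0$ are localized and bounded; this is the source of the cubic structure.

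For~\eqref{t-nu} I would apply~\eqref{eq-trick-V} with $u=\nu$, $u_0=\nu_0$, $f=\Pc\mathcal{N}(\nu+\epsilon u^\omega)-i\Dz_z q\,\dot m(z)$ and $g=\Pc\parent{Q^2\eta+Q\eta^2+\abs{\Pp(\eta)}^2\Pp(\eta)}$, which gives $\norm{\nu}_{\mathcal X(I)}\lesssim\norm{\nu_0}_{\mathcal H^{1/2}}+\underset{2^\N}{\sup}\,N^{1/2}\norm{\widetilde\Pi_N f}_{DU_H^2(I)}+\norm{g}_{L^2_t(I;L_x^{2,1/2+})}$, so it suffices to bound the last two terms. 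The nonlinear part of $f$ is controlled by the probabilistic nonlinear estimate~\eqref{eq-a-priori} (using $\norm{\cdot}_{V_H^2(\mathcal H^{1/2})}\leq\norm{\cdot}_{\mathcal X(I)}$), giving $\lesssim\norm{\nu_0}_{\mathcal H^{1/2}}+\norm{\nu}_{\mathcal X(I)}^3+R^3$; the modulation term is bounded via the embedding $L^1_t(\mathcal H^{1/2})\hookrightarrow DU_H^2(\mathcal H^{1/2})$ (a consequence of the embeddings of Proposition~\ref{prop-embedding}) together with the $\mathcal H^2$-smallness of $\Dz_z q$, giving $\lesssim\abs{z}^2\norm{\dot m(z)}_{L^1(I)}$. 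For $g$, using that $\japbrak{x}^{1+}Q^2$ and $\japbrak{x}^{1+}\phi_0$ are bounded, Lemma~\ref{lemma-weight}, and the preliminary bounds~\eqref{eq-R-I}--\eqref{eq-R-I-4}, one gets $\norm{Q^2\eta}_{L^2_t(I;L_x^{2,1/2+})}\lesssim\abs{z}^2\norm{\eta}_{L^2_t(I;L_x^{2,-1/2-})}\lesssim\abs{z}^2\parent{\norm{\nu}_{\mathcal X(I)}+R}$, $\norm{Q\eta^2}_{L^2_t(I;L_x^{2,1/2+})}\lesssim\abs{z}\,\norm{\eta}_{L^4_tL^4_x(I)}^2\lesssim\abs{z}\parent{\norm{\nu}_{\mathcal X(I)}+R}^2$, and — bounding $\alpha(z)(\Pc\eta)$ pointwise in time by $\norm{\japbrak{x}^{-1/2-}\Pc\eta}_{L^2_x}$ and interpolating the endpoints of~\eqref{eq-R-I} — $\norm{\abs{\Pp(\eta)}^2\Pp(\eta)}_{L^2_t(I;L_x^{2,1/2+})}\lesssim\parent{\norm{\nu}_{\mathcal X(I)}+R}^3$. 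Trading $\abs{z}$ for $\abs{z_0}+\norm{\dot m(z)}_{L^1(I)}$ and applying Young reassembles all of this into the right-hand side of~\eqref{t-nu}.

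For~\eqref{t-x} I would start from~\eqref{eq-modulation}: by~\eqref{eq-A} the matrix $A(z,\eta)$ is invertible with $\norm{A(z,\eta)^{-1}}\lesssim 1$ for $\delta$ small, so $\norm{\dot m(z)}_{L^1(I)}\lesssim\norm{\prodscalr{F}{\Dz_z Q}}_{L^1(I)}$ with $F=\Pc(\eta^3)+\Pc(Q\eta^2)$. The decisive point is that the time-dependent orthogonality conditions~\eqref{orth} have removed from $F$ every term linear in $\eta$ — their $L^1_t$-norm could not be controlled, and only the cubic remainder is within reach. Pairing $F$ against the localized $\Dz_z Q$ and distributing the spatial weight so that both the quadratic weighted norm of~\eqref{eq-R-I} and the $L^4_{t,x}$-bound~\eqref{eq-R-I-4} can be used (Hölder with $\tfrac12+\tfrac14+\tfrac14=1$ in space and in time), I expect $\norm{\prodscalr{\eta^3}{\Dz_z Q}}_{L^1(I)}\lesssim\norm{\eta}_{L^2_t(I;L_x^{2,-1/2-})}\norm{\eta}_{L^4_tL^4_x(I)}^2\lesssim\parent{\norm{\nu}_{\mathcal X(I)}+R}^3$ and $\norm{\prodscalr{Q\eta^2}{\Dz_z Q}}_{L^1(I)}\lesssim\abs{z}\,\norm{\eta}_{L^2_t(I;L_x^{2,-1/2-})}^2\lesssim\abs{z}\parent{\norm{\nu}_{\mathcal X(I)}+R}^2$; using $\abs{z}\leq\abs{z_0}+\norm{\dot m(z)}_{L^1(I)}$ and Young yields~\eqref{t-x}.

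The main obstacle is the bookkeeping for the localized lower-order terms (the pieces of $g$, and of $\prodscalr{F}{\Dz_z Q}$): local smoothing only delivers the \emph{quadratic}-in-time weighted control $\eta\in L^2_t\mathcal H^{1,-1/2-}$ recorded in~\eqref{eq-R-I}, with no $L^1_t$ information, so each such term must be split with care in order to combine it with~\eqref{eq-R-I-4}, with the spatial decay of $V$, $Q$, $\phi_0$, and with the powers of $z$ these profiles supply, and then $\underset{I}{\sup}\abs{z}$ must be re-expressed through $\abs{z_0}+\norm{\dot m(z)}_{L^1(I)}$, so as to land on purely cubic right-hand sides. The cancellation of the $\eta$-linear terms in $F$ via the orthogonality conditions~\eqref{orth} is precisely what makes the $L^1_t$-bound on $\norm{\dot m(z)}_{L^1(I)}$ possible.
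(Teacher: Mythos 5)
Your proposal is correct and mirrors the paper's own proof: the split $f+g$ with the critical-weighted estimate~\eqref{eq-trick-V}, the probabilistic bound~\eqref{eq-a-priori} for the cubic forcing, the duality-type bound $L^1_t(\mathcal H^{1/2})\hookrightarrow DU_H^2$ for the modulation term $\Dz_z q\,\dot m(z)$, the use of Lemma~\ref{lemma-weight} together with the preliminary weighted and $L^4_{t,x}$ bounds~\eqref{eq-R-I}--\eqref{eq-R-I-4} for $g$, and the control of $\dot m(z)$ in $L^1_t$ from the quadratic-in-$\eta$ modulation ODE are exactly the paper's steps. The only genuine addition relative to what the paper writes out is your explicit remark that one must replace $\sup_I|z|$ by $|z_0|+\norm{\dot m(z)}_{L^1(I)}$ before applying Young, which the paper leaves implicit but is indeed required to land on the stated right-hand sides.
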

\begin{proof}[Proof of~\eqref{t-nu}] It follows from \textit{weighted-critical} estimate~\eqref{eq-trick-V} that
\begin{equation*}
\norm{w}_\mathcal{X} \lesssim \norm{\nu_0}_{\mathcal{H}^{1/2}}+\underset{2^\N}{\sup}\ N^{1/2}\norm{\widetilde{\Pi}_Nf}_{DU_H^2}+\norm{g}_{L_t^2(\R;L_x^{2,1/2+})}\,.
\end{equation*}
We recall that $f=\mathcal{N}(\nu+u^\omega)+i\Dz_z Q\dot m(z)$ and we observe that $\widetilde\Pi_N f\in L^1(I,\mathcal{H}^{1/2})$ as explained in~\eqref{N-L1}. Hence, we apply the probabilistic nonlinear estimate~\eqref{eq-a-priori} on the continuous spectral subspace stated in section~\ref{section-continuous} to get that for all $\omega\in\widetilde\Omega_{\epsilon,R}$
\begin{equation*}
\underset{2^\N}{\sup}\ N^{1/2}{\norm{\widetilde{\Pi}_N   \mathcal{N}(\nu+u^\omega)}_{DU_H^2}}\lesssim \norm{\nu}_\mathcal{X}^3+R^3\,.
\end{equation*}
To handle the modulation term, we observe that $\Pc(\Dz_z Q)=\Dz_z Q$, and we deduce from the duality argument between $U^2$ and $V^2$ detailed in remark~\ref{remark-duality} that
\begin{equation*}
\begin{split}
\norm{\Dz_z Q(z)\dot m(z)}_{DU_H^2}&\leq \underset{\norm{v}_{V_H^2}\leq 1}{\sup}\int_\R\prodscal{\Dz_z Q\dot m(z)}{v} dt \\
&\leq \underset{\norm{v}_{V_H^2}\leq 1}{\sup}\norm{v}_{L_t^\infty L_x^2}\norm{\Dz_z Q(z)}_{L_t^\infty L_x^2}\norm{\dot m(z)}_{L_t^1} \\
&=\norm{\Dz_z Q(z)}_{L_x^2}\norm{\dot m(z)}_{L_t^1}\,,
\end{split}
\end{equation*}
where we used the embedding $V^2\hookrightarrow \displaystyle L_t^\infty(\R; L_x^2)$. Consequently, 
\begin{equation*}\underset{2^\N}{\sup}\ N^{1/2}\norm{\widetilde{\Pi}_N   \Dz_z Q(z)\dot m(z)}_{DU_H^2}\lesssim\norm{\Dz_z Q}_{\mathcal{H}^{1/2}}\norm{\dot m(z)}_{L_t^1}=\bigO{z^2}\norm{\dot m(z)}_{L_t^1}\,.
\end{equation*}
Note that we used~\eqref{-Dz Dz Q} to control $\norm{\Dz_z Q}_{\mathcal{H}^{1/2}}$. Now, we need to estimate the localized lower order terms collected in $g$, defined in~\eqref{eq-nu}.
\begin{equation*}
\norm{g}_{L_t^2(\R;L_x^{2,1/2+})}\leq \norm{\Pc(Q^2\eta)}_{L_t^2(\R;L_x^{2,1/2+})}+\norm{\Pc(Q\eta^2)}_{L_t^2(\R;L_x^{2,1/2+})}+\norm{\abs{\Pp(\eta)}^2\Pp(\eta)}_{L_t^2(\R;L_x^{2,1/2+})}\,.
\end{equation*}
It follows from~\eqref{eq-w} and~\eqref{eq-R-I} that
\[
\norm{\Pc(Q^2\eta)}_{L_t^2(\R;L_x^{2,1/2+})}\leq \norm{\japbrak{x}^{1/2+}Q}_{L_x^\infty}^2\norm{\eta}_{L_t^2(\R;L_x^{2,-1/2-})}
\lesssim\norm{z}_{L_t^\infty}^2\parent{\norm{\nu}_\mathcal{X}+\epsilon\norm{u_0}_{L_x^2}}\,.
\]
Similarly,~\eqref{eq-w} and~\eqref{eq-R-I-4} yield
\[
\norm{\Pc(Q\eta^2)}_{L_t^2(\R;L_x^{2,1/2+})}\leq\norm{\japbrak{x}^{1/2+}Q}_{L_x^\infty}\norm{\eta}_{L_t^4L_x^4}^2\\
\lesssim\norm{z}_{L_t^\infty}\parent{\norm{\nu}_\mathcal{X}+R}^2\lesssim \norm{z}_{L_t^\infty}^3+\norm{\nu}_\mathcal{X}^3+R^3\,.
\]
To conclude we recall that
\begin{equation*}
\abs{\Pp(\eta)}\lesssim\abs{\phi_0}\abs{\prodscalr{\nu+u^\omega}{\Dz_z Q}}^3\,,
\end{equation*}
and therefore
\begin{equation*}
\begin{split}
\norm{\abs{\Pp(\eta)}^2\Pp(\eta)}_{L_t^2(\R;L_x^{2,1/2+})}&\lesssim\norm{\phi_0}_{L_x^{2,1/2+}}\parent{\int_\R\prodscalr{\nu+u^\omega}{\Dz_z Q}^6}^{1/2}\\
&\lesssim\parent{\int_\R\prodscalr{\nu+u^\omega}{\Dz_z Q}^2}^{1/2}\underset{\R}{\sup}\abs{\prodscalr{\nu+u^\omega}{\Dz_z Q}}^2 \\
&\lesssim\norm{\japbrak{x}\Dz_z Q}_{L_x^2}\norm{\nu+u^\omega}_{L_t^2(\R;L_x^{2,-1/2-})}\norm{\nu+u^\omega}_{L_t^\infty L_x^2}\norm{\Dz_z Q}_{L_x^2} \\
&\lesssim \parent{\norm{\nu}_\mathcal{X}+\epsilon\norm{u_0}_{L_x^2}}^3\lesssim \norm{\nu}_\mathcal{X}^3+\epsilon^3\norm{u_0}_{L_x^2}^3\,.
\qedhere
\end{split}
\end{equation*}

\end{proof}
\begin{proof}[Proof of~\eqref{t-x}]
We recall that 
\begin{equation*}\abs{\dot m(z)}\lesssim \abs{\prodscalr{\eta^3}{\Dz_z Q}}+\abs{\prodscalr{Q\eta^2}{\Dz_z Q}}\,.
\end{equation*}
We apply Cauchy-Schwarz inequality firstly in space and then in time to get that
\begin{equation*}
\begin{split}
\int_\R \abs{\prodscalr{\eta^3}{\Dz_z Q}} dt &\leq \norm{\japbrak{x}^{1/2+}\Dz_z Q}_{L_x^\infty}\int_\R\norm{\japbrak{x}^{-1/2-}\eta}_{L_x^2}\norm{\eta}_{L_{t,x}^4}^2dt \\
&\lesssim \norm{\japbrak{x}\Dz_z Q}_{\mathcal{H}^2}\norm{\eta}_{L_t^2(\R;L_x^{2,-1/2-})}\norm{\eta}_{L_{t,x}^4}^2\,.
\end{split}
\end{equation*}
By using estimate~\eqref{eq-w} on the weighted norm of $\Dz_z Q=J\phi_0+\Dz_z Q$ and estimates of Lemma~\ref{lemma-R-I} on $\eta$, we conclude that
\[
\int_\R \abs{\prodscalr{\eta^3}{\Dz_z Q}} dt \lesssim\parent{\norm{\nu}_\mathcal{X}+\epsilon\norm{u_0}_{L^2}}\parent{\norm{\nu}_\mathcal{X}+R}^2
\lesssim\norm{\nu}_\mathcal{X}^3+\epsilon^3\norm{u_0}_{L^2}^3+R^3\,.
\]
Similarly, we have
\begin{equation*}
\begin{split}
\int_\R \abs{\prodscalr{Q\eta^2}{\Dz_z Q}} dt &\leq \norm{\japbrak{x}^{1/2+}Q}_{L^\infty}\norm{\japbrak{x}^{1/2+}\Dz_z Q}_{L^\infty}\norm{\japbrak{x}^{-1}\eta}_{L_t^2L_x^2}^2 \\
&\lesssim \norm{\japbrak{x}Q}_{\mathcal{H}^2}\norm{\japbrak{x}\Dz_z Q}_{\mathcal{H}^2}\norm{\eta}_{L_t^2(\R;L_x^{2,-1/2-})}^2\\
&\lesssim\norm{z}_{L_t^\infty}\parent{\norm{\nu}_\mathcal{X}^2+\epsilon^2\norm{u_0}_{L^2}^2}\lesssim \norm{z}_{L_t^\infty}^3+\norm{\nu}_\mathcal{X}^3+\epsilon^3\norm{u_0}_{L^2}^3\,.
\end{split}
\end{equation*}
This concludes the proof of Proposition~\ref{prop-a-priori-eta}.
\end{proof}
\subsection{Proof of Theorem~\ref{theorem-soliton}} 
Now that we have a local solution with some global a priori estimates both on the radiation term $\eta$ and on the modulation parameter $z$, we shall be able to prove global well-posedness, and then deduce from these global estimates asymptotic stability. 
\paragraph{\textbf{Global existence and uniqueness}} 
It remains to finely tune the parameters that appear in Proposition~\ref{prop-a-priori}. First, we take $\delta_0$ such that the local existence result of Proposition~\ref{prop-local-existence} holds true, $R=c\delta_0$. Then, if the solution exists up to a certain time $T$ we prove that
\begin{equation}
\label{eq-a-priori-boot}
\norm{\nu}_{\mathcal{X}\intervalco{0}{T}}+\norm{\dot m(z)}_{L^1\intervalco{0}{T}}\leq\delta'\implies \norm{\nu}_{\mathcal{X}\intervalco{0}{T}}+\norm{\dot m(z)}_{L^1\intervalco{0}{T}}\leq\frac{\delta'}{2}\,.
\end{equation}
Indeed, we have from~\eqref{t-nu} and~\eqref{t-x} that 
\[
\norm{\nu}_{\mathcal{X}\intervalco{0}{T}}\leq C\parent{2(\delta')^3+(c\delta_0)^3+\delta_0^3+\delta_0}\,,\quad 
\norm{\dot m(z)}_{L^1\intervalco{0}{T}}\leq C\parent{2(\delta')^3+(c\delta_0)^3+\delta_0^3}\,.\]
The a priori estimate~\eqref{eq-a-priori-boot} follows if we chose $\delta_0$ even smaller, say such that $\delta_0\leq(4C)^{-1}\delta'$, and $\delta'\lesssim \parent{c^3C}^{-1/2}$. Then, we use a continuity argument (see Lemma~\ref{lemma:U2-continuity}) and prove that the solution is global, with the bounds 
\[
\norm{\nu}_{\mathcal{X}(\R)}+\norm{\dot m(z)}_{L^1(\R)}\leq\delta'\,.
\]
Since $\dot m(z)$ lies in $L^1\intervalco{0}{\infty}$ the convergence of the modulation parameter follows immediately. There exists $z_+\in\C$ such that
\begin{equation*}
\underset{t\to+\infty}{\lim}m(z)(t)=z_+\,.
\end{equation*}
Note that $\abs{z}$ converges, and hence $E(z)$ is also convergent.
\paragraph{\textbf{Scattering}}
We turn to the proof of the so-called completeness property of the flow, that is the fact that the radiation part $\eta$ scatters. To this end, we first prove scattering for $\nu = \Pc(\eta)-\epsilon u^\omega$ by following the proof of the scattering from Theorem~\ref{theorem-continuous} we made in section~\ref{section-continuous}. We recall that $\nu$ is solution to~\eqref{eq-nu}, and we prove that the Cauchy criterion is satisfied :
\begin{equation*}
\underset{t_1,t_2\to+\infty}{\lim}\norm{\e^{-it_1(\Delta-V)}\nu(t_1)-\e^{-it_2(\Delta-V)}\nu(t_2)}_{\mathcal{H}^{1/2}(\R^3)}  = 0\,.
\end{equation*}
Indeed, by the Duhamel integral formulation,
\begin{equation*}
\norm{\e^{-it_1(\Delta-V)}\nu(t_1)-\e^{-it_2(\Delta-V)}\nu(t_2)}_{\mathcal{H}^{1/2}(\R^3)}\leq\norm{f+g}_{DV^2\intervaloo{t_1}{t_2}}\,.
\end{equation*}
As observed in remark~\ref{remark-t}, it follows from the proof of Proposition~\ref{prop-a-priori-eta} that there exists a constant $C$ depending on some norms of the solution such that 
\begin{multline}
\label{eq-cauchy-pple}
\norm{f+g}_{DV_H^2\intervaloo{t_1}{t_2}}\leq \disp\norm{f+g}_{DU_H^2\intervaloo{t_1}{t_2}}\\
\leq C\parent{\norm{\nu}_{\mathcal{X}\intervaloo{t_1}{+\infty}},\norm{\dot m(z)}_{L_t^1\intervaloo{t_1}{+\infty}},\norm{u^\omega}_{L_t^q(\intervaloo{t_1}{+\infty} ; L_x^r)}}\,,
\end{multline}
and $C$ goes to zero when $t_1$ goes to $+\infty$. Hence, the Cauchy criterion is satisfied and there exists $\nu_+\in\mathcal{H}^{1/2}\cap\ran(\Pc)$ such that 
\begin{equation*}
\underset{t\to+\infty}{\lim}\norm{\nu(t)-\e^{-itH}\parent{\epsilon u_0^\omega+\nu_+}}_{\mathcal{H}^{1/2}(\R^3)}=0\,.
\end{equation*}
It remains to prove that the discrete part of $\eta$ goes to zero when $t$ goes to infinity. First, the weak convergence of
\[\e^{-itH}\parent{\epsilon u_0^\omega+\nu}\underset{t\to+\infty}{\disp \rightharpoonup}0\quad \text{in}\ \mathcal{H}^{1/2}(\R^3)\,\]
follows from the Riemann-Lebesgue theorem and the Plancherel formula for the distorted Fourier transform. Then, we use the compactness of the operator $\operatorname{R}-\Id$ from $\mathcal{H}^{1/2}(\R^3)$ to $\C$ stated in Lemma~\ref{lemma-proj} to get that
\begin{equation*}
\underset{t\to+\infty}\lim \Pp(\eta) = \underset{t\to+\infty}{\lim}(\operatorname{R}-\Id)\e^{-itH}\parent{\epsilon u_0^\omega+\nu}=0\quad \text{in}\ \mathcal{H}^{1/2}(\R^3)\,.
\end{equation*}
This concludes the proof of Theorem~\ref{theorem-soliton} with $\eta_+=\nu_+$.
\appendix
\section{Weighted elliptic estimates for the ground states}
\label{appendix-weight}
We show how to prove estimates of Lemma~\ref{lemma-weight} for the ground states $Q(z)$ and its derivative. For the convenience of the reader we recall the equations satisfied by $e,q,\Dz e,\Dz_z q$ (see A.4 in~\cite{gustafson2004}), and we omit numerical constants as well as complex conjugate from the notations. The nonlinear ground state $Q(z)=z\phi_0+q$ and $E(z)=e_0+e$ are solution to~\eqref{eq-GS}, and they satisfy
\begin{equation}
\label{eq-e}
\begin{split}
&(H-e_0)q=-\Pc\ \mathcal{N}(Q)+eq\,,\\
&ez=\prodscal{\phi_0}{\mathcal{N}(Q)}\,.
\end{split}
\end{equation}
We recall that $\mathcal{N}(u)=\abs{u}^2u$, $\Dz_z=(\partial_{z_1},\partial_{z_2})$ and we use the notation $\operatorname{J}=D_z(z)=(1,i)$. Then, if we differentiate the above expressions with respect to $z$ we obtain
\begin{equation}
\label{eq-Q-e}
\begin{split}
&(H-e_0)\Dz_z Q=-\Pc\ \Dz_z \mathcal{N}(Q)+q\Dz_z e+e\Dz_z Q\,,\\
&z\Dz_z e+\operatorname{J}e=\prodscal{\phi_0}{\Dz_z \mathcal{N}(Q)}\,,
\end{split}
\end{equation}
with
\[
\Dz_z \mathcal{N}(Q)=Q^2(\operatorname{J}\phi_0+\Dz_z Q)=\bigO{z^2}\ \text{in}\ \mathcal{H}^2\,.
\]
\begin{proof}[Proof of Lemma~\ref{lemma-weight}]
We first prove the estimate for $q$ in $\mathcal{H}^1$ with weight $\japbrak{x}$. Given a Schwartz function $\varphi$, we have
\begin{equation}
\label{eq-a1}
(H-e_0)\japbrak{x}\varphi = [-\Delta,\japbrak{x}]\varphi+\japbrak{x}(H-e_0)\varphi
=-\frac{2x\cdot\nabla\varphi}{\japbrak{x}}-\frac{3+2\abs{x}^2}{\japbrak{x}^3}\varphi+\japbrak{x}(H-e_0)\varphi\,.
\end{equation}
Then, we multiply~\eqref{eq-a1} by $\japbrak{x}\overline\varphi$ and we integrate over $\R^3$ to obtain
\begin{equation}
\label{eq-A1}
\norm{\nabla(\japbrak{x}\varphi)}_{L^2}^2-e_0\norm{\japbrak{x}\varphi}_{L^2}^2=\int_{\R^3}\parent{\japbrak{x}^2V(x)+3+\frac{3+2\abs{x}^2}{\japbrak{x}^2}}\abs{\varphi(x)}^2dx
+\prodscal{\japbrak{x}^2(H-e_0)\varphi}{\varphi}\,.
\end{equation}
Since the first term on the right-hand side is equivalent to $\norm{\varphi}_{L^2}^2$, it suffices to handle the second term. Formally taking $\varphi=q$ and using~\eqref{eq-e}, we have 
\begin{equation*}
\begin{split}
\prodscal{\japbrak{x}(H-e_0)q}{\japbrak{x}q} &= \prodscal{-\Pc\mathcal{N}(Q)+eq}{\japbrak{x}^2q}\\
&=-\prodscal{\mathcal{N}(Q)}{\japbrak{x}^2q}+\prodscal{\phi_0}{\mathcal{N}(Q)}\prodscal{\phi_0}{\japbrak{x}^2q}+e\norm{\japbrak{x}q}_{L^2}\,.
\end{split}
\end{equation*}
Since $q=\bigO{z^3}$ in $\mathcal{H}^2$ (see~\eqref{-Dz Dz Q}), we have 
\[
\abs{\prodscal{\mathcal{N}(Q)}{\japbrak{x}^2q}}+\abs{\prodscal{\phi_0}{\mathcal{N}(Q)}\prodscal{\phi_0}{\japbrak{x}^2q}}\lesssim\abs{z}^6\parent{ \norm{\japbrak{x}q}_{L^2}^2+C(\norm{\japbrak{x}^2\phi_0}_{L^\infty})}\,.
\]
Therefore, we deduce from~\eqref{eq-A1} that
\[
\norm{\nabla(\japbrak{x}q)}_{L^2}^2-(e_0+e+C\abs{z}^6)\norm{\japbrak{x}q}_{L^2}^2\lesssim\norm{q}_{L^2}^2+\abs{z}^6C(\norm{\japbrak{x}^2\phi_0}_{L^\infty})\,.
\]
To conclude we observe that there exists $\gamma>0$ and $c_\gamma>0$ such that $E(z)=e_0+e(z)<-c_\delta$ for all $\abs{z}\leq\delta$.
Next, we just mention that the estimate for the weight $\japbrak{x}^2$ follows similarly, using that
\[[-\Delta,\japbrak{x}^2]\varphi = 4x\cdot\nabla\varphi+6\varphi\,.\]
Let us now derive the estimate with the weight $\japbrak{x}$ in $\mathcal{H}^2$ for $\Dz_zq$. To this end, we first differentiate ~\eqref{eq-a1} with respect to $x_k$ and we get
\begin{equation*}
(H-e_0)\partial_k(\japbrak{x}\varphi)+(\partial_kV)\japbrak{x}\varphi=\partial_k\parent{[-\Delta,\japbrak{x}]}\varphi+\partial_k(\japbrak{x}(H-e_0)\varphi)\,.
\end{equation*}
In addition, we multiply by $\partial_k(\japbrak{x}\overline\varphi)$ on both sides, and we integrate over $\R^3$ to see that
\begin{multline}
\label{eq-D2B}
\norm{\nabla(\partial_k\japbrak{x}\varphi)}_{L^2}^2-e_0\norm{\partial_k\japbrak{x}\varphi}_{L^2}^2=-\int_{\R^3}\parent{V\partial_k(\japbrak{x}\varphi)+\partial_k\parent{[-\Delta,\japbrak{x}]}\varphi}\partial_k(\japbrak{x}\overline\varphi)dx\\
+\prodscal{\partial_k(\japbrak{x}(H-e_0))\varphi}{\partial_k(\japbrak{x}\varphi)}\,.
\end{multline}
Since
\begin{equation*}
\partial_k\parent{[-\Delta,\japbrak{x}]}\varphi = -2\parent{\frac{\partial_k\varphi}{\japbrak{x}}+\frac{x\cdot\partial_k\nabla\varphi}{\japbrak{x}}-x_k\frac{x\cdot\nabla\varphi}{\japbrak{x}^3}}+(3+2\abs{x}^2)\frac{3x_k\varphi}{\japbrak{x}^5}-\frac{4x_k\varphi}{\japbrak{x}^3}-(3+2\abs{x}^2)\frac{\partial_k\varphi}{\japbrak{x}^3}\,,
\end{equation*}
we have that
\begin{equation*}
\label{eq-D2B2}
\abs{\int_{\R^3}\parent{V\partial_k(\japbrak{x}\varphi)+\partial_k\parent{[-\Delta,\japbrak{x}]}\varphi}\partial_k(\japbrak{x}\overline\varphi)dx}\lesssim\norm{\japbrak{x}\varphi}_{\mathcal{H}^1}^2+\norm{\varphi}_{\mathcal{H}^2}^2\,.
\end{equation*}
Moreover, by using~\eqref{-Dz Dz Q},~\eqref{eq-De},~\eqref{eq-Q-e} and the estimates for $q$ and $\Dz_z q$ with weight $\japbrak{x}$ in $\mathcal{H}^1$ already proved in the first part of the proof, we show that 
\begin{equation*}
\begin{split}
\abs{\prodscal{\partial_k(\japbrak{x}(H-e_0))\varphi}{\partial_k(\japbrak{x}\varphi)}}\leq C(\phi_0)\abs{z}^4\,.
\end{split}
\end{equation*}
Hence, 
\begin{equation*}
\norm{\nabla(\partial_k\japbrak{x}\varphi)}_{L^2}^2-(e_0+e(z))\norm{\partial_k\japbrak{x}\varphi}_{L^2}^2\lesssim C(\phi_0)\abs{z}^4\,.
\end{equation*}
This concludes the proof since $e_0+e(z)\leq -c_\gamma<0$ for $\gamma$ small enough and $\abs{z}\leq\gamma$.
\end{proof}
\begin{center}
\textbf{Acknowledgements}
\end{center}
The author would like to thank his thesis supervisors Nicolas Burq and Frederic Rousset for suggesting him this problem and for giving him many helpful comments related to this work.
\printbibliography[
title={References}
]
\end{document}